\providecommand\@dotsep{4.5}
\newif\ifsoldark
\newif\ifsollight
\newif\ifclassic
\newif\ifplain
\appto\appendix{\addtocontents{toc}{\protect\setcounter{tocdepth}{1}}}
\renewcommand*\backref[1]{\ifx#1\relax \else (p. #1) \fi} 
\numberwithin{equation}{section} 
\theoremstyle{plain}
\newtheorem{theorem}[equation]{Theorem}
\newtheorem{lemma}[equation]{Lemma}
\newtheorem{corollary}[equation]{Corollary}
\newtheorem{proposition}[equation]{Proposition}
\theoremstyle{definition}
\newtheorem{definition}[equation]{Definition}
\theoremstyle{remark}
\newtheorem{remark}[equation]{Remark}
\newtheorem{claim*}{Claim}
\newcommand{\dist}{\operatorname{dist}}
\newcommand{\card}{\operatorname{card}}
\newcommand{\dv}{\operatorname{div}}
\newcommand{\Di}{\operatorname{D}}
\newcommand{\Reg}{\operatorname{R}}
\newcommand{\n}[1]{\mathscr{#1}}
\newcommand{\m}[1]{\mathcal{#1}}
\newcommand{\bb}[1]{\mathbb{#1}}
\newcommand{\medcup}{\textstyle\bigcup}
\newcommand{\RNum}[1]{\uppercase\expandafter{\romannumeral #1\relax}}
\newcommand{\Lip}{\operatorname{Lip}}
\DeclareMathOperator{\supp}{supp}
\DeclareMathOperator{\diam}{diam}
\DeclareMathOperator*{\esssup}{ess\,sup}  
\def\div{\mathop{\operatorname{div}}\nolimits}
\def\Xint#1{\mathchoice
	{\XXint\displaystyle\textstyle{#1}}%
	{\XXint\textstyle\scriptstyle{#1}}%
	{\XXint\scriptstyle\scriptscriptstyle{#1}}%
	{\XXint\scriptscriptstyle\scriptscriptstyle{#1}}%
	\!\int}
\def\XXint#1#2#3{{\setbox0=\hbox{$#1{#2#3}{\int}$}
		\vcenter{\hbox{$#2#3$}}\kern-.5\wd0}}
\def\dashint{\Xint-}
\def\YYint#1#2#3{{\setbox0=\hbox{$#1{#2#3}{\iint}$}
		\vcenter{\hbox{$#2#3$}}\kern-.51\wd0}}
\newcommand{\ep}{\varepsilon}
\newcommand{\ra}{\rightarrow}
\newcommand{\lra}{\longrightarrow}   
\newcommand{\loc}{\operatorname{loc}}
\newcommand{\sss}{{\rm {Stop}}}
\newcommand{\ttt}{{\rm {Top}}}
\newcommand{\Ins}{{\rm {Ins}}}
\newcommand{\Bdry}{{\rm {Bdry}}}
\newcommand{\tree}{{\rm Tree}}
\newcommand{\wh}[1]{{\widehat{#1}}}
\newcommand{\R}{\mathbb{R}}
\newcommand{\wt}{\widetilde}
\newcommand{\cF}{{\mathcal  F}}
\newcommand{\cB}{{\mathcal  B}}
\newcommand{\1}{{\mathds 1}}
\def\HH{\mathcal{H}}
\def\R{\mathbb{R}}
\def\pom{{\partial\Omega}}
\newcommand{\DD}{{\mathcal D}}
\newcommand{\rf}[1]{{(\ref{#1})}}
\def\ve{\varepsilon} 
\def\vphi{\varphi}
\def\cM{{\mathcal{M}}}
\def\avint{\Xint-}
\newcommand{\WW}{\m W}
\newcommand*{\dt}[1]{%
	\accentset{\mbox{\Large\bfseries .}}{#1}}
\setlist{nosep}
\colorlet{citec}{blue}
\colorlet{urlc}{blue}
\colorlet{toc}{blue}
\colorlet{hyperc}{blue}
\colorlet{bpcolor}{NavyBlue}
\colorlet{smcolor}{purple}
\colorlet{impcolor}{blue}
\colorlet{eqcolor}{black}
\colorlet{lmcolor}{black}
\colorlet{propcolor}{black}
\colorlet{thmcolor}{black}
\colorlet{defcolor}{black}
\colorlet{rmcolor}{black}
\colorlet{excolor}{black}
\begin{document}
	
\author[M. Mourgoglou]{Mihalis Mourgoglou}
\email[]{michail.mourgoglou@ehu.eus}
\address{Departamento de Matemáticas, Universidad del País Vasco, UPV/EHU, Barrio Sarriena S/N 48940 Leioa, Spain and, IKERBASQUE, Basque Foundation for Science, Bilbao, Spain}

\author[B. Poggi]{Bruno Poggi}
\email[]{brunopoggi@pitt.edu}
\address{Department of Mathematics, University of Pittsburgh, Pittsburgh, PA}

\author[X. Tolsa]{Xavier Tolsa}
\email[]{xavier.tolsa@uab.cat}
\address{ICREA, Barcelona, Departament de  Matemàtiques, Universitat Autònoma de Barcelona, and Centre de Recerca Matemàtica, Barcelona, Catalonia}

\title[$L^p$-solvability of the Poisson-Dirichlet problem]{Solvability of the Poisson-Dirichlet problem with interior data  in $L^{p'}$-Carleson spaces and its applications to the $L^{p}$-regularity problem}
\date{\today}

\thanks{M.M. was supported  by IKERBASQUE and partially supported by the grant PID2020-118986GB-I00 of the Ministerio de Econom\'ia y Competitividad (Spain), and by  IT-1615-22 (Basque Government). B.P. and X.T. are supported by the European Research Council (ERC) under the European Union's Horizon 2020 research and innovation programme (grant agreement 101018680), and partially supported by the grant 2021-SGR-00071 (Catalonia). X.T. is also partially supported by MICINN (Spain) under  grant PID2020-114167GB-I00 and the María de Maeztu Program for units of excellence (Spain) (CEX2020-001084-M). This material is based upon work funded by the  Deutsche Forschungsgemeinschaft (DFG, German Research Foundation) under Germany's Excellence Strategy – EXC-2047/1 – 390685813, while the authors were in residence at the Hausdorff Research Institute in Spring 2022 during the program ``Interactions between geometric measure theory, singular integrals, and PDEs''.
}
 
\begin{abstract} We prove that the $L^{p'}$-solvability of the homogeneous Dirichlet problem for an elliptic operator $L=-\dv A\nabla$ with    real and  merely bounded coefficients  is equivalent to the $L^{p'}$-solvability  of  the   Poisson Dirichlet problem $Lw=H-\dv F$,   which is defined in terms of an $L^{p'}$ estimate on the non-tangential maximal function,  assuming that $\dist(\cdot, \partial \Omega) H$ and $F$ lie in certain $L^{p'}$-Carleson-type spaces, and that the domain $\Omega\subset\bb R^{n+1}$, $n\geq2$, satisfies the corkscrew condition and has $n$-Ahlfors regular boundary. In  turn, we use this result to show that, in a bounded domain with uniformly $n$-rectifiable boundary that satisfies the corkscrew condition, $L^{p'}$-solvability of the homogeneous Dirichlet problem  for an operator $L=-\dv A\nabla$  satisfying the Dahlberg-Kenig-Pipher condition (of arbitrarily large constant) implies solvability of  the $L^p$-regularity problem for the adjoint operator $L^*=-\dv A^T \nabla$, where $1/p+1/p'=1$ and $A^T$ is the transpose matrix of $A$.   This result for Dahlberg-Kenig-Pipher operators is new even if $\Omega$ is the unit ball, despite the fact that the $L^{p'}$-solvability of the Dirichlet problem for these operators in Lipschitz domains has been known since 2001.
	
Further novel applications include i)  new local estimates for the Green's function and its gradient in rough domains, ii)  a local $T1$-type theorem for the $L^{p}$-solvability of the ``Poisson-Regularity problem'', itself equivalent to the $L^{p'}$-solvability of the homogeneous Dirichlet problem, in terms of certain gradient estimates for local landscape functions, and iii) new $L^p$ estimates for the eigenfunctions (and their gradients) of symmetric operators $L$ on bounded rough domains. 
\end{abstract} 

\maketitle

\hypersetup{linkcolor=hyperc}

	\tableofcontents

\section{Introduction}\label{sec.intro}

The $L^p$-solvability of homogeneous boundary value problems for second-order linear  elliptic partial differential equations   with non-smooth coefficients  on rough domains  has been an area of intense and fruitful research in the last few decades, culminating in shocking equivalences between certain geometric properties of domains   in the Euclidean space  and $L^p$-solvability of the homogeneous Dirichlet problem   for the Laplacian. However, the   literature regarding the $L^p$-solvability of corresponding \emph{inhomogeneous problems} is far more scant.   Here, by $L^p$-solvability of a boundary value problem, we mean solvability of the problem together with a natural, scale-invariant bound on the $L^p$-norm of an appropriate non-tangential maximal function.  For instance, a natural question to consider is to find the quantitative conditions which should be put on $H$ and $F$ so that   the solution $w$ to the problem 
\begin{equation}\label{eq.poisson}
	\left\{\begin{array}{ll}-\dv A\nabla w=H-\dv F,\quad&\text{in }\Omega,\\w=g,\quad&\text{on }\partial\Omega,\end{array}\right.
\end{equation}
satisfies that  $\m N(w)\in L^p(\partial\Omega)$, where $\m N$ is the non-tangential maximal function, defined in Section \ref{sec.main}. Assuming that $\Omega$ has reasonable geometry, if $H\equiv0$, $F\equiv0$, and $A\equiv I$, then it is known that $g\in L^p(\partial\Omega)$ implies that $\m N(w)\in L^p(\partial\Omega)$, but the problem for non-trivial $H$ and $F$ is not well understood, although    energy estimates  for solutions of the Poisson problem  are well-known if the data   lie  in Besov spaces (see also Section \ref{sec.connections} for some related results in the literature).


In this paper, we prove that the $L^p$-solvability of the homogeneous Dirichlet problem (denoted $(\Di_p^L)$)
\begin{equation}\label{eq.dirichlet}
	\left\{\begin{array}{ll}-\dv A\nabla u=0,\quad&\text{in }\Omega,\\u=g,\quad&\text{on }\partial\Omega,\end{array}\right.
\end{equation}
is equivalent to the $L^p$-solvability of  the \emph{Poisson problem} (\ref{eq.poisson}), whenever $H$  and $F$ lie in certain Carleson spaces (see  Theorems \ref{thm.poisson} and  \ref{thm.poisson2}). Furthermore, we will see a non-trivial application of this result to the regularity problem for elliptic PDEs with rough coefficients,  which    was our initial motivation to study the Poisson problem  and shows that the question of the $L^p$-solvability of the Poisson problem is far from being an isolated curiosity.
  
We   use  the $L^p$-solvability of the Poisson problem   to  study the Dirichlet regularity  problem for Dahlberg-Kenig-Pipher operators, which are operators $L=-\dv A\nabla$ where  $\dist(\cdot,  \Omega^c) \, |\nabla A|^2$  satisfies a Carleson measure condition. More precisely,   we show that if $\Omega\subset\bb R^{n+1}, n\geq2$ is bounded and uniformly $n$-rectifiable, and if $A$ is a real, not necessarily symmetric,  DKP matrix in $\Omega$ (see Definition \ref{def.cond}), then $L^{p'}$-solvability of the Dirichlet problem (\ref{eq.dirichlet}) for the adjoint operator $L^*=-\dv A^T\nabla$ implies  $L^p$-solvability of the Dirichlet \emph{regularity problem} for the operator $L$ (see Theorem \ref{thm.regularity}). In particular,   if $\Omega$ is a bounded domain satisfying the corkscrew condition which has $n$-Ahlfors regular boundary  and interior big pieces of chord-arc domains, then for any DKP operator associated with $\Omega$, there exists  $p>1$ such that the regularity problem for $L$ is $L^p$-solvable (Corollary \ref{cor.reg}). Note that Corollary \ref{cor.reg} is new even when $\Omega$ is the unit ball,  and our result complements the study of the Dirichlet problem for DKP operators, initiated in 2001 \cite{kp3}.

Briefly, we mention several further novel applications of the $L^p$-solvability of the Poisson problem:
\begin{itemize}
	\item We present new local $L^p$ estimates on the n.t. maximal function of the Green's function and its gradient (see (\ref{eq.truent1}) and (\ref{eq.truent2})), which are equivalent to the solvability of $(\Di_{p'}^L)$ (Theorem \ref{thm.poisson2}(g),(h)). 
	\item We establish an equivalence between the solvability of $(\Di_{p'}^L)$ and a certain $L^p$  estimate on the non-tangential maximal function of the gradient of (local) \emph{landscape functions} of the operator $-\dv A^T\nabla$ (Theorem \ref{thm.poisson2}(f) and Remark \ref{rm.landscape}). This result may be interpreted as a local $T1$ theorem for the $L^p$-solvability of a problem we call the \emph{Poisson-regularity problem}, itself also equivalent to the solvability of $(\Di_{p'}^L)$ (Theorem \ref{thm.poisson2}(d),(e)).  
	\item  We furnish new estimates (Corollary \ref{cor.eigenfn}) for eigenfunctions and their gradients on bounded rough domains, in terms of the landscape function (also known as the \emph{torsion function}),  loosely related to the Hassell-Tao inequality.
\end{itemize}

\subsection{Definitions and main results}\label{sec.main}
Let us now state the main results more precisely, and in order to do so, we will need to   record  some definitions. We always consider domains $\Omega\subset\bb R^{n+1}$, $n\geq1$ satisfying the  corkscrew condition and with $n$-Ahlfors regular boundary (see Section \ref{sec.geom} for definitions of these classical geometric concepts), while $A=(A_{ij})$ is always a real, not necessarily symmetric $(n+1)\times(n+1)$ matrix of   merely  bounded measurable coefficients in $\Omega$ verifying the strong ellipticity conditions 
\begin{equation}\label{eq.elliptic}
	\lambda|\xi|^2\leq\sum_{i,j=1}^{n+1}A_{ij}(x)\xi_i\xi_j,\qquad \Vert A\Vert_{L^{\infty}(\Omega)}\leq\frac1{\lambda},\quad x\in\Omega,\quad\xi\in\bb R^{n+1},
\end{equation}  
for some $\lambda\in(0,1)$. We denote $L=-\dv A\nabla$, and $L^*=-\dv A^T\nabla$ where $A^T$ is the transpose of $A$. Moreover, we denote $\delta(x)=\delta_\Omega(x):=\dist(x,\Omega^c)$.

For $\alpha>0$,   $U\subsetneq\bb R^{n+1}$, and $\xi\in\partial U$, we define the \emph{cone with vertex $\xi$ and aperture} $\alpha>0$ by
\begin{equation}\label{eq.cone}\nonumber
\gamma_\alpha^U(\xi):=\{x\in U:|x-\xi|<(1+\alpha)\delta_U(x)\},
\end{equation}
and if $U=\Omega$, we  write $\gamma_\alpha=\gamma_\alpha^\Omega$. Define the \emph{non-tangential maximal function} of  $u\in L^{\infty}_{\loc}(\Omega)$ by 
\[
\m N_\alpha(u)(\xi):=\sup_{x\in\gamma_\alpha(\xi)}|u(x)|,\quad\text{for }\xi\in\partial\Omega.
\]
Following \cite{kp}, we introduce the \emph{modified non-tangential maximal function} $\widetilde{\m N}_{\alpha,\hat c, r}$ for a given aperture $\alpha>0$, a parameter $\hat c\in(0,1/2]$, and $r\geq1$: for any $u\in L^r_{\loc}(\Omega)$, we write
\begin{equation}\label{eq.mnt}\nonumber
\widetilde{\m N}_{\alpha,\hat c,r}(u)(\xi):=\sup_{x\in\gamma_\alpha(\xi)}\Big(\dashint_{B(x,\hat c\delta(x))}|u(y)|^r\,dm(y)\Big)^{1/r},\qquad\xi\in\partial\Omega.
\end{equation}
The $L^p$ norms of non-tangential maximal functions are equivalent under changes of   $\alpha$ or the averaging parameter $\hat c$ (see Lemma \ref{lm.ntchange}). For ease of notation, we will often write   $\widetilde{\m N}_{\alpha,r}=\widetilde{\m N}_{\alpha,\hat c,r}$ if we do not wish to specify $\hat c$. When we do not need to specify neither $\alpha$ nor $\hat c$,  we will write ${\m N}=\m N_\alpha$, $\gamma=\gamma_\alpha$, and $\widetilde{\m N}_r=\widetilde{\m N}_{\alpha,\hat c,r}$. 

\begin{definition}[The $L^p$ Dirichlet and regularity problems]\label{def.dirichlet} Let $p\in(1,\infty)$ and $p'$ its H\"older conjugate. We say that the (homogeneous) \emph{Dirichlet problem for the operator $L$ with $L^{p'}$ data in $\Omega$ is solvable} (write $(\Di_{p'}^L)$ is solvable in $\Omega$), if there exists $C\geq1$ so that for each $g\in C_c(\partial\Omega)$, the solution $u$ to the continuous Dirichlet problem (\ref{eq.dirichlet})  with boundary data $g$ satisfies the estimate 
\begin{equation}\label{eq.direst}
	\Vert\m N(u)\Vert_{L^{p'}(\partial\Omega,\sigma)}\leq C\,\Vert g\Vert_{L^{p'}(\partial\Omega,\sigma)},
\end{equation}
where $\sigma$ is the restriction of the $n$-dimensional Hausdorff measure to $\partial\Omega$. We call  $C$ in (\ref{eq.direst}) the \emph{$(\Di_{p'}^L)$ constant}.  
 
Let $\Omega$ be a bounded domain, and $p\in(1,\infty)$. We say that the (homogeneous) \emph{Dirichlet regularity problem}   (or just \textit{regularity problem})  for the operator $L$ with   $\dt W^{1,p}(\pom,\sigma)$  data  is solvable in $\Omega$ (write $(\Reg_{p}^L)$ is solvable in $\Omega$), if there exists $C\geq1$ so that for each $f\in\Lip(\partial\Omega)$, the solution $u$ to the continuous Dirichlet problem for $L$ (\ref{eq.dirichlet}) with boundary data $f$ satisfies the estimate
\begin{equation}\label{eq.regest}
	\Vert\wt{\m N}_2(\nabla u)\Vert_{L^{p}(\partial\Omega,\sigma)}\leq C\Vert f\Vert_{\dt{W}^{1,p}(\partial\Omega,\sigma)}.
\end{equation} 
Here, $\dt W^{1,p}(\partial\Omega)$ is the \emph{Haj\l{}asz-Sobolev space} on $\partial\Omega$; we defer  its definition to Section \ref{sec.sobolev}. We call the constant $C$ in (\ref{eq.regest}) the \emph{$(\Reg_p^L)$ constant}.
\end{definition}

From now on we  take $\sigma=\m H^n|_{\partial\Omega}$ to be the underlying measure on $\partial\Omega$. The space $\dt W^{1,p}(\partial\Omega)$ was identified in \cite{mt22} as the correct space to consider the $L^p$ regularity problem in rough domains beyond the  Lipschitz setting; indeed, if $\Omega$ is Lipschitz, then $(\Reg_p^L)$ has usually \cite{kp, dpr17} been defined with the estimate $\Vert\wt{\m N}_2(\nabla u)\Vert_{L^p(\partial\Omega)}\leq C\Vert\nabla_tf\Vert_{L^p(\partial\Omega)}$ taking the place of (\ref{eq.regest}), where $\nabla_t$ denotes the tangential derivative. In two-sided chord arc domains, one has that $\Vert f\Vert_{\dt W^{1,p}(\partial\Omega)} \approx\Vert\nabla_t f\Vert_{L^p(\partial\Omega)}$; for more information on the geometric conditions that guarantee this bound, see \cite{mt22} and \cite{tt22}.

\subsubsection{$L^p$-solvability of the Poisson-Dirichlet problem}\label{sec.main1}

Let $q\geq1$ and $\hat c\in(0,1/2]$.   Define the \emph{$q$-Carleson functional} of a   function $H:\Omega\ra\bb R, H\in L_{\loc}^{q}(\Omega)$ by
\begin{equation}\label{eq.carlesonfn}
	\n C_{\hat c,q}(H)(\xi):=\sup_{r>0}\frac1{r^n}\int_{B(\xi,r)\cap\Omega}\Big(\dashint_{B(x,\hat c\delta(x))}|H|^{q}\Big)^{1/q}\,dm(x),\qquad \xi\in\partial\Omega.
\end{equation} 
The $L^p$ norms of the Carleson functionals $\n C_{\hat c,q}$ defined in (\ref{eq.carlesonfn}) are equivalent under a change of the averaging parameter $\hat c$ (see Lemma \ref{lm.changeavg}), and thus we    write $\n C_q=\n C_{\hat c,q}$ if we do not need to specify $\hat c$. 

Let us comment on the link between the Carleson functionals $\n C_q$ and the modified non tangential maximal functions $\wt{\m N}_r$. For any $q\geq1$ and $p>1$, let ${\bf C}_{q,p}$ be the Banach space of  functions $H\in L^q_{\loc}(\Omega)$ such that $\n C_q(H)\in L^p(\partial\Omega)$, with norm
\[
\Vert H\Vert_{{\bf C}_{q,p}}:=\Vert\n C_q(H)\Vert_{L^p(\partial\Omega)}.
\]
For any $r\in[1,\infty]$,  and $p>1$, let ${\bf N}_{r,p}$ be the Banach space of functions $u\in L^r_{\loc}(\Omega)$ such that $\widetilde{\m N}_r(u)\in L^p(\partial\Omega)$ (we identify $\widetilde{\m N}_{\infty}=\m N$), with norm
\[
\Vert u\Vert_{{\bf N}_{r,p}}:=\Vert\widetilde{\m N}_r(u)\Vert_{L^p(\partial\Omega)}.
\]
Let $p,q\in(1,\infty)$ and $p',q'$ the corresponding H\"older conjugates. Then (see Proposition \ref{prop.duality})  ${\bf N}_{q,p}=({\bf C}_{q',p'})^*$, and we have the estimates
\begin{equation}\label{eq.dual2}
\Big|\int_\Omega Hu\,dm\Big|\lesssim\Vert\n C_{q'}(H)\Vert_{L^{p'}(\partial\Omega)}\Vert\widetilde{\m N}_q(u)\Vert_{L^p(\partial\Omega)},\qquad\text{for each }H\in{\bf C}_{q',p'}, u\in{\bf N}_{q,p},
\end{equation}
\[
\Vert\widetilde{\m N}_q(u)\Vert_{L^p(\partial\Omega)}\lesssim\sup_{H:\Vert\n C_{q'}(H)\Vert_{L^{p'}(\partial\Omega)}=1}\Big|\int_{\Omega}Hu\,dm\Big|,\qquad\text{for each }u\in{\bf N}_{q,p}.
\]

The Carleson   functionals  as defined in (\ref{eq.carlesonfn}) were introduced by Hyt\"onen and Ros\'en in \cite{hr13} when $\Omega$ is the half-space; they were motivated  by the question of identifying the dual space of ${\bf N}_{2,p}$ which has applications for the solvability of the Neumann problem \cite{kp, aa11, ar12, barton21}.   Hyt\"onen and Ros\'en obtained dyadic versions of the above duality  \cite[Section 2]{hr13}, and in the proof of Proposition \ref{prop.duality} we will see that their dyadic results translate to our setting of domains satisfying the corkscrew condition and with $n$-Ahlfors regular boundary.  Moreover, observe that   (\ref{eq.dual2}) can be understood as a generalization  of Carleson's theorem (see \cite[Theorem 2]{carleson62} or \cite{cms}). Finally, the spaces ${\bf N}_{q,p}$ and ${\bf C}_{q',p'}$ are closely related to the tent spaces of Coifman, Meyer and Stein \cite{cms}, and reduce to them for certain choices of the parameters.   Indeed, if we define the area integral operators by
\begin{equation}\label{eq.areaint}
\m A_{q}(H)(\xi):=\Big(\int_{\gamma(\xi)}|H(x)|^q\,\frac{dm(x)}{\delta(x)^{n+1}}\Big)^{\frac1q},\qquad	\wt{\m A}_{q}(H)(\xi):=\Big(\int_{\gamma(\xi)}\Big(\dashint_{B(x,\delta(x)/2)}|H|^2\,dm\Big)^{\frac{q}2}\,\frac{dm(x)}{\delta(x)^{n+1}}\Big)^{\frac1q},
\end{equation}
for $q>0$ and $\xi\in\partial\Omega$, then the proof of \cite[Theorem 3]{cms}  implies that, under the assumption that $\Omega$ is a Corkscrew domain with $n$-Ahlfors regular boundary such that either $\Omega$ is bounded or $\partial\Omega$ is unbounded,
\begin{equation}\label{eq.tentandcar}
\Vert\n C_2(H)\Vert_{L^p(\partial\Omega)}\approx\Vert\wt{\m A}_1(\delta_{\Omega}H)\Vert_{L^p(\partial\Omega)},\qquad\text{for all }p>1.
\end{equation}
Hence, if
\[
T^p_q:=\big\{H\in L^q_{\loc}(\Omega):\m A_q(H)\in L^p(\partial\Omega)\big\},\qquad \wt T^p_q:=\big\{H\in L^2_{\loc}(\Omega):\wt{\m A}_q(H)\in L^p(\partial\Omega)\big\},\quad 0<q\leq p<\infty,
\]
then for any $p>1$, a given function $H\in L^2_{\loc}(\Omega)$ lies in ${\bf C}_{2,p}$ if and only if $\delta_\Omega H\in\wt T^p_1$.

We are ready to state the first main result. Write $2^*=\frac{2(n+1)}{n-1}$ and $2_*=(2^*)'=\frac{2(n+1)}{n+3}$.
  
\begin{theorem}\label{thm.poisson} Let $\Omega\subsetneq\bb R^{n+1}$, $n\geq2$ be a domain satisfying the corkscrew condition and with $n$-Ahlfors regular boundary, such that either $\Omega$ is bounded or $\partial\Omega$ is unbounded. Let $p>1$, $p'$ its H\"older conjugate, and $L=-\dv A\nabla$. Assume that $(\Di_{p'}^L)$ is solvable in $\Omega$.   Let $g\in C_c(\partial\Omega)$, and $H,F\in L^{\infty}_c(\Omega)$. Then there exists a unique weak solution $w\in C(\overline{\Omega})\cap W^{1,2}_{\loc}(\Omega)$ to the problem (\ref{eq.poisson})  satisfying that 
\begin{equation}\label{eq.ntmaxn}
\Vert\widetilde{\m N}_{2^*}(w)\Vert_{L^{p'}(\partial\Omega)}\leq C\Big[\Vert\n C_{2_*}(\delta_\Omega H)\Vert_{L^{p'}(\partial\Omega)}+\Vert\n C_2(F)\Vert_{L^{p'}(\partial\Omega)}+\Vert g\Vert_{L^{p'}(\partial\Omega)}\Big].
\end{equation}
Here, $C$ depends only on $n$, $\lambda$, $p'$, the corkscrew and  $n$-Ahlfors regularity constants,   and the $(\Di_{p'}^L)$ constant. 
\end{theorem}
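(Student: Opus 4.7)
The plan is to separate the contributions coming from the Dirichlet data and from the inhomogeneous source, handle each with complementary tools, and for the Poisson piece reduce the non-tangential estimate to a pairing with solutions of the adjoint operator $L^*$ via Proposition \ref{prop.duality}.

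First I would split $w = u_g + V$, where $u_g$ is the solution to the homogeneous Dirichlet problem $Lu_g = 0$, $u_g|_{\partial\Omega} = g$ provided by the hypothesis of solvability of $(\Di_{p'}^L)$, and $V \in W^{1,2}_0(\Omega)$ is the Lax--Milgram energy solution of $LV = H - \dv F$ with zero trace, which is well defined since $H, F \in L^\infty_c(\Omega)$. The homogeneous contribution satisfies $\|\widetilde{\m N}_{2^*}(u_g)\|_{L^{p'}} \leq \|\m N(u_g)\|_{L^{p'}} \lesssim \|g\|_{L^{p'}}$ directly from $(\Di_{p'}^L)$. Uniqueness of $w$ among solutions with finite $\widetilde{\m N}_{2^*}$-norm should follow by standard arguments: any $L$-harmonic function with vanishing non-tangential limits must vanish, using the fine boundary behavior of the elliptic measure guaranteed by $(\Di_{p'}^L)$.

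It remains to bound $\|\widetilde{\m N}_{2^*}(V)\|_{L^{p'}}$. By the duality in Proposition \ref{prop.duality},
\[
\|\widetilde{\m N}_{2^*}(V)\|_{L^{p'}(\partial\Omega)} \lesssim \sup_{\phi}\left|\int_\Omega \phi V\,dm\right|,
\]
the supremum ranging over $\phi$ in a dense subclass (for instance $L^{\infty}_c(\Omega)$) with $\|\n C_{2_*}(\phi)\|_{L^p(\partial\Omega)}\leq 1$. For each such $\phi$ I would introduce the auxiliary energy solution $z = z_\phi \in W^{1,2}_0(\Omega)$ of the adjoint problem $L^*z = \phi$, $z|_{\partial\Omega} = 0$. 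Since $V, z$ both lie in $W^{1,2}_0(\Omega)$, their weak formulations combine via integration by parts to yield
\[
\int_\Omega \phi V\,dm \;=\; \int_\Omega A^T \nabla z \cdot \nabla V\,dm \;=\; \int_\Omega H z\,dm + \int_\Omega F\cdot \nabla z\,dm.
\]
Applying Proposition \ref{prop.duality} once more in the opposite direction, using $(2^*)'=2_*$ and $2'=2$, gives
\[
\left|\int H z\,dm\right| \lesssim \|\n C_{2_*}(\delta H)\|_{L^{p'}} \|\widetilde{\m N}_{2^*}(z/\delta)\|_{L^p}, \qquad \left|\int F\cdot \nabla z\,dm\right| \lesssim \|\n C_2(F)\|_{L^{p'}} \|\widetilde{\m N}_2(\nabla z)\|_{L^p}.
\]

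The crux of the proof is then the a priori estimate
\[
\|\widetilde{\m N}_{2^*}(z/\delta)\|_{L^p(\partial\Omega)} + \|\widetilde{\m N}_2(\nabla z)\|_{L^p(\partial\Omega)} \lesssim \|\n C_{2_*}(\phi)\|_{L^p(\partial\Omega)},
\]
uniform in $z = z_\phi$. An interior Caccioppoli estimate on Whitney-type balls $B(x, \hat c\delta(x))$ controls $(\fint|\nabla z|^2)^{1/2}$ by $\delta(x)^{-1}(\fint|z|^{2^*})^{1/2^*}$ plus an innocuous $\delta(x)(\fint|\phi|^{2_*})^{1/2_*}$ term; this reduces the task to bounding $\|\widetilde{\m N}_{2^*}(z/\delta)\|_{L^p}$, which I expect to be the main obstacle of the proof. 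To carry it out, I would exploit the Green representation $z(x) = \int_\Omega G^L(y,x)\phi(y)\,dy$, together with the pointwise Green-function/elliptic-measure estimates encoded by the solvability of $(\Di_{p'}^L)$ (equivalently, an $A_\infty$- or $B_p$-type condition for $\omega^L$ with respect to $\sigma$) and the symmetry $G^{L^*}(x,y) = G^L(y,x)$. A Whitney decomposition of $\Omega$ indexed by boundary coronas, combined with a stopping-time/Carleson-packing decomposition of the source $\phi$ and a tent-space argument, should transport the reverse-Hölder estimate for the $L$-elliptic measure into the desired $L^p$ bound for $\widetilde{\m N}_{2^*}(z/\delta)$, closing the estimate.
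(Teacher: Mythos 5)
Your proposal takes a genuinely different route from the paper: instead of bounding $\widetilde{\m N}_{2^*}(w)$ directly by Green's representation plus a four-region decomposition of $\Omega$ adapted to the pole $x$ and the cone $\gamma(\xi)$, you dualize at once via Proposition \ref{prop.duality}, solve the adjoint problem $L^*z_\phi=\phi$, and pair. That manipulation is indeed the same one the paper performs in (b)$\iff$(c) of Theorem \ref{thm.poisson2} --- but there it is used to \emph{transfer} the already-proven $(\operatorname{PD}_{p'}^L)$ estimate to the Poisson-regularity estimate $(\operatorname{PR}_p^{L^*})$, whereas in your scheme the implication is run in the other direction, and the $(\operatorname{PR}_p^{L^*})$ estimate has not yet been established. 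You therefore cannot invoke Theorem \ref{thm.poisson2} to close the loop without circularity: the paper's proof of (a)$\iff$(c) passes through (a)$\Rightarrow$(b), which \emph{is} Theorem \ref{thm.poisson}.

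The crux estimate you isolate,
\[
\|\widetilde{\m N}_{2^*}(z/\delta)\|_{L^p(\partial\Omega)}+\|\widetilde{\m N}_2(\nabla z)\|_{L^p(\partial\Omega)}\lesssim\|\n C_{2_*}(\phi)\|_{L^p(\partial\Omega)},
\]
is then the entire content of the theorem, and it is dispatched in a single sentence appealing to a ``Whitney decomposition ... stopping-time/Carleson-packing decomposition ... tent-space argument''. Proving the $\nabla z$ bound directly from $(\Di_{p'}^L)$ requires, on the adjoint side, precisely the kind of Whitney-plus-Carleson-embedding plus weak-$RH_p$ plus boundary H\"older machinery that the paper deploys in Steps 1--4 of Section \ref{sec.poisson1}; the hard work has been relocated, not avoided. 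Worse, the $z/\delta$ piece introduces a new and delicate claim. In a domain satisfying only the corkscrew and Ahlfors-regularity conditions (no Harnack chain assumed), and for merely bounded measurable coefficients, the boundary decay available for a solution $z$ of $L^*z=\phi$ vanishing on $\partial\Omega$ is the H\"older estimate of Lemma \ref{lm.boundaryholder}, i.e.\ $|z(x)|\lesssim(\delta(x)/r)^\eta$ with $\eta<1$; this permits $z(x)/\delta(x)$ to blow up like $\delta(x)^{\eta-1}$ along the cone, and it is not clear that the resulting nontangential maximal function lies in $L^p$. The paper never forms the global ratio $z/\delta$: instead, it absorbs the $\delta$-weight at the H\"older-inequality step on each Whitney cube $I$ via the elementary observation $\delta\approx\ell(I)$ there, as in \eqref{eq.change}. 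If you pursue the dual approach, that local absorption (keeping the pairing $\int_I Hz$ and writing $|\int_I Hz|\lesssim\ell(I)^n\,m_{2_*,I}(\delta H)\,m_{2^*,I}(z)$) is the natural fix, and from there you would still need to reproduce the domain-splitting and Carleson-embedding argument of the paper's proof.
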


\begin{remark}\label{rm.degnm} Note that, by the classical De Giorgi-Nash-Moser theory for elliptic equations with non-zero right-hand side, the use of the exponent $2^*$ for the modified non-tangential maximal function in the left-hand side of (\ref{eq.ntmaxn}) is sharp under our quantitative assumptions on $H$ and $F$. However, as will be clear by the method of proof in Section \ref{sec.poisson}, if $q\in[1,\frac{n+1}n)$ and $r\in[1,\frac{n+1}{n-1})$, then we may also show that
	\begin{equation}\label{eq.ntmaxtrue}
		\Vert\m N(w)\Vert_{L^{p'}(\partial\Omega)}\leq C\Big[\Vert\n C_{r'}(\delta_\Omega H)\Vert_{L^{p'}(\partial\Omega)}+\Vert\n C_{q'}(F)\Vert_{L^{p'}(\partial\Omega)}+\Vert g\Vert_{L^{p'}(\partial\Omega)}\Big].
	\end{equation}
\end{remark} 

\begin{remark} The assumption $H,F\in L^{\infty}_c(\Omega)$ in Theorem \ref{thm.poisson} can be generalized to $F\in{\bf C}_{2,p'}$ and $H$ such that $\delta_\Omega H\in{\bf C}_{2_*,p'}$; see Theorem \ref{thm.poisson3}.
\end{remark}

Theorem \ref{thm.poisson} is new even when $L=-\Delta$ or $\Omega$ is a ball. This is the first time that   an estimate such as (\ref{eq.ntmaxn}) or (\ref{eq.ntmaxtrue}) has appeared explicitly in the literature for the solution to a  Poisson-Dirichlet problem on a domain $\Omega\subsetneq\bb R^{n+1}$.  On the other hand, certain related estimates may be found in the literature \cite{hmm, barton21}, and Sobolev and Besov space estimates for solutions to the Poisson problem with right-hand side in Sobolev-Besov spaces are well-studied and have been considered many times; we will look into the connections of Theorem \ref{thm.poisson} with these results a little deeper in Section \ref{sec.connections}.

The  proof of Theorem \ref{thm.poisson} is presented in Section \ref{sec.poisson1}.  The argument contains several novel components; let us give a very brief sketch of the ideas for the case that $g\equiv0$, $H\equiv0$. First,  we may use  Green's representation formula   to see that we must find  appropriate bounds for  $\sup_{x\in\gamma_\alpha(\xi)}\int_\Omega\nabla_yG(x,y)F(y)\,dm(y)$, for any $\xi\in\partial\Omega$. To achieve the desired bounds, we split $\Omega$ into four   regions tailored to the pole $x$ and a non-tangential cone $\gamma(\xi)$. Namely, the first region is a small Whitney ball around $x$, the second region is a small Carleson tent over $\xi$, the third region is the rest of a large cone $\gamma_\beta(\xi)$, and   the last region  corresponds to a far-away, tangential (with respect to $\xi$) portion of $\Omega$. 
	
Each of the regions described above is treated separately, using harmonic analysis techniques:  for the first, we use the   properties of the solution operators $L^{-1}_\Omega$ and $L^{-1}_\Omega\dv$ (\ref{eq.op}); for the second, we use a dyadic Carleson embedding result and the weak-$RH_p$ property of the Poisson kernel; for the third, we relate the bound to an area integral estimate and use (\ref{eq.tentandcar}); and finally, we break up the fourth region into annular regions on which we apply the same ideas as for the second region, but since these annular regions are tangential and far-away from $\xi$ and $x$, we use the boundary H\"older  inequality and a global weak-$RH_p$ inequality for the Poisson kernel to furnish summable decay. The bounds corresponding to the second and fourth regions are the most difficult, since it is here where a delicate Carleson embedding is used; indeed, this is one of the main technical innovations of our paper. 

It is natural to wonder how sharp Theorem \ref{thm.poisson} is. This question is solved by our second main result, which gives several new characterizations of the solvability of $(\Di_{p'}^L)$, a new estimate  for the gradient of the Green's function, and a certain local $T1$ theorem, under very mild geometric assumptions of the domain. First, we need a few more definitions.

\begin{definition}[Landscape functions]\label{def.landscape} Given a Whitney cube $I$ in a Whitney decomposition of $\Omega$ (see Section \ref{sec.whitney} for definitions), we say that the unique weak solution $u_I\in Y_0^{1,2}(\Omega)$ to the equation $Lu_I=\1_I$ is the \emph{local landscape function}   subordinate to $I$. If $\Omega$ is bounded, then the unique weak solution $u\in Y_0^{1,2}(\Omega)$ to the equation $Lu=\1_{\Omega}$ is the \emph{landscape function} of $L$.
\end{definition} 

Up to a rescaling, the landscape function is known in the literature of shape optimization as the \emph{torsion function} of $L$ (see, for instance, \cite{bc94}). The landscape function (introduced with this name by Filoche and Mayboroda in \cite{fm}) has recently been a subject of heavy interest when $L$ is the Schr\"odinger operator $L=-\dv A\nabla+V$, $V\geq0$, due to applications to mathematical physics and the phenomenon of localization of eigenfunctions. For related literature, see \cite{fm, adfjm, pog21}.

\begin{definition}[$(\operatorname{PD}_{p'}^L)$ and $(\operatorname{PR}_p^L)$]
For any $p\in(1,\infty)$, we say that $(\operatorname{PD}_{p'}^L)$ is solvable in $\Omega$   if there exists  $C>0$ so that for each $H,F\in L^{\infty}_c(\Omega)$, the unique weak solution $w\in Y_0^{1,2}(\Omega)$ (see Section \ref{sec.pde} for the definition of this space) to the equation $Lw=H-\dv F$ satisfies the estimate
\begin{equation}\label{eq.pd1}
\Vert\wt{\m N}_{2^*}(w)\Vert_{L^{p'}(\partial\Omega)}\leq C\big[\Vert\n C_{2_*}(\delta_\Omega H)\Vert_{L^{p'}(\partial\Omega)}+\Vert\n C_2(F)\Vert_{L^{p'}(\partial\Omega)}\big].
\end{equation}
We say that $(\operatorname{PD}_{p'}^L)$ is solvable in $\Omega$ for $H\equiv0$ if the estimate (\ref{eq.pd1}) holds for $H\equiv0$ and arbitrary $F\in L^{\infty}_c(\Omega)$.

We say that $(\operatorname{PR}_p^L)$ is solvable in $\Omega$ if   there exists  $C>0$ so that for each $H, F\in L^{\infty}_c(\Omega)$, the unique weak solution $v\in Y_0^{1,2}(\Omega)$ to the equation $Lv=H-\dv F$ satisfies the estimate
\begin{equation}\label{eq.estreg}
\|\wt{\m N}_{2}(\nabla v)\|_{L^{p}(\partial\Omega)}\leq C\big[\|\n C_{2_*}(H)\|_{L^{p}(\partial\Omega)}+\|\n C_2(F/\delta_\Omega)\|_{L^{p}(\partial\Omega)}\big].
\end{equation}
We say that $(\operatorname{PR}_p^L)$ is solvable \emph{for local landscape functions in $\Omega$} if there exist $C>0$, $C'\geq2$ so that for each Whitney cube $I$ in a Whitney decomposition of $\Omega$, the local landscape function $u_I$ satisfies the estimate
\begin{equation}\label{eq.estreg2}
	\Vert\wt{\m N}_2^{C'\ell(I)}(\nabla u_I)\Vert_{L^p(C'B_Q,\sigma)}\leq C\ell(I)^{1+\frac np},
\end{equation}
where $Q$ is a boundary cube of $I$ (see (\ref{eq.truncated}) for the definition of the truncated non-tangential maximal function $\wt{\m N}^{s}$).
\end{definition}

 \begin{remark}\label{rm.more} If $(\operatorname{PR}_p^{L^*})$ is solvable in $\Omega$, then an easy variant of \cite[Theorem 7.2]{mt22}  allows for solvability with general data in the space ${\bf C}_{2_*,p}$. More precisely, for any $H\in{\bf C}_{2_*,p}$, there exists a weak solution $v\in W^{1,2}_{\loc}(\Omega)$ to the equation $Lv=H$ satisfying  (\ref{eq.estreg}) and the non-tangential limit
 	\[
 	\lim_{\gamma(\xi)\ni x\ra\xi}\Big(\dashint_{B(x,\delta(x)/2)}|v|^{2^*}\,dm\Big)^{\frac1{2^*}}=0,\qquad\text{for }\sigma\text{-a.e.\ }\xi\in\partial\Omega.
 	\]
 	Similarly, if $(\Reg_p^L)$ is solvable in $\Omega$, then by essentially the same argument of \cite[Theorem 7.2]{mt22}, we can extend the solvability to general data in the space $W^{1,p}(\partial\Omega)$: for any $f\in W^{1,p}(\partial\Omega)$, there exists a weak solution $u\in W^{1,2}_{\loc}(\Omega)$ to the equation $Lu=0$ satisfying   (\ref{eq.regest}) and that $u\ra f$ non-tangentially $\sigma$-a.e.\ on $\partial\Omega$.
 \end{remark}

\begin{theorem}[Characterizations of $(\Di_{p'}^L)$]\label{thm.poisson2} Let $\Omega\subsetneq\bb R^{n+1}$, $n\geq2$ be a domain satisfying the corkscrew condition and with $n$-Ahlfors regular boundary, such that either $\Omega$ is bounded, or $\partial\Omega$ is unbounded. Let $p\in(1,\infty)$, $p'$ its H\"older conjugate, and $L=-\dv A\nabla$. The following are equivalent.
\begin{enumerate}[(a)]
\item $(\Di_{p'}^L)$ is solvable in $\Omega$.
\item $(\operatorname{PD}_{p'}^L)$ is solvable in $\Omega$.
\item $(\operatorname{PD}_{p'}^L)$ is solvable in $\Omega$ for $H\equiv0$.
\item $(\operatorname{PR}_p^{L^*})$ is solvable in $\Omega$.
\item $(\operatorname{PR}_p^{L^*})$ is solvable in $\Omega$ for $F\equiv0$.
\item $(\operatorname{PR}_p^{L^*})$ is solvable for local landscape functions in $\Omega$.
\item There exist $C>0$, $C'\geq2$ so that for some $q\in(1,\frac{n+1}n)$ and for each $x\in\Omega$, we have the estimate
\begin{equation}\label{eq.truent1}
\Vert\wt{\m N}_q^{C'\delta(x)}\big(\nabla_2 G_L(x,\cdot)\big)\Vert_{L^p(B(x,C'\delta(x)),\sigma)}\leq C\delta(x)^{-n/p'},
\end{equation}
where $G_{L}$ is the Green's function for the operator $L$ (see Definition \ref{def.green}). 
\item There exist $C>0$, $C'\geq2$ so that for some $q\in(1,\frac{n+1}{n-1})$ and for each $x\in\Omega$, we have the estimate
\begin{equation}\label{eq.truent2}
	\Vert\wt{\m N}_q^{C'\delta(x)}\big(\frac{G_L(x,\cdot)}{\delta}\big)\Vert_{L^p(B(x,C'\delta(x)),\sigma)}\leq C\delta(x)^{-n/p'}.
\end{equation}
\end{enumerate}
\end{theorem}

Theorem \ref{thm.poisson2} is proved in Section \ref{sec.proofpoisson}. Note that (a)$\implies$(b) is directly implied by Theorem \ref{thm.poisson}, while (b)$\implies$(c), (d)$\implies$(e) and (e)$\implies$(f) follow by definition. The implications (c)$\implies$(d) and (e)$\implies$(c) follow from an application of the $\m N$-$\n C$ duality, while the direction (c)$\implies$(a) is proved by combining this duality with an argument in \cite[Section 9]{mt22}. The statements (f)$\implies$(a), (g)$\iff$(a) and (h)$\iff$(a) are obtained via similar arguments to the proof of (c)$\implies$(a). 

Let us discuss Theorem \ref{thm.poisson2}. It shows that one loses no information when switching from the study of the homogeneous Dirichlet problem with $L^{p'}$ data, to the study of the Poisson problem with inhomogeneous data $\div F$ with $F$ in the Carleson space ${\bf C}_{2,p'}$. Perhaps surprisingly, both of these problems are also equivalent to the ``Poisson regularity'' problem $(\operatorname{PR}_p^{L^*})$, even though the implication $(\Di_{p'}^L)\implies(\Reg_p^{L^*})$ remains an open problem for arbitrary elliptic operators $L$ on Lipschitz domains.   We now emphasize two observations, important enough to merit their own remarks.

\begin{remark} The statements (g) and (h) give  quantitative characterizations of $(\Di_{p'}^L)$ in terms of new local boundary estimates on the the Green's function and its gradient.  We have not been able to find either estimate (\ref{eq.truent1}) or (\ref{eq.truent2}) written explicitly in the literature even in significantly simpler geometric settings, or for $L=-\Delta$. On the other hand, both estimates are natural since they are closely related to the $RH_p$ property of the Poisson kernel.   Moreover, observe that a global version of the estimate (\ref{eq.truent1}) follows formally from applying that $(\operatorname{PR}_p^{L^*})$ is solvable in $\Omega$, since $L^*G_L(x,\cdot)=\delta_x$, and a straightforward computation shows that, if $\n C(\mu)(\xi):=\sup_{r>0}\frac1{r^n}\mu(B(\xi,r)\cap\overline{\Omega})$ for $\xi\in\partial\Omega$ and $\mu$ a measure on $\overline{\Omega}$, then $\Vert\n C(\delta_x)\Vert_{L^p(\sigma)}\approx\delta(x)^{-n/p'}$. This argument can be made rigorous by approximating $\delta_x$ by averages, along the lines of \cite{hk}.
	
There has recently been strong interest in studying how estimates for the Green's function are related to the geometry of domains \cite{azz19, dlm22, dm22, chm22, flm22}, and  in this connection, let us point out that, when coupled with the literature on free boundary results for $(\Di_{p'}^{-\Delta})$, Theorem \ref{thm.poisson2} implies that under its background assumptions, the estimates (\ref{eq.truent1}), (\ref{eq.truent2}) for $L=-\Delta$ characterize the IBPCAD condition (or uniform rectifiability plus the weak local John condition; see \cite{ahmmt}). 
\end{remark}

\begin{remark}\label{rm.landscape} Here, we give context to the statement (f) and why its equivalence to the statements (a) and (e) is interesting in its own right. As remarked before, if (e) holds, then (f) holds immediately, since
\begin{equation}\label{eq.computew}
\Vert\n C_2(\1_I)\Vert_{L^p(\sigma)}\approx\ell(I)^{1+\frac np},\qquad\text{for each Whitney box }I.
\end{equation}
So (f)$\implies$(e) tells us that in order to ascertain that $(\operatorname{PR}_p^{L^*})$ is solvable for arbitrary $H\in{\bf C}_{2,p}$ and $F\equiv0$, it is enough to obtain the estimate (\ref{eq.estreg2}) (which itself is a local version of the global estimate (\ref{eq.estreg})) for the countable collection $H=\1_I$, where $I$ runs over all Whitney boxes in $\Omega$. It is easy to interpret this result as a local $T1$ theorem, as follows: if $T=\nabla (L_\Omega^*)^{-1}$ (see (\ref{eq.op})), then the statement (e) is equivalent to asking that the map $T:{\bf C}_{2_*,p}\ra{\bf N}_{2,p}$ is bounded (by Remark \ref{rm.more}). But (f)$\implies$(e) gives that this map is bounded if the local estimate (\ref{eq.estreg2}) holds for $\{T\1_I\}_{I\in\m W}$, which is exactly a  local $T1$ theorem.  

The implications (f)$\implies$(a) and (f)$\implies$(d) join a family of results in the literature about the landscape function which say, loosely speaking, that several distinct properties of the operator $L$ can be a priori controlled by properties of the landscape function; see for instance \cite[Display 3]{fm}, \cite[Lemma 4.1, Theorem 4.4, Theorem 5.1]{adfjm} or \cite[Theorem 1.21, Theorem 1.25]{pog21}. Finally, we mention that the equivalence (f)$\iff$(a) gives the first robust connection between the theory of the landscape/torsion function (primarily developed for the study of questions in mathematical physics and shape optimization)  and the theory of solvability of the Dirichlet problem with singular data. 
\end{remark}

We now give an immediate application of Theorems \ref{thm.poisson} and \ref{thm.poisson2} toward  estimates for eigenfunctions and their gradients of elliptic operators $L=-\dv A\nabla$  on rough domains.

\begin{corollary}[Boundary estimates for eigenfunctions]\label{cor.eigenfn} Retain the setting and assumptions of Theorem \ref{thm.poisson2}, and moreover, assume that $\Omega$ is bounded, that $A$ is symmetric, and that $(\Di_{p'}^L)$ is solvable in $\Omega$. Let $\psi\in W_0^{1,2}(\Omega)$ satisfy $L\psi=E\psi$ in $\Omega$ for some eigenvalue $E>0$, and let $u$ be the landscape function of $L$ on $\Omega$. Then there exists $C>0$, depending only on $n$, $\lambda$, $p$, the corkscrew condition and $n$-Ahlfors regularity constants, and the $(\Di_{p'}^L)$ constant, such that
\begin{equation}\label{eq.efn1}
\Vert\wt{\m N}_{2^*}(\psi)\Vert_{L^{p'}(\partial\Omega)}\leq CE\Vert\n C_{2_*}(\dist(\cdot,\partial\Omega)\psi)\Vert_{L^{p'}(\partial\Omega)}\leq CE^2\Vert\psi\Vert_{L^{\infty}(\Omega)}\Vert\n C_{2_*}(\dist(\cdot,\partial\Omega)u)\Vert_{L^{p'}(\partial\Omega)},
\end{equation}
\begin{equation}\label{eq.efn2}
	\Vert\wt{\m N}_2(\nabla\psi)\Vert_{L^{p}(\partial\Omega)}\leq CE\Vert\n C_{2_*}(\psi)\Vert_{L^{p}(\partial\Omega)}\leq C E^2\Vert\psi\Vert_{L^{\infty}(\Omega)}\Vert\n C_{2_*}(u)\Vert_{L^{p}(\partial\Omega)}.
\end{equation}
In particular, if $\Omega$ is a bounded Lipschitz domain and $L=-\Delta$, then both estimates (\ref{eq.efn1}) and (\ref{eq.efn2}) hold with $p=p'=2$. 
\end{corollary}

\noindent\emph{Proof.} We prove (\ref{eq.efn2}).  Since $(\Di_{p'}^L)$ is solvable in $\Omega$, then by Theorem \ref{thm.poisson2} we have that $(\operatorname{PR}_{p}^L)$ is solvable in $\Omega$, and so we see that
\[
\Vert\wt{\m N}_2(\nabla\psi)\Vert_{L^{p}(\partial\Omega)}\lesssim\Vert\n C_{2_*}(E\psi)\Vert_{L^{p}(\partial\Omega)}\leq E\Vert\n C_{2_*}(\psi)\Vert_{L^{p}(\partial\Omega)}.
\]
Since (see \cite[Display 3]{fm}) $|\psi(x)|\leq E\Vert\psi\Vert_{L^{\infty}(\Omega)}u(x)$ for each $x\in\Omega$, the desired estimate (\ref{eq.efn2}) follows.  The estimate (\ref{eq.efn1}) follows by an analogous argument, using Theorem \ref{thm.poisson}.\hfill{$\square$}

The estimates (\ref{eq.efn1}) and (\ref{eq.efn2}) are new even for the eigenfunctions of the Laplacian;   although estimates  related to (\ref{eq.efn2}) controlling the normal derivative of Dirichlet eigenfunctions have been studied heavily, such as the Hassell-Tao inequality \cite{ht02}, when the domain is smooth. On the other hand, the usual techniques in the literature rely on Rellich identities, and thus do not extend to the optimally rough geometric setting that we consider here. Indeed, since we merely assume that $(\Di_{p'}^L)$ is solvable (in addition to the background assumptions that  the domain has Corkscrew points and its boundary is $n$-Ahlfors regular), the estimates (\ref{eq.efn1}) and (\ref{eq.efn2}) hold up to the IBPCAD domains when $L=-\Delta$, and for certain other specific $L$, the estimates hold \emph{even for domains with fractal boundaries} (see \cite{dm2} for the existence of such an operator in the complement of the 4-corner Cantor set).

  We emphasize that the constant $C$ in the estimates (\ref{eq.efn1}) and (\ref{eq.efn2}) does not depend on $|\Omega|$, nor on the eigenvalue $E$. There is a vast literature studying the geometry of eigenfunctions of $-\Delta$  and estimates of eigenfunctions near the boundary; for a few related results, see for instance, \cite{lp95, bp99, vdbb99, ht02, kls13, sx13} and the survey \cite{gn13}.

\subsubsection{The regularity problem for Dahlberg-Kenig-Pipher operators}\label{sec.dkp} We now pass to an important application of Theorem \ref{thm.poisson}. We will see that Theorem \ref{thm.poisson} helps us solve the regularity problem for certain elliptic operators whose gradient satisfies a Carleson measure condition.   Denote  
$${\rm osc}_{B(x,\delta(x)/2)}(A) = \sup_{y,z\in B(x,\delta(x)/2)}|A(y)-A(z)|.$$

\begin{definition}[DKP and DPR conditions]\label{def.cond} We say that a real matrix function $A$ in $\Omega\subset\bb R^{n+1}$, $n\geq1$, satisfies the $K$-\emph{Dahlberg-Kenig-Pipher condition} ($K$-DKP) in $\Omega$ if $A\in\Lip_{\loc}(\Omega)$ and 
\begin{equation}\label{eq.dkp}
d\mu(x):= {\1}_\Omega(x)\,\esssup\big\{\delta_\Omega(z)|\nabla A(z)|^2:z\in B(x,\delta_\Omega(x)/2)\big\}\,dm(x)
\end{equation}
is a $K$-Carleson measure with respect to $\HH^n|_\pom$. That is, for any ball $B$ centered in $\pom$, 
\[
\mu(B\cap\Omega) \leq K\HH^n(B\cap\pom).
\]

We say that  $A$ satisfies the \emph{$\tau$-Dindo\v{s}-Pipher-Rule condition} ($\tau$-DPR) in $\Omega$ if the measure
$$d\mu(x):= {\1}_\Omega(x)\,\delta_\Omega(x)^{-1}{\rm osc}_{B(x,\delta_\Omega(x)/2)}(A)^2\,dm(x)$$
is a $\tau$-Carleson measure with respect to $\HH^n|_\pom$.
\end{definition}

We will say that $A$ is a DKP (DPR) matrix and that $L=-\dv A\nabla$ is a DKP (DPR) operator if $A$ satisfies the $K$-DKP ($\tau$-DPR) condition for some $K>0$ ($\tau>0$). It is easy to see that if $A$ satisfies the $K$-DKP condition, then it also satisfies the $\tau$-DPR condition, with $\tau\lesssim K$. The DKP operators first arose in the literature as pullbacks of the Laplacian under a mapping of Dahlberg-Kenig-Stein \cite{dah4}. The well-posedness of the $L^p$-Dirichlet problem for  DKP operators in Lipschitz domains was shown in \cite{kp3}, and their result carries over to chord-arc domains, and more generally to domains with interior big pieces of chord-arc domains considered in \cite{ahmmt} (see Section \ref{sec.geom} for the definition), by the methods of \cite{jk3, dj} (see for instance the remarks following Theorem 1.1 in \cite{ahmmt}).

The question of establishing that $(\Reg_p^L)$ is solvable for some $p>1$ when $L$ is a DKP operator has remained open since 2001, even when $\Omega$ is the unit ball. The closest result in the published literature is from \cite{dpr17}, where the authors show that for any $p\in(1,\infty)$, $(\Reg_p^L)$ is solvable if $\Omega$ is a Lipschitz domain \emph{with  small enough Lipschitz constant $\ell$} and if $A$ satisfies the $\tau$-DPR condition \emph{with small enough $\tau>0$}, depending on $p$. 

The following theorem is our third main result, which establishes that $(\Di_{p'}^{L^*})\implies(\Reg_p^L)$ for DKP operators on bounded Corkscrew domains with uniformly $n$-rectifiable boundaries (see Section \ref{sec.geom} for the definition of uniform $n$-rectifiability). This solves the question posed in the previous paragraph.
 
\begin{theorem}[$(\Di_{p'}^{L^*})\implies(\Reg_p^L)$ for DKP operators]\label{thm.regularity} Let $\Omega\subset\bb R^{n+1}$, $n\geq2$, be a bounded domain satisfying the corkscrew condition and with uniformly $n$-rectifiable boundary. Let $p\in(1,\infty)$, $p'$ its H\"older conjugate, and $L=-\dv A\nabla$, where $A$ is a  DKP matrix in $\Omega$. Suppose that $(\Di_{p'}^{L^*})$ is solvable in $\Omega$. Then $(\Reg_p^L)$ is solvable in $\Omega$, and  the $(\Reg_p^L)$ constant depends only on $p$, $n$, $\lambda$, the corkscrew constant, the uniform $n$-rectifiability constants, the DKP constant, and the $(\Di_{p'}^{L^*})$ constant.
\end{theorem}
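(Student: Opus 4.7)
The natural approach is via duality. Proposition \ref{prop.duality} identifies ${\bf N}_{2,p}$ as the dual of ${\bf C}_{2,p'}$, so
\begin{equation*}
\Vert\wt{\m N}_2(\nabla u)\Vert_{L^p(\pom)} \lesssim \sup_H \Big|\int_\Omega H\cdot\nabla u \, dm\Big|,
\end{equation*}
where the supremum ranges over $H \in C^\infty_c(\Omega;\bb R^{n+1})$ with $\Vert\n C_2(H)\Vert_{L^{p'}(\pom)} \le 1$; it thus suffices to bound this pairing by $\Vert f\Vert_{\dt W^{1,p}(\pom)}$. For each such $H$, introduce the dual Poisson problem: let $v \in W^{1,2}_0(\Omega)$ solve $L^* v = -\dv H$. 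The hypothesis $(\Di_{p'}^{L^*})$ combined with Theorem \ref{thm.poisson2} (a)$\iff$(b) applied to $L^*$ yields that $(\operatorname{PD}_{p'}^{L^*})$ is solvable, so $\Vert\wt{\m N}_{2^*}(v)\Vert_{L^{p'}(\pom)} \lesssim 1$.

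Next, I would construct a Varopoulos-type extension $\tilde f$ of $f$ into $\Omega$. Exploiting the uniform $n$-rectifiability of $\pom$, the Haj\l asz--Sobolev extension framework from \cite{mt22} (or its variants) provides $\tilde f \in W^{1,2}_{\loc}(\Omega) \cap C(\ol\Omega)$ with trace $f$ and non-tangential maximal bounds
\begin{equation*}
\Vert\wt{\m N}_2(\nabla\tilde f)\Vert_{L^p(\pom)} + \Vert\wt{\m N}_2(\delta|\nabla^2\tilde f|)\Vert_{L^p(\pom)} \lesssim \Vert f\Vert_{\dt W^{1,p}(\pom)}.
\end{equation*}
Set $u_1 := u - \tilde f \in W^{1,2}_0(\Omega)$; then $L u_1 = \dv(A\nabla\tilde f)$ weakly. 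Testing this equation against $v$, testing the equation for $v$ against $u_1$ (both legitimate since $u_1, v \in W^{1,2}_0$), and integrating the resulting term $-\int_\Omega A\nabla\tilde f \cdot \nabla v \, dm$ by parts using $v=0$ on $\pom$, we arrive at the identity
\begin{equation*}
\int_\Omega H \cdot \nabla u \, dm = \int_\Omega H \cdot \nabla\tilde f \, dm + \int_\Omega v \, A_{jk}\partial_{jk}\tilde f \, dm + \int_\Omega v (\partial_j A_{jk}) \partial_k \tilde f \, dm.
\end{equation*}

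It remains to estimate each of these three integrals by $\Vert f\Vert_{\dt W^{1,p}(\pom)}$. The first is directly bounded using the Carleson--non-tangential duality (\ref{eq.dual2}), giving $\Vert\n C_2(H)\Vert_{L^{p'}} \Vert\wt{\m N}_2(\nabla\tilde f)\Vert_{L^p} \lesssim \Vert f\Vert_{\dt W^{1,p}}$. The third (the \emph{DKP term}) is controlled using the Carleson measure condition $\delta|\nabla A|^2\, dm$ (from the DKP hypothesis) together with the bounds on $v$ and $\nabla\tilde f$: one writes $v(\partial_j A_{jk})\partial_k\tilde f = (v|\nabla\tilde f|\delta^{-1/2})(|\nabla A|\delta^{1/2})$ and applies a Carleson embedding in the measure $\delta|\nabla A|^2\,dm$. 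The second (the \emph{smooth term}) is handled analogously, now using that $\delta|\nabla^2\tilde f|$ has $L^p$-controlled non-tangential maximal function (from the Varopoulos bound) in place of the DKP Carleson condition.

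The main obstacle is that a naive application of the duality $|\int vg \, dm| \lesssim \Vert\wt{\m N}_{2^*}(v)\Vert_{L^{p'}} \Vert\n C_{2_*}(g)\Vert_{L^p}$ to the second and third integrals fails, because neither $\n C_{2_*}(|\nabla^2\tilde f|)$ nor $\n C_{2_*}(|\nabla A||\nabla\tilde f|)$ lies in $L^p(\pom)$---they both carry an unintegrable $1/\delta$ blow-up over Carleson boxes. To overcome this, one must exploit the vanishing of $v$ on $\pom$ to absorb one factor of $\delta$, which mirrors the appearance of $\delta H$ in the Carleson functional in Theorem \ref{thm.poisson}. This requires a refined $\delta$-weighted duality for $W^{1,2}_0$-functions compatible with both the Varopoulos extension and the DKP Carleson measure; establishing it is expected to parallel the Carleson embedding analysis in the proof of Theorem \ref{thm.poisson} and constitutes the technical core of the argument.
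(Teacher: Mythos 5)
Your reduction to duality, and the introduction of the dual Poisson problem $L^*v=-\dv H$ with $\Vert\wt{\m N}_{2^*}(v)\Vert_{L^{p'}}\lesssim1$ via Theorem \ref{thm.poisson2}, match the opening move of the paper's argument. But from there the two proofs diverge, and your route has a gap that I believe is structural, not merely technical. The paper never tries to pair $v$ (or $w$, in their notation) against $\delta^{-1}$-singular quantities like $\nabla A\cdot\nabla\tilde f$ or $\nabla^2\tilde f$. Instead of a Varopoulos-type smooth extension, it constructs the \emph{almost $L$-elliptic extension} $v_\varphi$ (Section \ref{sec.extension}) from a corona decomposition of $\Omega$ into Lipschitz subdomains $\Omega_R$ on which the DPR condition holds with arbitrarily small constant (Section \ref{sec.corona}). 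On each $\Omega_R$ the function $v_\varphi$ solves $L^*v_R=0$, so when $\int_\Omega A\nabla w\cdot\nabla v_\varphi\,dm$ is split along the corona, the dangerous $\nabla A$ terms simply do not appear; what does appear is the gradient of $v_R$, controlled by Dindo\v{s}--Pipher--Rule's $(\Reg_q^{L})$ black box (Theorem \ref{thm.regr}) plus a localization theorem, and the buffer region $\n H$ is tamed by its Carleson packing condition. Propositions \ref{prop.conormal} and \ref{prop.ext} are where the real work sits, and the corona decomposition is not optional scaffolding --- it is what removes the singular terms at the source.

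The concrete gap in your outline is the ``refined $\delta$-weighted duality'' you invoke at the end. To estimate $\int v\,(\partial_jA_{jk})\partial_k\tilde f\,dm$ or $\int v\,A_{jk}\partial_{jk}\tilde f\,dm$ by exploiting the vanishing of $v$ on $\partial\Omega$, you effectively need to absorb a factor $\delta^{-1}$ into $v$: e.g.\ bound $v/\delta$ by $|\nabla v|$ and then use $\Vert\wt{\m N}_2(\nabla v)\Vert_{L^{p'}}\lesssim1$. But the latter is a $(\operatorname{PR}_{p'}^{L^*})$-type estimate for the solution of $L^*v=-\dv H$, which by Theorem \ref{thm.poisson2} is equivalent to $(\Di_{p}^{L})$-solvability --- an assumption that is \emph{not} available from the hypothesis $(\Di_{p'}^{L^*})$. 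The Cauchy--Schwarz/Carleson-embedding manipulation you sketch ($v(\partial_jA_{jk})\partial_k\tilde f=(v|\nabla\tilde f|\delta^{-1/2})(|\nabla A|\delta^{1/2})$) runs into the same wall: one side always produces a $\delta^{-1}$-weighted square integral of $v$ or $\nabla\tilde f$ that the available $L^{p'}$-bound on $\wt{\m N}_{2^*}(v)$ does not control unless $p=2$. So the ``technical core'' you postpone is not a Carleson-embedding lemma waiting to be checked; it is an estimate that the hypotheses do not furnish, and the corona decomposition is precisely the device the paper introduces to avoid needing it.
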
 

\begin{remark} In the case of the Laplacian, a result analogous to  Theorem \ref{thm.regularity} was recently obtained in \cite[Theorem 1.2]{mt22} by two of us for $p\in(1,2+\ep_0)$, but the proof relied heavily on the $L^2$ boundedness of the Riesz transform and the double layer potential  on uniformly $n$-rectifiable sets, while this theory is not available for   layer potentials associated to DKP operators. Because of this, our proof of Theorem \ref{thm.regularity} must by necessity be substantially different from the argument in \cite{mt22}.  In fact, our proof of Theorem \ref{thm.regularity}   gives an alternative argument to \cite[Theorem 1.2]{mt22} and improves upon it, since our result holds for $p\in(1,\infty)$. 
\end{remark} 

As mentioned before, for any DKP matrix $A$ there exists $p>1$ so that $(\Di_{p'}^{L^*})$ is solvable in $\Omega$ whenever $\Omega$ satisfies the corkscrew condition and the IBPCAD condition, and its boundary $\partial\Omega$ is $n$-Ahlfors regular  \cite{kp3, dj, ahmmt}; and moreover, if $\Omega$ satisfies the IBPCAD condition and $\partial\Omega$ is $n$-Ahlfors regular, then $\partial\Omega$ is uniformly $n$-rectifiable \cite[Theorem 1.5]{ahmmt}  (see Section \ref{sec.geom} for the definitions). Hence we have the following corollary.
 
\begin{corollary}[$(\Reg_p^L)$ for DKP operators in IBPCAD domains]\label{cor.reg} Let $\Omega\subset\bb R^{n+1}$, $n\geq2$, be a bounded domain satisfying the corkscrew condition and the IBPCAD condition,   and assume that $\partial\Omega$ is $n$-Ahlfors regular. Let $L=-\dv A\nabla$ where $A$ is a DKP matrix in $\Omega$. Then there exists $p>1$ such that $(\Reg_p^L)$ is solvable in $\Omega$.	
\end{corollary}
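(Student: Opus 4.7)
The plan is to obtain Corollary \ref{cor.reg} as a direct composition of Theorem \ref{thm.regularity} with two ingredients that the authors have already singled out in the literature: a geometric implication (IBPCAD plus $n$-Ahlfors regularity yields uniform $n$-rectifiability) and an analytic one (the $L^{p'}$-solvability of the homogeneous Dirichlet problem for DKP operators in this class of domains). No genuinely new estimate should be required; the task is simply to verify that the hypotheses of Theorem \ref{thm.regularity} are satisfied for our $\Omega$ and $L$.

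First, I would verify the geometric hypothesis of Theorem \ref{thm.regularity}. By assumption $\Omega$ is bounded, satisfies the corkscrew condition, and its boundary is $n$-Ahlfors regular; combining the IBPCAD condition with $n$-Ahlfors regularity of $\partial\Omega$, the main geometric result \cite[Theorem 1.5]{ahmmt} yields that $\partial\Omega$ is uniformly $n$-rectifiable. Thus $\Omega$ belongs exactly to the class of domains covered by Theorem \ref{thm.regularity}.

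Second, I would verify the analytic hypothesis, namely that there exists $p>1$ for which $(\Di_{p'}^{L^*})$ is solvable in $\Omega$. The operator $L^*=-\dv A^T\nabla$ is a DKP operator whenever $L$ is, since $|\nabla A^T|=|\nabla A|$, so it suffices to invoke the DKP Dirichlet theory. The original result of Kenig--Pipher \cite{kp3} establishes $L^{p'}$-solvability of the Dirichlet problem for DKP operators on Lipschitz domains for some $p>1$; this solvability extends to chord-arc domains and then to domains satisfying the IBPCAD condition with $n$-Ahlfors regular boundary through the Jerison--Kenig and Dahlberg--Jerison machinery \cite{jk3, dj}, as explicitly noted in \cite{ahmmt} and in the discussion preceding Definition \ref{def.cond}.

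With both hypotheses in place, a direct application of Theorem \ref{thm.regularity} gives the desired value of $p>1$ for which $(\Reg_p^L)$ is solvable in $\Omega$. There is no real obstacle here; the only minor bookkeeping point is to confirm that the exponent $p>1$ produced by the DKP Dirichlet theory can indeed be fed as input into Theorem \ref{thm.regularity}, which is immediate since Theorem \ref{thm.regularity} is stated for arbitrary $p\in(1,\infty)$ under the assumption of $(\Di_{p'}^{L^*})$-solvability.
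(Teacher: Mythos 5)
Your proposal is correct and follows exactly the route the paper takes: the corollary is stated immediately after the paragraph invoking \cite{kp3, dj, ahmmt} for $(\Di_{p'}^{L^*})$-solvability and \cite[Theorem 1.5]{ahmmt} for uniform $n$-rectifiability, and then applying Theorem \ref{thm.regularity}. Nothing is missing.
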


Corollary \ref{cor.reg} fully answers the question of the solvability of $(\Reg_p^L)$ for some $p>1$ when $L$ is a DKP operator on a bounded rough domain, but we mention again that this problem had been open even when $\Omega$ is the unit ball.  If, in addition to the hypothesis of Corollary \ref{cor.reg}, we have that $\Omega$ satisfies the Harnack chain condition, then   $\Omega$ is a chord-arc domain \cite{ahmnt} (see Section \ref{sec.geom} for definitions), and in this case we can use the theory of Carleson perturbations for the regularity problem \cite{kp2, dfm22} to deduce  the following improvement to Corollary \ref{cor.reg}.

\begin{corollary}[$(\Reg_p^L)$ for DPR operators]\label{cor.reg2} Let $\Omega\subset\bb R^{n+1}$, $n\geq2$, be a bounded chord-arc domain, and let $L=-\dv A\nabla$ where $A$ is a DPR matrix in $\Omega$. Then there exists $p>1$ such that $(\Reg_p^L)$ is solvable in $\Omega$.	
\end{corollary}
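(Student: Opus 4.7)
The plan is to deduce Corollary~\ref{cor.reg2} from Corollary~\ref{cor.reg} by constructing a DKP matrix that is a Carleson perturbation of the given DPR matrix $A$, and then invoking the perturbation theory for the regularity problem on chord-arc domains. Specifically, I would pick a standard radial bump $\varphi\in C_c^\infty(B(0,1/4))$ with $\int\varphi\,dm=1$ and define, for $x\in\Omega$,
\[
\varphi_x(y):=\bigl(\delta(x)/4\bigr)^{-(n+1)}\varphi\!\left(\tfrac{y-x}{\delta(x)/4}\right),\qquad \tilde A(x):=\int_\Omega A(y)\,\varphi_x(y)\,dm(y).
\]
Since $\supp\varphi_x\subset B(x,\delta(x)/4)\subset\Omega$, this is well-defined, and $\tilde A\in C^\infty(\Omega)$ satisfies the ellipticity bounds \eqref{eq.elliptic} with the same constant $\lambda$ because averaging preserves ellipticity.

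Next, I would verify the pointwise estimates
\[
|A(x)-\tilde A(x)|\lesssim \osc_{B(x,\delta(x)/2)}(A),\qquad |\nabla \tilde A(x)|\lesssim \delta(x)^{-1}\osc_{B(x,\delta(x)/2)}(A),
\]
the second coming from writing $\nabla\tilde A(x)=\int(A(y)-A(x))\nabla_x\varphi_x(y)\,dm(y)$ and using $|\nabla_x\varphi_x|\lesssim\delta(x)^{-(n+2)}$ on $B(x,\delta(x)/4)$. The DPR hypothesis on $A$ then gives
\[
\delta(x)|\nabla \tilde A(x)|^2\,dm(x)\lesssim \delta(x)^{-1}\osc_{B(x,\delta(x)/2)}(A)^2\,dm(x),
\]
which is a Carleson measure with respect to $\HH^n|_\pom$. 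Hence $\tilde A$ is a DKP matrix on $\Omega$. Since $\Omega$ is chord-arc, it satisfies the corkscrew and IBPCAD conditions and has $n$-Ahlfors regular boundary, so Corollary~\ref{cor.reg} applied to $\tilde L:=-\dv \tilde A\nabla$ yields some $p_0>1$ for which $(\Reg_{p_0}^{\tilde L})$ is solvable in $\Omega$.

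Finally, the same oscillation bound shows
\[
\delta(x)^{-1}|A(x)-\tilde A(x)|^2\,dm(x)\lesssim \delta(x)^{-1}\osc_{B(x,\delta(x)/2)}(A)^2\,dm(x),
\]
so $A$ is a Carleson perturbation of $\tilde A$ in the sense standard in the perturbation literature. Invoking the Carleson perturbation theorem for the regularity problem on chord-arc domains from \cite{kp2, dfm22} transfers the solvability of $(\Reg_{p_0}^{\tilde L})$ to the solvability of $(\Reg_p^L)$ for some $p>1$ (possibly smaller than $p_0$), which is the desired conclusion.

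The main obstacle I anticipate is a bookkeeping one: checking that the perturbation results of \cite{kp2, dfm22} apply exactly in the form needed, i.e.\ that they handle non-symmetric merely bounded matrices on chord-arc domains and produce solvability of the regularity problem in the Haj\l asz-Sobolev formulation of Definition~\ref{def.dirichlet} rather than in a Lipschitz-domain formulation. Once the correct perturbation statement is identified and combined with the elementary construction of $\tilde A$ above, the argument is essentially mechanical.
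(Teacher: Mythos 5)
Your proposal is correct and follows the same route the paper indicates: Corollary~\ref{cor.reg2} is obtained from Corollary~\ref{cor.reg} by combining the Carleson perturbation theory for the regularity problem in \cite{kp2,dfm22} with the standard (but unwritten in the paper) fact that any DPR matrix is an FKP-perturbation, in the sense of \eqref{eq.fkp}, of its Whitney-scale mollification $\tilde A$, which is DKP. The one wrinkle to tidy is that your pointwise bounds $|\nabla\tilde A(x)|\lesssim\delta(x)^{-1}\osc_{B(x,\delta(x)/2)}(A)$ and $|A(x)-\tilde A(x)|\lesssim\osc_{B(x,\delta(x)/2)}(A)$ need to be upgraded to the essential-supremum form that appears in \eqref{eq.dkp} and \eqref{eq.fkp}; this follows by a routine covering argument using that the Carleson condition on $\delta^{-1}\osc_{B(\cdot,c\delta(\cdot))}(A)^2\,dm$ is stable under changing the parameter $c\in(0,1)$.
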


Let us now give a very quick overview on the method of proof of Theorem \ref{thm.regularity}, which is presented in Sections \ref{sec.strat}-\ref{sec.reg}. First, as a main innovation, we use the duality between non-tangential maximal functions and Carleson functionals (Proposition \ref{prop.duality}) to reduce the regularity problem to the study of the conormal derivative of solutions to Poisson Dirichlet problems. Indeed, for a solution $w$ to the Poisson-Dirichlet problem (\ref{eq.poisson}) with $H\equiv0$, we show in Proposition \ref{prop.conormal}  that the variational conormal derivative $\partial_{\nu_A}w$ (see Section \ref{sec.strat} for the definition) is a bounded linear functional on the Haj\l{}asz-Sobolev space $\dt W^{1,p}(\partial\Omega)$, and the  new estimate (\ref{eq.boundconormal}) is the main ingredient in our proof of Theorem \ref{thm.regularity}, since it replaces a $1$-sided Rellich estimate.

Given Proposition \ref{prop.conormal}, Theorem \ref{thm.regularity} follows rather easily  from the duality between $\m N$ and $\n C$ and the $L^p$-solvability of the Poisson-Dirichlet problem, as shown early in Section \ref{sec.strat}. However,  the main estimate (\ref{eq.boundconormal}) of Proposition \ref{prop.conormal} requires a completely new argument, even if one were to cut out the sharp geometric considerations.  

To prove Proposition \ref{prop.conormal}, first we conceive of an appropriate corona decomposition which is able to invoke the results of \cite{dpr17} as ``black boxes'' on each corona subdomain. With the corona decomposition at hand, we construct an ``almost $L$-elliptic'' extension of Lipschitz boundary data. Both the corona decomposition and the construction of the almost $L$-elliptic extension  are based on analogous ideas from \cite{mt22} where the regularity problem for the Laplace equation in uniformly $n$-rectifiable domains was solved. Nevertheless, the corona decomposition in the present manuscript requires highly non-trivial modifications from the one in \cite{mt22} to ensure that the hypotheses of \cite[Theorem 2.10]{dpr17} are satisfied by $L$ on each corona subdomain.

The next step  in the proof  is to show Proposition \ref{prop.ext}, which is an $L^p$ estimate on the modified non-tangential maximal function of the gradient of the almost $L$-elliptic extension. This estimate  is interesting in its own right, since it illustrates the naturality of the almost $L$-elliptic extension, and it is new even for the Laplacian.  Indeed, Proposition \ref{prop.ext} together with \cite[Lemma 4.7]{mt22} imply that the almost harmonic extension is a correct analogue of the Varopoulos extension for one ``smoothness level'' up. However, the proof   is very delicate, and the tools which we use to achieve it are quite sharp, such as the localization theorem for the regularity problem, and the dyadic Carleson embedding results that we also invoked in the proof of Theorem \ref{thm.poisson}.

The last step in the proof of Proposition \ref{prop.conormal} is to show the estimate
\begin{equation}\label{eq.test}
\Big|\int_\Omega A\nabla w\nabla v_\varphi\,dm\Big|\lesssim\big[\Vert g\Vert_{L^{p'}(\partial\Omega)}+\Vert\n C_2(F)\Vert_{L^{p'}(\partial\Omega)}\big]\Vert\nabla_{H,p}\varphi\Vert_{L^p(\partial\Omega)},
\end{equation}
where $F\in{\bf C}_{2,p'}$, $\varphi\in\Lip(\partial\Omega)$, $w$ is the unique weak solution to the problem (\ref{eq.poisson}) with  $H\equiv0$, and $v_\varphi$ is the almost $L^*$-elliptic extension of $\varphi$. To prove (\ref{eq.test}), we combine the $L^{p'}$-solvability of the Poisson-Dirichlet problem (it is only here where we use  that $(\Di_{p'}^{L^*})$ is solvable in $\Omega$) with the $\m N$-$\n C$ duality  and the heuristic that $L^*v_f$ should belong to ${\bf C}_{2_*,p}$. Related estimates were shown in Sections 4 and 5 of \cite{mt22}, although our argument for (\ref{eq.test}) is different and does not follow from the arguments in \cite{mt22}; instead, we appeal substantially to  the proof of Proposition \ref{prop.ext}.

Our manuscript is the first paper that considers the natural connections between the $\m N$-$\n C$ duality, the $L^p$-solvability of the Poisson problem, and the duality between $(\Di_{p'}^{L^*})$ and $(\Reg_p^L)$. It is the new marriage of these ideas that affords us the solution of the regularity problem for DKP operators in such large geometric generality.

\subsection{Historical remarks}\label{sec.history}  Let us provide some historical context to our investigation. For elliptic operators $L=-\dv A\nabla$,   solvability of $(\Di_{p'}^{L^*})$ and $(\Reg_p^L)$ in rough domains has garnered a lot of attention in the last several decades. For a  historical overview   of the case  $L=-\Delta$   in rough domains, we refer the reader to the introduction of \cite{mt22}.  Hence we shall focus now on the rich history related to the boundary value problems for the operators $L$ with rough (real) coefficients. The problems are always homogeneous, unless stated otherwise.

\subsubsection{Early history of the homogeneous problems with singular data} In 1924, the celebrated result of Wiener  characterized the domains for which the continuous Dirichlet problem for the Laplacian is solvable (now known as \emph{Wiener regular domains}). At the end of the 1950s, the De Giorgi-Nash-Moser theory opened the door for the systematic study of boundary value problems for elliptic operators $L=-\dv A\nabla$ whose matrices $A$ are merely bounded and elliptic, and only a few years later, Littman, Stampacchia, and Weinberger \cite{lsw} proved that the continuous Dirichlet problem for $L$ with merely bounded and elliptic coefficients is well-posed on a domain if and only if the domain is Wiener regular. In other words, the \emph{continuous} Dirichlet problem is well-posed for the Laplacian if and only if it is well-posed for $L$.

Near the end of the 1970s, Dahlberg  proved that $(\Di_2^{-\Delta})$ is solvable in Lipschitz domains \cite{dah1, dah2}, and it was then understood that in Lipschitz domains, the solvability of $(\Di_{p'}^L)$ for some $p>1$ is equivalent to  quantitative absolute continuity of the $L$-elliptic measure $\omega_L$ (namely, that $\omega_L\in A_{\infty}(\sigma)$ in a local sense). Despite the results of Caffarelli, Fabes, Mortola, and Salsa \cite{cfms}, who proved that non-negative solutions $u$ to $Lu=0$ in the unit ball have nontangential limits at $\omega_L$-a.e.\ point in the  boundary for symmetric operators $L$ with merely bounded and elliptic coefficients,  examples were soon found by Caffarelli, Fabes, Kenig \cite{cfk}, and independently by Modica and Mortola \cite{mm}, of elliptic measures which were mutually singular with respect to the surface measure on smooth domains, in   contrast to the situation of the continuous Dirichlet problem described earlier.

In the positive direction, for symmetric operators $L$ with smooth and bounded coefficients, Jerison and Kenig \cite{jk1} solved $(\Di_2^L)$ in Lipschitz domains, relying on a so-called Rellich identity, available only in the symmetric case.  On $C^1$ domains and for symmetric, globally continuous operators, the solvability of $(\Di_p^L)$ for any $p>1$ became completely understood in the work of Fabes, Jerison, and Kenig \cite{fjk}, who characterized the symmetric operators $L$ for which $\omega_L\ll\sigma$ in terms of a Dini  condition; this condition   also guaranteed that $(\Di_p^L)$ is solvable for any $p>1$.

Although \cite{fjk} had completely characterized the case of symmetric, globally continuous operators on $C^1$ domains, the characterization remained open for non-symmetric elliptic operators with rough coefficients, and perhaps more importantly, it was hard to verify the Dini-type condition of \cite{fjk} for certain important operators that arose in practice, except for $t$-independent operators, which we will briefly discuss a bit later.

\subsubsection{Dahlberg's conjectures} As mentioned in   \cite{kp3,dpp07},  Dahlberg posed two conjectures in 1984.   The first conjecture concerned whether the $L^p$-solvability of the Dirichlet problem was stable under certain perturbations of the coefficients. More precisely, if $L_0=-\dv A_0\nabla$ and $L=-\dv A\nabla$ are two  elliptic operators, such that $(\Di_p^{L_0})$ is solvable in the half-space $\bb R^{n+1}_+$ and
\begin{equation}\label{eq.fkp}
\frac{\esssup_{y\in B(x,\delta(x)/2)}|A(y)-A_0(y)|^2}{\delta(x)}\,dx\quad\text{is a Carleson measure,}
\end{equation}
does it follow that $(\Di_q^L)$ is solvable in $\bb R^{n+1}_+$, for some $q>1$ possibly larger than $p$? This question was resolved in the affirmative by Fefferman, Kenig, and Pipher in \cite{fkp}. The condition (\ref{eq.fkp}) has come to be known as the \emph{Fefferman-Kenig-Pipher} condition (FKP), and its applicability in rough domains has continued to be heavily studied in recent years \cite{mpt14,chm,chmt,ahmt,mp20,fp}. For a more thorough review of these results, see \cite{ahmt} or \cite{fp}. Let us also remark that the FKP condition is known to also preserve the solvability of $(\Reg_p^L)$ (with some $q$ perhaps larger than $p$); this was shown for symmetric operators on the ball by Kenig and Pipher in \cite{kp2}, and it has been recently extended to the case of rough domains with Harnack chains and non-symmetric operators by Dai, Feneuil and Mayboroda in \cite{dfm22}.
 
Dahlberg's second conjecture asked whether for some $p>1$, $(\Di_p^L)$ is solvable in the half-space when $L$ is a DKP operator (see Definition \ref{def.cond}). As mentioned in Section \ref{sec.dkp}, this question was resolved in the affirmative by Kenig and Pipher in \cite{kp3}. Dindo\v{s}, Petermichl, and Pipher later proved in \cite{dpp07} that for any $p>1$, if $A$ is a (possibly non-symmetric) $K$-DKP matrix with small enough constant $K=K(p)>0$ and $\Omega$ is a Lipschitz domain with small enough Lipschitz constant $\ell=\ell(p)>0$, then $(\Di_p^L)$ is solvable in $\Omega$; under the same assumptions, $(\Reg_p^L)$ is also solvable in $\Omega$, but this was shown a decade later by Dindo\v{s}, Pipher, and Rule \cite{dpr17}. Via the theory of FKP perturbations described above, the respective results of \cite{dpp07,dpr17} were also generalized in the respective articles to the $\tau$-DPR operators with $\tau=\tau(p)>0$ small enough. 
 
\subsubsection{Further results} Let us return to the case of $t$-independent real operators. Although the symmetric $t$-independent case was already understood by the work of \cite{jk1},    the first non-trivial positive result for non-symmetric $t$-independent operators on the half-space $\bb R^{n+1}_+$ was obtained by Kenig, Koch, Pipher, and Toro \cite{kkpt} when $n=1$. There, the authors showed that there exists $p>1$ such that $(\Di_{p'}^L)$ is solvable, and unlike the symmetric $t$-independent case, $p$ may not be made precise.  Later, Kenig and Rule showed in \cite{kr} that $(\Di_{p'}^{L^*})\implies(\Reg_p^L)$ for the $t$-independent operators when $n=1$, and thus it followed that $(\Reg_p^L)$ was solvable for some $p>1$. These results of \cite{kkpt} and \cite{kr} were generalized to $n\geq2$ by Hofmann, Kenig, Mayboroda, and Pipher in \cite{hkmps} and \cite{hkmpr}, where the solution to the Kato problem \cite{hlm02, ahlmct} played a   role in the proof.     We remark that there is an extensive literature regarding solvability of boundary value problems in the complex, $t$-independent case, which we do not review. For a recent review in this direction, see  \cite{bhlmp}.
 
We now comment on the solvability of the boundary value problems for DKP operators in rough domains beyond the Lipschitz setting. As mentioned in Section \ref{sec.dkp}, it has been known from \cite{kp3, jk3, dj, ahmmt} that if $L$ is a DKP operator and $\Omega$ satisfies the corkscrew condition and the IBPCAD condition, then there exists $p>1$ so that $(\Di_{p'}^{L^*})$ is solvable in $\Omega$. However, the solvability of $(\Reg_p^{-\Delta})$ beyond the Lipschitz setting remained an open problem for over 30 years until the recent work of the first and third author of this manuscript \cite{mt22}, where it was shown that $(\Di_{p'}^{-\Delta})\iff(\Reg_p^{-\Delta})$ whenever the bounded domain $\Omega$ satisfies the corkscrew condition and has $n$-Ahlfors regular boundary, extending the duality between the Dirichlet problem and regularity problem for the Laplacian shown by Jerison and Kenig in the Lipschitz setting  \cite{jk5}. 

We turn to a few words on the duality between $(\Di_{p'}^{L^*})$ and $(\Reg_p^L)$. In the case that  $\Omega$ is a star-like Lipschitz domain, Kenig and Pipher showed in \cite{kp} that $(\Reg_p^L)\implies(\Di_{p'}^{L^*})$, but the converse direction has remained an open problem. Shen proved in \cite{shen07} that if $(\Di_{p'}^{L^*})$ is solvable and $(\Reg_q^L)$ is solvable for some $q>1$, then $(\Reg_p^L)$ is solvable; later this result was extended by Dindo\v{s} and Kirsch \cite{dk12}. Complex analogues of the duality result of \cite{kp} are hopeless in the full generality, as was shown by Mayboroda in \cite{may10}; however, Hofmann, Kenig, Mayboroda, and Pipher  \cite{hkmpr} proved $(\Di_{p'}^{L^*})\iff(\Reg_p^L)$  in the range $p \in (1,2+\ve)$ for  elliptic equations with bounded $t$-independent coefficients under the assumption that De Giorgi-Nash-Moser estimates hold, and Auscher and one of us \cite{am14}  proved the equivalence for $p\in(p_0,2]$ for elliptic systems with $t$-independent complex coefficients assuming De Giorgi-Nash-Moser estimates, for some $p_0<1$ determined by the  exponent in the assumed interior H\"older condition. 

\subsubsection{Connections to the literature on the Poisson problem}\label{sec.connections} So far, we have discussed results only for the homogeneous problem (\ref{eq.dirichlet}). As remarked in Section \ref{sec.main1}, estimates controlling the left-hand side of (\ref{eq.ntmaxn}) have never appeared explicitly for the Poisson-Dirichlet problem (\ref{eq.poisson})  in a domain $\Omega\subsetneq\bb R^{n+1}$. Nevertheless, related estimates for the inhomogeneous case have been studied extensively \cite{jk4, fmm98, mm07, dindos08, mm11, hmm, bm16, barton21}. Let us briefly describe some of these results and explain their connection to our Theorem \ref{thm.poisson}. Given $p\in[1,\infty]$ and $\theta\in(-\infty,+\infty)$, the Bessel potential space   is defined by $L^p_\theta:=\{(I-\Delta)^{-\theta/2}H:H\in L^p(\bb R^{n+1})\}$ with norm
\[
\Vert w\Vert_{L^p_\theta}:=\Vert(I-\Delta)^{\theta/2}w\Vert_{L^p(\bb R^{n+1})}.
\]
For $\theta\geq0$, define the Sobolev space $L^p_\theta(\Omega)$ as the space of restrictions of functions in $L^p_\theta$ to $\Omega$. For $\theta>0$ and $p\in(1,\infty)$, let $L^{p'}_{\theta,0}(\Omega)$ be the space of functions in $L^{p'}_\theta$ supported on $\overline{\Omega}$, and let $L^p_{-\theta}(\Omega)=(L^{p'}_{\theta,0}(\Omega))^*$. By layer potential techniques and the Calder\'on-Zygmund theory of singular integrals, one has the following result on $C^\infty$ bounded domains \cite[Theorem 0.3]{jk4}: if $p\in(1,\infty)$ and $\theta>1/p$, then for every $H\in L^p_{\theta-2}(\Omega)$, there is a unique solution $w\in L^p_\theta(\Omega)$ to the   problem $-\Delta w=H$ in $\Omega$, $w=0$ on $\partial\Omega$; and moreover,
\begin{equation}\label{eq.estp}
\Vert w\Vert_{L^p_\theta(\Omega)}\leq C\Vert H\Vert_{L^p_{\theta-2}(\Omega)},\qquad\text{for each }H\in L^p_{\theta-2}(\Omega).
\end{equation}
Note that when $p=2$ and $\theta=1$, (\ref{eq.estp}) reduces to a very simple estimate obtained via the Lax-Milgram theorem.  Furthermore, the estimates (\ref{eq.estp}) are equivalent to estimates on $L^p$ for singular integral operators whose kernels are fractional gradients of the Green's function $G(x,y)$. However,  in the case of Lipschitz domains, the estimates (\ref{eq.estp}) do not hold anymore for the full range $p\in(1,\infty)$ and $\theta>1/p$. The sharp ranges of $p$ and $\theta$ for which (\ref{eq.estp}) holds in Lipschitz domains were found by Jerison and Kenig in \cite{jk4}. 

The estimates (\ref{eq.estp}) are linked   to the solvability of the homogeneous Dirichlet problem (\ref{eq.dirichlet}) with boundary data in Besov spaces $\dt B_\theta^{p,p}(\partial\Omega)$, $\theta\in(0,1)$ (for a definition, see \cite[Chapter 2]{bm16}). These boundary value problems with fractional data constitute a scale between the homogeneous Dirichlet problem at one end, and the homogeneous regularity problem at the other end. For $p>1$ and $\theta\in(0,1)$, define $Z(p,\theta)$ as the space of functions $F\in L^2_{\loc}(\Omega)$  for which
\begin{equation}\label{eq.z}
	\Vert F\Vert_{Z(p,\theta)}:=\Big(\int_{\Omega}\Big(\dashint_{B(x,\delta(x)/2)}|F|^2\,dm\Big)^{\frac p2}\delta(x)^{p(1-\theta)-1}\,dm(x)\Big)^{\frac1p}<+\infty.
\end{equation}
If $\Omega=\bb R^{n+1}_+$, $n\geq2$, and $A$ is a real, $t$-independent matrix, then for certain choices of $p\in(1,\infty)$ and $\theta\in(0,1)$, it was shown in \cite[Corollary 3.24]{bm16} that there exists a unique solution $u$ to the problem (\ref{eq.dirichlet}) with $g\in\dt B_\theta^{p,p}(\partial\Omega)$ and such that $\Vert\nabla u\Vert_{Z(p,\theta)}\lesssim\Vert g\Vert_{\dt B_\theta^{p,p}(\partial\Omega)}$. 

Similar solvability results in Besov spaces were already known for the Laplacian in Lipschitz domains \cite{jk4,fmm98,mm07, mm11}. Barton and Mayboroda also obtained the well-posedness of the inhomogeneous Dirichlet problem with Besov boundary data: they show in \cite[Theorem 3.25]{bm16} that for the same choices of $p$ and $\theta$ as for the homogeneous problem, the problem $Lw=-\dv F$ in $\Omega$, $w=g$ on $\partial\Omega$ for $g\in\dt B_\theta^{p,p}(\partial\Omega)$ and $F\in Z(p,\theta)$ is uniquely solvable, and
\begin{equation}\label{eq.z3}
\Vert\nabla w\Vert_{Z(p,\theta)}\lesssim\Vert g\Vert_{\dt B_\theta^{p,p}(\partial\Omega)}+\Vert F\Vert_{Z(p,\theta)}.
\end{equation}
Note that  (\ref{eq.z3}) is an estimate on a weighted energy integral of $ w$, and  is quite different in nature from our $L^{p'}$ estimate on $\wt{\m N}_{2^*}(w)$ (\ref{eq.ntmaxn}).

We now turn to results in the literature which are close to  (\ref{eq.ntmaxn}) and (\ref{eq.estreg}). Some closely related estimates for the Poisson problem appeared in \cite[Appendix B]{dindos08}.  In the  case that $\Omega=\bb R^{n+1}_+$, $n\geq1$, if $L$ and $L^*$ are complex bounded elliptic operators with $t$-independent coefficients satisfying De Giorgi-Nash-Moser bounds,  it was shown in  \cite{hmm}  that if $L^{-1}_{\bb R^{n+1}}$ is the   operator whose integral kernel is the fundamental solution of $L$ in $\bb R^{n+1}$, then \cite[Propositions 4.6 and 5.1]{hmm}
\begin{equation}\label{eq.hmm1}
\Vert\wt{\m N}_2\big(\nabla L^{-1}_{\bb R^{n+1}}(H-\dv F)\big)\Vert_{L^p(\partial\bb R^{n+1}_+)}\lesssim\Vert\wt{\m A}_1(tH)\Vert_{L^p(\partial\bb R^{n+1}_+)}+\Vert\m A_2(F)\Vert_{L^p(\partial\bb R^{n+1}_+)},\qquad p\in(1,2+\ep).
\end{equation}
This estimate is related to (\ref{eq.estreg}), in light of (\ref{eq.tentandcar}). They used (\ref{eq.hmm1}) to eventually obtain non-tangential and area-integral estimates for the boundary layer potentials of complex elliptic operators whose coefficients satisfy a small Carleson measure condition. Finally,  in \cite{barton21}, for $\Omega=\bb R^{n+1}_+$ and $L$ a bounded  elliptic operator with $t$-independent complex coefficients, Barton also obtained  (\ref{eq.hmm1}) in the case that $H\equiv0$, and  the estimate  $\Vert\wt{\m N}_2(L^{-1}_{\bb R^{n+1}}\dv F)\Vert_{L^{p'}(\partial\bb R^{n+1}_+)}\lesssim\Vert\n C_2(F)\Vert_{L^{p'}(\partial\bb R^{n+1}_+)}$ (see \cite[Corollary 4.27]{barton21}), for essentially optimal ranges of $p$. Note that this last estimate is related to  (\ref{eq.pd1}) when $H\equiv0$. In fact, Barton's estimates are more general, since she considers higher-order elliptic equations and square function estimates as well.

\subsection{Related results} As we were finishing the first version of this manuscript, we learned that M. Dindo\v{s}, S. Hofmann, and J. Pipher \cite{dhp22} had simultaneously and independently obtained the conclusion of Corollary \ref{cor.reg2} in unbounded Lipschitz graph domains in $\bb R^{n+1}$, $n\geq1$, when $L$ is a DPR operator, via a different method. Both the first version of this manuscript \cite{mpt22} and the first version of the paper of  M. Dindo\v{s}, S. Hofmann, and J. Pipher \cite{dhp22} were posted on arXiv.org on July 21, 2022. Their proof of their analogue of Corollary \ref{cor.reg2} is significantly shorter than ours, owing to the more restrictive geometric setting that they study. Indeed, they perform a clever reduction of the problem to the solvability of $(\Reg_q^{L_0})$ for some $q>1$, where $L_0$ is a block form operator, but this approach does not work in more general domains since it necessitates the existence of a uniform ``preferred direction'' at the boundary, as occurs in a domain above a Lipschitz graph. The passage from Lipschitz graph domains to uniformly $n$-rectifiable domains, or even to chord-arc domains, is highly non-trivial, and requires a novel approach via the Poisson-Dirichlet problem. Even in the case of the Laplacian, the same problem had remained open for over 30 years until its solution in \cite{mt22}, illustrating that the geometry of the domain creates significant difficulties which are hard to overcome.

\subsection{Outline} The rest of the paper is organized as follows. In Section \ref{sec.prelim}, we provide basic definitions and lemmas. In Section \ref{sec.poisson}, we prove Theorems \ref{thm.poisson} and \ref{thm.poisson2} on the $L^p$-solvability of the Poisson problem. In Section \ref{sec.strat}, we give an overview of the structure of the proof of Theorem \ref{thm.regularity} on the regularity problem for DKP operators. In Section \ref{sec.corona}, we construct the corona decomposition tailored to exploit the results of \cite{dpr17} on each corona subdomain. In Section \ref{sec.extension}, we define the almost $L$-elliptic extension based on the corona decomposition of Section \ref{sec.corona}. Then, in Section \ref{sec.reg} we finish the proof of Theorem \ref{thm.regularity} by proving an estimate on the conormal derivative of solutions to Poisson problems (see Proposition \ref{prop.conormal}). Finally, in Appendix \ref{sec.app} we provide proofs for some  lemmas of Section \ref{sec.prelim}.

\section{Preliminaries}\label{sec.prelim}

We  write $a\lesssim b$ if there exists a constant $C>0$ so that $a\leq Cb$ and $a\lesssim_tb$ if $C$ depends on the parameter $t$. We write $a\approx b$ to mean $a\lesssim b\lesssim a$ and define $a\approx_tb$ similarly.

All measures in this paper are assumed to be Radon measures. We denote by $m=m_{n+1}$ the Lebesgue measure on $\bb R^{n+1}$. Given a measure $\mu$ on $\bb R^{n+1}$, the \emph{Hardy-Littlewood maximal function} of a measurable function $f$ on $\supp\mu$ is defined as
\[
\m M_\mu(f)(\xi):=\sup_{B:\xi\in B}\frac1{\mu(B)}\int_B|f|\,d\mu,\qquad\xi\in\supp\mu,
\]
where the supremum is taken over all possible balls $B$ centered at $\supp\mu$.

Given $q\in(0,\infty]$, a set $U\subset\bb R^{n+1}$, a measure $\mu$ on $\bb R^{n+1}$, and a function $H\in L^q(U,\mu)$, denote the \emph{$L^q(\mu)$ mean of $H$ over $U$} as
\[
m_{q,U,\mu}(H):=\Big(\frac1{\mu(U)}\int_U|H(x)|^q\,d\mu\Big)^{1/q}.
\]
If $\mu$ is the Lebesgue measure,  we   write $m_{q,U}=m_{q,U,\mu}$. If $q=1$, we   write $m_{U,\mu}=m_{q,U,\mu}$.

\subsection{Geometric preliminaries}

\subsubsection{Lipschitz domains}\label{sec.lip}  We say that $Z\subset \R^{n+1}$ is
an $\ell$-cylinder of diameter $d$ if there is a coordinate system $(x,t)\in\R^{n}\times \R$ such that $Z=\{(x,t):|x|\leq d,-2\ell d\leq |t|\leq 2\ell d\}$. Also, for all $s>0$, we denote $sZ=\{(x,t):|x|\leq sd,-2\ell d\leq |t|\leq 2\ell d\}$.

We say that $\Omega$ is a \emph{Lipschitz domain} with Lipschitz character $(\ell,N,C_0)$ if there is $r_0>0$ and at most $N$ $\ell$-cylinders
$Z_j$, $j=\overline{1,N}$ of diameter $d$ with $C_0^{-1}r_0\leq d\leq C_0r_0$ such that\footnote{We could also say that $\Omega$ has character
	$(\ell,N,r_0,C_0)$. However, notice that, since $\pom$ is covered by at most $N$ cylinders of diameter comparable to $r_0$, we have $r_0\gtrsim \diam(\pom)/N$. So $r_0/\diam(\pom)$ depends on $N$.}
\begin{itemize}
	\item $8Z_j \cap\pom$ is the graph of a Lipschitz function $\phi_j$ with $\|\nabla \phi_j\|_\infty\leq \ell$, $\phi_j(0)=0$,
	\item $\pom=\bigcup_j (Z_j\cap \pom)$,
	\item We have that $8Z_j\cap\Omega=\{(x,t)\in8Z_j: \phi_j(x)>t\}$.
\end{itemize}
We also say that $\Omega$ is a  Lipschitz domain  with Lipschitz constant $\ell$.

\subsubsection{Quantitative conditions on the geometry of domains}\label{sec.geom} A measure $\mu$ in $\bb R^{n+1}$ is called \emph{$n$-Ahlfors regular}   if there exists some constant $C_0>0$ such that
\[
C_0^{-1}r^n\leq\mu(B(x,r))\leq C_0r^,\qquad\text{for all }x\in\supp\mu\text{ and }0<r\leq\diam(\supp\mu).
\]
A measure $\mu$ is \emph{uniformly $n$-rectifiable} if it is $n$-Ahlfors regular and there exist constants $\theta,M>0$ such that for each $x\in\supp\mu$ and each $r\in(0,\diam(\supp\mu)]$, there is a Lipschitz mapping $g$ from the $n$-dimensional ball $B_n(0,R)$ to $\bb R^{n+1}$ with $\Lip(g)\leq M$ and satisfying the bound $\mu\big(B(x,r)\cap g(B_n(0,r))\big)\geq\theta r^n$. A set $E\subset\bb R^{n+1}$ is $n$-Ahlfors regular if $\m H^n|_E$ is $n$-Ahlfors regular, where $\m H^n$ is the $n$-dimensional Hausdorff measure, which we assume to be normalized so that it coincides with $m_n$ in $\bb R^n$.   Also, $E$ is uniformly $n$-rectifiable if $\m H^n|_E$ is uniformly $n$-rectifiable.  The notion of  uniform rectifiability was introduced in \cite{ds1} and \cite{ds2}, and it  should be considered a quantification of rectifiability.

By a \emph{domain} we mean a connected open set. In this paper, $\Omega$ is always a domain in $\bb R^{n+1}$ with $n\geq2$. As mentioned in the introduction, we denote the restriction of the $n$-Hausdorff measure to $\partial\Omega$ by $\sigma$, and we call it the \emph{surface measure} on $\partial\Omega$. 

We say that $\Omega$ satisfies the \emph{corkscrew condition} if there exists   $c>0$ such that for each $x\in\partial\Omega$ and every $r\in(0,2\diam\Omega)$, there exists a ball $B\subset B(x,r)\cap\Omega$ so that $r(B)\geq cr$.

Given two points $x,x'\in\Omega$, and a pair of numbers $M,N\geq1$, an $(M,N)$-\emph{Harnack chain connecting $x$ to $x'$}, is a chain of open balls $B_1,\ldots,B_N\subset\Omega$, with $x\in B_1,x'\in B_N, B_k\cap B_{k+1}\neq\varnothing$ and $M^{-1}\diam B_k\leq\dist(B_k,\partial\Omega)\leq M\diam B_k$. We say that $\Omega$ satisfies the \emph{Harnack chain condition} if there is a uniform constant $M$ such that for any $x,x'\in\Omega$, there exists an $(M,N)$-Harnack chain connecting them, with $N$ depending only on $M$ and on $|x-x'|/\min\{\delta(x),\delta(x')\}$.

We say that  $\Omega$ is a \emph{chord-arc domain} if $\Omega$ satisfies the Harnack chain condition, if both $\Omega$ and $\bb R^{n+1}\backslash\overline{\Omega}$ satisfy the corkscrew condition, and if $\partial\Omega$ is $n$-Ahlfors regular.

We say that  $\Omega$ has \emph{Interior Big Pieces of Chord-Arc Domains}, or that $\Omega$ satisfies the IBPCAD condition, if there exist positive constants $\eta$ and $C$, and $N\geq2$, such that for every $x\in\Omega$, with $\delta_\Omega(x)<\diam(\partial\Omega)$, there is a chord-arc domain $\Omega_x\subset\Omega$ satisfying
\begin{itemize}
	\item $x\in\Omega_x$.
	\item $\dist(x,\partial\Omega_x)\geq\eta\delta_\Omega(x)$.
	\item $\diam(\Omega_x)\leq C\delta_\Omega(x)$.
	\item $\sigma(\partial\Omega_x\cap B(x,N\delta_\Omega(x)))\geq\eta\sigma(B(x,N\delta_\Omega(x)))\approx_N\eta\delta_\Omega(x)^n$.
	\item The chord-arc constants of the domains $\Omega_x$ are uniform in $x$.
\end{itemize}

\subsubsection{Dyadic lattices and the $\beta$ numbers} \label{sec.lattice}

Given an  $n$-Ahlfors regular measure $\mu$ in $\bb R^{n+1}$, we consider the dyadic lattice $\m D_\mu$ of ``cubes'' built by David and Semmes \cite[Chapter 3 of Part I]{ds2}. The properties satisfied by $\m D_\mu$ are the following. Assume first, for simplicity, that $\diam(\supp\mu)=\infty$. Then for each $j\in\bb Z$ there exists a family $\m D_{\mu,j}$ of Borel subsets of $\supp\mu$ (the dyadic cubes of the $j$-th generation) such that:
\begin{enumerate}[(a)]
	\item each $\m D_{\mu,j}$ is a partition of $\supp\mu$, i.e. $\supp\mu=\bigcup_{Q\in\m D_{\mu,j}}Q$ and $Q\cap Q'=\varnothing$ whenever $Q,Q'\in\m D_{\mu,j}$ and $Q\neq Q'$;
	\item if $Q\in\m D_{\mu,j}$ and $Q'\in\m D_{\mu,k}$ with $k\leq j$, then either $Q\subset Q'$ or $Q\cap Q'=\varnothing$;
	\item if for all $j\in\bb Z$ and $Q\in\m D_{\mu,j}$, we have $2^{-j}\lesssim\diam(Q)\leq2^{-j}$ and $\mu(Q)\approx2^{-jn}$.
	\item there exists $C>0$ such that, for all $j\in\bb Z$, $Q\in\m D_{\mu,j}$, and $0<\tau<1$,
	\begin{equation}\nonumber
		\mu\big(\{x\in Q:\dist(x,\supp\mu\backslash Q)\leq\tau2^{-j}\}\big)+\mu\big(\{x\in\supp\mu\backslash Q:\dist(x,Q)\leq\tau2^{-j}\}\big)\leq C\tau^{1/C}2^{-jn}.
	\end{equation}
	This property is known as the \emph{thin boundary condition}, and it implies the existence of a point $x_Q\in Q$ (the \emph{center} of $Q$) such that $\dist(x_Q,\supp\mu\backslash Q)\gtrsim2^{-j}$ (see \cite[Lemma 3.5 of Part I]{ds2}).
\end{enumerate}
We set $\m D_{\mu}:=\bigcup_{j\in\bb Z}\m D_{\mu,j}$. If $\diam(\supp\mu)<\infty$, the families $\m D_{\mu,j}$ are only defined for $j\geq j_0$, with $2^{-j_0}\approx\diam(\supp\mu)$, and the same properties above hold for $\m D_{\mu}:=\bigcup_{j\geq j_0}\m D_{\mu,j}$.

Given a cube $Q\in\m D_{\mu,j}$, we say that its side length is $2^{-j}$ and denote it by $\ell(Q)$. Notice that $\diam Q\leq\ell(Q)$. We also denote
\begin{equation}\label{eq.ball}\nonumber
	B(Q):=B(x_Q,c_1\ell(Q)),\qquad B_Q:=B(x_Q,\ell(Q))\supset Q,
\end{equation}
where $c_1>0$ is chosen so that $B(Q)\cap\supp\mu\subset Q$, for all $Q\in\m D_\mu$.  For $\lambda>1$, we write $$\lambda Q = \bigl\{x\in \supp\mu:\, \dist(x,Q)\leq (\lambda-1)\,\ell(Q)\bigr\}.$$

The side length of a ``true cube'' $P\subset\R^{n+1}$ is also denoted by $\ell(P)$. On the other hand, given a ball $B\subset\R^{n+1}$, its radius is denoted by $r(B)$. For $\lambda>0$,   $\lambda B$ is the ball concentric with $B$ with radius $\lambda\,r(B)$.

Given $E\subset\R^{n+1}$, a ball $B$, and a hyperplane $L$, we denote
$$b\beta_{E}(B,L) =  \sup_{y\in E\cap B} \frac{\dist(y,L)}{r(B)} + 
\sup_{y\in L\cap B}\!\! \frac{\dist(x,E)}{r(B)} .$$
We set $b\beta_{E}(B,L) = \inf_L b\beta_{E}(x,r,L)$, where the infimum is taken over all hyperplanes $L\subset\R^{n+1}$. For   $B=B(x,r)$, we also write $b\beta_{E}(x,r,L)=b\beta_{E}(B,L)$, and $b\beta_{E}(x,r)=b\beta_{E}(B)$.

For $p\geq1$, a measure $\mu$, a ball $B$, and a hyperplane $L$, we set
$$\beta_{\mu,p}(B,L) = \left(\frac1{r(B)^n}\int_B \left(\frac{\dist(x,L)}{r(B)}\right)^p\,d\mu(x)\right)^{1/p}.$$
We define $\beta_{\mu,p}(B) = \inf_L \beta_{\mu,p}(B,L)$, where the infimum is taken over all hyperplanes $L$.
For $B=B(x,r)$, we also write $\beta_{\mu,p}(x,r,L) = \beta_{\mu,p}(B,L)$, and $\beta_{\mu,p}(x,r) = \beta_{\mu,p}(B)$. For a given cube $Q\in\DD_\mu$, we define:
\begin{align*}
	\begin{array}{ll}
		\beta_{\mu,p}(Q,L)  = \beta_{\mu,p}(B_Q,L)
		,&\quad \beta_{\mu,p}(\lambda Q,L)= \beta_{\mu,p}(\lambda B_Q,L),\\
		\quad \,\beta_{\mu,p}(Q) = \beta_{\mu,p}(B_Q),
		& \quad \quad\,\beta_{\mu,p}(\lambda Q)= \beta_{\mu,p}(\lambda B_Q).
	\end{array}
\end{align*}
Also, we define similarly $$b\beta_\mu(Q,L),\quad b\beta_\mu(\lambda Q,L),\quad b\beta_\mu(Q),\quad b\beta_\mu(\lambda Q),$$
by identifying these coefficients with the analogous ones in terms of $B_Q$. These coefficients are defined in the same way as $b\beta_{\supp\mu}(B,L)$ and $b\beta_{\supp\mu}(B)$,
replacing again $B$ by $Q\in\DD_\mu$ or $\lambda Q$.

\subsubsection{The Haj\l{}asz-Sobolev spaces}\label{sec.sobolev}    Let $(\Sigma,\sigma)$ be a metric space with $\sigma$ a doubling
measure on $\Sigma$, which means that there is a uniform constant $C_\sigma\geq1$ such that $\sigma(B(x,2r))\leq C_\sigma\, \sigma(B(x,r))$, for all $x\in \Sigma$ and $ r>0$. 

For a Borel function $f:\Sigma\to\R$, we say that a non-negative Borel function $g:\Sigma \to \R$ is a {\it Haj\l asz upper gradient of  $f$} if  
\begin{equation}\label{eq:H-Sobolev}\nonumber
	|f(x)-f(y)| \leq |x-y| \,(g(x)+g(y))\quad \mbox{ for $\sigma$-a.e.\ $x, y \in \Sigma$.} 
\end{equation}
We denote the collection of all the Haj\l asz upper gradients of $f$ by $D(f)$. For $p\geq1$, we denote by $\dt{W}^{1,p}(\Sigma)$ the space of Borel functions $f$ which have 
a Haj\l asz upper gradient in $L^p(\sigma)$, and we let $W^{1,p}(\Sigma)$ be the space of functions $f\in L^p(\sigma)$ which have a Haj\l asz upper gradient in $L^p(\sigma)$; that is,  $W^{1,p}(\Sigma)=  \dt W^{1,p}(\Sigma) \cap L^p(\sigma)$.
We  define the semi-norm  
\begin{equation}\label{eqseminorm}\nonumber
	\| f \|_{\dt W^{1.p}(\Sigma)} = \inf_{g \in D(f)} \| g\|_{L^p(\Sigma)}
\end{equation}
and the scale-invariant norm
\begin{equation}\label{eqnorm}\nonumber
	\| f\|_{W^{1,p}(\Sigma)} = \diam(\Sigma)^{-1} \|f\|_{L^p(\Sigma)} +   \inf_{g \in D(f)} \| g\|_{L^p(\Sigma)}.
\end{equation}
For any   metric space $\Sigma$, if $p\in (1,\infty)$, from the uniform convexity of $L^p(\sigma)$, one easily deduces
that the infima in the definitions of   $\|\cdot\|_{W^{1,p}(\Sigma)}$ and $\|\cdot\|_{\dt W^{1,p}(\Sigma)}$   are uniquely attained. We denote by $\nabla_{H,p} f$ the function $g$ which attains the infimum. The spaces $\dt{W}^{1,p}(\Sigma)$ and ${W}^{1,p}(\Sigma)$ were introduced in \cite{haj96}, and are known as the {\it Haj\l{}asz-Sobolev spaces}.

\subsubsection{Properties of the Carleson and non-tangential maximal operators}\label{sec.carleson} 

First we state the well-known change-of-aperture lemma for the non-tangential maximal functions.
\begin{lemma}\label{lm.ntchange} Fix $p>1$, $\alpha,\beta>0$, and $\hat c_1,\hat c_2\in(0,1/2]$. We have that
\begin{equation}\label{eq.changeavg}\nonumber
\Vert\widetilde{\m N}_{\alpha,\hat c_1,r}(u)\Vert_{L^p(\partial\Omega)}\approx_{\alpha,\beta,n,p,(\hat c_2/\hat c_1)}\Vert\widetilde{\m N}_{\beta,\hat c_2,r}(u)\Vert_{L^p(\partial\Omega)},\qquad\text{for any }u\in L^r_{\loc}(\Omega).
\end{equation}
 \end{lemma}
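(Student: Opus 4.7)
The plan is to decouple the two changes — of aperture and of averaging parameter — and handle each by a standard pointwise/maximal-function argument, then combine by alternating.

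\textbf{Step 1: Change of averaging parameter with fixed aperture.} For $\hat c_1\le \hat c_2$ in $(0,1/2]$, the inclusion $B(x,\hat c_1\delta(x))\subset B(x,\hat c_2\delta(x))$ yields the trivial pointwise estimate $\widetilde{\mathcal N}_{\alpha,\hat c_1,r}(u)\le (\hat c_2/\hat c_1)^{(n+1)/r}\widetilde{\mathcal N}_{\alpha,\hat c_2,r}(u)$. For the reverse direction I would, for each $x\in\gamma_\alpha(\xi)$, cover $B(x,\hat c_2\delta(x))$ by a finite family $\{B(y_i,\hat c_1\delta(y_i)/2)\}$ with $y_i\in B(x,\hat c_2\delta(x))$, whose cardinality is bounded in terms of $\hat c_2/\hat c_1$; since $\delta(y_i)\approx\delta(x)$ and $y_i\in\gamma_{\alpha'}(\xi)$ for an enlarged aperture $\alpha'=\alpha'(\alpha)$, summing the $L^r$ integrals yields the pointwise bound $m_{r,B(x,\hat c_2\delta(x))}(u)\lesssim \widetilde{\mathcal N}_{\alpha',\hat c_1,r}(u)(\xi)$.

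\textbf{Step 2: Change of aperture with fixed averaging parameter.} For $\alpha_1\le\alpha_2$, the inequality $\widetilde{\mathcal N}_{\alpha_1,\hat c,r}(u)\le \widetilde{\mathcal N}_{\alpha_2,\hat c,r}(u)$ is immediate. The reverse direction is a Fefferman--Stein-type argument. For $\xi\in\partial\Omega$ and $x\in\gamma_{\alpha_2}(\xi)$, fix $\xi_x\in\partial\Omega$ with $|x-\xi_x|=\delta(x)$ and set $B_x:=B(\xi_x,c\delta(x))\cap\partial\Omega$ with $c=c(\alpha_1)>0$ small enough that $|x-\eta|<(1+\alpha_1)\delta(x)$ for every $\eta\in B_x$, so that $x\in\gamma_{\alpha_1}(\eta)$ and consequently $m_{r,B(x,\hat c\delta(x))}(u)\le \widetilde{\mathcal N}_{\alpha_1,\hat c,r}(u)(\eta)$ on $B_x$. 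The triangle inequality gives $B_x\subset B(\xi,C\delta(x))$ with $C=C(\alpha_1,\alpha_2)$, and $n$-Ahlfors regularity yields $\sigma(B_x)\approx \sigma(B(\xi,C\delta(x)))\approx \delta(x)^n$. Averaging thus gives, for any $q>0$,
\begin{equation*}
m_{r,B(x,\hat c\delta(x))}(u)^q \le \frac{1}{\sigma(B_x)}\int_{B_x}\widetilde{\mathcal N}_{\alpha_1,\hat c,r}(u)^q\,d\sigma \lesssim \mathcal M_\sigma\big(\widetilde{\mathcal N}_{\alpha_1,\hat c,r}(u)^q\big)(\xi).
\end{equation*}
Taking the supremum over $x\in\gamma_{\alpha_2}(\xi)$ and choosing any $q\in(0,p)$ (possible since $p>1$), the $L^{p/q}$-boundedness of the Hardy--Littlewood maximal operator $\mathcal M_\sigma$ — which holds because $\sigma$ is doubling, as a consequence of Ahlfors regularity — yields $\|\widetilde{\mathcal N}_{\alpha_2,\hat c,r}(u)\|_{L^p(\sigma)}\lesssim \|\widetilde{\mathcal N}_{\alpha_1,\hat c,r}(u)\|_{L^p(\sigma)}$.

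\textbf{Step 3: Combining.} A finite number of alternations of the two steps — Step 1 may enlarge the aperture, and Step 2 then renormalizes it — yields the claimed two-sided $L^p$-equivalence for arbitrary $\alpha,\beta>0$ and $\hat c_1,\hat c_2\in(0,1/2]$, with constants depending on the stated parameters.

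The main obstacle is Step 2, and within it the geometric verification that the boundary ball $B_x$ has surface measure comparable to $\sigma(B(\xi,C\delta(x)))$. This is the precise point at which $n$-Ahlfors regularity of $\partial\Omega$ enters the proof and allows the Hardy--Littlewood maximal function to close the argument; without it, one cannot transfer a pointwise domination on a small boundary ball into a maximal-type pointwise estimate at $\xi$.
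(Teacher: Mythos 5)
Your proof is correct. The paper gives no proof of this lemma (it is cited as well-known), so there is nothing to compare against; your argument is the standard one: the change of averaging parameter is handled pointwise by a Whitney-ball covering (which enlarges the aperture by a bounded amount), and the change of aperture is handled in $L^p$ by the Fefferman--Stein device of averaging the pointwise bound over a boundary ball $B_x$ comparable to $B(\xi,\delta(x))$ and invoking the $L^{p/q}$-boundedness of the Hardy--Littlewood maximal operator on the doubling space $(\partial\Omega,\sigma)$, with $0<q<p$. The geometric input is exactly as you identified: $n$-Ahlfors regularity gives both $\sigma(B_x)\approx\sigma(B(\xi,C\delta(x)))\approx\delta(x)^n$ and the doubling property needed for $\mathcal M_\sigma$, and $p>1$ guarantees a valid exponent $p/q>1$ for the maximal operator.
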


The $L^p$ norms of the Carleson functionals $\n C_{\hat c,q}$ defined in (\ref{eq.carlesonfn}) are equivalent under a change of the parameter $\hat c$, as the next lemma states. We defer its proof to   Appendix \ref{sec.proofchangeavg}.

\begin{lemma}\label{lm.changeavg}Let $\hat c_1,\hat c_2\in(0,1/2]$, $q\in[1,\infty)$, and $p>1$. Then
\begin{equation}\label{eq.changeavg2}
\Vert\n C_{\hat c_1,q}(H)\Vert_{L^p(\partial\Omega)}\approx_{n, q, p, (\hat c_2/\hat c_1)}\Vert\n C_{\hat c_2,q}(H)\Vert_{L^p(\partial\Omega)},\qquad\text{for any }H\in L^q_{\loc}(\Omega).
\end{equation}
\end{lemma}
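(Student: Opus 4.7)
The claim is symmetric in $\hat c_1$ and $\hat c_2$, so the plan is to establish $\n C_{\hat c_1,q}(H)(\xi) \lesssim \n C_{\hat c_2,q}(H)(\xi)$ pointwise on $\partial\Omega$ together with the reverse, both of which imply \eqref{eq.changeavg2}. Assume without loss of generality that $\hat c_1 < \hat c_2$, and write $U_{\hat c,q}(H)(x) := \bigl(\dashint_{B(x,\hat c\delta(x))}|H|^q\,dm\bigr)^{1/q}$ for the integrand of $\n C_{\hat c,q}(H)$.

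For the easy direction I will exploit the containment $B(x,\hat c_1\delta(x))\subset B(x,\hat c_2\delta(x))$ together with the ratio of volumes: since $|H|^q\ge 0$,
\[
U_{\hat c_1,q}(H)(x)^q \;=\; \frac{1}{|B(x,\hat c_1\delta(x))|}\!\int_{B(x,\hat c_1\delta(x))}\!|H|^q\,dm \;\le\; \Bigl(\frac{\hat c_2}{\hat c_1}\Bigr)^{n+1}\! U_{\hat c_2,q}(H)(x)^q
\]
for every $x\in\Omega$. Inserting this bound into the definition of $\n C_{\hat c_1,q}(H)$ yields $\n C_{\hat c_1,q}(H)(\xi) \le (\hat c_2/\hat c_1)^{(n+1)/q}\,\n C_{\hat c_2,q}(H)(\xi)$ for every $\xi\in\partial\Omega$, which is sharper than the required $L^p$ bound.

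For the reverse direction the plan is to pair a Vitali covering with a Whitney decomposition of $\Omega$. For fixed $x\in\Omega$, the constraint $\hat c_2\le 1/2$ gives $\delta(y)\approx\delta(x)$ uniformly for $y\in B(x,\hat c_2\delta(x))$, so one may cover $B(x,\hat c_2\delta(x))$ by a uniformly bounded collection $\{B(y_j,\hat c_1\delta(y_j))\}_{j=1}^{K}$, with $K = K(n,\hat c_1/\hat c_2)$ and $y_j\in B(x,\hat c_2\delta(x))$. Summing integrals of $|H|^q$ over the covering balls and dividing by $|B(x,\hat c_2\delta(x))|$ produces the pointwise bound
\begin{equation*}
U_{\hat c_2,q}(H)(x) \;\le\; C\,\max_{1\le j\le K}U_{\hat c_1,q}(H)(y_j(x)),\qquad y_j(x)\in B(x,\hat c_2\delta(x)). \qquad (\ast)
\end{equation*}

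To integrate $(\ast)$ into a Carleson-type comparison one must absorb the $x$-dependence of the $y_j$. Let $\n W=\{Q\}$ be a Whitney decomposition of $\Omega$ with $\ell(Q)\approx\dist(Q,\partial\Omega)$. On a single Whitney cube $Q$ the function $\delta$ varies by a bounded factor, so each $y_j(x)\in B(x,\hat c_2\delta(x))$ lies in a fixed dilate $CQ$ uniformly in $x\in Q$. Combining this uniformity with the volume-ratio bound used above, now applied to compare $U_{\hat c_1,q}(H)(y_j(x))$ with an average $\dashint_{B(y_j(x),\varepsilon\ell(Q))}U_{\hat c_1,q}(H)\,dz$ for a suitably small $\varepsilon=\varepsilon(\hat c_1,\hat c_2)$, gives
\[
\int_Q U_{\hat c_2,q}(H)(x)\,dm(x)\;\lesssim\;\int_{CQ\cap\Omega}U_{\hat c_1,q}(H)(z)\,dm(z).
\]
Summing over Whitney cubes $Q$ intersecting $B(\xi,r)$ (whose union is contained in $B(\xi,C'r)\cap\Omega$), dividing by $r^n$ and taking the supremum over $r>0$, I obtain the pointwise inequality $\n C_{\hat c_2,q}(H)(\xi)\lesssim \n C_{\hat c_1,q}(H)(\xi)$, whence the $L^p$ estimate.

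The main obstacle will be the rigorous replacement of the $x$-dependent maximum in $(\ast)$ by an averaged quantity at the Whitney scale. While morally clear---everything in $(\ast)$ lives at a single scale $\ell(Q)$---this step requires a small additional dashed-integral argument together with the volume-comparison estimate that drove the easy direction; once this reduction is in place, the summation over Whitney cubes and the passage from $r$ to $C'r$ are routine.
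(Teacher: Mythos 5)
Your easy direction---the $\lesssim$ half of \eqref{eq.changeavg2} when $\hat c_1<\hat c_2$, via nestedness of $B(x,\hat c_1\delta(x))\subset B(x,\hat c_2\delta(x))$ and a volume ratio---is correct and is exactly what the paper calls ``straightforward.'' The hard direction, however, has a genuine gap which you yourself flag but then underestimate. Your $(\ast)$ is fine, but the step you propose for converting it into something integrable---invoking ``the volume-ratio bound used above'' to deduce $U_{\hat c_1,q}(H)(y_j(x))\lesssim\dashint_{B(y_j(x),\varepsilon\ell(Q))}U_{\hat c_1,q}(H)\,dz$---does not follow. The volume-ratio inequality compares two averaging radii \emph{at the same base point}; it says nothing about comparing $U_{\hat c_1,q}$ at a point with an average of $U_{\hat c_1,q}$ nearby, and the naive pointwise comparison $U_{\hat c_1,q}(y)\lesssim U_{\hat c_1,q}(z)$ for $z$ near $y$ is actually false (take $H$ concentrated just outside $B(z,\hat c_1\delta(z))$ but inside $B(y,\hat c_1\delta(y))$). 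So the averaging mechanism you sketch is broken, not merely unwritten.

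The argument that does work bypasses the maximum in $(\ast)$ entirely and also makes the Whitney decomposition unnecessary. Cover $B(x,\hat c_2\delta(x))$ by $N\lesssim_n(\hat c_2/\hat c_1)^{n+1}$ balls $B_j=B(e_j,\rho\delta(x))$ of bounded overlap with $\rho:=\hat c_1/8$ and centers $e_j\in B(x,\hat c_2\delta(x))$. Since $\hat c_2\le1/2$ forces $\delta(z)\gtrsim\delta(x)$ for $z\in B_j$, one gets $B_j\subset B(z,\hat c_1\delta(z))$ for \emph{every} $z\in B_j$, hence $\bigl(\int_{B_j}|H|^q\bigr)^{1/q}\lesssim(\hat c_1\delta(x))^{(n+1)/q}U_{\hat c_1,q}(H)(z)$ for all such $z$; averaging in $z$ over $B_j$, summing in $j$ using $\|\cdot\|_{\ell^q}\le\|\cdot\|_{\ell^1}$ and bounded overlap, and dividing by $|B(x,\hat c_2\delta(x))|^{1/q}$ yields the pointwise estimate
\[
U_{\hat c_2,q}(H)(x)\lesssim_{n,q,\hat c_2/\hat c_1}\dashint_{B(x,\tilde c\delta(x))}U_{\hat c_1,q}(H)\,dm,\qquad \tilde c:=\hat c_2+\hat c_1/8<1.
\]
A single Fubini (for each $z$, the set $\{x:z\in B(x,\tilde c\delta(x))\}$ has measure $\approx|B(x,\tilde c\delta(x))|\approx(\tilde c\delta(z))^{n+1}$, and $z\in B(\xi,2r)$ whenever $x\in B(\xi,r)$) then gives $\n C_{\hat c_2,q}(H)(\xi)\lesssim\n C_{\hat c_1,q}(H)(\xi)$ pointwise, which is stronger than the claimed $L^p$ bound. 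For comparison, the paper gets the same pointwise Carleson inequality by a different device: it covers the unit ball by $B(e_j,\hat c_1/(2\hat c_2))$ with \emph{fixed} centers, translates rigidly via $F_j(x)=x+\hat c_2\delta_\Omega(x)e_j$, verifies that each $F_j$ is injective and Lipschitz with Jacobian bounded below, and invokes the area formula. Your covering-and-Fubini route, once repaired as above, is more elementary; the paper's is slicker but requires the Lipschitz change-of-variables theorem.
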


As remarked in the introduction, there is a duality between the operators $\wt {\m N}$ and $\n C$. The precise results which we shall use are stated in Proposition \ref{prop.duality} below. When $\Omega$ is the half-space, these results were shown by Hyt\"onen and Ros\'en in \cite[Theorem 3.1, Theorem 3.2]{hr13}. For the benefit of the reader, in Appendix \ref{sec.proofduality}, we give the proof of the proposition in our  setting, by appealing to the discrete vector-valued model in \cite{hr13}.

\begin{proposition}[Duality between $\n C$ and $\m N$]\label{prop.duality} Let $\Omega\subset\bb R^{n+1}$, $n\geq1$, be a domain satisfying the corkscrew condition and such that $\partial\Omega$ is $n$-Ahlfors regular. Suppose that either $\Omega$ is bounded, or that $\partial\Omega$ is unbounded. Let $p,q\in(1,\infty)$ and $p'$, $q'$ their H\"older conjugates. Then ${\bf N}_{q,p}=({\bf C}_{q',p'})^*$, and moreover,
\begin{equation}\label{eq.hr7}\nonumber
\Vert uH\Vert_{L^1(\Omega)}\lesssim\Vert\wt{\m N}_q(u)\Vert_{L^p(\partial\Omega)}\Vert\n C_{q'}H\Vert_{L^{p'}(\partial\Omega)},\qquad u\in L^q_{\loc}(\Omega), H\in L^{q'}_{\loc}(\Omega),
\end{equation}
\begin{equation}\label{eq.hr8}\nonumber
\Vert\widetilde{\m N}_q(u)\Vert_{L^p(\partial\Omega)}\lesssim\sup_{H:\Vert\n C_{q'}(H)\Vert_{L^{p'}(\partial\Omega)}=1}\Big|\int_{\Omega}Hu\,dm\Big|,\qquad u\in L^q_{\loc}(\Omega),
\end{equation}
\begin{equation}\label{eq.hr9}\nonumber
	\Vert\n C_{q'}H\Vert_{L^{p'}(\partial\Omega)}\lesssim\sup_{u:\Vert\wt{\m N}_qu\Vert_{L^p(\partial\Omega)}=1}\Big|\int_\Omega Hu\,dm\Big|,\qquad H\in L^{q'}_{\loc}(\Omega).
\end{equation}
\end{proposition}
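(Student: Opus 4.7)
The plan is to reduce the statement to the abstract discrete vector-valued framework of Hyt\"onen-Ros\'en \cite[Section 2]{hr13} and invoke their discrete duality. I would begin by fixing a Whitney decomposition $\cW=\{P\}$ of $\Omega$ into closed balls satisfying $r(P)\approx\delta_\Omega(\hat x_P)$ (with $\hat x_P$ the center of $P$) and with the enlargements $P^*:=B(\hat x_P,\hat c\,\delta_\Omega(\hat x_P))$ having bounded overlap. Using the corkscrew condition and the $n$-Ahlfors regularity of $\partial\Omega$, I would assign to each $P\in\cW$ a dyadic boundary cube $Q_P\in\m D_\sigma$ with $\ell(Q_P)\approx r(P)$ and $\dist(Q_P,P)\lesssim r(P)$. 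The resulting family of pairs $(Q_P,P)$ provides an index set that matches the abstract setting of \cite[Section 2]{hr13}, in which each dyadic cube $Q\in\m D_\sigma$ carries a uniformly bounded number of Whitney balls $P$ with $Q_P=Q$.

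Next, for $u\in L^q_{\loc}(\Omega)$ and $H\in L^{q'}_{\loc}(\Omega)$, I would discretize through the sequences
\[
a_P:=m_{q,P^*}(u),\qquad b_P:=m_{q',P^*}(H)\,m(P).
\]
A direct application of H\"older's inequality together with bounded overlap of $\{P^*\}$ yields $|\int_\Omega uH\,dm|\lesssim\sum_P a_P b_P$. By unwinding the definitions together with Whitney and dyadic properties, one checks the pointwise comparabilities
\[
\wt{\m N}_q(u)(\xi)\approx\sup\{a_P:P\in\cW,\ Q_P\text{ lies in an appropriate dyadic cone over }\xi\},
\]
\[
\n C_{q'}(H)(\xi)\approx\sup_{Q\in\m D_\sigma,\,\xi\in Q}\frac1{\sigma(Q)}\sum_{P:\,Q_P\subset Q}b_P,
\]
where the implicit constants depend only on $\hat c$ and the geometric constants. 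These identify $\|\wt{\m N}_q(u)\|_{L^p(\partial\Omega)}$ and $\|\n C_{q'}(H)\|_{L^{p'}(\partial\Omega)}$ with the discrete non-tangential maximal and Carleson sequence norms studied in \cite{hr13}.

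With the discretization in place, the first inequality of the proposition follows from the pointwise bound for $\int_\Omega uH\,dm$ in combination with the discrete bilinear estimate of \cite[Theorem 2.2]{hr13}. The two dual inequalities, characterising $\|\wt{\m N}_q(u)\|_{L^p(\partial\Omega)}$ and $\|\n C_{q'}(H)\|_{L^{p'}(\partial\Omega)}$ as suprema over unit-norm test functions in the opposite space, follow from the discrete dual estimates of \cite[Theorem 2.3]{hr13} applied to the sequences $(a_P)$ and $(b_P)$, after a density argument that allows one to upgrade test sequences to genuine $L^q_{\loc}$ (resp.\ $L^{q'}_{\loc}$) functions by spreading each value $a_P$ or $b_P$ uniformly over $P^*$. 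The identification ${\bf N}_{q,p}=({\bf C}_{q',p'})^*$ is then a standard functional-analytic consequence: the pairing embeds each space into the dual of the other, while the dual-norm characterization of $\wt{\m N}_q$ supplies the matching reverse bound.

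The main obstacle will be executing the discrete-to-continuous comparison with enough care to cover both geometric regimes admitted by the statement. In particular, one must verify that the shadow map $P\mapsto Q_P$ is compatible with the continuous cones $\gamma_\alpha(\xi)$ in both directions (enlarging $\alpha$ if necessary), and that truncation of $\m D_\sigma$ at the top scale $\approx\diam(\partial\Omega)$ in the bounded case does not lose control of either norm; while in the case of unbounded $\partial\Omega$, the summation over all generations must be absolutely convergent for $u$ and $H$ verifying the hypotheses. These verifications are technical but follow standard Whitney-dyadic arguments, and the change-of-aperture and change-of-average lemmas (Lemmas \ref{lm.ntchange} and \ref{lm.changeavg}) absorb any loss in geometric constants incurred along the way.
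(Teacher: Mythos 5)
Your overall route is the paper's: discretize with a Whitney-plus-dyadic structure, invoke the discrete duality of Hyt\"onen--Ros\'en on the resulting sequences, and pass back to functions. The re-indexing choice (scalar sequences $(a_P),(b_P)$ indexed by Whitney pieces rather than the paper's vector-valued sequences $(u_Q)_{Q\in\m D_\sigma}$ with $u_Q=u|_{w^*(Q)}$) is a cosmetic variant that does not change the substance.

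There is, however, a genuine gap in the bridge between the dyadic and continuous operators. You assert pointwise comparabilities, in particular
\[
\n C_{q'}(H)(\xi)\approx\sup_{Q\in\m D_\sigma,\,\xi\in Q}\frac1{\sigma(Q)}\sum_{P:\,Q_P\subset Q}b_P,
\]
and then propose to repair any lost constants with the change-of-aperture and change-of-averaging lemmas. Only the direction ``discrete~$\lesssim$~continuous'' holds pointwise. The reverse direction fails pointwise: given a ball $B(\xi,r)$ realizing the supremum in $\n C_{q'}(H)(\xi)$, there need not exist a dyadic boundary cube $Q\ni\xi$ with $\ell(Q)\approx r$ that captures the Whitney pieces meeting $B(\xi,r)\cap\Omega$; the smallest dyadic cube containing those pieces' shadows and also containing $\xi$ can be much larger than $r$, which destroys the normalization $1/\sigma(Q)$. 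The same obstruction appears in one direction of the $\m N^d$-to-$\m N^s$ comparison. Changing the aperture of $\wt{\m N}$ (Lemma \ref{lm.ntchange}) or the averaging parameter of $\n C$ (Lemma \ref{lm.changeavg}) does nothing to cure this, since the failure is in the localization to dyadic cubes, not in the choice of cone or averaging ball. What is needed is a good-$\lambda$--type lemma asserting that if $\n C_{q'}(H)(\xi)>\tau$, then the dyadic functional exceeds $c_1\tau$ on a nearby Borel set $E$ with $(\sup_{\zeta\in E}|\zeta-\xi|)^n\lesssim\sigma(E)$, and then concluding the $L^p$ comparison from this distributional estimate; this is exactly Lemma \ref{lm.lc} (the domain analogue of \cite[Lemma 3.5]{hr13}), which your argument never invokes and without which the $L^p$-norm equivalences in your middle step do not follow.
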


\begin{remark} For any $p,q\in(1,\infty)$, the space ${\bf C}_{q',p'}$ is not reflexive; indeed, we have that ${\bf C}_{q',p'}\subsetneq({\bf N}_{q,p})^*$. See \cite[Theorems 2.4 and 3.2]{hr13}.
\end{remark}

To obtain the $L^{p'}$-solvability of the Poisson-Dirichlet problem with general data, we need the following approximation lemma. Its proof is deferred to Appendix \ref{sec.prooflipschitz}.

\begin{lemma}\label{lm.lipschitz} Fix $p,q\in(1,\infty)$ and $H\in{\bf C}_{q,p}$.
\begin{enumerate}[(i)]
	\item There exists a sequence of functions $\{H_k\}_{k\in\bb N}\subset{\bf C}_{q,p}$, such that each $H_k$ is compactly supported in $\Omega$, $H_k\ra H$ strongly in $L^q_{\loc}(\Omega)$, $H_k\ra H$ pointwise a.e.\ in $\Omega$, and $H_k\ra H$ strongly in ${\bf C}_{q,p}$ as $k\ra\infty$.
	\item Suppose that $H$ is compactly supported in $\Omega$. Then there exists a family of functions $\{H_\ep\}_{\ep>0}\subset\Lip(\Omega)\cap{\bf C}_{q,p}$, each of which is compactly supported in $\Omega$, such that $H_\ep\ra H$ strongly in $L^q_{\loc}(\Omega)$, and $\Vert H_\ep-H\Vert_{{\bf C}_{q,p}}\ra0$ as $\ep\ra0$.
\end{enumerate}
\end{lemma}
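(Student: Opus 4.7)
For part (i), the plan is to truncate $H$ by a compact exhaustion and to prove ${\bf C}_{q,p}$-convergence via a three-scale analysis of the Carleson functional. Set $K_k := \{x \in \Omega : \delta(x) \geq 1/k\} \cap \overline{B(0,k)}$ and $H_k := H\1_{K_k}$. Each $H_k$ is compactly supported in $\Omega$, $|H_k| \leq |H|$, and $H_k \to H$ pointwise a.e.\ and in $L^q_{\loc}(\Omega)$ by dominated convergence. Since $\n C_q(H - H_k) \leq \n C_q(H) \in L^p(\sigma)$, the dominated convergence theorem on $\partial\Omega$ reduces ${\bf C}_{q,p}$-convergence to the pointwise statement $\n C_q(H - H_k)(\xi) \to 0$ for $\sigma$-a.e.\ $\xi$.

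Fix such $\xi$ with $\Phi(\xi) := \n C_q(H)(\xi) < \infty$ and $\epsilon > 0$. Writing $a_k(x) := (\dashint_{B(x, \hat c\delta(x))} |H-H_k|^q dm)^{1/q}$ and $f_k(r) := r^{-n}\int_{B(\xi,r)\cap\Omega} a_k\,dm$, we have $\n C_q(H-H_k)(\xi) = \sup_{r > 0} f_k(r)$ and $f_k \leq f := f_0$. I will estimate $\sup_r f_k(r)$ in three regimes. At \emph{small scales}, applying the differentiation theorem for mutually singular Radon measures to $d\nu := a_0\,\1_\Omega dm$ (locally finite near $\xi$ since $\nu(B(\xi,r)) \leq r^n\Phi(\xi)$) and $\sigma$ (which is mutually singular with $\nu$, being supported on $\partial\Omega$ while $\nu$ lives on the open set $\Omega$): the $n$-Ahlfors regularity $\sigma(B(\xi,r)) \approx r^n$ then yields $\nu(B(\xi,r))/r^n \to 0$ as $r \to 0^+$ for $\sigma$-a.e.\ $\xi$, so $f_k(r) \leq f(r) < \epsilon$ for $r \leq r_0(\xi,\epsilon)$. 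At \emph{large scales}, for any $\eta \in B(\xi,r)\cap\partial\Omega$ the inclusion $B(\xi,r) \subset B(\eta,2r)$ gives $\Phi(\eta) \geq 2^{-n}f(r)$; integrating over $\eta$ with the $n$-Ahlfors lower bound $\sigma(B(\xi,r)\cap\partial\Omega) \gtrsim \min(r^n,\diam(\partial\Omega)^n)$ produces $f(r) \lesssim r^{-n/p}\|\Phi\|_{L^p(\sigma)}$ (unbounded $\partial\Omega$) or $f(r) \leq \Phi(\xi)(\diam\Omega/r)^n$ for $r \geq \diam\Omega$ (bounded $\Omega$, using $\nu(B(\xi,r)) = \nu(\Omega)$ there), so in both cases $f_k(r) \leq f(r) < \epsilon$ for $r \geq R_0(\xi,\epsilon)$. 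At \emph{medium scales} $r \in [r_0, R_0]$, the dyadic observation $f_k(r) \leq 2^n f_k(2^{j+1}r_0)$ on $r \in [2^j r_0, 2^{j+1}r_0]$ reduces the supremum to the finite maximum $\max_{0 \leq j \leq J}f_k(2^jr_0)$, each term of which tends to zero by DCT on $B(\xi,2^Jr_0)\cap\Omega$ (on which $a_0 \in L^1$).

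For part (ii), I will mollify: set $H_\ep := H * \rho_\ep$ for a standard mollifier $\rho_\ep$. For $\ep < \dist(\supp H, \partial\Omega)/2$, $H_\ep \in C_c^\infty(\Omega) \subset \Lip(\Omega)$ is compactly supported, and $H_\ep \to H$ in $L^q(\Omega)$ and pointwise a.e. Minkowski's inequality combined with volume doubling produces the uniform bound $(\dashint_{B(x,\hat c\delta)}|H-H_\ep|^q dm)^{1/q} \lesssim (\dashint_{B(x,2\hat c\delta)}|H|^q dm)^{1/q}$ for $\ep$ small (with vanishing left side where the bound fails); by Lemma \ref{lm.changeavg}, this yields an $L^p(\sigma)$-dominant for $\n C_q(H-H_\ep)$ independent of $\ep$. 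The three-scale analysis of (i) then carries over, with the small-scale regime now trivial: the compact support of $H$ forces the integrand to vanish for $\delta(x)$ below a fixed positive constant, so $f_\ep(r,\xi) = 0$ for small $r$.

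\emph{Main obstacle.} I expect the hard part to be the large-scale estimate in (i), where we must upgrade the $L^p(\sigma)$-integrability of $\Phi$ to a scale-invariant decay of $f(r)$. The geometric inequality $\Phi(\eta) \gtrsim f(r)$ valid for $\eta \in B(\xi,r)\cap\partial\Omega$, combined with the $n$-Ahlfors lower regularity of $\sigma$, is the essential tool and treats bounded and unbounded $\Omega$ uniformly.
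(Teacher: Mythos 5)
Your proof is correct, and both parts take genuinely different routes from the paper's. For part (i), the paper imports the abstract density result of Hyt\"onen--Ros\'en \cite[Lemma 2.5]{hr13} in the discrete sequence model, then transfers it through the maps $S$ and $F$ using Lemmas \ref{lm.ns} and \ref{lm.nd}; you instead truncate directly ($H_k = H\1_{K_k}$) and prove ${\bf C}_{q,p}$-convergence by a hands-on three-scale analysis, with Besicovitch differentiation of the mutually singular pair $\nu = a_0\1_\Omega\, dm$ and $\sigma$ handling small scales, the $L^p(\sigma)$-norm of $\n C_q(H)$ combined with Ahlfors regularity (or $\nu(\Omega)<\infty$ for bounded $\Omega$) handling large scales, and a dyadic reduction plus DCT handling the finitely many intermediate scales. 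This is longer but self-contained and in addition makes the pointwise a.e.\ and $L^q_{\loc}$ convergences explicit, which the paper's discretization argument leaves implicit. For part (ii), the paper mollifies at a variable scale $\hat c\delta(x)\ep$, which is the construction one would need without compact support; since part (ii) already assumes $H$ compactly supported, your fixed-scale mollification $H*\rho_\ep$ works equally well, and the small-scale regime collapses because the averages $a_\ep(x)$ vanish identically for $\delta(x)$ below the fixed threshold $\dist(\supp H,\pom)/(2(1+\hat c))$, uniformly in small $\ep$. The one point worth stating carefully in a full write-up: $\nu$ is a Radon measure on all of $\R^{n+1}$ (not just $\Omega$) because $\nu(B(\xi,r))\le r^n\,\n C_q(H)(\xi)<\infty$ for any single $\xi$ with finite Carleson functional, which exists since $\n C_q(H)\in L^p(\sigma)$; this is what licenses the differentiation theorem.
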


\subsubsection{Whitney decompositions}\label{sec.whitney} Given an open set $U\subsetneq\bb R^{n+1}$ and a number $\hat c\in(0,4]$, let $k\in\bb N$ be the unique integer such that $2^{-k}\leq\hat c/4< 2^{-k+1}$, and we say that a collection of closed cubes, $\m W_{\hat c}(U)$, is a \emph{$\hat c$-Whitney decomposition of $U$} if the interiors of the cubes in $\m W(U)=\m W_{\hat c}(U)$ are pairwise disjoint, $\cup_{I\in\m W(U)}I=U$, and moreover
\[
2^{k+2}\diam I\leq\dist(I,\partial U)\leq 2^{k+5}\diam I,\qquad\text{for each }I\in\m W(U).
\]
Such a $\hat c$-Whitney decomposition always exists; this can be attained by dyadically subdividing $k+2$ times each Whitney cube from the standard decomposition. 

Suppose that $\partial\Omega$ is  $n$-Ahlfors regular and consider the dyadic lattice $\m D_\sigma$ defined   in Section \ref{sec.lattice}. Then for each   $I\in\m W=\m W(\Omega)$ there is some cube $Q\in\m D_\sigma$ such that   
\begin{equation}\label{eq.bi}
\ell(Q)=\ell(I),\qquad\text{and}\qquad\dist(I,Q)\leq\frac{256}{\hat c}\diam I.
\end{equation} 
For any $I\in\m W(\Omega)$, we let $b(I)=b_\Omega(I)$ be the collection of all cubes $Q\in\m D_\sigma$ which satisfy (\ref{eq.bi}). If $Q\in b(I)$, we say that $Q$ is a \emph{boundary cube  of $I$}. There is a uniformly bounded number of $Q\in b(I)$, depending only on $n$ and the $n$-Ahlfors regularity of $\partial\Omega$. Conversely, given $Q\in\m D_\sigma$, we let
\begin{equation}\label{eq.wq}\nonumber
	w(Q):=\bigcup_{I\in\m W: Q\in b(I)}I.
\end{equation}
It is easy to check that $w(Q)$ is made up of at most  a uniformly bounded number of cubes $I$, but it may happen that $w(Q)=\varnothing$. Moreover, if $I\in\m W$ and $I\cap w(Q)\neq\varnothing$, then $I\subset w(Q)$ since the Whitney cubes are disjoint.

Given a Whitney cube $I\in\m W(U)$, we use the notation $I^*$ for the slightly larger cube of same center as $I$ and such that $\diam I^*=(1+\theta)\diam I$, where $\theta\in(0,1/10)$ is a small fixed constant. We also define
\begin{equation}\label{eq.wq2}
w^*(Q):=\bigcup_{I\in\m W: Q\in b(I)}I^*.
\end{equation}

\subsection{Elliptic PDE preliminaries}\label{sec.pde}

Throughout this subsection,   we assume only that $\Omega$ is a domain in $\bb R^{n+1}$, $n\geq2$. We write  $2^*=\frac{2(n+1)}{n-1}$ and that $2_*=(2^*)'=\frac{2(n+1)}{n+3}$.

Recall that $C_c^{\infty}(\Omega)$ is the space of compactly supported smooth functions in $\Omega$,  and  that for $p\in[1,\infty)$, $W^{1,p}(\Omega)$ is the Sobolev space of $p$-th integrable functions in $\Omega$ whose weak derivatives exist in $\Omega$ and are $p$-th integrable functions, while $W_0^{1,p}(\Omega)$ is the completion of $C_c^{\infty}(\Omega)$ under the norm $\Vert u\Vert_{W^{1,p}(\Omega)}:=\Vert u\Vert_{L^p(\Omega)}+\Vert\nabla u\Vert_{L^p(\Omega)}$. Moreover,   $\dt W^{1,p}(\Omega)$ consists of the $L^1_{\loc}(\Omega)$ functions whose weak gradient is $p$-th integrable over $\Omega$, and   we denote by $L^p_c(\Omega)$ the space of  $p$-th integrable functions with compact support in $\Omega$. We let $Y_0^{1,2}(\Omega)$ be the completion of $C_c^{\infty}(\Omega)$ under the norm $\Vert u\Vert_{Y^{1,2}(\Omega)}:=\Vert u\Vert_{L^{2^*}(\Omega)}+\Vert\nabla u\Vert_{L^2(\Omega)}$.

We assume throughout that $A$ is a real, not necessarily symmetric matrix satisfying (\ref{eq.elliptic}). Define the elliptic operator $L$ acting formally on real-valued functions $u$ by
\[
Lu=-\dv(A\nabla u)=-\sum_{i,j=1}^{n+1}\frac{\partial}{\partial x_i}\Big(a_{ij}\frac{\partial u}{\partial x_j}\Big).
\]
We write $A^T$ for the transpose of $A$, and $L^*=-\dv A^T\nabla$.

\subsubsection{Properties of weak solutions}

Given $H\in L^{2_*}_{\loc}(\Omega)$ and $F\in L^2_{\loc}(\Omega)$, we say that a function $w\in W^{1,2}_{\loc}(\Omega)$ solves $Lw=H-\dv F$ \emph{in the weak sense}, or that $w$ is a \emph{weak solution} of the equation $Lw=H-\dv F$, if for any $\phi\in C_c^{\infty}(\Omega)$, we have that
\[
\int_\Omega A\nabla w\nabla\phi\,dm=\int_\Omega\Big[H\phi+F\nabla\phi\Big]\,dm.
\]

The proof of the following basic inequality which weak solutions verify is standard.

\begin{lemma}[Caccioppoli inequality]\label{lm.cacc} Let $B$ be a ball in $\Omega$ and let $w\in W_{\loc}^{1,2}(B^*)$ be a weak solution to the equation $Lw=H-\dv F$ in $B^*=(1+\theta)B$, where $F\in L^2(B^*)$, $H\in L^{2_*}(B^*)$, and $\theta\in(0,1)$. Then
\begin{equation}\nonumber
\int_B|\nabla w|^2\lesssim_\theta\frac1{r(B)^2}\int_{B^*}|w|^2\,dm+\Big(\int_{B^*}|H|^{2_*}\,dm\Big)^{\frac2{2_*}}+\int_{B^*}|F|^2\,dm.
\end{equation}
\end{lemma}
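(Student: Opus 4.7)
The plan is to execute the standard cutoff-and-test argument for Caccioppoli-type estimates, taking care to treat the inhomogeneous term $H$ via Sobolev embedding (which is why the exponent $2_*$ is the sharp one). Fix a cutoff $\eta\in C_c^\infty(B^*)$ with $\eta\equiv 1$ on $B$, $0\le\eta\le 1$, and $|\nabla\eta|\lesssim (\theta r(B))^{-1}$. Then $\phi=\eta^2 w\in W^{1,2}_0(B^*)$ is an admissible test function (after a standard approximation, since $w\in W^{1,2}_{\loc}(B^*)$), and inserting it into the weak formulation of $Lw=H-\dv F$ with the expansion $\nabla\phi=\eta^2\nabla w+2\eta w\nabla\eta$ gives
\begin{equation}\nonumber
\int \eta^2\, A\nabla w\cdot\nabla w\,dm \;=\; -2\!\int \eta w\, A\nabla w\cdot\nabla\eta\,dm + \int H\eta^2 w\,dm + \int \eta^2 F\cdot\nabla w\,dm + 2\!\int \eta w\, F\cdot\nabla\eta\,dm.
\end{equation}

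The ellipticity in \eqref{eq.elliptic} bounds the left-hand side below by $\lambda\int\eta^2|\nabla w|^2\,dm$, and the boundedness $\|A\|_\infty\le 1/\lambda$ together with Cauchy--Schwarz and Young's inequality (with a small parameter $\varepsilon>0$) handles the first, third, and fourth terms on the right: each can be bounded by $\varepsilon\int\eta^2|\nabla w|^2 + C_\varepsilon\bigl(\int w^2|\nabla\eta|^2+\int\eta^2|F|^2\bigr)$. The only term needing care is $\int H\eta^2 w\,dm$, where we cannot simply pair $H$ with $w$ in $L^2$. Since $0\le\eta\le 1$, we have $\eta^2\le\eta$, so by Hölder with the conjugate pair $2_*,2^*$,
\begin{equation}\nonumber
\Big|\int H\eta^2 w\,dm\Big| \;\le\; \|H\|_{L^{2_*}(B^*)}\,\|\eta w\|_{L^{2^*}(B^*)}.
\end{equation}
Now $\eta w\in W^{1,2}_0(B^*)$, so the Sobolev embedding $W^{1,2}_0\hookrightarrow L^{2^*}$ in $\mathbb{R}^{n+1}$ (this is precisely why $2^*=2(n+1)/(n-1)$ is the right exponent) yields
\begin{equation}\nonumber
\|\eta w\|_{L^{2^*}(B^*)} \;\lesssim\; \|\nabla(\eta w)\|_{L^2(B^*)} \;\le\; \|\eta\nabla w\|_{L^2(B^*)} + \|w\nabla\eta\|_{L^2(B^*)}.
\end{equation}
A further application of Young's inequality with parameter $\varepsilon$ then bounds this contribution by $\varepsilon\int\eta^2|\nabla w|^2 + \varepsilon\int w^2|\nabla\eta|^2 + C_\varepsilon\|H\|_{L^{2_*}(B^*)}^2$.

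Collecting everything, we obtain
\begin{equation}\nonumber
\lambda\int \eta^2|\nabla w|^2\,dm \;\le\; C\varepsilon\int \eta^2|\nabla w|^2\,dm + C_\varepsilon\Big[\int w^2|\nabla\eta|^2\,dm + \|H\|_{L^{2_*}(B^*)}^2 + \|F\|_{L^2(B^*)}^2\Big],
\end{equation}
and choosing $\varepsilon$ so that $C\varepsilon<\lambda/2$ lets us absorb the gradient term on the right into the left. Using $\eta\equiv 1$ on $B$ and $|\nabla\eta|\lesssim 1/(\theta r(B))$ yields the claimed inequality. The only mildly subtle point is to ensure that $\eta^2 w$ is genuinely an admissible test function given the integrability hypotheses on $H$ and $F$; this is standard and follows by truncating $w$ and passing to the limit, with all integrals finite since $w\in W^{1,2}_{\loc}$, $F\in L^2(B^*)$, and $H\eta^2 w\in L^1(B^*)$ by the same Hölder--Sobolev argument above.
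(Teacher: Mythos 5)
The paper itself does not supply a proof for this lemma; it merely remarks that the argument is standard. Your write-up is precisely the standard proof, and it is correct: you test with $\eta^2 w$ (justified by approximation), expand, apply ellipticity, handle the $A$- and $F$-terms by Cauchy--Schwarz and Young, and treat the $\int H\eta^2 w$ term via H\"older with the conjugate pair $(2_*,2^*)$ followed by the Sobolev embedding $W^{1,2}_0(\mathbb{R}^{n+1})\hookrightarrow L^{2^*}(\mathbb{R}^{n+1})$ applied to $\eta w$; a final Young step and absorption give the claim, with the $\theta$-dependence entering through $|\nabla\eta|\lesssim(\theta r(B))^{-1}$.
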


The next lemma comprises the conclusions of the De Giorgi-Nash-Moser theory, a capstone result in the regularity of elliptic PDEs with rough coefficients.

\begin{lemma}\label{lm.moser} Let $B$ be a ball in $\Omega$, $B^*=(1+\theta)B$ for some $\theta\in(0,1]$, and let $u\in W^{1,2}(B^*)$ be a weak solution to the equation $Lu=0$ in $B^*$. Then there exists $\eta\in(0,1)$ so that $u$ is H\"older continuous with exponent $\eta$ in $B^*$. Moreover, for any $q>0$,
\begin{equation}\label{eq.moser}\nonumber
\max_{x\in B}|u(x)|\lesssim_{\theta,n,q}\Big(\dashint_{B^*}|u|^q\,dm\Big)^{\frac1q}\lesssim\min_{x\in\frac12B}|u(x)|,
\end{equation}
and
\[
\sup_{x,y\in\frac12B}\frac{|u(x)-u(y)|}{|x-y|^\eta}\lesssim(r(B))^{-\eta}\Big(\dashint_B|u|^2\,dm\Big)^{\frac12}.
\]
\end{lemma}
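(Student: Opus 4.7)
The plan is to prove this statement by carrying out the classical De~Giorgi--Nash--Moser program in three steps: Moser iteration for the sup estimate, a weak Harnack inequality via a logarithmic $\operatorname{BMO}$ bound combined with John--Nirenberg, and finally H\"older oscillation decay derived from the Harnack inequality. The entire argument uses only ellipticity and boundedness of $A$, so non-symmetry of the matrix is not an obstacle.

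For the first bound $\max_B |u|\lesssim\bigl(\dashint_{B^*}|u|^q\,dm\bigr)^{1/q}$, I would run Moser iteration. For $\beta\geq 1$ and a suitable cutoff $\eta$ supported in a slightly larger ball, testing the equation $Lu=0$ against $\eta^2|u|^{2\beta-2}u$ and invoking ellipticity produces a Caccioppoli-type estimate for $|u|^\beta$ analogous to Lemma \ref{lm.cacc} with $H=F=0$. Combining with the Sobolev embedding $W^{1,2}\hookrightarrow L^{2(n+1)/(n-1)}$ yields a reverse H\"older bound
\[
\Bigl(\dashint_{B_{r'}}|u|^{\chi\cdot 2\beta}\Bigr)^{1/(\chi\cdot 2\beta)}\leq \frac{C}{(r-r')^{\kappa}}\Bigl(\dashint_{B_r}|u|^{2\beta}\Bigr)^{1/(2\beta)},\qquad \chi=\tfrac{n+1}{n-1},
\]
for nested balls $B_{r'}\subset B_r\subset B^*$. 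Iterating with $\beta_k=\chi^k$, summing the resulting geometric series in the exponents, and then using a standard interpolation trick to lower the starting exponent from $2$ to an arbitrary $q>0$ gives the claimed sup bound with constant depending on $\theta$, $n$, $q$, and $\lambda$.

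For the reverse bound $\bigl(\dashint_{B^*}|u|^q\bigr)^{1/q}\lesssim\min_{B/2}|u|$ (applied to nonnegative solutions, the relevant case for the Harnack chain), the plan has two ingredients. First, the same Moser iteration applied to negative powers $u^{\beta}$, $\beta<0$, yields $\min_{B/2} u\gtrsim \bigl(\dashint_{B^*}u^{-q}\bigr)^{-1/q}$ for any $q>0$. Second, to connect positive and negative exponents, I would prove a logarithmic estimate by testing $Lu=0$ against $\eta^{2}/u$, which gives
\[
\int \eta^{2}|\nabla \log u|^{2}\,dm\lesssim \int |\nabla\eta|^{2}\,dm.
\]
Via the Poincar\'e inequality on balls this implies $\log u\in\operatorname{BMO}(B^*)$ with norm independent of $u$. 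The John--Nirenberg inequality then produces $p_0>0$ with $\dashint e^{p_0|\log u-c|}<\infty$, which bridges the $L^{p_0}$ and $L^{-p_0}$ means of $u$ and closes the chain.

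Finally, for the H\"older estimate, I would use oscillation decay. For any concentric balls $B_r\subset B^*$, set $M=\sup_{B_r}u$, $m=\inf_{B_r}u$, and apply the weak Harnack inequality to the nonnegative solutions $M-u$ and $u-m$ on $B_r$; combining the two outputs yields
\[
\osc(u,B_{r/2})\leq \gamma\,\osc(u,B_r),\qquad \gamma\in(0,1),
\]
with $\gamma$ depending only on $n$ and $\lambda$. Iterating over a geometric sequence of radii produces a modulus of continuity of power type, with exponent $\eta=\log_{2}(1/\gamma)$. The quantitative bound on the seminorm in terms of $\bigl(\dashint_B|u|^{2}\bigr)^{1/2}$ follows by starting the iteration at the initial scale and plugging in the $L^{\infty}$--$L^{2}$ sup estimate from the first step. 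The main technical care required throughout is merely to track the dependence of constants on the Caccioppoli ``gap'' $\theta$ when iterating between the radii $r(B)$ and $(1+\theta)r(B)$; once this bookkeeping is done the lemma follows, and the result is essentially textbook material in the spirit of Gilbarg--Trudinger and Han--Lin.
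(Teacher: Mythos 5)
Your proposal is correct and follows the standard De Giorgi--Nash--Moser program (Moser iteration, logarithmic $\operatorname{BMO}$ bound plus John--Nirenberg for the weak Harnack inequality, and oscillation decay for the H\"older seminorm), which is exactly the body of classical results the paper is invoking without proof when it states Lemma~\ref{lm.moser}. You also correctly observe that the reverse inequality $\bigl(\dashint_{B^*}|u|^q\bigr)^{1/q}\lesssim\min_{\frac12 B}|u|$ is a Harnack inequality and hence implicitly requires $u$ to be nonnegative on $B^*$, a hypothesis the paper leaves tacit.
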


When $A$ is Lipschitz, we have a control on the size of the gradient of the solution. The precise technical result which we shall use is the following;  it is a consequence of the proof of \cite[Theorem 5.19]{gm12}.

\begin{lemma}\label{lm.gradient} Let $B$ be a ball in $\Omega$, $L=-\dv A\nabla$, with $A$ a Lipschitz matrix function verifying (\ref{eq.elliptic}) on $2B$, and suppose that $\Vert\nabla A\Vert_{L^{\infty}(2B)}\leq\frac{C}{r(B)}$. Let $u$ solve $Lu=0$ in $2B$. Then $\nabla u$ is continuous on $2B$, and
\[
\sup_{x\in B}|\nabla u(x)|\lesssim\Big(\dashint_{2B}|\nabla u(y)|^2\,dm(y)\Big)^{\frac12}.
\]
\end{lemma}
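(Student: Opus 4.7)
The statement is a classical interior $C^1$ estimate for elliptic equations with Lipschitz coefficients, and I would approach it by differentiating the equation and then bootstrapping integrability.

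First, by the natural scaling $\tilde u(y) = u(x_0 + ry)$, $\tilde A(y) = A(x_0 + ry)$, where $x_0$ is the center of $B$ and $r = r(B)$, the hypothesis $\|\nabla A\|_{L^\infty(2B)} \leq C/r(B)$ becomes $\|\nabla \tilde A\|_{L^\infty(B(0,2))} \leq C$, while the ellipticity constants are preserved. Thus I may assume $B = B(0,1)$ and $\|\nabla A\|_{L^\infty(B(0,2))} \leq C$.

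Next, since $A$ is Lipschitz on $2B$, the standard Nirenberg difference-quotient method applied to the weak formulation of $Lu = 0$ yields $u \in W^{2,2}_{\loc}(2B)$, and for each $k$ the function $v_k := \partial_k u$ is a weak solution on $2B$ to the inhomogeneous equation
\begin{equation*}
L v_k = \dv F_k, \qquad F_k := -(\partial_k A)\,\nabla u,
\end{equation*}
with $|F_k| \lesssim |\nabla u|$ by the Lipschitz bound on $A$. This is obtained by differentiating the identity $\int A\nabla u \cdot \nabla \phi = 0$ against $\partial_k \phi$ for $\phi \in C_c^\infty(2B)$.

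Now I bootstrap integrability. Applying the Caccioppoli inequality (Lemma \ref{lm.cacc}) to $v_k$ on nested balls and using $|F_k| \lesssim |\nabla u|$, I obtain $\nabla u \in W^{1,2}_{\loc}(2B)$ with a quantitative bound by $\|\nabla u\|_{L^2(2B)}$. Sobolev embedding in $\mathbb{R}^{n+1}$ then improves this to $\nabla u \in L^{2^*}_{\loc}$, hence $F_k \in L^{2^*}_{\loc}$. Iterating (combining Caccioppoli, Meyers-type higher integrability, and Sobolev embedding) a finite number of steps depending only on $n$, I reach an exponent $q > n+1$ with $F_k \in L^q_{\loc}(2B)$ and with quantitative control by $\|\nabla u\|_{L^2(2B)}$.

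Finally, with $F_k$ in a supercritical $L^q$ space, the classical Moser-DeGiorgi $L^\infty$ estimate for weak solutions of $Lv_k = \dv F_k$ (Lemma \ref{lm.moser} in its inhomogeneous form, which is the content of the cited \cite[Theorem 5.19]{gm12}) yields
\begin{equation*}
\sup_{B(0,1)} |v_k| \lesssim \Big( \dashint_{B(0,3/2)} |v_k|^2\,dm \Big)^{1/2} + \|F_k\|_{L^q(B(0,3/2))}.
\end{equation*}
Combining this with the bootstrap bound on $\|F_k\|_{L^q}$ and summing over $k$ gives the desired estimate after undoing the scaling. The only nontrivial step is the finite bootstrap to reach supercritical integrability of $F_k$; everything else is a packaging of standard facts, which is why the result can be attributed to the cited reference.
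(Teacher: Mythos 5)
The paper itself offers no proof of this lemma: it simply invokes \cite[Theorem 5.19]{gm12} (an interior Schauder estimate for divergence-form operators with Hölder, in particular Lipschitz, coefficients). So I am judging your proposal on its own terms.

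Your overall strategy---scale, differentiate the equation via Nirenberg's difference quotients to produce $L v_k = \div F_k$ with $|F_k|\lesssim|\nabla u|$, bootstrap the integrability of $F_k$ past the critical exponent $n+1$, and finish with the inhomogeneous De Giorgi--Nash--Moser local boundedness estimate---is a legitimate route. However, the bootstrap step as you have written it does not close, and in dimensions $n+1\geq 4$ it stalls well short of the target. Concretely: Caccioppoli applied to $v_k$ gives only $\nabla v_k\in L^2_{\loc}$; Sobolev then gives $v_k\in L^{2^*}_{\loc}$ and hence $F_k\in L^{2^*}_{\loc}$. At the next pass, Caccioppoli again produces only $\nabla v_k\in L^2_{\loc}$, and Meyers' reverse Hölder improvement only produces $\nabla v_k\in L^{2+\epsilon_0}_{\loc}$ for a small, fixed $\epsilon_0$ depending on ellipticity---it does \emph{not} self-improve when the datum becomes more integrable. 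So the loop ``Caccioppoli, Meyers, Sobolev'' saturates at $\nabla u\in L^{(2+\epsilon_0)^*}_{\loc}$, and since $2^*=\tfrac{2(n+1)}{n-1}\leq n+1$ as soon as $n\geq3$, the threshold $q>n+1$ is never reached for $n\geq 3$ (only $n=2$ is covered by a single pass). The tools you name cannot produce the geometric exponent growth you are implicitly relying on.

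What is actually needed to make a bootstrap of this flavor work with merely Caccioppoli-type input is the full Moser iteration for the inhomogeneous equation, testing $Lv_k=\div F_k$ against $|v_k|^{2\beta}v_k\eta^2$: with the optimal choice $\beta=(p_k-2)/2$ at each step one gets $v_k\in L^{p_{k+1}}_{\loc}$ with $p_{k+1}=\tfrac{2^*}{2}p_k=\tfrac{n+1}{n-1}\,p_k$, a genuine geometric increase that escapes $n+1$ in finitely many steps. That is a materially stronger ingredient than what you cite, and it is the substance of the De Giorgi--Moser machinery you invoke only at the very end. Alternatively---and this is likely what \cite{gm12} does---one can sidestep the bootstrap entirely: $A\in C^{0,1}\subset C^\alpha$ puts you in the range of the classical interior Schauder estimate for divergence-form equations, which gives $u\in C^{1,\alpha}_{\loc}(2B)$ with $\|\nabla u\|_{L^\infty(B)}\lesssim r(B)^{-1}\|u\|_{L^2(3B/2)}$ after scaling; replacing $u$ by $u-\dashint_{2B}u$ and applying Poincaré turns the right-hand side into $\bigl(\dashint_{2B}|\nabla u|^2\bigr)^{1/2}$, which is the stated conclusion. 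You should either replace ``Caccioppoli, Meyers, Sobolev'' with the genuine Moser power iteration (and justify each test function against the current integrability class), or take the shorter Schauder route.
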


\subsubsection{Green's function and  elliptic measure}\label{sec.pdeprelim}

We start with the definition and properties of the integral kernel to the solution operator $L^{-1}$, known as the Green's function.

\begin{definition}[Green's function]\label{def.green} Assume that $\Omega\subset\bb R^{n+1}$ is an open, connected set, and let $L=-\dv A\nabla$ where $A$ is a real, strongly elliptic, not necessarily symmetric matrix of bounded measurable coefficients. There exists a unique non-negative function $G_L:\Omega\times\Omega\ra\bb R$, called \emph{Green's function} for $L$, satisfying the following properties:
\begin{enumerate}[(i)]
	\item For each $x,y\in\Omega$, $G_L(x,y)\lesssim|x-y|^{1-n}$, and $G_L(x,y)>0$ in $\Omega$.
	\item For each $\hat c\in(0,1)$, $x\in\Omega$ and any $y\in B(x,\hat c\delta(x))$, we have $|x-y|^{1-n}\lesssim_{\hat c} G_L(x,y)$.
	\item For each $x\in\Omega$, $G_L(x,\cdot)\in C(\overline{\Omega}\backslash\{x\})\cap W^{1,2}_{\loc}(\Omega\backslash\{x\})$  and  $G_L(x,\cdot)|_{\partial\Omega}\equiv0$.
	\item For each $x\in\Omega$,   the identity $LG_L(\cdot,x)=\delta_x$ holds in the distributional sense; that is,
	\[
	\int_\Omega A(y)\nabla_yG_L(y,x)\nabla\Phi(y)\,dm(y)=\Phi(x),\qquad\text{for any }\Phi\in C_c^{\infty}(\Omega).
	\]
	\item\label{it.represent} If $H\in L^{\frac n2,1}(\Omega)$ and $F\in L^{n,1}(\Omega)$, we have that the function
	\begin{equation}\label{eq.represent}
	w(x)=\int_\Omega G_L(y,x)H(y)\,dm(y)+\int_\Omega\nabla_yG_L(y,x)F(y)\,dm(y)
	\end{equation}
	solves the equation $L^*w=H-\dv F$ in the weak sense in $\Omega$, $w\in Y_0^{1,2}(\Omega)$, and $\Vert w\Vert_{L^{\infty}(\Omega)}\lesssim\Vert H\Vert_{L^{\frac n2,1}(\Omega)}+\Vert F\Vert_{L^{n,1}(\Omega)}$. 
	\item For each $x,y\in\Omega$ with $x\neq y$,
	\begin{equation}\label{eq.transpose}
		G_L(x,y)=G_{L^*}(y,x).
	\end{equation}
	\item\label{item.map} Define the operators $L^{-1}_\Omega$ and $L^{-1}_{\Omega}\dv$ by
	\begin{equation}\label{eq.op}
	(L^{-1}_\Omega H)(x):=\int_\Omega G_L(x,y)H(y)\,dm(y),\qquad (L^{-1}_\Omega\dv F)(x):=\int_\Omega\nabla_yG_L(x,y)F(y)\,dm(y),\qquad x\in\Omega.
	\end{equation} 
	Then we have that $L^{-1}_\Omega$ maps $L^{2_*}(\Omega)$ into $L^{2^*}(\Omega)$, and  the operator $L^{-1}_\Omega\dv$ maps $L^2(\Omega)$ into $L^{2^*}(\Omega)$.
\end{enumerate}
\end{definition}

The construction of the Green's function in bounded domains for real non-symmetric elliptic matrices with bounded and measurable coefficients may be found in \cite{gw}, while the unbounded case is shown in \cite{hk}. The sharp representation formula (\ref{eq.represent}) for $H$ and $F$ in Lorentz spaces has been shown in  \cite{mourg19}.

To define the elliptic measure, we borrow the setting of \cite{agmt22}.  Assume that $\partial\Omega$ is $n$-Ahlfors regular; in this setting, $\Omega$ is Wiener regular, so that the continuous Dirichlet problem for $L$ is solvable in $\Omega$. By the maximum principle and the Riesz Representation Theorem, there exists a family of probability measures $\{\omega^x_L\}_{x\in\Omega}$ on $\partial\Omega$ so that for each $f\in C_c(\partial\Omega)$ and each $x\in\Omega$, the solution $u$ to the continuous Dirichlet problem with data $f$ satisfies that $u(x)=\int_{\partial\Omega}f(\xi)\,d\omega^x(\xi)$. We call $\omega^x_L$ the \emph{$L$-elliptic measure with pole at $x$}.

We now turn to several well-known estimates that the $L$-elliptic measure verifies, see \cite{cfms} and \cite{hkm93}. For a proof of the next lemma, see \cite[Lemma 11.21]{hkm93}.

\begin{lemma}[Bourgain's estimate]\label{lm.bourgain} Let $\Omega\subsetneq\bb R^{n+1}$ be open with $n$-Ahlfors regular boundary. Then there exists $c>0$ depending only on $n$, $\lambda$, and the $n$-Ahlfors regularity constant, such that for any $\xi\in\partial\Omega$ and $r\in(0,\diam(\partial\Omega)/2]$, we have that $\omega^x(B(\xi,2r)\cap\partial\Omega)\geq c$,  for all $x\in\Omega\cap B(\xi,r)$.
\end{lemma}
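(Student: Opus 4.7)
The plan is to prove this classical Bourgain-type estimate via a capacitary barrier together with the maximum principle. By translation we may assume $\xi = 0$, and we extend the matrix $A$ uniformly elliptically to all of $\bb R^{n+1}$ (say by setting $A = I$ on $\Omega^c$), still denoting the resulting operator by $L$. Let $B' := B(0, 2r)$ and $K := \pom \cap B(0, 3r/2)$; by $n$-Ahlfors regularity, $\sigma(K) \approx r^n$.

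First I would introduce the $L$-capacitary potential of $K$ in $B'$: the function $U_K \in W^{1,2}_0(B')$ with $LU_K = \mu_K$ (distributionally in $B'$), where $\mu_K \ge 0$ is the $L$-equilibrium measure supported on $K$ with total mass $\mu_K(K) = \operatorname{Cap}^{B'}_L(K)$, and admitting the Riesz representation
$$
U_K(z) = \int_K G^{B'}_L(z, y)\, d\mu_K(y),
$$
where $G^{B'}_L$ is the $L$-Green's function of $B'$. Standard potential theory yields $0 \le U_K \le 1$ in $B'$, $U_K = 0$ on $\partial B'$, and $U_K = 1$ quasi-everywhere on $K$. Since $\mu_K$ is supported on $\pom$, which is disjoint from $\Omega$, the function $U_K$ is in fact $L$-harmonic on $\Omega \cap B'$.

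Next I would compare three $L$-harmonic functions on $\Omega \cap B'$: the target $h_1(z) := \omega^z_L(\pom \cap B(0, 2r))$, the ``restricted'' harmonic measure $h_2$, defined as the $L$-harmonic function in $\Omega \cap B'$ with boundary data $1$ on $\pom \cap B'$ and $0$ on $\Omega \cap \partial B'$, and $U_K$ itself. On $\partial(\Omega \cap B')$, inspection of boundary values (using $U_K \le 1 = h_2 = h_1$ on $\pom \cap B'$, and $U_K = h_2 = 0 \le h_1$ on $\Omega \cap \partial B'$) yields $h_1 \ge h_2 \ge U_K$; the maximum principle then transfers this chain into the interior. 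In particular, $\omega^x_L(\pom \cap B(0, 2r)) = h_1(x) \ge U_K(x)$, so the matter reduces to proving $U_K(x) \gtrsim 1$.

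For the final step I would establish two quantitative bounds. First, for $x \in B(0, r) \cap \Omega$ and $y \in K \subset B(0, 3r/2)$ --- both at distance $\gtrsim r$ from $\partial B'$ and separated by $|x-y| \lesssim r$ --- the De Giorgi-Nash-Moser theory (combining the local lower bound $G^{B'}_L(x, y) \gtrsim |x-y|^{1-n}$ when $|x-y| \le r/4$ with a short Harnack chain inside $B(0, 3r/2)$ when $|x-y| > r/4$) gives
$$
G^{B'}_L(x, y) \gtrsim r^{1-n} \quad \text{for all } y \in K.
$$
Second, by a Frostman-type argument --- testing against $\sigma|_K$, using the dyadic estimate $\int_K |z-y|^{1-n}\, d\sigma(y) \lesssim r$ coupled with the pointwise upper bound $G^{B'}_L(z,y) \lesssim |z-y|^{1-n}$ --- one obtains $\operatorname{Cap}^{B'}_L(K) \gtrsim r^{n-1}$. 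Combining,
$$
U_K(x) = \int_K G^{B'}_L(x, y)\, d\mu_K(y) \gtrsim r^{1-n}\,\mu_K(K) \gtrsim 1,
$$
which completes the argument.

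The main obstacle I anticipate is twofold. First, rigorously justifying the boundary comparison despite possible irregularity of $\pom$: the identity $U_K = 1$ on $K$ only holds quasi-everywhere, so one must invoke the PWB/Perron theory to apply the maximum principle and confirm that $h_1, h_2$ are their own PWB solutions. Second, the Frostman-type capacity estimate --- classical for the Laplacian --- requires for general $L$ the two-sided comparability $G^{B'}_L(x, y) \approx |x-y|^{1-n}$ on the relevant scales, which itself rests on De Giorgi-Nash-Moser; carrying this out uniformly when $y$ approaches $\pom$ requires some care, but is supplied by the fact that $y \in K$ is kept at distance $\gtrsim r$ from $\partial B'$.
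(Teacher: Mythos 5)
Your argument is correct and is, in substance, the classical capacitary-barrier proof of Bourgain's lemma; the paper itself does not reprove this statement but cites {\cite[Lemma 11.21]{hkm93}}, where essentially the same comparison with a capacitary potential is carried out. Your extension of $A$ by the identity outside $\Omega$ is the right device: it makes the Green's function $G_L^{B'}$ and the equilibrium measure of $K$ well-defined in the full ball $B'$, so that the interior estimates $G_L^{B'}(x,y)\approx |x-y|^{1-n}$ hold on scales $\approx r$ via De Giorgi--Nash--Moser and short Harnack chains inside $B'$ (all constants depending only on $n$ and $\lambda$). The Frostman step is also sound: testing $\sigma|_K$ against the upper bound $G_L^{B'}(z,y)\lesssim|z-y|^{1-n}$, together with the Ahlfors-regular dyadic estimate $\int_K|z-y|^{1-n}\,d\sigma(y)\lesssim r$, gives $\operatorname{Cap}_L^{B'}(K)\gtrsim r^{n-1}$, and combining with the Green's function lower bound yields $U_K(x)\gtrsim 1$.

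Two points you flag as possible gaps actually dissolve. First, you do not need the quasi-everywhere identity $U_K=1$ on $K$ for the comparison $h_1\ge U_K$: what you need is only the universal bound $U_K\le1$ in $B'$ together with the facts that $h_1\to1$ at each point of $\pom\cap B'$ (guaranteed by Wiener regularity, which holds because $n$-Ahlfors regularity implies a uniform capacity density condition on $\pom$) and that $U_K\to0\le h_1$ on $\Omega\cap\partial B'$. With those one-sided boundary inequalities, the ordinary maximum principle on $\Omega\cap B'$ applies directly, and the auxiliary function $h_2$ can be dispensed with. Second, you do need the $W^{1,2}_0(B')$ normalization of $U_K$ to conclude that $U_K$ vanishes at $\partial B'$, but since $\partial B'$ is smooth and the coefficients are uniformly elliptic everywhere in a neighborhood of $\partial B'$, boundary continuity is classical there. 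The one thing worth stating explicitly is the qualitative fact that $\Omega$ is Wiener regular, which the paper records just before Lemma~\ref{lm.bourgain}; this is what makes $h_1$ attain its boundary data continuously and is also what the identity $\sigma(K)\approx r^n$ requires (for $r\le\diam(\pom)/2$).
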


Moreover, the preceding lemma implies the following result via a standard argument involving the maximum principle and the pointwise upper bound for the Green's function in Definition \ref{def.green} (i). For a proof in the case that $L=-\Delta$, which is readily generalized to our setting, see \cite[Lemma 3.3]{ahmmmtv}.

\begin{lemma}\label{lm.upper} Let $\Omega\subsetneq\bb R^{n+1}$ be open with $n$-Ahlfors regular boundary. Let $B=B(x_0,r)$ be a closed ball with $x_0\in\partial\Omega$ and $0<r<\diam(\partial\Omega)$. Then
\begin{equation}\label{eq.upper}\nonumber
G_L(x,y)\lesssim\frac{\omega^x(4B)}{r^{n-1}},\qquad\text{for all }x\in\Omega\backslash2B \text{ and }y\in B\cap\Omega,
\end{equation}
where the implicit constant depends only on $n$, $\lambda$, and the $n$-Ahlfors regularity constant.
\end{lemma}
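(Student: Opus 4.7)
The plan is to fix $x\in B\cap\Omega$ and regard the desired inequality as a comparison of two functions of $y\in\Omega\setminus 2B$, via a maximum-principle argument after passing to the adjoint operator using the transpose identity $G(x,y)=G_{L^*}(y,x)$ from property (\ref{eq.transpose}). Because $x\in B$ and $y\in\Omega\setminus 2B$ force $|x-y|\ge r$, the map $y\mapsto G(x,y)$ is continuous on $\overline{\Omega\setminus 2B}$ and $L^*$-harmonic in its interior, with vanishing boundary values on $\partial\Omega\setminus 2B$ by property (iii) of Definition \ref{def.green}. The companion function $y\mapsto\omega^y(4B)$ is interpreted as the elliptic measure naturally associated with the direction of harmonicity of $G$ in its second argument, namely one that is $L^*$-harmonic in $y$; with this convention both sides solve the same homogeneous equation on $\Omega\setminus 2B$ and a maximum-principle comparison is legitimate.

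The core step is to introduce
\[
h(y):=G(x,y)-\frac{C_1}{r^{n-1}}\,\omega^y(4B),\qquad y\in\Omega\setminus 2B,
\]
for a constant $C_1$ to be chosen, and to verify $h\le 0$ on the topological boundary of $\Omega\setminus 2B$. On the portion $\partial\Omega\setminus 2B$, property (iii) gives $G(x,\cdot)\to 0$ while $\omega^{\,\cdot}(4B)\ge 0$, so $h\le 0$ there. On the spherical piece $\partial(2B)\cap\Omega$, the pointwise bound $G(x,y)\le C_0|x-y|^{1-n}\le C_0 r^{1-n}$ from property (i) is paired with Bourgain's estimate (Lemma \ref{lm.bourgain}), applied with $\xi=x_0$ and radius $2r$ so that $B(\xi,2r)=4B$, which yields $\omega^y(4B)\ge c$ for every $y\in B(x_0,2r)\cap\Omega$, and hence by continuity on $\partial(2B)\cap\Omega$ as well. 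Choosing $C_1:=C_0/c$ then forces $h\le 0$ on this component. If $\Omega$ is unbounded, decay at infinity is handled via the global bound $G(x,y)\lesssim|x-y|^{1-n}\to 0$ together with $\omega^y(4B)\in[0,1]$, by exhausting $\Omega\setminus 2B$ with $(\Omega\setminus 2B)\cap B(x_0,R)$ and letting $R\to\infty$; the contribution from $\partial B(x_0,R)$ vanishes in the limit.

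With $h\le 0$ on $\partial(\Omega\setminus 2B)$ established, the maximum principle for $L^*$ on $\Omega\setminus 2B$ propagates $h\le 0$ into the interior and delivers exactly $G(x,y)\le C_1\,\omega^y(4B)/r^{n-1}$ for all $y\in\Omega\setminus 2B$, which is the asserted bound. The only conceptual subtlety, which I consider the main point to flag in the writeup, is that in the non-symmetric setting the transpose identity (\ref{eq.transpose}) forces the comparison on the right-hand side to use the elliptic measure that is $L^*$-harmonic in $y$, and Bourgain's lower bound must be invoked for the adjoint $L^*$ — this is legitimate because $L^*$ satisfies the same ellipticity constants $\lambda$ as $L$, so Lemma \ref{lm.bourgain} applies verbatim to $L^*$. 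Once this convention is pinned down, the remainder of the proof is a routine CFMS-style comparison, and I do not anticipate further technical difficulties.
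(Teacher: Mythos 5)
Your maximum-principle argument is the correct CFMS-style proof, and you are right to flag the $L$/$L^*$ pairing in the non-symmetric case as the one conceptual point that needs care. However, I believe you have resolved the pairing the wrong way round, because of an internal tension in the paper's Green's function conventions.

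You assert that $y\mapsto G(x,y)$ is $L^*$-harmonic, reading $G(x,y)=G_{L^*}(y,x)$ via (\ref{eq.transpose}) and then treating the first slot of $G_{L^*}$ as $L^*$-harmonic. This is the usual convention $L\,G_L(\cdot,y)=\delta_y$, and it is also what Definition \ref{def.green}(iv) says literally. But Definition \ref{def.green}(v) requires the opposite: for $w(x)=\int_\Omega G_L(x,y)H(y)\,dm(y)$ to solve $L^*w=H$, one needs $L^*_x G_L(x,y)=\delta_y(x)$. The rest of the paper is internally consistent with the convention of (v), not (iv): in the proof of Theorem \ref{thm.poisson2} the paper uses $\int\varphi\,d\omega_L^z=-\int_\Omega A^T\nabla_y G_{L^*}(z,y)\cdot\nabla\varphi\,dm(y)$, which comes from $L^{-1}f=\int G_{L^*}(\cdot,y)f(y)\,dm(y)$, i.e.\ from (v)'s convention; under (iv) the formula would instead feature $G_L$. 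So (iv) carries a misprint, and (v) is what governs. Under the operative convention, $y\mapsto G(x,y)$ is $L$-harmonic, the companion measure in Lemma \ref{lm.upper} is $\omega_L$, and Bourgain's lower bound must be invoked for $\omega_L$, not $\omega_{L^*}$. This is also exactly what is required downstream: the output of Lemma \ref{lm.upper} in (\ref{eq.useupper}) feeds the weak reverse-H\"older estimate of Proposition \ref{prop.wrhp}, which controls $d\omega_L/d\sigma$ from the hypothesis $(\Di_{p'}^L)$; with your labelling ($\omega_{L^*}$) that chain would not close, since the hypothesis gives no $A_\infty$ control on $\omega_{L^*}$.

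With that one swap, the proof is sound: the barrier $h(y)=G(x,y)-C_1\,\omega^y(4B)/r^{n-1}$, the boundary checks on $\partial\Omega\setminus 2B$ (property (iii)) and on $\partial(2B)\cap\Omega$ (property (i) plus Bourgain), and the exhaustion by $B(x_0,R)$ when $\Omega$ is unbounded are all the right moves. Two small technical points are worth recording in a writeup. First, Lemma \ref{lm.bourgain} as stated applies for radii up to $\diam(\partial\Omega)/2$, whereas the lemma allows $r<\diam(\partial\Omega)$; for $r$ in the top dyadic range one should note instead that $4B$ already captures a definite fraction of $\partial\Omega$, so $\omega^y(4B)\gtrsim 1$ holds trivially. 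Second, the continuity argument you invoke to pass Bourgain's bound from $B(x_0,2r)\cap\Omega$ to $\partial(2B)\cap\Omega$ is cleanest if you simply apply Bourgain at a slightly larger radius, say $3r$ instead of $2r$, so the sphere lies strictly inside the region where the lower bound holds.
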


The following generalization of the formula (\ref{eq.represent}) is well-known; for a detailed proof in our setting, see \cite[Lemma 2.6]{agmt22}.
\begin{lemma}\label{lm.formula} Let $\Omega\subsetneq\bb R^{n+1}$ be an open, connected set with $n$-Ahlfors regular boundary. For any $\Phi\in C_c^{\infty}(\bb R^{n+1})$, we have that
\begin{equation}\nonumber
\Phi(x)=\int_{\partial\Omega}\Phi\,d\omega^x~+~\int_\Omega A^T(y)\nabla_yG_L(x,y)\nabla\Phi(y)\,dm(y),\qquad\text{for }m\text{-a.e.\ }x\in\Omega.
\end{equation}
\end{lemma}

A boundary version of the H\"older continuity of solutions will prove useful; see \cite[Chapter 6]{hkm93}.

\begin{lemma}\label{lm.boundaryholder} Let $\Omega\subsetneq\bb R^{n+1}$ be open with $n$-Ahlfors regular boundary, and let $\xi\in\partial\Omega$, $r\in(0,\diam\partial\Omega)$. Suppose that $u$ is a non-negative solution of $Lu=0$ in $\Omega\cap B(\xi,2r)$, which vanishes continuously on $B(\xi,2r)\cap\partial\Omega$. Then there exist $\eta\in(0,1)$ and $C\geq1$, depending only on $n$, $\lambda$, and the $n$-Ahlfors regularity constant, so that
\begin{equation}\nonumber
u(x)\leq C\Big(\frac{\delta_\Omega(x)}{r}\Big)^\eta\frac1{|B(\xi,2r)|}\int_{B(\xi,2r)\cap\Omega}u\,dm,\qquad\text{for all }x\in\Omega\cap B(\xi,r).
\end{equation}
\end{lemma}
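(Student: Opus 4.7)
The plan is to derive the boundary H\"older estimate by combining a boundary oscillation decay with a Moser subsolution sup-estimate, both applied to the zero extension of $u$.

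First, I would verify that the zero extension $\tilde u$ of $u$ to $B(\xi, 2r)$ (defined by $\tilde u = u$ in $\Omega \cap B(\xi, 2r)$ and $\tilde u = 0$ in $B(\xi, 2r) \setminus \overline{\Omega}$) is a non-negative weak $L$-subsolution in $B(\xi, 2r)$. Heuristically this holds because the outward conormal derivative of $u$ on $\partial\Omega$ is non-positive (as $u \geq 0$ in $\Omega$ with $u = 0$ on $\partial\Omega \cap B(\xi, 2r)$), so integration by parts gives
\[
\int_{B(\xi, 2r)} A \nabla \tilde u \cdot \nabla \phi \, dm \;=\; \int_{\partial\Omega \cap B(\xi, 2r)} \phi \, (A \nabla u \cdot \nu) \, d\sigma \;\leq\; 0
\]
for $\phi \in C_c^\infty(B(\xi, 2r))$ with $\phi \geq 0$. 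Rigorously, this is carried out via cutoff functions $\eta_k$ vanishing in a $1/k$-neighborhood of $\partial\Omega$, together with the Caccioppoli inequality (Lemma \ref{lm.cacc}) and the continuous vanishing of $u$, to pass to the limit.

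Second, I would invoke the classical boundary oscillation decay for non-negative $L$-subsolutions vanishing on a capacity-thick set. For $y \in \partial\Omega \cap \overline{B(\xi, 5r/4)}$ and $\rho \in (0, r/4]$, the $n$-Ahlfors regularity of $\partial\Omega$ provides the Wiener-type capacity density bound $\operatorname{cap}\bigl(\overline{B(y, \rho) \setminus \Omega},\, B(y, 2\rho)\bigr) \gtrsim \rho^{n-1}$, and the Maz'ya--Wiener estimate (see, e.g., \cite{hkm93}, Chapter~6) yields
\[
\sup_{B(y, \rho)} \tilde u \;\leq\; \theta \sup_{B(y, 2\rho)} \tilde u, \qquad 0 < \rho \leq r/4,
\]
for some $\theta \in (0, 1)$ depending only on $n$, $\lambda$, and the Ahlfors regularity constant. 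Iterating dyadically gives the boundary H\"older bound
\[
\sup_{B(y, \rho)} \tilde u \;\leq\; C (\rho/r)^\eta \sup_{B(y, r/4)} \tilde u, \qquad \eta := \log(1/\theta)/\log 2.
\]

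Third, applying the Moser subsolution sup-estimate (the upper half of Lemma \ref{lm.moser}, whose proof uses only the subsolution property of $\tilde u$) to $\tilde u$ at scales $7r/4$ and $2r$ with $q=1$ gives
\[
\sup_{B(\xi, 7r/4)} \tilde u \;\lesssim\; \frac{1}{|B(\xi, 2r)|} \int_{B(\xi, 2r)} \tilde u \, dm \;=\; \frac{1}{|B(\xi, 2r)|} \int_{B(\xi, 2r) \cap \Omega} u \, dm.
\]
Finally, for $x \in \Omega \cap B(\xi, r)$, pick $y \in \partial\Omega$ with $|x - y| = \delta_\Omega(x)$. If $\delta_\Omega(x) \geq r/4$ the conclusion follows at once by absorbing $(1/4)^{-\eta}$ into $C$; otherwise $y \in \partial\Omega \cap B(\xi, 5r/4)$ and $x \in B(y, \delta_\Omega(x)) \subset B(y, r/4)$, so combining the boundary H\"older estimate of Step~2 with the sup-estimate yields the full inequality. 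The main obstacle is the rigorous verification of the subsolution property of $\tilde u$ in Step~1, which must be carried out via the Caccioppoli-based cutoff argument described above since $\partial\Omega$ is merely $n$-Ahlfors regular.
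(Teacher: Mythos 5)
Your proof is correct and is essentially the standard argument that underlies the reference the paper cites in lieu of a proof (\cite[Chapter 6]{hkm93}): the zero extension of a non-negative solution vanishing continuously on the boundary is a weak subsolution; $n$-Ahlfors regularity of $\partial\Omega$ gives a uniform capacity density condition (since $\mathcal H^n$-mass $\gtrsim \rho^n$ on a set of diameter $\rho$ in $\mathbb R^{n+1}$ forces capacity $\gtrsim \rho^{n-1}$); the Maz'ya--Wiener oscillation decay plus dyadic iteration gives the Hölder rate; and the Moser $L^\infty$--$L^1$ bound for subsolutions supplies the averaged right-hand side. Two minor points you should make explicit if you were to write this up in full: (a) to speak of $\tilde u$ as an $L$-subsolution in the full ball $B(\xi,2r)$ you must first extend $A$ to a bounded elliptic matrix on $B(\xi,2r)$ (say by $I$ off $\Omega$); this is harmless because $\nabla\tilde u=0$ a.e.\ off $\Omega$, so the value of the extension never enters the weak form. (b) A cleaner route to the subsolution property than the cutoff-plus-Caccioppoli argument you sketch is to note that $(u-\varepsilon)_+$ has compact support in $\Omega$ inside any compact subset of $B(\xi,2r)$ by the continuous vanishing of $u$, so its zero extension is a nonnegative subsolution for each $\varepsilon>0$, and then let $\varepsilon\to0$ monotonically. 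Either way the argument goes through, and the final combination (splitting on $\delta_\Omega(x)\gtrless r/4$, choosing $y\in\partial\Omega$ nearest to $x$, checking $B(y,r/4)\subset B(\xi,7r/4)$) is handled correctly.
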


Now, we record a well-known equivalence between the solvability of $(\Di_{p'}^L)$ and a weak-$RH_p$ property of the Poisson kernel. Under our lax geometric assumptions, the proposition is essentially shown in \cite[Theorem 9.2]{mt22} for the case of the Laplacian. The direction (b)$\implies$(a) in the  proposition below is already shown in \cite{hl2018}; for completeness, we give a full proof of the   direction (a)$\implies$(b), but we defer it to Appendix \ref{sec.proofwrhp}. 

\begin{proposition}\label{prop.wrhp} Let $\Omega\subsetneq\bb R^{n+1}$ be open with $n$-Ahlfors regular boundary and satisfying the corkscrew condition. Fix $p\in(1,\infty)$, $\frac1p+\frac1{p'}=1$. The following are equivalent.
\begin{enumerate}[(a)]
	\item $(\Di_{p'}^L)$ is solvable in $\Omega$.
	\item The $L$-elliptic measure $\omega_L$ is absolutely continuous with respect to   $\sigma$, and   for every ball $B$ centered at $\partial\Omega$ satisfying that $\diam B\leq2\diam(\partial\Omega)$, 
	\begin{equation}\label{eq.wrhp}
		\Big(\dashint_B\Big|\frac{d\omega_L^x}{d\sigma}\Big|^p\,d\sigma\Big)^{\frac1p}\lesssim\frac{\omega_L^x(8B)}{\sigma(B)},\qquad\text{for all }x\in\Omega\backslash2B.
	\end{equation}
	\item The $L$-elliptic measure $\omega_L$ is absolutely continuous with respect to $\sigma$ and there is some $\Lambda>1$ big enough such that, for every ball $B$ centered in $\pom$ with $\diam(B)\leq 2\diam(\pom)$
	and all $x \in \Lambda B$ such that $\dist(x,\pom)\geq \Lambda^{-1}r(B)$, it holds that
	$$
	\Big(\dashint_{\Lambda B} \Big|\frac{d\omega^{x}_L}{d\sigma}\Big|^p\,d\sigma\Big)^{1/p} \lesssim_\Lambda\frac1{\sigma(B)}.
	$$
\end{enumerate}
\end{proposition}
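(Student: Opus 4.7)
My plan is to establish the main direction (a) $\implies$ (b); the converse (b) $\implies$ (a) is cited from \cite{hl2018}, and (b) $\iff$ (c) is a standard reduction using Bourgain's estimate (Lemma \ref{lm.bourgain}) together with a change-of-pole argument: (b) applied to appropriate balls together with $\omega_L^x(8B)\leq 1$ yields (c), while (c) is propagated to arbitrary poles by the same comparison techniques described below. The strategy for (a) $\implies$ (b) is to first establish (\ref{eq.wrhp}) at the corkscrew point $x_B$ of $B$ using duality against $(\Di^L_{p'})$, and then extend it to arbitrary $x\in\Omega\setminus 2B$ via a pointwise change-of-pole inequality.

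For the first step, fix $B=B(\xi_0,r)$ with $\xi_0\in\pom$, $r\leq 2\diam(\pom)$, and let $x_B\in B\cap\Omega$ be a corkscrew point for $B$, so $\delta(x_B)\gtrsim r$. For any simple non-negative function $g$ supported on $B\cap\pom$, let $u$ solve (\ref{eq.dirichlet}) with boundary data $g$, so $u(x_B)=\int g\,d\omega_L^{x_B}$. By the corkscrew geometry, $x_B\in\gamma_\alpha(\xi)$ for every $\xi$ in a surface ball $\Delta'\subset B\cap\pom$ concentric with $B$ and of radius $\approx r$ (depending only on the corkscrew and aperture constants), hence $\m N(u)(\xi)\geq u(x_B)$ on $\Delta'$. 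Since $\sigma(\Delta')\approx r^n$ by $n$-Ahlfors regularity, integrating and invoking (\ref{eq.direst}) yields
\[
u(x_B)^{p'}\,r^n \;\lesssim\; \int_{\Delta'}\m N(u)^{p'}\,d\sigma \;\lesssim\; \|g\|_{L^{p'}(\sigma)}^{p'}.
\]
Taking the supremum over simple non-negative $g$ in the unit ball of $L^{p'}(B,\sigma)$ and invoking $L^{p'}$--$L^p$ duality shows $\omega_L^{x_B}|_B\ll\sigma$ with density $k_L^{x_B}$ satisfying $\bigl(\dashint_B |k_L^{x_B}|^p\,d\sigma\bigr)^{1/p}\lesssim 1/\sigma(B)$. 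Combined with the lower bound $\omega_L^{x_B}(8B)\gtrsim 1$ from Bourgain's estimate, this is (\ref{eq.wrhp}) at $x=x_B$.

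For the extension to arbitrary $x\in\Omega\setminus 2B$, the plan is to prove the pointwise comparison $k_L^x(\xi)\lesssim \omega_L^x(8B)\, k_L^{x_B}(\xi)$ for $\sigma$-a.e.\ $\xi\in B$; integrating and using the bound on $k_L^{x_B}$ from the previous step then yields (\ref{eq.wrhp}) for general $x$. Both $\omega_L^{\cdot}(E)$ (for Borel $E\subset B$) and $\omega_L^{\cdot}(8B)$ are non-negative $L$-solutions vanishing continuously on $\pom\setminus 8B$, and one compares them away from $8B$ by combining the boundary Hölder decay of Lemma \ref{lm.boundaryholder}, the Green's function upper bound of Lemma \ref{lm.upper}, and the Bourgain lower bound on $\omega_L^{x_B}(8B)$. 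The \textbf{main obstacle} is precisely this change-of-pole step: in the absence of the Harnack chain condition, the classical CFMS-type comparison principle is not directly available, and one must build the comparison carefully from Lemma \ref{lm.boundaryholder}, possibly iterating on Whitney scales to handle poles $x$ that may be arbitrarily close to (or far from) $\pom$ outside $2B$ uniformly. Once the pointwise comparison is in hand, an $L^p$ integration over $B$ against $\sigma$ produces exactly the bound (\ref{eq.wrhp}).
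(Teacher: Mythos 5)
Your first step — establishing $\bigl(\dashint_B|k_L^{x_B}|^p\,d\sigma\bigr)^{1/p}\lesssim\sigma(B)^{-1}$ at the corkscrew point $x_B$ via $L^{p'}$--$L^p$ duality and (\ref{eq.direst}) — is correct. The gap is in the second step. The pointwise kernel comparison $k_L^x(\xi)\lesssim\omega_L^x(8B)\,k_L^{x_B}(\xi)$ is a CFMS-type change-of-pole statement, and it genuinely requires Harnack chains (it amounts to comparing the positive solutions $\omega_L^\cdot(E)$ and $\omega_L^\cdot(8B)$ uniformly over Borel $E\subset B$, which in turn requires comparing values of solutions at arbitrary interior points of $\Omega\cap\partial(2B)$ with the value at $x_B$). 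Under only the corkscrew condition and $n$-Ahlfors regularity of $\pom$ this is not available, and you correctly flag it as the main obstacle — but ``iterating on Whitney scales'' does not close it: iterating Lemma \ref{lm.boundaryholder} controls decay toward the boundary, but does not let you compare solution values at two interior points that cannot be joined by a Harnack chain.

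The paper avoids the change of pole entirely. Rather than proving the estimate at $x_B$ and transferring it, it establishes, for \emph{every} $x\in\Omega\setminus 2B$ and $f\geq 0$ supported in $B$,
\[
u(x)=\int_B f\,d\omega_L^x \;\lesssim\; \frac{1}{\sigma(B)^{1/p'}}\,\|f\|_{L^{p'}(B)},
\]
by using the solvability estimate (\ref{eq.direst}) \emph{directly at $x$}: if $\delta(x)\gtrsim r$, one picks a boundary cube $Q$ near $x$ with $\ell(Q)\approx\delta(x)\approx r$, notes $x\in\gamma_\alpha(\zeta)$ for $\zeta\in Q$ with $\alpha$ large, and bounds $u(x)\leq\bigl(\dashint_Q|\m N_\alpha u|^{p'}\bigr)^{1/p'}\lesssim\sigma(B)^{-1/p'}\|f\|_{L^{p'}(B)}$; if $\delta(x)\ll r$, one first applies the boundary H\"older estimate (Lemma \ref{lm.boundaryholder}) to pass to a larger region and then the same duality bound. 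The factor $\omega_L^x(8B)$ is then recovered not by a density comparison but by the maximum principle in $\Omega\setminus 2B$: Bourgain's estimate gives $\omega_L^y(8B)\approx 1$ on $\Omega\cap\partial(2B)$, so $v(y)=C\sigma(B)^{-1/p'}\|f\|_{L^{p'}(B)}\,\omega_L^y(8B)-u(y)$ is nonnegative on $\partial(\Omega\setminus 2B)$ and hence on $\Omega\setminus 2B$. Integrating in $f$ then gives (\ref{eq.wrhp}). The lesson is that one should \emph{not} route the argument through $x_B$: the $(\Di_{p'}^L)$ hypothesis already controls the solution at arbitrary poles, and the $\omega_L^x(8B)$ factor is a soft maximum-principle gain, not a change of pole.
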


In Section \ref{sec.reg}, we will make use of the following powerful localization result for the regularity problem, shown in the proof of \cite[Theorem 5.24]{kp}. Define
\begin{equation}\label{eq.truncated}
	\widetilde{\m N}_{\alpha,\hat c,r}^s(u)(\xi):=\sup_{x\in\gamma_\alpha(\xi)\cap B(\xi,s)}\Big(\dashint_{B(x,\hat c\delta(x))}|u(y)|^r\,dm(y)\Big)^{1/r},\qquad\xi\in\partial\Omega.
\end{equation} 

\begin{theorem}[{\cite[Theorem 1.8.13]{kbook}}]\label{thm.loc} Let $\Omega$ be a bounded star-like Lipschitz domain in $\bb R^{n+1}$, $n\geq2$, and $L=-\dv A\nabla$, where $A$ is a matrix function in $\Omega$ verifying (\ref{eq.elliptic}). Suppose that $(\Reg_2^L)$ is solvable in $\Omega$, that $f\in\dt{W}^{1,p}(\partial\Omega)$ and that $u$ is the solution to the Dirichlet problem $Lu=0$ in $\Omega$, $u=f$ on $\partial\Omega$. Let $s\in(0,\diam(\partial\Omega)/4)$, $\xi\in\partial\Omega$. Then,
\begin{equation}\label{eq.loc}\nonumber
\dashint_{B(\xi,s/2)}|\widetilde{\m N}_2^{s/2}(\nabla u)|^2\,d\sigma\leq C\Big[~\dashint_{B(\xi,2s)}|\nabla_tf|^2\,d\sigma+\Big(s^{-(n+1)}\int_{B(\xi,2s)\cap\Omega\backslash B(\xi,s)}|\nabla u|\,dm\Big)^2\Big].
\end{equation}
\end{theorem}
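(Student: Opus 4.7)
The plan is a standard localize-and-correct argument. Split $u = v + w$, where $v$ solves a Dirichlet problem with boundary data a suitable truncation of $f$ near $\xi$ (so that the global $(\Reg_2^L)$ hypothesis can be applied to $v$), and $w = u-v$ is an $L$-harmonic correction that vanishes on a neighborhood of $\xi$ in $\partial\Omega$.

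For the localization step, choose a cutoff $\psi \in C_c^\infty(B(\xi, 7s/4))$ with $\psi \equiv 1$ on $B(\xi, 3s/2)$ and $|\nabla\psi| \lesssim 1/s$, and let $c := \dashint_{B(\xi,2s)\cap\partial\Omega} f\,d\sigma$. Define the localized data $g := \psi(f-c) + c$ on $\partial\Omega$; then $g \equiv f$ on $B(\xi, 3s/2)\cap\partial\Omega$. Since the Lipschitz boundary supports a $(1,2)$-Poincaré inequality, the Leibniz rule for Haj\l{}asz gradients combined with Poincaré gives
\begin{equation*}
\Vert g\Vert_{\dt W^{1,2}(\partial\Omega)}^2 \lesssim \int_{B(\xi,2s)\cap\partial\Omega} |\nabla_t f|^2\,d\sigma.
\end{equation*}
Letting $v$ solve $Lv = 0$ in $\Omega$ with $v = g$ on $\partial\Omega$, the $(\Reg_2^L)$ hypothesis furnishes
\begin{equation*}
\int_{\partial\Omega} |\wt{\m N}_2(\nabla v)|^2\,d\sigma \lesssim s^n\dashint_{B(\xi,2s)}|\nabla_t f|^2\,d\sigma,
\end{equation*}
which, after dividing by $\sigma(B(\xi,s/2))\approx s^n$, supplies the first term on the right-hand side of the target estimate.

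For the corrector $w := u - v$, which satisfies $Lw = 0$ in $\Omega$ and vanishes on $B(\xi,3s/2)\cap\partial\Omega$, the goal is to bound its contribution by an $L^1$ annular mean of $|\nabla u|$. For $\eta \in B(\xi,s/2)\cap\partial\Omega$ and $x\in\gamma(\eta)\cap B(\eta,s/2)$, one has $B(x,2\hat c\delta(x)) \subset \Omega\cap B(\xi, 5s/4)$, so Lemma \ref{lm.cacc} gives
\begin{equation*}
\Big(\dashint_{B(x,\hat c\delta(x))} |\nabla w|^2\,dm\Big)^{1/2} \lesssim \delta(x)^{-1}\Big(\dashint_{B(x,2\hat c\delta(x))} |w|^2\,dm\Big)^{1/2}.
\end{equation*}
Since $w$ vanishes on $B(\xi,3s/2)\cap\partial\Omega$, the $L$-harmonic measure representation of $w$ in the Lipschitz subdomain $\Omega\cap B(\xi,5s/4)$, together with the quantitative boundary decay (Lemma \ref{lm.boundaryholder}), bounds the right-hand side uniformly by $s^{-1}\sup_{\partial B(\xi,5s/4)\cap\Omega}|w|$. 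Choosing an interior corkscrew $y_*\in\Omega\cap(B(\xi,7s/4)\setminus B(\xi,5s/4))$ with $\delta(y_*)\approx s$, connecting $y_*$ to any point of $\partial B(\xi,5s/4)\cap\Omega$ by Harnack chains composed of balls of comparable size contained in the annulus $B(\xi,2s)\setminus B(\xi,s)$, and integrating $\nabla w$ along these chains yields
\begin{equation*}
\sup_{\partial B(\xi, 5s/4)\cap\Omega}|w - w(y_*)| \lesssim s\cdot s^{-(n+1)}\int_{B(\xi,2s)\cap\Omega\setminus B(\xi,s)}|\nabla w|\,dm,
\end{equation*}
while $|w(y_*)|$ is absorbed by Lemma \ref{lm.boundaryholder}. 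Writing $\nabla w = \nabla u - \nabla v$ and handling the $\nabla v$ contribution by Caccioppoli together with the bound on $v$ from Step 1 produces the second term on the right-hand side.

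The delicate point is the production of an $L^1$-in-the-annulus form on the right-hand side, rather than the $L^2$-in-the-annulus form that drops out directly from Caccioppoli. This requires both (i) a local boundary representation of $w$ to convert $L^2$ gradient means on small balls near $\partial\Omega$ into supremum bounds of $w$ on the ``upper'' boundary $\partial B(\xi,5s/4)\cap\Omega$, and (ii) path-integration of $\nabla w$ along Harnack chains of controlled size threading the annular region. Both steps rely essentially on the star-like Lipschitz hypothesis, which simultaneously provides quantitative boundary regularity and the availability of Harnack chains of comparable size filling $B(\xi,2s)\setminus B(\xi,s)$; without either, one would be stuck with an $L^2$ annular term and a strictly weaker conclusion.
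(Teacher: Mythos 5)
Your decomposition $u = v+w$ with localized data $g$ agreeing with $f$ near $\xi$ is a natural starting point, and Step 1 (the treatment of $v$) is sound. But Step 2 breaks down at the assertion that Caccioppoli plus boundary H\"older decay give the \emph{uniform} bound
\[
\delta(x)^{-1}\Big(\dashint_{B(x,2\hat c\delta(x))}|w|^2\,dm\Big)^{1/2}\lesssim s^{-1}\sup_{\partial B(\xi,5s/4)\cap\Omega}|w|
\]
for $x$ ranging over $\gamma(\zeta)\cap B(\zeta,s/2)$. Lemma \ref{lm.boundaryholder} gives $|w(z)|\lesssim(\delta(z)/s)^\eta M$ with $M=\sup_{\partial B(\xi,5s/4)\cap\Omega}|w|$ and a De Giorgi--Nash exponent $\eta\in(0,1)$ that is strictly less than $1$ for merely bounded measurable $A$. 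Feeding this into the Caccioppoli estimate produces $\delta(x)^{\eta-1}s^{-\eta}M$, which blows up as $\delta(x)\to 0$. So $\wt{\m N}_2^{s/2}(\nabla w)(\zeta)$ is not controlled pointwise by $s^{-1}M$ at any boundary point $\zeta$; the claimed uniform bound is false. The issue is not the $L^2$-versus-$L^1$ form you flag at the end, but the fact that boundary H\"older regularity cannot cancel the $\delta^{-1}$ loss coming from differentiation.

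Pointwise control of $\wt{\m N}_2(\nabla w)$ up to the boundary for a solution with zero Dirichlet data on $B(\xi,3s/2)\cap\partial\Omega$ is morally as strong as an $L^\infty$ bound on the Poisson kernel, which is not a consequence of $(\Reg_2^L)$. The argument in Kenig's notes (and in the original Kenig--Pipher proof) obtains control in $L^2$, not $L^\infty$, by invoking $(\Reg_2^L)$ \emph{a second time, locally}: one constructs a star-like Lipschitz subdomain $D_1$ whose lateral boundary threads the annulus $B(\xi,2s)\setminus B(\xi,s)$, applies $(\Reg_2^L)$ to $u$ restricted to $D_1$ (the star-like structure guarantees that such subdomains carry the solvability with uniform constants under rescaling), and controls the tangential gradient of the data on $\partial D_1\setminus\partial\Omega$ by the $L^1$ annular mean of $\nabla u$ via a Poincar\'e-plus-averaging argument over the placement of the lateral surface. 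This second, local application of the solvability hypothesis is what your argument is missing; with only bounded measurable coefficients there is no boundary-H\"older shortcut that closes the estimate on $w$.
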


\section{The Poisson-Dirichlet problem}\label{sec.poisson}

In this section we prove Theorems \ref{thm.poisson} and \ref{thm.poisson2}. Recall that $\delta:=\dist(\cdot,\partial\Omega)$, $2^*=\frac{2(n+1)}{n-1}$ and that $2_*=(2^*)'=\frac{2(n+1)}{n+3}$.

\subsection{Proof of Theorem \ref{thm.poisson}}\label{sec.poisson1} Retain the setting of Theorem \ref{thm.poisson}.  Set
\begin{equation}\nonumber
w(x):=\int_{\partial\Omega}g(\xi)\,d\omega^x(\xi)+\int_\Omega\nabla_2 G(x,y)F(y)\,dm(y)+\int_{\Omega}G(x,y)H(y)\,dm(y)= w_1(x)+w_2(x)+w_3(x),
\end{equation}
where $G=G_{L}$ is the Green's function for $L$, $\omega^x$ is the $L$-elliptic measure  on $\partial\Omega$ with pole at $x$, and $\nabla_2$ denotes the gradient in the second variable. Under our qualitative assumptions on the data, it is known (see Section \ref{sec.pdeprelim}, (\ref{eq.represent}), and (\ref{eq.transpose})) that $Lw=H-\dv F$ holds in the weak sense in $\Omega$, and that $w$ is continuous on $\overline{\Omega}$, so that $w=g$ on $\partial\Omega$. It remains to show that (\ref{eq.ntmaxn}) holds. Since $(\Di_{p'}^L)$ is solvable, we have that
\begin{multline}\label{eq.break}
\Vert\widetilde{\m N}_{2^*}(w)\Vert_{L^{p'}(\partial\Omega)}\leq  \Vert\widetilde{\m N}_{2^*}(w_1)\Vert_{L^{p'}(\partial\Omega)}+\Vert \widetilde{\m N}_{2^*}(w_2)\Vert_{L^{p'}(\partial\Omega)}+\Vert \widetilde{\m N}_{2^*}(w_3)\Vert_{L^{p'}(\partial\Omega)}\\ \leq C\Vert g\Vert_{L^{p'}(\partial\Omega)}+\Vert\widetilde{\m N}_{2^*}(w_2)\Vert_{L^{p'}(\partial\Omega)}+\Vert \widetilde{\m N}_{2^*}(w_3)\Vert_{L^{p'}(\partial\Omega)}.
\end{multline} 

\subsubsection{Step 1: A decomposition of $\Omega$}
We will now focus our attention on estimating the second term on the right-hand side of (\ref{eq.break}). Fix an aperture $\alpha>0$ and $\xi\in\partial\Omega$, and for each $x\in\gamma_{\alpha}(\xi)$ denote $B^x:=B(\xi,\tau\delta(x))\cap\Omega$ and $B_x:=B(x,\delta(x)/8)$, where   $\tau\in(0,2^{-10})$ is a small constant to be determined later. We think of $B^x$ as a ``shrunk'' Carleson region associated to $x$, and of $B_x$ as a small Whitney region associated to $x$. With this notation, we claim that
\begin{multline}\label{eq.split}
|\widetilde{\m N}_{\alpha,2^{-5},2^*}(w_2)(\xi)|\leq\sup_{x\in\gamma_{\alpha}(\xi)}\Big\{\Big(\dashint_{B(x,\frac{\delta(x)}{32})}\Big|\int_{B_x}\nabla_2 G(z,y)F(y)\,dm(y)\Big|^{2^*}\,dm(z)\Big)^{\frac1{2^*}}\\ \qquad+~\sup_{z\in B(x,\delta(x)/32)}\Big[\Big|\int_{B^x}\nabla_2 G(z,y)F(y)\,dm(y)\Big|~+~\Big|\int_{\Omega\backslash(B^x\cup B_x)}\nabla_2G(z,y)F(y)\,dm(y)\Big|\Big]\Big\}\\ =:\sup_{x\in\gamma_{\alpha}(\xi)}\big\{T_1(x)+T_2(x)+T_3(x)\big\}.
\end{multline}
To see (\ref{eq.split}),  simply note that, 
\begin{equation}\nonumber
\begin{array}{lll}
|\widetilde{\m N}_{\alpha,2^{-5},2^*}(w_2)(\xi)|&=&\sup_{x\in\gamma_{\alpha}(\xi)}\Big(\dashint_{B(x,\frac{\delta(x)}{32})}\Big|\int_{\Omega}\nabla_2 G(z,y)F(y)\,dm(y)\Big|^{2^*}\,dm(z)\Big)^{\frac1{2^*}}\\ & \leq& \sup_{x\in\gamma_{\alpha}(\xi)}\Big(\dashint_{B(x,\frac{\delta(x)}{32})}\Big|\int_{B_x}\nabla_2 G(z,y)F(y)\,dm(y)\Big|^{2^*}\,dm(z)\Big)^{\frac1{2^*}}\\& &+\sup_{x\in\gamma_{\alpha}(\xi)}\Big(\dashint_{B(x,\frac{\delta(x)}{32})}\Big|\int_{B^x}\nabla_2 G(z,y)F(y)\,dm(y)\Big|^{2^*}\,dm(z)\Big)^{\frac1{2^*}}\\&&+\underset{x\in\gamma_{\alpha}(\xi)}{\sup}\Big(\dashint_{B(x,\frac{\delta(x)}{32})}\Big|\int_{\Omega\backslash(B^x\cup B_x)}\nabla_2 G(z,y)F(y)\,dm(y)\Big|^{2^*}\,dm(z)\Big)^{\frac1{2^*}}\\&\leq&\sup_{x\in\gamma_{\alpha}(\xi)}\Big(\dashint_{B(x,\frac{\delta(x)}{32})}\Big|\int_{B_x}\nabla_2 G(z,y)F(y)\,dm(y)\Big|^{2^*}\,dm(z)\Big)^{\frac1{2^*}}\\&&+\sup_{x\in\gamma_{\alpha}(\xi)}~\sup_{z\in B(x,\delta(x)/{32})}\Big|\int_{B^x}\nabla_2 G(z,y)F(y)\,dm(y)\Big|\\&&+\sup_{x\in\gamma_{\alpha}(\xi)}~\sup_{z\in B(x,\delta(x)/32)}\Big|\int_{\Omega\backslash(B^x\cup B_x)}\nabla_2 G(z,y)F(y)\,dm(y)\Big|
\end{array}
\end{equation}
  
\subsubsection{Step 2: Estimate for $T_1$}   Note that
\[
T_1(x)\leq C\delta(x)^{-(n+1)/2^*}\Big(\int_{\Omega}|w_{21}(z)|^{2^*}\,dm(z)\Big)^{\frac1{2^*}},
\]
where $w_{21}:= L^{-1}_\Omega\dv(F{\1}_{B_x})$ (see (\ref{eq.op})), and $C$ depends only on $n$.  By the mapping properties of $L^{-1}_\Omega$ (see Definition \ref{def.green} \ref{item.map}), it follows that
\begin{multline}\nonumber
T_1(x)\lesssim\delta(x)^{-(n+1)/2^*}\Big(\int_{\Omega}|F|^2{\1}_{B_x}\,dm\Big)^{1/2}\lesssim C\delta(x)\Big(\dashint_{B(x,\delta(x)/8)}|F|^2\,dm\Big)^{1/2}\\ \lesssim\frac1{\delta(x)^n}\int_{B(x,\delta(x)/8)}\Big(\dashint_{B(x,\delta(x)/8)}|F|^2\,dm\Big)^{1/2}\,dm(z)\\ \lesssim \frac1{\delta(x)^n}\int_{B(\xi,C\delta(x))\cap\Omega}\Big(\dashint_{B(z,\delta(z)/2)}|F|^2\,dm\Big)^{1/2}\,dm(z)\lesssim_{\alpha} \n C_{\frac12,2}(F)(\xi),
\end{multline}
where we used  the fact that $B_x\subset B(y,\delta(y)/2)$ for each $y\in B_x$, and the definition of $\n C_{\frac12,2}(F)$. Hence
\begin{equation}\label{eq.i1done}
\Vert\sup_{x\in\gamma_{\alpha'}(\xi)} T_1(x)\Vert_{L^{p'}(\partial\Omega,d\sigma(\xi))}\lesssim\Vert\n C_2(F)\Vert_{L^{p'}(\partial\Omega)}.
\end{equation}

\subsubsection{Step 3: Estimate for $T_2$}   Let $\m W$ be a $(1/2)$-Whitney decomposition of $\Omega$ (see Section \ref{sec.whitney}), let $\m W_x$ consist of $I\in\m W$ such that $I\cap B^x\neq\varnothing$, and let $\m F$ be the collection of $Q\in\m D_\sigma$ for which there exists $I\in\m W_x$ with $Q\in b(I)$.  Then,
\begin{align}\label{eq.i2first}
T_2(x)&\leq\sup_{z\in B(x,\delta(x)/32)}\sum_{I\in\m W_x}\int_I|\nabla_2G(z,y)||F(y)|\,dm(y)\\ \nonumber &\leq\sup_{z\in B(x,\delta(x)/32)}\sum_{I\in\m W_x}\Big(\int_I|\nabla_2G(z,y)|^2 \,dm(y)\Big)^{1/2}\Big(\int_I|F(y)|^2\,dm(y)\Big)^{1/2}\\ \nonumber &\lesssim\sup_{z\in B(x,\delta(x)/32)}\sum_{I\in\m W_x}\Big(\int_{I^*}G(z,y)^2 \,dm(y)\Big)^{1/2}\frac1{\ell(I)}\ell(I)^{\frac{n+1}2}m_{2,I}(F)\\ \nonumber &\lesssim\sum_{Q\in\m F}~\sum_{I\in\m W_x:Q\in b(I)}\Big(\int_{(1+2\theta)I}G(x,y)^2 \,dm(y)\Big)^{1/2}\ell(I)^{\frac{n-1}2}m_{2,I}(F),
\end{align} 
where   we have used the H\"older inequality, (\ref{eq.transpose}), Lemma \ref{lm.cacc}, and Lemma \ref{lm.moser}. Note that if   $Q\in\m F$, then $Q\subset2^6B^x\cap\partial\Omega$, and $\ell(Q)\leq(32\sqrt{n+1})^{-1}\tau\delta(x)$; moreover, if $I\cap w(Q)\neq\varnothing$, then $(1+2\theta)I\subset 2^{10}B_Q$. Using these facts, we may guarantee that $x\in\Omega\backslash2^{11}B_Q$ if $\tau$ is chosen smaller than $2^{-6}$. Hence, for any $I\in\m W_x$ and any $Q\in\m D_{\sigma}$ which satisfies $Q\in b(I)$, we apply Lemma \ref{lm.upper} with $B=2^{10}B_Q$ to see that
\begin{equation}\label{eq.useupper}
G(x,y)\lesssim\frac{\omega^x(2^{12}B_Q)}{\ell(Q)^{n-1}},\qquad\text{for each }y\in(1+2\theta)I.
\end{equation}
We thus obtain that
\begin{multline}\label{eq.i2}
T_2(x)\lesssim\sum_{Q\in\m F}\frac{\omega^x(2^{12}B_Q)}{\ell(Q)^n}\sum_{I:Q\in b(I)}\int_Im_{2,I}(F)\,dm(z) \lesssim\sum_{Q\in\m F}\frac{\omega^x(2^{12}B_Q)}{\ell(Q)^n}\int_{w(Q)}m_{2,B(z,\delta(z)/2)}(F)\,dm(z)\\ \leq \Big(\sum_{Q\in\m F}\Big\{\frac{\omega^x(2^{12}B_Q)}{\ell(Q)^n}\Big\}^{p_1}\frac1{\inf_{\zeta\in Q}(\n C_2(F)(\zeta))}\int_{w(Q)}m_{2,B(z,\delta(z)/2)}(F)\,dm(z)\Big)^{\frac1{p_1}}\\ \times\Big(\sum_{Q\in\m F}(\inf_{\zeta\in Q}(\n C_2(F)(\zeta)))^{p_1'-1}\int_{w(Q)}m_{2,B(z,\delta(z)/2)}(F)\,dm(z)\Big)^{\frac1{p_1'}},
\end{multline}
where $p_1>p$ will be determined later. Next we let
\begin{equation}\label{eq.aq}\nonumber
a_Q:=\frac1{\inf_{\zeta\in Q}\n C_2(F)(\zeta)}\int_{w(Q)}m_{2,B(z,\delta(z)/2)}(F)\,dm(z),
\end{equation}
and remark that $\{a_Q\}_{Q\in\m F}$ satisfies a Carleson packing condition: For any $S\in\m D_\sigma$,  
\begin{multline}\label{eq.packing}
\sum_{Q\in\m F:Q\subset S}a_Q\leq\frac1{\inf_{\zeta\in S}\n C_2(F)(\zeta)}\sum_{Q\in\m F:Q\subset S}\int_{w(Q)}m_{2,B(z,\delta(z)/2)}(F)\,dm(z)\\ \lesssim\frac1{\inf_{\zeta\in S}\n C_2(F)(\zeta)}\int_{CB_S}m_{2,B(z,\delta(z)/2)}(F)\,dm(z)\lesssim\sigma(S),
\end{multline}
where $B_S$ is a ball with center in $S$ and radius $\ell(S)$, and we have used that there is uniformly bounded overlap of the $w(Q)$'s when summing over all dyadic cubes $Q\subset S$,  $C$ is chosen large enough to contain $\cup_{Q\subset S}w(Q)$, and in the last inequality we used the definition of $\n C_2(F)$. By the dyadic Carleson's embedding theorem (see \cite[Theorem 5.8]{tolsa14}), we deduce that
\begin{multline}\label{eq.i21}
\sum_{Q\in\m F}a_Q\Big\{\frac{\omega^x(2^{12}B_Q)}{\ell(Q)^n}\Big\}^{p_1}\lesssim\int_{B(\xi,2^6\tau\delta(x))}~\sup_{Q\in\m F: \zeta\in Q}\Big\{\frac{\omega^x(2^{12}B_Q)}{\ell(Q)^n}\Big\}^{p_1}\,d\sigma(\zeta)\\ \lesssim\int_{B(\xi,2^6\tau\delta(x))}\Big(\m M_\sigma\Big(\frac{d\omega^x}{d\sigma}{\1}_{B(\xi,2^8\tau\delta(x))\cap\partial\Omega}\Big)(\zeta)\Big)^{p_1}\,d\sigma(\zeta).
\end{multline}
On the other hand,
\begin{multline}\label{eq.i22}
\sum_{Q\in\m F}(\inf_{\zeta\in Q}(\n C_2(F)(\zeta)))^{p_1'-1}\int_{w(Q)}m_{2,B(z,\delta(z)/2)}(F)\,dm(z) \leq\sum_{Q\in\m F}\Big(\dashint_Q\n C_2(F)(\zeta)\,d\sigma(\zeta)\Big)^{p_1'}a_Q \\ \lesssim \int_{B(\xi,2^6\tau\delta(x))}\Big(\sup_{Q\in\m F:\zeta\in Q}\dashint_Q\n C_2(F)(\wt\zeta)\,d\sigma(\wt\zeta)\Big)^{p_1'}\,d\sigma(\zeta)\\ \leq\int_{B(\xi,2^6\tau\delta(x))}\big(\m M_\sigma\big(\n C_2(F){\1}_{B(\xi,2^6\tau\delta(x))\cap\partial\Omega}\big)\big)^{p_1'}\,d\sigma(\zeta),
\end{multline}
where we   used Carleson's embedding theorem. From (\ref{eq.i2}), (\ref{eq.i21}), and (\ref{eq.i22}), it follows that
\begin{multline}\label{eq.i23}
T_2(x)\lesssim\Big(\int_{B(\xi,2^8\tau\delta(x))}\Big|\frac{d\omega^x}{d\sigma}(\zeta)\Big|^{p_1}\,d\sigma(\zeta)\Big)^{\frac1{p_1}}\Big(\int_{B(\xi,2^6\tau\delta(x))}|\n C_2(F)(\zeta)|^{p_1'}\,d\sigma(\zeta)\Big)^{\frac1{p_1'}} \\ \lesssim\Big(\dashint_{B(\xi,2^6\tau\delta(x))}|\n C_2(F)(\zeta)|^{p_1'}\,d\sigma(\zeta)\Big)^{\frac1{p_1'}},
\end{multline}
where in the last estimate we used Proposition \ref{prop.wrhp} with $p_1>p$ sufficiently close to $p$. Finally, we have that
\begin{multline}\label{eq.i2done}
\Vert\sup_{x\in\gamma_{\alpha}(\xi)}T_2(x)\Vert_{L^{p'}(\partial\Omega,d\sigma(\xi))}\lesssim\Big(\int_{\partial\Omega}\sup_{x\in\gamma(\xi)}\Big(\dashint_{B(\xi,2^6\tau\delta(x))}\n C_2(F)^{p_1'}\,d\sigma\Big)^{\frac{p'}{p_1'}}d\sigma(\xi)\Big)^{\frac1{p'}}\\ \leq\Big(\int_{\partial\Omega}\big(\m M_\sigma(\n C_2(F)^{p_1'})(\xi)\big)^{p'/p_1'}\,d\sigma(\xi)\Big)^{\frac{p_1'}{p'}\frac1{p_1'}}  \lesssim \Vert\n C_2(F)\Vert_{L^{p'}(\partial\Omega)}.
\end{multline}

\subsubsection{Step 4: Estimate for $T_3$} It remains to estimate $T_3$. To this end, we split it further as follows:
\begin{multline}\label{eq.split2}
T_3(x)\leq\sup_{z\in B(x,\delta(x)/32)}\int_{\gamma_\beta(\xi)\backslash(B^x\cup B_x)}|\nabla_2G(z,y)F(y)|\,dm(y)\\+\sup_{z\in B(x,\delta(x)/32)}\Big|\int_{\Omega\backslash(\gamma_{\beta}(\xi)\cup B^x)}\nabla_2G(z,y)F(y)\,dm(y)\Big| =:T_{31}(x)+T_{32}(x),
\end{multline}
where $\beta\geq\alpha$ will be determined soon. We control $T_{32}$ first. For each integer $m\geq-3$, let $\m F^m$ be the family of cubes $Q\in\m D_\sigma$ verifying $2^m\tau\delta(x)\leq\dist(Q,\xi)<2^{m+1}\tau\delta(x)$ and $\ell(Q)\leq \wt c2^m\tau\delta(x)$ for a small $\wt c\in(0,1)$ to be determined later. We claim that, for large enough $\beta$,
\begin{equation}\label{eq.inc}
\Omega\backslash(\gamma_\beta(\xi)\cup B^x)\subset\medcup_{m=-3}^{\infty}\bigcup_{Q\in\m F^m}w(Q).
\end{equation}
To see this, let $y\in\Omega\backslash(\gamma_\beta(\xi)\cup B^x)$, and let $I\in\m W$ be such that $y\in I$; we show that if $Q\in b(I)$, then $Q\in\m F^m$ for some $m\geq-3$. For any $Q\in b(I)$, it is easy to see that
\[
\dist(I,Q)+\diam Q\leq 5\delta(y)< 5(1+\beta)^{-1}|y-\xi|,
\]
On the other hand, we have that $\dist(I,\xi)\geq\frac34|y-\xi|$. By fixing $\beta\geq\alpha$ large enough that $(1+\beta)^{-1}\leq\frac1{10}$, we thus guarantee that
\[
\dist(Q,\xi)\geq\dist(I,\xi)-(\dist(I,Q)+\diam Q)\geq|y-\xi|/4\geq\tau\delta(x)/4\geq2^{-3}\tau\delta(x).
\]
Now let $m=m_Q\geq-3$ be the unique integer such that $2^m\tau\delta(x)\leq\dist(Q,\xi)<2^{m+1}\tau\delta(x)$, and suppose that $\ell(Q)\geq\wt c2^m\tau\delta(x)$. Then
\begin{equation}\label{eq.contradict}\nonumber
(1+\beta)\ell(Q)\leq\frac1{2^5\sqrt{n+1}}|y-\xi| \leq\frac1{2^5\sqrt{n+1}}\big\{\diam I+\dist(Q,I)+\diam Q+\dist(Q,\xi)\big\} \leq\frac{C}{\wt c}\ell(Q),
\end{equation}
which gives a contradiction if $\beta$ is chosen large enough (depending on $\wt c$). Thus we have that $\ell(Q)<\wt c2^m\tau\delta(x)$, so that $Q\in\m F^m$, finishing the proof of (\ref{eq.inc}).

With the claim at hand, for each $m\geq-3$ we let $\{Q_j^m\}_j$ be the smallest family of cubes of uniform  generation $k$ with $2^{-k}\leq\wt c2^m\tau\delta(x)<2^{-k+1}$ such that  for each $Q\in\m F^m$, there exists $Q_j^m$ with $Q\subseteq Q_j^m$.  We write $\m F_j^m$ for the family of cubes contained in $Q_j^m$. We now claim that for any $m\geq-3$, $j$, and $Q\in\m F_j^m$, we have that $x\in\Omega\backslash 2^{14}B_Q$. Fix a large integer $m_0\geq-3$ to be determined later. If $m\geq m_0$, the triangle inequality implies that
\[
|x-x_Q|\geq\dist(Q,\xi)-|x-\xi|\geq2^{m-1}\tau\delta(x)-(1+\alpha)\delta(x)\geq\frac{2^{m-1}\tau-2(1+\alpha)}{\wt c2^m\tau}\ell(Q_j^m),
\] 
so that if we choose $m_0$ large enough so that $2^{m_0-2}\geq\frac{2(1+\alpha)}{\tau}$, we may guarantee that  $2^{m-1}\tau-2(1+\alpha)\geq2^{m-2}\tau$ for each $m\geq m_0$. Then, by choosing $\wt c<2^{-16}$, we obtain that $|x-x_Q|>2^{14}\ell(Q)$ for all $m\geq m_0$. If $m\leq m_0$, we have that $|x-x_Q|\geq\delta(x)\geq\frac1{\wt c2^m\tau}\ell(Q)\geq\frac1{\wt c}(2^{m_0}\tau)^{-1}\ell(Q)$, so that if $\wt c<(2^{m_0+14}\tau)^{-1}$, we obtain that $|x-x_Q|\geq2^{14}\ell(Q)$ for all $-3\leq m\leq m_0$.

We have thus shown that for any $m\geq-3$, any $j$, and any $Q\in\m F_j^m$, we have that $x\in\Omega\backslash2^{14}B_Q$.  Proceeding as in (\ref{eq.i2first}) and (\ref{eq.i2}), and using the preceding claim to apply Lemma \ref{lm.upper} and obtain the appropriate analogue of (\ref{eq.useupper}),   we deduce that
\begin{equation}\label{eq.i321}
T_{32}(x)\lesssim\sum_{m\geq-3}\sum_j\sum_{Q\in\m F_j^m}\frac{\omega^x(2^{12}B_Q)}{\ell(Q)^n}\int_{w(Q)}m_{2,B(z,\delta(z)/2)}(F)\,dm(z).
\end{equation}

Arguing as in the proof of (\ref{eq.i23}), for fixed $m$ and $j$ we may show that
\begin{multline}\label{eq.i322} 
\sum_{Q\in\m F_j^m}\frac{\omega^x(2^{12}B_Q)}{\ell(Q)^n}\int_{w(Q)}m_{2,B(z,\delta(z)/2)}(F)\,dm(z) \lesssim \Big(\int_{2^{13}B_{Q_j^m}}\Big(\frac{d\omega^x}{d\sigma}\Big)^{p_1}\,d\sigma\Big)^{\frac1{p_1}}\Big(\int_{Q_j^m}\n C_2(F)^{p_1'}\,d\sigma\Big)^{\frac1{p_1'}} \\ \lesssim\omega^x\big(2^{16}B_{Q_j^m}\big)\Big(\dashint_{B(\xi,2^{m+2}\tau\delta(x))}\n C_2(F)^{p_1'}\,d\sigma\Big)^{\frac1{p_1'}},
\end{multline}
where we used the  $n$-Ahlfors regularity of $\partial\Omega$, the fact that $\ell(Q_j^m)\approx 2^m\tau\delta(x)$, and Proposition \ref{prop.wrhp}. It follows from (\ref{eq.i321}) and (\ref{eq.i322}) that
\begin{equation}\label{eq.i323}
T_{32}(x)\lesssim\Big(\sum_{m\geq-3}\omega^x\big(\medcup_j2^{16}B_{Q_j^m}\big)\Big)\big(\m M_\sigma(\n C_2(F)^{p_1'})(\xi)\big)^{1/p_1'}.
\end{equation}
Let $E:=\medcup_j2^{16}B_{Q_j^m}$. By choosing $\wt c$ small enough, we may guarantee that $\dist(E,\xi)>2^{m-2}\tau\delta(x)$, and therefore $\omega^y(E)\equiv0$ on $B(\xi,2^{m-2}\tau\delta(x))\cap\partial\Omega$. Moreover, for all $m$ large enough such that $2^m\geq8/\tau$, we have that $x\in B(\xi,2^{m-2}\tau\delta(x))$, and for such $m$, since $v(x):=\omega^x(E)$ solves $Lv=0$ in $\Omega$, the boundary H\"older inequality, Lemma \ref{lm.boundaryholder}, implies that $\omega^x(E)\lesssim\Big(\frac{(1+\alpha)\delta(x)}{2^{m-2}\tau\delta(x)}\Big)^\eta\lesssim2^{-m\eta}$, so that 
\[
\sum_{m\geq-3}\omega^x(E)\lesssim\sum_{m\geq-3}2^{-m\eta}\leq C.
\]
Using this result on (\ref{eq.i323}), it follows that
\begin{equation}\label{eq.i32done}
\Vert\sup_{x\in\gamma(\xi)} T_{32}(x)\Vert_{L^{p'}(\partial\Omega,d\sigma(\xi))}\lesssim\Vert\big(\m M_\sigma(\n C_2(F)^{p_1'})\big)^{1/p_1'}\Vert_{L^{p'}(\partial\Omega)}\lesssim\Vert\n C_2(F)\Vert_{L^{p'}(\partial\Omega)}.
\end{equation}

For $T_{31}$, let $\m W_1$ be the collection of Whitney cubes $I\in\m W$ such that $I\cap(\gamma_\beta(\xi)\backslash(B^x\cup B_x))\neq\varnothing$. It is easy to see that $\cup_{I\in\m W_1}I\subset\gamma_{5+4\beta}(\xi)$. For any $I\in\m W_1$ and $y\in I^*$, we distinguish two cases: either $\delta(y)\leq K\delta(x)$ or $\delta(y)>K\delta(x)$, for $K$  large  to be specified. If $\delta(y)\leq K\delta(x)$, then $|y-x|\geq2^{-4}\delta(x)\geq 2^{-4}K^{-1}\delta(y)$. If $\delta(y)> K\delta(x)$, then
\[
|y-x|>|y-\xi|-|x-\xi|\geq\delta(y)-(1+\alpha)\delta(x)>\delta(y)-(1+\alpha)K^{-1}\delta(y),
\]
so if we choose $K$ large enough that $\frac{1+\alpha}K\leq\frac12$, then we have that $|y-x|>\delta(y)/2$. Thus, in any case we have that $|y-x|\gtrsim\delta(y)$, and similarly, $|y-z|\gtrsim\delta(y)$ for all $z\in B(x,\delta(x)/32)$.   With this in mind, we see that
\begin{multline}\label{eq.i31}
T_{31}(x)\leq\sup_{z\in B(x,\delta(x)/32)}\sum_{I\in\m W_1}\Big(\int_I|\nabla_2G(z,y)|^2 \,dm(y)\Big)^{1/2}\Big(\int_I|F(y)|^2\,dm(y)\Big)^{1/2}\\ \lesssim\sup_{z\in B(x,\delta(x)/32)}\sum_{I\in\m W_1}\Big(\int_{I^*}G(z,y)^2 \,dm(y)\Big)^{1/2}\frac1{\ell(I)}\ell(I)^{\frac{n+1}2}m_{2,I}(F)\\ \lesssim\sum_{I\in\m W_1}\ell(I)^{-n}\int_Im_{2,I}(F) \lesssim\int_{\gamma_{5+4\beta}(\xi)}\delta(y)^{-n-1}\{\delta(y)m_{2,B(y,\delta(y)/2)}(F)\}\,dm(y),
\end{multline}
where we have used that  $G(z,y)\lesssim|y-z|^{1-n}\lesssim\delta(y)^{1-n}$ for each $y\in\gamma_\beta(\xi)\backslash(B^x\cup B_x)$ and each $z\in B(x,\delta(x)/32)$. From (\ref{eq.i31}) it follows that
\begin{multline}\label{eq.i31done}
\int_{\partial\Omega}\big|\sup_{x\in\gamma_{\alpha}(\xi)}T_{31}(x)\big|^{p'}\,d\sigma(\xi) \lesssim\int_{\partial\Omega}\Big|\int_{\gamma_{5+4\beta}(\xi)}\delta(y)^{-n-1}\{\delta(y)m_{2,B(y,\delta(y)/2)}(F)\}\,dm(y)\Big|^{p'}\,d\sigma(\xi)\\ \lesssim\int_{\partial\Omega}\wt{\m A}_1(\delta F)(\xi)^{p'}\,d\sigma(\xi) \lesssim\int_{\partial\Omega}\n C_2(F)(\xi)^{p'}\,d\sigma(\xi),
\end{multline}
where in the last inequality we have used (\ref{eq.tentandcar}).

\subsubsection{Conclusion} From the estimates   (\ref{eq.i1done}), (\ref{eq.i2done}), (\ref{eq.i32done}) and (\ref{eq.i31done}), we deduce that
\begin{equation}\label{eq.w2done}
\Vert\widetilde{\m N}_{2^*}(w_2)\Vert_{L^{p'}(\partial\Omega)}\lesssim\Vert\n C_2(F)\Vert_{L^{p'}(\partial\Omega)}.
\end{equation}

It remains only to estimate the last term in the right-hand side of (\ref{eq.break}). However, this term is almost completely analogous to the term we have just bounded in (\ref{eq.w2done}), thus let us only comment on the changes. First, as in (\ref{eq.split}) we break up $|\widetilde{\m N}_{\alpha,2^{-5},2^*}(w_3)(\xi)|$ into three analogous parts $T_1', T_2', T_3'$. The term $T_1'$ is controlled in the same way as $T_1$ was controlled, except now we use the fact that $L^{-1}_\Omega:L^{2_*}(\Omega)\ra L^{2^*}(\Omega)$. The term $T_2'$ is controlled similarly as $T_2$ was controlled, but now we do not need to use the Caccioppoli inequality; instead, for instance  in the analogue of (\ref{eq.i2first}) we use the basic estimate
\begin{equation}\label{eq.change}
\int_IG(z,y)|H(y)|\,dm(y)\lesssim\Big(\dashint_IG(z,y)^{2^*}\,dm(y)\Big)^{\frac1{2^*}}\frac1{\ell(I)^n}\ell(I)^{n+1}m_{2_*,I}(\delta H),
\end{equation}
and mimic the proof of (\ref{eq.i2done}) to finish the estimate for $T_2'$. The term $T_3'$ is split into two terms $T_{31}'$ and $T_{32}'$ analogously as in (\ref{eq.split2}), and we bound each of these separately; the only difference with the detailed proofs above will be the use of (\ref{eq.change}) instead of the Caccioppoli inequality at the very beginning. As such, we control all three terms $T_1'$, $T_2'$ and $T_3'$ in the expected ways, and obtain the estimate
\begin{equation}\label{eq.w3done}
	\Vert\widetilde{\m N}_{2^*}(w_3)\Vert_{L^{p'}(\partial\Omega)}\lesssim\Vert\n C_{2_*}(\delta H)\Vert_{L^{p'}(\partial\Omega)}.
\end{equation}
From (\ref{eq.break}), (\ref{eq.w2done}), and (\ref{eq.w3done}),  the desired bound (\ref{eq.ntmaxn}) follows.\hfill{$\square$}

We now consider an extension of Theorem \ref{thm.poisson} to the case of  $F\in{\bf C}_{2,p'}$ and $H$ with $\delta_\Omega H\in{\bf C}_{2_*,p'}$.

\begin{theorem}\label{thm.poisson3} Let $\Omega\subsetneq\bb R^{n+1}$, $n\geq2$ be a domain satisfying the corkscrew condition and with $n$-Ahlfors regular boundary. Let $p>1$, $p'$ its H\"older conjugate, and $L=-\dv A\nabla$. Assume that $(\Di_{p'}^L)$ is solvable in $\Omega$.   Let $g\in L^{p'}(\partial\Omega)$, $H$ such that  $\delta_\Omega H\in{\bf C}_{2_*,p'}$, and $F\in{\bf C}_{2,p'}$. Then there exists a weak solution $w\in W^{1,2}_{\loc}(\Omega)$ to the problem (\ref{eq.poisson}) satisfying the estimate (\ref{eq.ntmaxn}).  The equation $w=g$ on $\partial\Omega$ is understood as
\begin{equation}\label{eq.gnt}\nonumber
\lim_{\gamma(\xi)\ni x\ra\xi}\dashint_{B(x,\delta_\Omega(x)/2)}w\,dm=g(\xi),\qquad\text{for }\sigma\text{-a.e.\ }\xi\in\partial\Omega.
\end{equation} 
\end{theorem}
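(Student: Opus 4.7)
The plan is to deduce Theorem \ref{thm.poisson3} from Theorem \ref{thm.poisson} by a density argument, exploiting the linearity of the Poisson--Dirichlet problem together with the $L^{p'}$ estimate on the modified non-tangential maximal function, which provides a ``metric'' in which approximating solutions form Cauchy sequences. By linearity, write $w = w_g + w_F + w_H$, where $w_g$ solves the homogeneous problem $Lw_g = 0$ with $w_g = g$ on $\pom$, $w_F$ solves $Lw_F = -\dv F$ with zero boundary data, and $w_H$ solves $Lw_H = H$ with zero boundary data. For $w_g$, the solvability of $(\Di_{p'}^L)$ together with Proposition \ref{prop.wrhp} extends to general $g \in L^{p'}(\pom)$ in the usual way (Poisson representation using the weak-$RH_p$ property of $d\omega_L/d\sigma$), giving the bound $\|\m N(w_g)\|_{L^{p'}} \lesssim \|g\|_{L^{p'}}$ and the non-tangential limit $w_g = g$ $\sigma$-a.e.

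For $w_F$, I would invoke Lemma \ref{lm.lipschitz} (applying first (i) and then (ii)) to produce a sequence $\{F_k\} \subset \Lip_c(\Omega) \cap L^\infty_c(\Omega)$ with $F_k \to F$ strongly in ${\bf C}_{2,p'}$. Theorem \ref{thm.poisson} (with $g \equiv 0$, $H \equiv 0$) yields unique weak solutions $w_F^{(k)} \in C(\overline{\Omega}) \cap W^{1,2}_{\loc}(\Omega)$ with $w_F^{(k)}|_{\pom} = 0$ and
\[
\|\widetilde{\m N}_{2^*}(w_F^{(k)})\|_{L^{p'}(\pom)} \lesssim \|\n C_2(F_k)\|_{L^{p'}(\pom)}.
\]
By linearity, $w_F^{(k)} - w_F^{(j)}$ solves the analogous problem with right-hand side $-\dv(F_k - F_j)$, so $\{w_F^{(k)}\}$ is Cauchy in ${\bf N}_{2^*,p'}$. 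Since $\widetilde{\m N}_{2^*}$ dominates local $L^{2^*}$ averages, the sequence converges in $L^{2^*}_{\loc}(\Omega)$ to some $w_F$; Caccioppoli's inequality (Lemma \ref{lm.cacc}) then upgrades this to convergence (along a subsequence) in $W^{1,2}_{\loc}(\Omega)$, so $w_F$ is a weak solution of $Lw_F = -\dv F$ and satisfies the desired estimate by taking $k \to \infty$ in the above inequality and using $\n C_2(F_k) \to \n C_2(F)$ in $L^{p'}$.

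For $w_H$, I would apply the same scheme to the function $\widetilde H := \delta_\Omega H \in {\bf C}_{2_*,p'}$: Lemma \ref{lm.lipschitz} produces $\widetilde H_k \in \Lip_c(\Omega) \cap L^\infty_c(\Omega)$ converging to $\widetilde H$ in ${\bf C}_{2_*,p'}$, and then $H_k := \widetilde H_k / \delta_\Omega$ also lies in $L^\infty_c(\Omega)$ since $\widetilde H_k$ is supported away from $\pom$. Theorem \ref{thm.poisson} applies to each $H_k$, and the Cauchy/convergence argument proceeds identically. For the non-tangential limits, each $w_F^{(k)}$ (resp.\ $w_H^{(k)}$) is continuous up to the boundary with zero trace, so for $\sigma$-a.e.\ $\xi$,
\[
\limsup_{\gamma(\xi) \ni x \to \xi} \Big(\dashint_{B(x,\delta(x)/2)} |w_F|^{2^*}\,dm\Big)^{1/2^*} \leq \widetilde{\m N}_{2^*}(w_F - w_F^{(k)})(\xi),
\]
and the right side tends to $0$ in $L^{p'}$ as $k \to \infty$, so passing to a subsequence gives the a.e.\ non-tangential limit $0$ for $w_F$ (and analogously for $w_H$). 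Adding the three pieces yields the non-tangential limit $g$ $\sigma$-a.e.\ for $w$.

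The main obstacle I anticipate is not the convergence itself but the correct bookkeeping of the approximation of $H$: one must ensure that the sequence obtained by approximating $\delta_\Omega H$ produces an $H_k$ with $\delta_\Omega H_k = \widetilde H_k$ lying in the same Carleson space, so that Theorem \ref{thm.poisson} is applicable directly. This is where the remark following Theorem \ref{thm.poisson} about the compactly supported bounded case being sufficient becomes essential, and where the interplay between the weight $\delta_\Omega$ and the Lipschitz approximation in Lemma \ref{lm.lipschitz} must be handled with care, using that $\supp \widetilde H_k \Subset \Omega$ guarantees $\delta_\Omega^{-1} \in L^\infty(\supp \widetilde H_k)$.
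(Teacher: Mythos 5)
Your proposal is correct and follows essentially the same route as the paper: reduce to $g\equiv 0$ via the homogeneous Dirichlet solvability, approximate the data by compactly supported Lipschitz functions via Lemma \ref{lm.lipschitz}, run the Cauchy argument in ${\bf N}_{2^*,p'}$ using the a priori estimate from Theorem \ref{thm.poisson}, upgrade to $W^{1,2}_{\loc}$-convergence via Caccioppoli, and recover the non-tangential boundary limit by passing to an a.e.\ convergent subsequence of $\widetilde{\m N}_{2^*}(w-w_k)$ together with continuity of each $w_k$ up to the boundary. The only cosmetic difference is that you split the inhomogeneous part into two separate approximating sequences $w_F^{(k)}$, $w_H^{(k)}$, whereas the paper uses a single sequence $w_k$ solving $Lw_k=\delta_\Omega^{-1}h_k-\dv F_k$; and the bookkeeping issue you flag about recovering $H_k=\widetilde H_k/\delta_\Omega\in L^\infty_c(\Omega)$ from the compactly supported approximant $\widetilde H_k$ is handled exactly as you describe, so it is not actually an obstacle.
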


\noindent\emph{Proof.}   Since $(\Di_{p'}^L)$ is solvable in $\Omega$, it is well-known that for any $g\in L^{p'}(\partial\Omega)$, there exists a   weak solution $u\in W^{1,2}_{\loc}(\Omega)\cap C_{\loc}^{\eta}(\Omega)$ to the equation $Lu=0$ in $\Omega$ satisfying that $\lim_{\gamma(\xi)\ni x\ra\xi}u(x)=g(\xi)$, for $\sigma\text{-a.e.\ }\xi\in\partial\Omega$. Hence, by linearity, it is enough for us to consider the case $g\equiv0$. By Lemma \ref{lm.lipschitz}, there exist sequences $\{F_k\}$, $\{h_k\}$ of compactly supported Lipschitz functions on $\Omega$ such that $F_k\ra F$ strongly in ${\bf C}_{2,p'}$ and $h_k\ra\delta_\Omega H$ strongly in ${\bf C}_{2_*,p'}$, as $k\ra\infty$. Let $w_k$ be the unique weak solution in $C(\overline\Omega)\cap Y_0^{1,2}(\Omega)$ to the equation $Lw_k=\delta_\Omega^{-1}h_k-\dv F_k$. For any $k,\ell\in\bb N$, we have that
\[
L(w_k-w_\ell)=\delta_{\Omega}^{-1}(h_k-h_\ell)-\dv(F_k-F_\ell),
\]
and $w_k-w_\ell\in Y_0^{1,2}(\Omega)$. Therefore, since we have already shown the bound (\ref{eq.ntmaxn}) for the case of compactly supported Lipschitz data, we see that
\begin{equation}\label{eq.ntc}
\Vert\wt{\m N}_{2^*}(w_k-w_\ell)\Vert_{L^{p'}(\partial\Omega)}\lesssim\Vert\n C_{2_*}(h_k-h_\ell)\Vert_{L^{p'}(\partial\Omega)}+\Vert\n C_{2}(F_k-F_\ell)\Vert_{L^{p'}(\partial\Omega)}\lra0,
\end{equation}
as $k\ra\infty$. This shows that the sequence $\{w_k\}$ is strongly convergent in the Banach space ${\bf N}_{2^*,p'}$, defined in Section \ref{sec.main1}. Let $w\in{\bf N}_{2^*,p'}$ be the limit of $\{w_k\}$ in ${\bf N}_{2^*,p'}$, and furthermore, note that $w_k\ra w$ strongly in $L^{2^*}_{\loc}(\Omega)$ as $k\ra\infty$. Let us remark that since we also have that $F_k\ra F$ strongly in $L^2_{\loc}(\Omega)$ and that $h_k\ra\delta_\Omega H$ strongly in $L^{2_*}_{\loc}(\Omega)$, then by the Caccioppoli inequality (Lemma \ref{lm.cacc}), we see that the sequence $\{\nabla(w_k-w_\ell)\}$ is strongly convergent in $L^2_{\loc}(\Omega)$. Then it is an exercise to check that $w\in W^{1,2}_{\loc}(\Omega)$ with $\nabla w_k\ra\nabla w$ strongly in $L^2_{\loc}(\Omega)$. From this fact, it easily follows that the equation $Lw=H-\dv F$ holds in the weak sense in $\Omega$. 

Finally, we claim that
\[
\lim_{\gamma(\xi)\ni x\ra\xi}\Big(\dashint_{B(x,\delta(x)/2)}|w|^{2^*}\,dm\Big)^{\frac1{2^*}}=0,\qquad\text{for }\sigma\text{-a.e.\ }\xi\in\partial\Omega.
\]
However, this follows easily since  $\wt{\m N}_{2^*}(w-w_{k'})(\xi)\ra0$ pointwise $\sigma$-a.e.\ on $\partial\Omega$  as $k'\ra\infty$ (after passing to a subsequence $\{w_{k'}\}$), and by boundary H\"older continuity applied to the solution $w_{k'}$ in small enough balls centered at the boundary.\hfill{$\square$}

\subsection{Proof of Theorem \ref{thm.poisson2}}\label{sec.proofpoisson} The implication (a)$\implies$(b) is   contained in Theorem \ref{thm.poisson} already, while   (b)$\implies$(c) and (d)$\implies$(e) follow immediately by definition. We will show (c)$\implies$(a), (e)$\implies$(c), (c)$\implies$(d), (e)$\implies$(f), (f)$\implies$(a), (g)$\implies$(a), (a)$\implies$(g), (h)$\implies$(a), (a)$\implies$(h).  

{\bf Proof of }(c)$\implies$(a){\bf.} We follow part of the argument for \cite[Theorem 1.5]{mt22}. Assume that (c) holds, and we  show that (a) holds by proving that the assumptions in Proposition \ref{prop.wrhp} (c) hold. To this end, we   prove that for $B$, $\Lambda$, and $x \in\Omega$ as in Proposition \ref{prop.wrhp} (c),
\begin{equation}\label{eqclau883}
	\left(\avint_{\Lambda B} \big(\cM_{\sigma,0}\, \omega_L^x\big)^p \,d\sigma\right)^{1/p} \lesssim_\Lambda \sigma(B)^{-1},
\end{equation}
where $\cM_{\sigma,0}$ is the truncated maximal operator defined by 
$$\cM_{\sigma,0} \,\tau(\xi) = \sup_{0<r\leq \dist(x,\pom)/4} \frac{|\tau|(B(\xi,r))}{\sigma(B(\xi,r))},$$
for any signed Radon measure $\tau$. 

Given a ball $B_\xi=B(\xi,r)$, with $\xi\in\pom\cap\Lambda B$, $0<r\leq \dist(x,\pom)/4$, (so that $x\not \in 4B_\xi$), consider a smooth non-negative function $\vphi_{B_\xi}$ which equals $1$ on $B_\xi$ and vanishes away from $2B_\xi$. Then  we have that
\begin{multline}\label{eq.compb}
\omega_L^x(B_\xi) \lesssim\dashint_{B(x,\delta(x)/8)}\omega_L^z\,dm(z)\leq\dashint_{B(x,\delta(x)/8)}\int_{\partial\Omega} \vphi_{B_\xi}\,d\omega_L^z\,dm(z)\\ = -\,\dashint_{B(x,\delta(x)/8)}\int_\Omega A^T\nabla_y G_{L}(z,y)\nabla \vphi_{B_\xi}(y)\,dm(y)\,dm(z) \lesssim \frac1r\int_{2B_\xi}\Big|\dashint_{B(x,\delta(x)/8)}\nabla_y G_{L}(z,y)\,dm(z)\Big|\,dm(y)\\ \lesssim\int_{CB_\xi}\wt{\m N}_2^{C'\delta(x)}\Big(\1_{\Omega\backslash B(x,\delta(x)/4)}~\dashint_{B(x,\delta(x)/8)}\nabla_2 G_{L}(z,\cdot)\,dm(z)\Big)\,d\sigma,
\end{multline}
where we used Lemma \ref{lm.moser}, (\ref{eq.represent}),  and Fubini's theorem.  Therefore,
$$\frac{\omega^x_L(B(\xi,r))}{\sigma(B(\xi,r))}\lesssim \avint_{B(\xi,Cr)} \wt{\m N}_2^{C'\delta(x)}\Big(\1_{\Omega\backslash B(x,\delta(x)/4)}~\dashint_{B(x,\delta(x)/8)}\nabla_2 G_{L}(z,\cdot)\,dm(z)\Big)\,d\sigma.$$
Taking the supremum over $0<r\leq \dist(x,\pom)/4$, we derive 
$$\cM_{\sigma,0}\,\omega^x_L(\xi) \lesssim \cM_\sigma\Big(\1_{B(x,C'\delta(x))}\wt{\m N}_2^{C'\delta(x)}\Big(\1_{\Omega\backslash B(x,\delta(x)/4)}~\dashint_{B(x,\delta(x)/8)}\nabla_2 G_{L}(z,\cdot)\,dm(z)\Big)\Big)(\xi),$$
so that
\begin{multline}\label{eq.fkf4}
	\|\cM_{\sigma,0}\, \omega^x_L\|_{L^p(\Lambda B,\sigma)} \lesssim 
	\Big\Vert\cM_\sigma\Big(\1_{B(x,C'\delta(x))}\wt{\m N}_2^{C'\delta(x)}\Big(\1_{\Omega\backslash B(x,\delta(x)/4)}~\dashint_{B(x,\delta(x)/8)}\nabla_2 G_{L}(z,\cdot)\,dm(z)\Big)\Big)\Big\Vert_{L^p(\sigma)} \\ \lesssim
	\Big\Vert\wt{\m N}_2^{C'\delta(x)}\Big(\1_{\Omega\backslash B(x,\delta(x)/4)}~\dashint_{B(x,\delta(x)/8)}\nabla_2 G_{L}(z,\cdot)\,dm(z)\Big)\Big\Vert_{L^p(B(x,C'\delta(x)),\sigma)}.
\end{multline}

Next, we use the $\m N$-$\n C$ duality to control the right-hand side of (\ref{eq.fkf4}). We have that
\begin{multline}\label{eq.casi}
\Big\Vert\wt{\m N}_2\Big(\1_{\Omega\backslash B(x,\delta(x)/4)}~\dashint_{B(x,\delta(x)/8)}\nabla_2 G_{L}(z,\cdot)\,dm(z)\Big)\Big\Vert_{L^p(\sigma)}\\ \lesssim\sup_{F\in{\bf C}_{2,p'}:\Vert\n C_2(F)\Vert_{L^{p'}(\partial\Omega)}=1}\Big|\int_{\Omega}\Big(\1_{\Omega\backslash B(x,\delta(x)/4)}(y)\dashint_{B(x,\delta(x)/8)}\nabla_2 G_{L}(z,y)\,dm(z)\Big)F(y)\,dm(y)\Big|\\ =\sup_{F\in{\bf C}_{2,p'}:\Vert\n C_2(F)\Vert_{L^{p'}(\partial\Omega)}=1}\Big|\dashint_{B(x,\delta(x)/8)}\Big(\int_{\Omega}\nabla_2 G_{L}(z,y)\big\{F\1_{\Omega\backslash B(x,\delta(x)/4)}\big\}\,dm(y)\Big)\,dm(z)\Big|,
\end{multline}
where we used Proposition \ref{prop.duality} and Fubini's theorem. By Lemma \ref{lm.lipschitz}, we may assume without loss of generality that the supremum in the right-hand side of (\ref{eq.casi}) runs over the compactly supported Lipschitz functions $F$ such that $\Vert\n C_2(F)\Vert_{L^{p'}(\partial\Omega)}=1$. Then, let
\[
w_{x,F}(z):=\int_{\Omega}\nabla_2 G_{L}(z,y)\big\{F(y)\1_{\Omega\backslash B(x,\delta(x)/4)}(y)\big\}\,dm(y),\qquad z\in\Omega,
\]
so that $Lw_{x,F}=-\dv F\1_{\Omega\backslash B(x,\delta(x)/4)}$, $w_{x,F}\in Y_0^{1,2}(\Omega)$, and by hypothesis, we have 
\[
\Vert\wt{\m N}_{2^*}(w_{x,F})\Vert_{L^{p'}(\partial\Omega)}\lesssim\Vert\n C_2(F\1_{\Omega\backslash B(x,\delta(x)/4)})\Vert_{L^{p'}(\partial\Omega)}\leq\Vert\n C_2(F)\Vert_{L^{p'}(\partial\Omega)}.
\]
Therefore, if $\zeta\in\partial\Omega$ satisfies that $|x-\zeta|=\delta_\Omega(x)$, then
\begin{multline}\label{eq.casi4}
\Big|\dashint_{B(x,\delta(x)/8)}\Big(\int_{\Omega}\nabla_2 G_{L}(z,y)\big\{F(y)\1_{\Omega\backslash B(x,\delta(x)/4)}(y)\big\}\,dm(y)\Big)\,dm(z)\Big|\\ \lesssim\Big|\dashint_{B(x,\delta(x)/8)}w_{x,F}\,dm\Big|\leq\Big(\dashint_{B(x,\delta(x)/8)}|w_{x,F}|^{2^*}\,dm\Big)^{\frac1{2^*}}\\ \lesssim\dashint_{B(\zeta,C\delta(x))}\wt{\m N}_{2^*}(w_{x,F})\,d\sigma\lesssim\delta(x)^{-n/p'}\Vert\n C_2(F)\Vert_{L^{p'}(\partial\Omega)}\approx\delta(x)^{-n/p'},
\end{multline}
so that from (\ref{eq.fkf4}) and (\ref{eq.casi}) we conclude that
\begin{equation}\label{eq.casi3}
\|\cM_{\sigma,0}\, \omega^x_L\|_{L^p(\Lambda B,\sigma)}\lesssim\delta(x)^{-n/p'}\lesssim_{\Lambda}\sigma(B)^{-1/p'},
\end{equation}
which readily implies the desired bound (\ref{eqclau883}).
 
{\bf Proof of }(e)$\implies$(c){\bf.} Fix $F\in L^{\infty}_c(\Omega)$ and let $w$ be the unique weak solution in $Y_0^{1,2}(\Omega)$ to the equation $Lw=-\dv F$ given by the Green's representation formula (\ref{eq.represent}) (with the Green's function $G_L$). Using Proposition \ref{prop.duality}, we see that
\begin{equation}\label{eq.casi2b}
	\Vert\wt{\m N}_{2^*}(w)\Vert_{L^{p'}(\partial\Omega)}\lesssim\sup_{H\in{\bf C}_{2_*,p}:\Vert\n C_{2_*}(H)\Vert_{L^{p}(\partial\Omega)}=1}\Big|\int_\Omega Hw\,dm\Big|.
\end{equation}
By Lemma \ref{lm.lipschitz}, we may assume without loss of generality that the supremum in the estimate above runs over all compactly supported Lipschitz functions $H$ in $\Omega$ satisfying that $\Vert\n C_{2_*}(H)\Vert_{L^{p}(\partial\Omega)}=1$. Let $v\in Y_0^{1,2}(\Omega)$ be the unique weak solution to the equation $L^*v=H$. Then we have that
\[
\int_\Omega Hw\,dm=\int_\Omega A^T\nabla v\nabla w\,dm=\int_\Omega A\nabla w\nabla v\,dm=\int_\Omega F\nabla v\,dm,
\]
where we used that $v,w\in Y_0^{1,2}(\Omega)$. Using this identity in (\ref{eq.casi2b}) and Proposition \ref{prop.duality} again, we obtain that
\begin{multline}\nonumber
	\Vert\wt{\m N}_{2^*}(w)\Vert_{L^{p'}(\partial\Omega)}\lesssim\sup_{H:\Vert\n C_{2_*}(H)\Vert_{L^{p}(\partial\Omega)}=1}\Vert\n C_{2}(F)\Vert_{L^{p'}(\partial\Omega)}\Vert\wt{\m N}_{2}(\nabla v)\Vert_{L^{p}(\partial\Omega)}\\ \lesssim\sup_{H:\Vert\n C_{2_*}(H)\Vert_{L^{p}(\partial\Omega)}=1}\Vert\n C_{2_*}(H)\Vert_{L^p(\partial\Omega)}\Vert\n C_2(F)\Vert_{L^{p'}(\partial\Omega)}=\Vert\n C_{2}(F)\Vert_{L^{p'}(\partial\Omega)},
\end{multline}
where we used that the estimate (\ref{eq.estreg}) holds (with $F\equiv0$).
 
{\bf Proof of }(c)$\implies$(d){\bf.} First, let us note that one may prove (c)$\implies$(e) in a perfectly analogous manner to the proof of (e)$\implies$(c)   shown just above, and for all other applications in this paper, only the equivalence (c)$\iff$(e) is needed. Nevertheless, we prefer to show the stronger result (\ref{eq.estreg}) with non-trivial $F$, and that is what we account for in this argument.  Fix $H,F\in L^{\infty}_c(\Omega)$ and let $v$ be the unique weak solution in $Y_0^{1,2}(\Omega)$ to the equation $L^*v=H-\dv F$. Using Proposition \ref{prop.duality}, we see that
\begin{equation}\label{eq.casi2}
	\Vert\wt{\m N}_2(\nabla v)\Vert_{L^p(\partial\Omega)}\lesssim\sup_{G\in{\bf C}_{2,p'}:\Vert\n C_2(G)\Vert_{L^{p'}(\partial\Omega)}=1}\Big|\int_\Omega G\nabla v\,dm\Big|.
\end{equation}
By Lemma \ref{lm.lipschitz}, we may assume without loss of generality that the supremum in the estimate above runs over all compactly supported Lipschitz functions $G$ in $\Omega$ satisfying that $\Vert\n C_2(G)\Vert_{L^{p'}(\partial\Omega)}=1$. Let $w\in Y_0^{1,2}(\Omega)$ be the unique weak solution to the equation $Lw=-\dv G$. Then we have that
\begin{equation}\label{eq.cd2}
\int_\Omega G\nabla v\,dm=\int_\Omega A\nabla w\nabla v\,dm=\int_\Omega A^T\nabla v\nabla w\,dm=\int_\Omega Hw\,dm+\int_\Omega F\nabla w\,dm,
\end{equation}
where we used that $v,w\in Y_0^{1,2}(\Omega)$. Next, using Proposition \ref{prop.duality} again, we see that
\begin{equation}\label{eq.cd3}
\Big|\int_\Omega Hw\,dm\Big|\lesssim\Vert\n C_{2_*}(H)\Vert_{L^p(\partial\Omega)}\Vert\wt{\m N}_{2^*}(w)\Vert_{L^{p'}(\partial\Omega)}\lesssim\Vert\n C_{2_*}(H)\Vert_{L^p(\partial\Omega)}\Vert\n C_2(G)\Vert_{L^{p'}(\partial\Omega)}=\Vert\n C_{2_*}(H)\Vert_{L^p(\partial\Omega)},
\end{equation}
where we used that $(\operatorname{PD}_{p'}^L)$ is solvable in $\Omega$. Furthermore,
\begin{equation}\label{eq.cd4}
\Big|\int_\Omega F\nabla w\,dm\Big|\lesssim\Vert\n C_2(F/\delta)\Vert_{L^p(\partial\Omega)}\Vert\wt{\m N}_2(\delta\nabla w)\Vert_{L^{p'}(\partial\Omega)}.
\end{equation}
Assume for the time being the following estimate:
\begin{equation}\label{eq.cd5}
\wt{\m N}_{\alpha,\frac18,2}(\delta\nabla w)(\xi)\lesssim\wt{\m N}_{\alpha_1,\frac12,2}(w)(\xi)+\n C_{\frac12,2}(G)(\xi),\qquad\sigma-\text{a.e. }\xi\in\partial\Omega,
\end{equation}
for some $\alpha_1>\alpha$. If (\ref{eq.cd5}) holds, then we have that
\begin{equation}\label{eq.cd6}
\Vert\wt{\m N}_2(\delta\nabla w)\Vert_{L^{p'}(\partial\Omega)}\lesssim\Vert\wt{\m N}_2(w)\Vert_{L^{p'}(\partial\Omega)}+\Vert\n C_2(G)\Vert_{L^{p'}(\partial\Omega)}\lesssim\Vert\n C_2(G)\Vert_{L^{p'}(\partial\Omega)}=1.
\end{equation}
From (\ref{eq.casi2})-(\ref{eq.cd6}), the desired estimate (\ref{eq.estreg}) follows. It remains to show that (\ref{eq.cd5}) holds. Fix $\xi\in\partial\Omega$, $x\in\gamma_\alpha(\xi)$, and write $B=B(x,\delta(x)/8)$, $B^*=1.1B$. By the Caccioppoli inequality (Lemma \ref{lm.cacc}), 
\[
\Big(\dashint_B|\delta\nabla w|^2\,dm\Big)^{\frac12}\lesssim\Big(\dashint_{B^*}|w|^2\,dm\Big)^{\frac12}+\delta(x)\Big(\dashint_{B^*}|G|^2\,dm\Big)^{\frac12}\lesssim\wt{\m N}_{\alpha_1,\frac12,2}(w)(\xi)+\delta(x)\Big(\dashint_{B^*}|G|^2\,dm\Big)^{\frac12},
\]
and note that
\begin{multline*}
\delta(x)\Big(\dashint_{B^*}|G|^2\,dm\Big)^{\frac12}\lesssim\delta(x)^{-n}\int_{B(x,\delta(x)/8)}\Big(\dashint_{B(y,\delta(y)/2)}|G|^2\,dm\Big)^{\frac12}\,dm(y)\\ \leq\frac1{\delta(x)^n}\int_{B(\xi,(2+\alpha)\delta(x))\cap\Omega}\Big(\dashint_{B(y,\delta(y)/2)}|G|^2\,dm\Big)^{\frac12}\,dm(y)\lesssim\n C_{\frac12,2}(G)(\xi),
\end{multline*}
which completes the proof of (\ref{eq.cd5}).

{\bf Proof of }(e)$\implies$(f){\bf.} This follows from definition, using (\ref{eq.computew}).

{\bf Proof of }(f)$\implies$(a){\bf.} We show that (a) holds by proving that the assumptions in Proposition \ref{prop.wrhp} (c) hold. Thus we endeavor to prove (\ref{eqclau883}), and to do this, we mimic the argument of (c)$\implies$(a) written above with some small changes. Given $B$, $\Lambda$, and $x\in\Omega$ as in Proposition \ref{prop.wrhp} (c), $\xi\in\partial\Omega\cap\Lambda B$, $r\in(0,\dist(x,\partial\Omega)/4)$, then similarly to (\ref{eq.compb}) we obtain the estimate
\begin{equation}\label{eq.compd}\nonumber
\omega_L^x(B_\xi) \lesssim\dashint_{I}\omega_L^z\,dm(z) \lesssim\int_{CB_\xi}\wt{\m N}_2^{C'\ell(I)}\Big(\nabla_2\dashint_{I} G_{L}(z,\cdot)\,dm(z)\Big)\,d\sigma \lesssim\ell(I)^{-(n+1)}\int_{CB_\xi}\wt{\m N}_2^{C'\ell(I)}(\nabla u_I)\,d\sigma
\end{equation}
where $I$ is a Whitney cube such that $x\in\overline I$ and $u_I$ is the local landscape function (for the operator $L^*$) subordinate to $I$. Proceeding as in the proof of (c)$\implies$(a), we use that (f) holds to deduce that
\[
\|\cM_{\sigma,0}\, \omega^x_L\|_{L^p(\Lambda B,\sigma)}\lesssim\ell(I)^{-(n+1)}\Vert\wt{\m N}_2^{C'\ell(I)}(\nabla u_I)\Vert_{L^p(C'B_Q,\sigma)}\lesssim\ell(I)^{-(n+1)}\ell(I)^{1+\frac np}\approx\ell(I)^{-n/p'}\approx\delta(x)^{-n/p'}.
\] 
The desired result follows.

{\bf Proof of }(g)$\implies$(a){\bf.} This is very similar to the proof of (f)$\implies$(a); we omit further details.

{\bf Proof of }(a)$\implies$(g){\bf.} We use Proposition \ref{prop.duality} to see that
\begin{equation}\nonumber
\Vert\wt{\m N}_q\big(\nabla_2G_L(x,\cdot)\big)\Vert_{L^p(\sigma)}\lesssim\sup_{F\in{\bf C}_{q',p'}:\Vert\n C_{q'}(F)\Vert_{L^{p'}(\partial\Omega)}=1}\Big|\int_{\Omega}\nabla_y G_{L}(x,y)F(y)\,dm(y)\Big|.
\end{equation}
Let $w=L^{-1}_\Omega\dv F$. Then, we have that
\begin{equation}\nonumber
\Big|\int_{\Omega}\nabla_y G_{L}(x,y)F(y)\,dm(y)\Big|=|w(x)|\lesssim\dashint_{B(x,C'\delta(x))}\m N_{\alpha}(w)\,d\sigma\lesssim\delta(x)^{-n/p'}\Vert\m N(w)\Vert_{L^{p'}(\sigma)}.
\end{equation}
Since (a) holds, then by Theorem \ref{thm.poisson} (more specifically, Remark \ref{rm.degnm}), we have that 
\[
\Vert\m N(w)\Vert_{L^{p'}(\sigma)}\lesssim\Vert\n C_{q'}(F)\Vert_{L^{p'}(\sigma)}=1,
\]
which yields (g) by the  estimates above.\hfill{$\square$}

{\bf Proof of }(h) $\iff$ (a). This is shown very similarly to the proofs of (f)$\implies$(a) (using the Caccioppoli inequality) and (a)$\implies$(g) (using Theorem \ref{thm.poisson}), we leave the details to the interested reader.

\section{Overview of the proof of Theorem \ref{thm.regularity}}\label{sec.strat}

The main goal of the rest of this manuscript is to prove Theorem \ref{thm.regularity}. In this section, we begin the proof of this theorem and delegate to further sections the more technical yet important details of the proof. Throughout this section we assume that $\Omega\subset\R^{n+1}$, $n\geq2$, is a bounded open set with uniformly $n$-rectifiable
boundary satisfying the corkscrew condition, and that $A$ is a real, not necessarily symmetric $(n+1)\times (n+1)$ matrix in $\Omega$ with measurable coefficients satisfying (\ref{eq.elliptic}) and the   DKP condition in $\Omega$.

First, to prove Theorem \ref{thm.regularity}, due to the linearity of the PDE and the density of the Lipschitz functions in $\dt W^{1,p}(\partial\Omega)$ for arbitrary $p\in(1,\infty)$, it suffices to establish the main estimate (\ref{eq.regest}) for $f\in\Lip(\partial\Omega)$.

Thus  assume that $f\in\Lip(\partial\Omega)$, and let $u$ be the unique solution to the problem $Lu=0$ in $\Omega$ and $u=f$ on $\partial\Omega$. By Proposition \ref{prop.duality}, we have that
\begin{equation}\label{eq.ntbound1}
\Vert\widetilde{\m N}_2(\nabla u)\Vert_{L^p(\partial\Omega)}\lesssim\sup_{F~:~\Vert\n C_{2}(F)\Vert_{L^{p'}(\partial\Omega)}=1}\Big|\int_{\Omega}F\nabla u\,dm\Big|.
\end{equation}
For the next step, we want to be able to control the right-hand side of (\ref{eq.ntbound1}), and it turns out that this may be done via studying the conormal derivative of solutions to Poisson problems. Given a weak solution $w$ to the Poisson-Dirichlet problem (\ref{eq.poisson}), define the functional
\begin{equation}\label{eq.funct}
	\ell_w(\varphi)=B[w,\Phi]:=\int_\Omega A\nabla w\nabla\Phi\,dm-\int_\Omega H\Phi\,dm-\int_\Omega F\nabla\Phi\,dm,\qquad\varphi\in\Lip(\partial\Omega),
\end{equation}
where $\Phi\in\Lip(\overline{\Omega})$ is a Lipschitz extension of $\varphi$ to $\Omega$, with $\Phi|_{\partial\Omega}=\varphi$. It is easy to see that $\ell_w(\varphi)$ is well-defined, since $w$ solves the equation $Lw=H-\dv F$ in the weak sense. We call $\ell_w(\varphi)$ the \emph{conormal derivative of $w$}, and denote $\partial_{\nu_A}w=\ell_w$.

With the concept of the conormal derivative at hand, let us see how to control the right-hand side of (\ref{eq.ntbound1}). By  Lemma \ref{lm.lipschitz}, we may assume without loss of generality that the supremum in the right-hand side of (\ref{eq.ntbound1}) runs over $F\in L^{\infty}_c(\Omega)$ with $F\in{\bf C}_{2,p'}$. Fix such $F$, and let $w$ be the unique weak solution to the transpose equation $L^*w=-\dv F$ in the space $Y_0^{1,2}(\Omega)$. It follows that
\begin{equation}\label{eq.ntbound2}
	\int_\Omega F\nabla u\,dm=\int_\Omega A^T\nabla w\nabla u\,dm-\partial_{\nu_{A^T}}w(f)=-\partial_{\nu_{A^T}}w(f),
\end{equation}
where in the last identity we used that $Lu=0$ in $\Omega$ and that $w\in Y_0^{1,2}(\Omega)$. Note that, since $A$ is a   DKP matrix, then so is $A^T$.

From (\ref{eq.ntbound2}), it is clear that we want an estimate on the conormal derivative of Poisson solutions. The following proposition shows that the conormal derivative $\partial_{\nu_A}w$ is a bounded linear functional on $\dt{W}^{1,p}$. It is the main ingredient for our solution to the regularity problem with non-smooth coefficients, as it completely takes the place of a 1-sided Rellich estimate. Indeed, the result can be seen as an alternative to Lemmas 5.1 and 5.2 of \cite{mt22} (see also Remark \ref{rm.lemmas}).  

\begin{proposition}[Conormal derivative of the Poisson-Dirichlet problem]\label{prop.conormal} Let $\Omega\subset\bb R^{n+1}$, $n\geq2$ be a bounded domain satisfying the corkscrew condition and with uniformly $n$-rectifiable boundary. Let $p>1$, $p'$ its H\"older conjugate, and $L=-\dv A\nabla$, where $A$ is a   DKP matrix in $\Omega$.  Assume that the homogeneous Dirichlet problem $(\Di_{p'}^L)$ is solvable   in $\Omega$, that $g\in L^{p'}(\partial\Omega)$ and that $F\in{\bf C}_{2,p'}$. Let $w$ be the unique solution to the Poisson Dirichlet problem (\ref{eq.poisson}) with $H\equiv0$. Then there exists a constant $C>0$ such that 
	\begin{equation}\label{eq.boundconormal}
		|\partial_{\nu_A}w(\varphi)|\leq C\big[\Vert g\Vert_{L^{p'}(\partial\Omega)}+\Vert\n C_{2}(F)\Vert_{L^{p'}(\partial\Omega)}\big]\Vert\nabla_{H,p}\varphi\Vert_{L^p(\partial\Omega)},\qquad\text{for each }\varphi\in\Lip(\partial\Omega).
	\end{equation}
\end{proposition}

\begin{remark}\label{rm.lemmas} Although Proposition \ref{prop.conormal} has a similar flavor to results in Section 5 of \cite{mt22}, our proof is quite different, in that we do not study the properties of the signed measure $L^*v_{\varphi}$ for $v_\varphi$ the almost $L^*$-elliptic extension of $\varphi\in\Lip(\partial\Omega)$ defined in Section \ref{sec.extension}.
\end{remark}

\begin{remark}\label{rm.detail} One detail that may seem purely technical but   is in fact quite important for our choreography is that we show (\ref{eq.boundconormal}) with the Carleson functional $\n C_2$ in the right-hand side, and not merely with $\n C_{p'}$, although controlling the latter is easier for the relevant term $T_{222}$, defined in (\ref{eq.break4}) and studied in Section \ref{sec.t222}. Since $\n C_2(F)\leq\n C_{p'}(F)$ pointwise on $\partial\Omega$ for $p\in(1,2]$, the result with $\n C_2$ is stronger.
\end{remark}

Proposition \ref{prop.conormal} is proved in Section \ref{sec.proofconormal} below, after proving an  important auxiliary estimate in Section \ref{sec.proofext}. Assuming for a moment that the above proposition holds, then putting the pieces (\ref{eq.ntbound1}), (\ref{eq.ntbound2}), and Proposition \ref{prop.conormal} together, we have that  
\[
\Vert\widetilde{\m N}_2(\nabla u)\Vert_{L^p(\partial\Omega)}\lesssim\sup_{F~:~\Vert\n C_{2}(F)\Vert_{L^{p'}(\partial\Omega)}=1}\Vert\n C_{2}(F)\Vert_{L^{p'}(\partial\Omega)}\Vert\nabla_{H,p}f\Vert_{L^p(\partial\Omega)}=\Vert\nabla_{H,p}f\Vert_{L^p(\partial\Omega)},
\]
which gives the proof of Theorem \ref{thm.regularity} modulo the proof of Proposition \ref{prop.conormal}. 


Let us finish this section with a few words about the idea of the proof of Proposition \ref{prop.conormal} and how the following sections fit together. Notice that when defining the conormal derivative $\partial_{\nu_A}w(\varphi)$ of a given boundary function $\varphi$, we are free to choose the extension $\Phi$ to the whole domain $\Omega$, and so a main part of our argument is to select an extension with certain desirable properties. Given $f\in\Lip(\partial\Omega)$, we construct the almost $L$-elliptic extension $v_f$ of $f$, which is designed using a corona decomposition of the rough domain into Lipschitz subdomains where both the Lipschitz character as well as the DKP condition are controlled. Roughly speaking, the extension $v_f$ is then chosen to solve the equation $Lv_f=0$ in each subdomain, while on the exteriors of the corona subdomains, $v_f$ equals a certain Lipschitz extension of $f$ with good gradient estimates. 

In the next section, Section \ref{sec.corona}, the details of the corona decomposition into Lipschitz subdomains is described. Then in Section \ref{sec.extension}, with the corona decomposition already at hand, the almost $L$-elliptic extension is defined and its properties addressed. Finally, in Section \ref{sec.reg} we carry out the details of the estimates that prove Proposition \ref{prop.conormal}.

\section{A corona decomposition into Lipschitz subdomains adapted to the DKP property}\label{sec.corona}

Throughout this section we assume that $\Omega\subset\R^{n+1}$, $n\geq1$, is a bounded open set with uniformly $n$-rectifiable
boundary satisfying the corkscrew condition, and that $A$ is a real, not necessarily symmetric $(n+1)\times (n+1)$ matrix in $\Omega$ with measurable coefficients satisfying (\ref{eq.elliptic}) and the   DKP condition in $\Omega$. Our objective in this section is to show that, if $\ell, \tau>0$ are arbitrary parameters, then there exists a corona decomposition of $\Omega$ into Lipschitz subdomains $\Omega_R$, with $R\in\ttt\subset\DD_\sigma$, and a buffer region $\n H$ like the one in \cite[Section 3.3]{mt22} such that
\begin{equation}\label{eq.corona}\nonumber
	\Omega=\bigcup_{R\in\ttt}\Omega_R\cup \n H,\qquad\text{where }\n H=\Omega\backslash\bigcup_{R\in\ttt}\Omega_R.
\end{equation}
Moreover, the domains $\Omega_R$ have uniform Lipschitz character and Lipschitz constant $\ell$ and the matrix $A$ satisfies the $\tau$-DPR condition in each $\Omega_R$. Obviously, we allow the packing condition of the corona decomposition to depend 
on $\tau$ and $\ell$.

We will follow very closely the arguments in Section 3 from \cite{mt22}.
The main difference is that in the construction below we introduce another stopping condition to ensure that $A$ satisfies the $\tau$-DPR condition in each domain $\Omega_R$, with
$R\in\ttt$.

Let us begin. For each $Q\in\DD_\sigma$ and a fixed $M\geq2$ large to be chosen below, we let
\begin{equation}\label{eqaaa1}\nonumber
	\lambda_\Omega(Q)= \ell(Q)^2 \sup\{|\nabla A(x)|^2:x\in B(x_Q,M\ell(Q))\cap\Omega,\,\delta_\Omega(x)\geq M^{-1}\ell(Q)\}.
\end{equation}

\begin{lemma}
	Suppose that $A$ satisfies the   DKP condition in $\Omega$. Then there exists $C(M)>0$ such that
	\begin{equation}\label{eqDKP*}
		\sum_{Q\in\DD_\sigma:Q\subset R}\lambda_\Omega(Q)\sigma(Q) \leq C(M)\,\sigma(R),\qquad\text{for any }R\in\m D_\sigma.
	\end{equation}
\end{lemma}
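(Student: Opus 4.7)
The plan is to compare the discrete packing of $\lambda_\Omega(Q)$'s against the Carleson integral of the DKP density
\[
g(x):=\esssup_{z\in B(x,\delta_\Omega(x)/2)}\delta_\Omega(z)|\nabla A(z)|^2,
\]
for which the DKP hypothesis on $A$ directly reads $\int_{B\cap\Omega}g\,dm\lesssim \HH^n(B\cap\pom)$ for every ball $B$ centered in $\pom$. I will fix a Whitney decomposition $\m W(\Omega)$ with parameter small enough (for instance $\hat c=4$) that for each $I\in\m W(\Omega)$ and each $x\in I$ one has $I^*\subset B(x,\delta_\Omega(x)/2)\subset\Omega$. To each Whitney cube $I$ I associate
\[
a_I:=\ell(I)^2\,\esssup_{I^*}|\nabla A|^2,
\]
and to each $Q\in\DD_\sigma$ the family $\m W_Q$ of those $I\in\m W(\Omega)$ that meet the Whitney region
\[
U_Q:=\{x\in B(x_Q,M\ell(Q))\cap\Omega:\delta_\Omega(x)\geq M^{-1}\ell(Q)\}
\]
appearing in \eqref{eqaaa1}. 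A standard Whitney counting argument, combined with the $n$-Ahlfors regularity of $\pom$, will give $\ell(I)\approx_M\ell(Q)$ for every $I\in\m W_Q$, $|\m W_Q|\lesssim_M 1$ for every $Q$, and (for fixed $I$) $\#\{Q\in\DD_\sigma: I\in\m W_Q\}\lesssim_M 1$.

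Next I will verify two one-sided comparisons. First, since $U_Q\subset\bigcup_{I\in\m W_Q}I^*$ and $\ell(I)^{n+2}\approx_M\sigma(Q)\,\ell(Q)^2$, the definition of $\lambda_\Omega(Q)$ yields
\[
\lambda_\Omega(Q)\,\sigma(Q)\,\lesssim_M\,\max_{I\in\m W_Q}a_I\,\ell(I)^n.
\]
Second, for any $I\in\m W(\Omega)$ and any $x\in I$, the choice of decomposition guarantees $B(x,\delta_\Omega(x)/2)\supset I^*$ and $\delta_\Omega(z)\approx\ell(I)$ on $I^*$, so
\[
g(x)\,\geq\,\esssup_{z\in I^*}\delta_\Omega(z)|\nabla A(z)|^2\,\gtrsim\,\ell(I)\,\esssup_{I^*}|\nabla A|^2\,=\,\frac{a_I}{\ell(I)},
\]
and integrating over $I$ produces $a_I\,\ell(I)^n\lesssim\int_I g\,dm$.

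Putting the two estimates together and observing that $I\in\m W_Q$ with $Q\subset R$ forces $I\subset B_R^*:=B(x_R,CM\ell(R))\cap\Omega$, an interchange of summation will give
\begin{align*}
\sum_{Q\subset R}\lambda_\Omega(Q)\,\sigma(Q)&\lesssim_M\sum_{Q\subset R}\sum_{I\in\m W_Q}a_I\,\ell(I)^n\,\lesssim_M\,\sum_{I\subset B_R^*}a_I\,\ell(I)^n\\
&\lesssim_M\sum_{I\subset B_R^*}\int_I g\,dm\,\leq\,\int_{B_R^*}g\,dm\,\lesssim_M\,\HH^n(CB_R\cap\pom)\,\lesssim_M\,\sigma(R),
\end{align*}
the penultimate step being the DKP Carleson hypothesis applied to $CB_R$, and the last step being the $n$-Ahlfors regularity of $\pom$.

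The essential point to get right is the geometric bridge from the $\esssup$-defined $\lambda_\Omega(Q)$ to the integral of $g$: the Whitney decomposition has to be set up so that $B(x,\delta_\Omega(x)/2)\supset I^*$ for each $x\in I$, so that the inner $\esssup$ in the definition of $g$ already encodes all of $I^*$; otherwise one cannot pass from the pointwise supremum of $|\nabla A|^2$ on $U_Q$ to an $L^1$-integral of $g$. Once that is arranged, the lemma reduces to a routine Whitney double-counting plus one direct application of the Carleson hypothesis, so I do not expect any further technical obstacle.
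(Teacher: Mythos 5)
Your proof is correct and follows essentially the same strategy as the paper: upper-bound $\lambda_\Omega(Q)\sigma(Q)$ by the integral of the DKP density $g(x)=\esssup_{B(x,\delta_\Omega(x)/2)}\delta_\Omega|\nabla A|^2$ over a Whitney-scale neighborhood of size $\approx_M\ell(Q)$, then sum over $Q\subset R$ using finite superposition and the Carleson hypothesis. The only cosmetic difference is that the paper discretizes the region $B(x_Q,M\ell(Q))\cap\{\delta_\Omega\geq M^{-1}\ell(Q)\}$ via a $Q$-dependent Besicovitch covering by Whitney-radius balls, whereas you use a single fixed Whitney decomposition of $\Omega$ and the families $\m W_Q$, which makes the double-counting step $\#\{Q: I\in\m W_Q\}\lesssim_M 1$ a little more transparent.
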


\begin{proof}
	For $a>0$, we denote 
	\begin{equation}\label{eqomegaa} 
		\Omega_a = \{x\in\Omega:\delta_\Omega(x)\geq a\}.
	\end{equation}
 	Given $Q\in\DD_\sigma$, applying the Besicovitch covering theorem we can cover $B(x_Q,M\ell(Q))\cap \Omega_{M^{-1}\ell(Q)}$ by a family of balls $B_i$, $i\in I_Q$, centered in points $x_i\in B(x_Q,M\ell(Q))\cap \Omega_{M^{-1}\ell(Q)}$, with radii $r_i= \delta_\Omega(x_i)/8$,
	so that moreover the family $\{B_i\}_{i\in I_Q}$ has finite superposition. From the fact that $r_i\approx_M \ell(Q)$ for each $i$ and the finite superposition, it easily follows that $\# I_Q\lesssim_M 1$.
	Then we  have
	\begin{multline}\nonumber
		\int_{B(x_Q,M\ell(Q))\cap \Omega_{M^{-1}\ell(Q)}}   \sup_{x\in B(z,\delta_\Omega(z)/2)} (\delta_\Omega(x)\,|\nabla A(x)|^2)\,dm(z)	\gtrsim_M \ell(Q) \sum_{i\in I_Q} \int_{B_i}  \sup_{x\in B(z,\delta_\Omega(z)/2)} |\nabla A(x)|^2\,dm(z)\\
		 \geq \ell(Q) \sum_{i\in I_Q}   \sup_{x\in B_i} |\nabla A(x)|^2\,m(B_i) \approx_M \ell(Q)^2 \sum_{i\in I_Q}   \sup_{x\in B_i} |\nabla A(x)|^2\,\sigma(Q) \\ \geq \,\ell(Q)^2 \sup_{B(x_Q,M\ell(Q))\cap\Omega_{M^{-1}\ell(Q)}}
		|\nabla A(x)|^2\,\sigma(Q) = \lambda_\Omega(Q)\,\sigma(Q),
	\end{multline}
	where in the second inequality we took into account that $B_i\subset B(z,\delta_\Omega(z)/2)$ for any $z\in B_i$.
	Therefore,
	\begin{align*}
		\sum_{Q\in\DD_\sigma:Q\subset R}\lambda_\Omega(Q)\sigma(Q) & \lesssim_M\sum_{Q\in\DD_\sigma:Q\subset R}
		\int_{B(x_Q,M\ell(Q))\cap \Omega_{M^{-1}\ell(Q)}}  \sup_{x\in B(z,\delta_\Omega(z)/2)} (\delta_\Omega(x)\,|\nabla A(x)|^2)\,dm(z)\\
		& \lesssim \int_{B(x_R,C(M)\ell(R))}  \sup_{x\in B(z,\delta_\Omega(z)/2)} (\delta_\Omega(x)\,|\nabla A(x)|^2)\,dm(z)
		\lesssim_M \sigma(R),
	\end{align*}
	by the finite superposition of   $B(x_Q,M\ell(Q))\cap \Omega_{M^{-1}\ell(Q)}$, with $Q\in\DD_\sigma$, $Q\subset R$, and the   DKP
	condition.
\end{proof}

\subsection{The approximating Lipschitz graph}\label{subs:approxLipgraph}

In this subsection we describe how to associate an approximating Lipschitz graph to a cube $R\in \DD_\sigma$, assuming $b\beta_{\sigma}(k_1R)$ to be small enough for some big constant $k_1>2$ {(where we denoted $b\beta_\sigma\equiv b\beta_{\supp\sigma}$).}
The construction is based on the arguments in \cite[Chapters 7, 8, 12, 13, 14]{ds1}.
The first step consists in defining suitable stopping cubes.

Given $x\in\R^{n+1}$, we write $x= (x',x_{n+1})$.
For a given cube $R\in\DD_\sigma$, we denote by $L_R$ a best approximating hyperplane for
$b\beta_\sigma(k_1R)$.
We also assume, without loss of generality, that 
$$
L_R \,\,\textup{is the horizontal  hyperplane}\,\, \{x_{n+1}=0\}.
$$ 
We let $x_R^0$ be the orthogonal projection of $x_R$ on $L_R$ (i.e., $x_R^0=((x_R)',0)$). For a given $\gamma\in(0,1/4)$ to be chosen below,
we let
$$x_R^+ = \big((x_R)',(\tfrac14 - \gamma) r(B(R))\big), \qquad x_R^- = \big((x_R)',(-\tfrac14 + \gamma) r(B(R))\big),$$
and we consider the balls
$$B^+(R) = B(x_R^+,\tfrac14r(B)),\qquad B^-(R) = B(x_R^-,\tfrac14r(B)),$$
and the cylinders 
$$C(R) = \big\{x\in \R^{n+1}: |x'-(x_R)'|\leq \tfrac14\gamma^{1/2} r(B(R)),\,
{|x_{n+1}| }\leq \tfrac14\gamma\, r(B(R))\big\},$$
$$C'(R) = \big\{x\in \R^{n+1}: |x'-(x_R)'|\leq \tfrac14 r(B(R)),\,
{|x_{n+1}| }\leq \gamma\, r(B(R))\big\}.$$
It is easy to check that $C(R) \subset B^+(R) \cap B^-(R)\subset C'(R)$. Also, in the case when $\dist(x_R,L_R)\leq r(B(R))/4$, say, we have 
\begin{equation}\label{eqinc77}
	B^+(R)\cup B^-(R)\subset B(R).
\end{equation}

We consider constants $\ve,\delta$, and $\theta$ such that 
$0<\ve\ll\delta\ll\gamma\leq 1/4$ and $0<\theta\ll1$ to be chosen later (depending on the corkscrew condition and the uniform rectifiability constants), $k_1>2$, and we denote by $\cB$ or $\cB(\ve,\theta)$ the family of cubes $Q\in
\DD_\sigma$ such that either
$b\beta_\sigma(k_1Q) > \ve$ or $\lambda_\Omega(Q)>\theta$.

We consider now $R\in\DD_\sigma\setminus \cB$. 
Notice that in this case \rf{eqinc77} holds, and also
\begin{equation}\label{eqinc78}
	\big\{x\in R: |x'-(x_R)'|\leq \tfrac14\gamma^{1/2} r(B(R))\big\}\subset C(R).
\end{equation}
We let $\sss(R)$ be the family of maximal cubes $Q\in\DD_\sigma(R)$ such that at least one of the following holds:
\begin{itemize}
	\item[(a)] {$Q\cap B^+(R) \cap B^-(R)= \varnothing$.}
	\item[(b)] $Q\in\cB(\ve,\theta)$, i.e., $b\beta_\sigma(k_1Q) > \ve$ or $\lambda_\Omega(Q)>\theta$.
	\item[(c)] $\angle(L_Q,L_R)> \delta$, where $L_Q$, $L_R$ are best approximating hyperplanes for $\beta_{\sigma,\infty}(k_1Q)$ and $\beta_{\sigma,\infty}(k_1R)$, respectively, and {$\angle(L_Q,L_R)$ denotes the angle between $L_Q$ and $L_R$.}
	\item[(d)] $\sum_{P\in\DD_\sigma:Q\subset P\subset R}\lambda_\Omega(P)\geq \theta$.
\end{itemize}
We denote by $\tree(R)$ the family of cubes in $\DD_\sigma(R)$ which are not strictly contained
in any cube from $\sss(R)$. We also consider the function
$$d_R(x) = \inf_{Q\in\tree(R)} \big(\dist(x,Q) + \diam(Q)\big).$$
Notice that $d_R$ is $1$-Lipschitz.
Assuming $k_1$ big enough (but independent of $\ve$ and $\delta$) and arguing as in the proof of  \cite[Proposition 8.2]{ds1}, the following holds:

\begin{lemma}\label{lemgraf}
	Denote by $\Pi_R$ the orthogonal projection on $L_R$.
	There is a Lipschitz function $A:L_R \to L_R^\bot$ with slope at most $C\delta$ such that
	$$\dist(x,(\Pi_R(x),A(\Pi_R(x)))) \leq C_1\ve\,d_R(x)\quad \mbox{ for all $x\in k_1R$.}$$
\end{lemma}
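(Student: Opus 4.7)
The plan is to follow the proof of Proposition 8.2 in \cite{ds1} essentially verbatim, with the key observation that the additional stopping conditions involving $\lambda_\Omega$ (namely the $\lambda_\Omega > \theta$ part of (b) and condition (d)) do not interfere with the geometric construction of the Lipschitz graph. The reason is that these new stopping conditions only enlarge $\sss(R)$ — equivalently, they shrink $\tree(R)$ — but they do not weaken the geometric control ($b\beta_\sigma(k_1 Q)\leq \ve$ and $\angle(L_Q,L_R)\leq \delta$) enjoyed by the cubes that remain in $\tree(R)$, and this geometric control is all that the construction relies on.

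Concretely, I would proceed as follows. First, construct a Whitney-type decomposition $\{I_j\}$ of $L_R \cap \Pi_R(k_1 B(R))$ with sidelengths comparable to $d_R$ lifted to $L_R$ via $\Pi_R$, together with a smooth partition of unity $\{\vphi_j\}$ satisfying $\|\nabla \vphi_j\|_\infty \lesssim \ell(I_j)^{-1}$. To each $I_j$ one associates a cube $Q_j \in \tree(R)$ with $\ell(Q_j) \approx \ell(I_j)$ and $\dist(Q_j,\Pi_R^{-1}(I_j)) \lesssim \ell(I_j)$; such a cube exists because, by the definition of $d_R$ and stopping condition (a), any point of $L_R$ at height $d_R \gtrsim \ell(I_j)$ is flanked by tree cubes at the scale $\ell(I_j)$. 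Define the graph function by
\[
A(y) := \sum_j \vphi_j(y)\,A_j(y), \qquad y \in L_R,
\]
where $A_j : L_R \to L_R^\bot$ is the affine function whose graph is the plane $L_{Q_j}$.

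The Lipschitz bound $\|\nabla A\|_\infty \leq C\delta$ rests on two ingredients: each $A_j$ has slope $\leq C\delta$ by stopping condition (c), and for overlapping Whitney cubes $I_i, I_j$ one has $|A_i - A_j| \lesssim \ve\,\ell(I_i)$ on $I_i \cap I_j$. The latter uses $b\beta_\sigma(k_1 Q_i), b\beta_\sigma(k_1 Q_j) \leq \ve$ to force $\supp \sigma$ into an $\ve\ell(I_i)$-neighborhood of both $L_{Q_i}$ and $L_{Q_j}$ on a ball of size $\gtrsim \ell(I_i)$, which — provided $k_1$ is large enough that both $k_1 Q_i$ and $k_1 Q_j$ contain a common substantial portion of $\supp \sigma$ — pins down the planes up to the same error. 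The approximation bound $\dist(x,(\Pi_R(x),A(\Pi_R(x)))) \leq C_1 \ve\,d_R(x)$ for $x \in k_1 R$ then follows by choosing $Q \in \tree(R)$ with $\ell(Q)\approx d_R(x)$ and $\dist(x,Q) \lesssim d_R(x)$, locating $x$ within $C\ve\,\ell(Q)$ of $L_Q$ via $b\beta_\sigma(k_1Q) \leq \ve$, and using that the construction forces $A$ to agree with the affine parametrization of $L_Q$ up to $C\ve\,\ell(Q)$ on $\Pi_R(x)$.

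The main technical obstacle, exactly as in \cite{ds1}, is calibrating $k_1$ large enough (independent of $\ve$ and $\delta$) to ensure the plane comparisons above have sufficient room: overlapping dilates $k_1 Q_i$ and $k_1 Q_j$ must contain enough common points of $\supp \sigma$ for the $b\beta_\sigma$-bounds to force agreement between $L_{Q_i}$ and $L_{Q_j}$. The new DPR-type stopping conditions play no role in this estimate, so no further difficulty arises; they will only enter later, in the packing estimate for $\sss(R)$ via \eqref{eqDKP*}, which is not part of the present lemma.
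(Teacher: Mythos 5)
Your proposal is correct and takes exactly the same route as the paper: the paper itself gives no proof, only the remark that the statement follows ``arguing as in the proof of \cite[Proposition 8.2]{ds1}'' for $k_1$ large enough, and your key observation---that the new stopping conditions involving $\lambda_\Omega$ only shrink $\tree(R)$ (equivalently, make $d_R$ larger) without weakening the $b\beta_\sigma(k_1Q)\leq\ve$ and $\angle(L_Q,L_R)\leq\delta$ control on the surviving tree cubes---is precisely the reason the David--Semmes construction carries over verbatim. Your sketch of that construction (Whitney decomposition of $L_R$ adapted to $D_R$, partition of unity, affine pieces from the $L_{Q_j}$, plane comparison via the $\beta$-coefficients, Lipschitz bound via the angle condition) is the standard argument of \cite[Ch.~8]{ds1}, so there is nothing to add.
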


Remark that in this lemma, and in the whole subsection, we assume that {$R$ is} as 
above, so that, in particular, $b\beta_\sigma(k_1R)\leq \ve$.

We denote
$$D_R(x)= \inf_{y\in\Pi_R^{-1}(x)}d_R(y).$$
It is immediate to check that $D_R$ is also a $1$-Lipschitz function. Further, as shown in \cite[Lemma
8.21]{ds1}, there is some fixed constant $C_2$ such that
\begin{equation}\label{eqDR}
	C_2^{-1}d_R(x) \leq D_R(x) \leq d_R(x)\quad \mbox{ for all $x\in {3B(R)}$.}
\end{equation}

We denote by $Z(R)$ the set of points $x\in R$ such that {$d_R(x)=0$.}
The following lemma is an easy consequence of {the} results  in  \cite[Chapters 7, 12-14]{ds1}, although it is not stated explicitly in \cite{ds1}.

\begin{lemma}\label{lempack1}
	There are some constants $C_3(\ve,\delta)>0$ and $k_1\geq 2$ such that
	\begin{align}\label{eqlempack1}
		\sigma(R) \approx_\gamma \sigma(C(R))& \leq 2\,\sigma(Z(R)) + 2\sum_{Q\in\sss(R)\cap\cB(\ve,\theta)} \!\sigma(Q) \\
		& \quad + C_3 \sum_{Q\in\tree(R)} \!\beta_{\sigma,1}(k_1Q)^2\,\sigma(Q)+  2\,\theta^{-1} \sum_{Q\in\tree(R)} \!\lambda_\Omega(Q)\,\sigma(Q).
		\notag
	\end{align}
\end{lemma}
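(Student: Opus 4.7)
The first comparison $\sigma(R)\approx_\gamma\sigma(C(R))$ is the easy step: the hypothesis $R\notin\cB$ gives $b\beta_\sigma(k_1R)\leq\ve$, so nearly every point of $R$ lies within distance $\ve\ell(R)\ll\gamma r(B(R))$ of the hyperplane $L_R=\{x_{n+1}=0\}$. Combining this with the inclusion \eqref{eqinc78} and the $n$-Ahlfors regularity of $\sigma$ on the one hand, and the ambient bound $\sigma(C(R))\lesssim\bigl(\gamma^{1/2}r(B(R))\bigr)^n\approx_\gamma\ell(R)^n$ on the other, yields the two-sided estimate.

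The main step is to decompose $C(R)\cap\supp\sigma$. By maximality of the stopping cubes,
$$C(R)\cap\supp\sigma\;\subset\;Z(R)\cup\bigcup_{Q\in\sss(R)}Q.$$
Now observe that any $Q\in\sss(R)$ which is stopped \emph{only} by condition (a) satisfies $Q\cap B^+(R)\cap B^-(R)=\varnothing$ and hence, since $C(R)\subset B^+(R)\cap B^-(R)$, we have $Q\cap C(R)=\varnothing$. Therefore only cubes stopped by (b), (c), or (d) survive the intersection with $C(R)$, and we split
$$\sigma(C(R))\leq\sigma(Z(R))+\sum_{Q\in\sss(R)\cap\cB(\ve,\theta)}\sigma(Q)+\sum_{Q\in\sss_{(c)}(R)}\sigma(Q)+\sum_{Q\in\sss_{(d)}(R)}\sigma(Q),$$
where $\sss_{(c)}(R)$ (respectively $\sss_{(d)}(R)$) denotes those cubes triggered by (c) (resp.\ (d)) but not by (b). The first two terms appear directly on the right-hand side of \eqref{eqlempack1} (with the factor $2$ allowing a minor absorption step).

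The treatment of the (d)--cubes is a Fubini argument tailored to the new DKP stopping criterion. If $Q\in\sss_{(d)}(R)$, then $\sum_{P\in\tree(R):Q\subset P}\lambda_\Omega(P)\geq\theta$, hence
$$\sum_{Q\in\sss_{(d)}(R)}\sigma(Q)\leq\theta^{-1}\sum_{Q\in\sss_{(d)}(R)}\sigma(Q)\sum_{P\in\tree(R):Q\subset P}\lambda_\Omega(P)=\theta^{-1}\sum_{P\in\tree(R)}\lambda_\Omega(P)\sum_{\substack{Q\in\sss_{(d)}(R)\\ Q\subset P}}\sigma(Q),$$
and by the disjointness of the stopping cubes the inner sum is bounded by $\sigma(P)$, producing the last term of \eqref{eqlempack1}.

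The hardest (but most classical) step is to control $\sum_{Q\in\sss_{(c)}(R)}\sigma(Q)$ by the $\beta$-number sum; this is precisely where the David--Semmes machinery enters. The plan is to adapt the proof of \cite[Proposition 8.2]{ds1} (cf.\ Chapters 12--14 there): using Lemma \ref{lemgraf} and \eqref{eqDR} one realizes each $Q\in\sss_{(c)}(R)$ as a cube where the approximating plane $L_Q$ has rotated by an angle $>\delta$ with respect to $L_R$; a telescoping estimate along the chain of ancestors $Q\subset P\subset R$ inside $\tree(R)$ then shows that the accumulated angular variation forces
$$\delta^2\lesssim\sum_{P\in\tree(R):Q\subset P}\beta_{\sigma,1}(k_1P)^2,$$
and the standard packing/Carleson argument (see \cite[Chapter 7]{ds1}) converts this into
$$\sum_{Q\in\sss_{(c)}(R)}\sigma(Q)\leq C_3(\ve,\delta)\sum_{Q\in\tree(R)}\beta_{\sigma,1}(k_1Q)^2\,\sigma(Q).$$
Since this is the essence of the David--Semmes corona construction and the only genuinely new input here is the additional (d) stopping condition, which we have just handled, we expect this step to be a direct adaptation of the DS argument rather than a fresh calculation; the brunt of the novelty of our construction will instead be exploited later, via the DKP packing bound \eqref{eqDKP*} used to sum the $\lambda_\Omega$-contributions across all trees.
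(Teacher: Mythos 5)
Your decomposition of $C(R)\cap\supp\sigma$ into $Z(R)$ and the various classes of stopping cubes is correct, and your Fubini argument for the (d)-cubes is precisely what the paper does. However, your treatment of the (c)-cubes contains a genuine gap: the claimed pointwise estimate
$$\delta^2\lesssim\sum_{P\in\tree(R):Q\subset P}\beta_{\sigma,1}(k_1P)^2$$
is false. The trouble is that membership in $\tree(R)$ only imposes the \emph{upper} bound $b\beta_\sigma(k_1P)\leq\ve$; the $\beta$-numbers along the chain of ancestors can be much smaller than $\ve$. Take a chain of length $N$ in which every $\beta(P)\approx\ve'$ for some $\ve'\ll\ve$: after $N\approx\delta/\ve'$ generations the approximating plane has rotated by $\delta$ (so $Q$ is stopped by (c)), yet $\sum_P\beta(P)^2\approx N(\ve')^2\approx\delta\ve'$, which can be made arbitrarily small. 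The angle estimate only gives $\delta\lesssim\sum_P\beta(P)$, and Cauchy--Schwarz then produces $\delta^2\lesssim N\sum_P\beta(P)^2$ — useless when $N$ is unbounded.

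The David--Semmes machinery does not yield a pointwise bound on each (c)-cube; it gives an \emph{aggregate} bound. What the paper actually does is a dichotomy: let $\cF_1$ be the family of $R$ for which the (c)-cubes cover at least half of $C(R)$. If $R\in\cF_1$, then \cite[eq.~(12.2)]{ds1} (a nontrivial statement, proved along DS Chapters 12--14, and exploiting that a large fraction of the measure sits on cubes where the plane has tilted) gives $\iint_X\beta_{\sigma,1}(x,kt)^2\,d\sigma\,dt/t\gtrsim_\delta\sigma(R)$, which after discretizing the cone $X$ shows that the single $\beta$-term on the right of \eqref{eqlempack1} already dominates $\sigma(C(R))$, so the inequality holds vacuously. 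If $R\notin\cF_1$, the (c)-sum is $<\sigma(C(R))/2$ and can simply be absorbed into the left-hand side; this absorption is where the factor of $2$ in \eqref{eqlempack1} comes from. You should replace the telescoping step by this case split.

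Everything else in your plan — the $\approx_\gamma$ equivalence, the disjointness argument excluding the (a)-cubes from $C(R)$, and the Fubini estimate for the $\lambda_\Omega$ stopping condition — matches the paper's proof.
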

 
\begin{proof}
	The fact that $\sigma(R) \approx_\gamma \sigma(C(R))$ is an immediate consequence of the inclusion \rf{eqinc78} and the $n$-Ahlfors regularity of $\sigma$. Denote $\cF_1= \big\{R\in\DD_\sigma:\sigma\big(\bigcup_{Q\in \sss(R)\cap {\rm{(c)}}} Q\big)\geq \sigma(C(R))/2\big\}$, 	where $Q\in \sss(R)\cap \rm{(c)}$ means that $Q$ satisfies the condition (c) in the above definition of $\sss(R)$.
	Notice that $\cF_1$ is very similar to the analogous set $\cF_1$ defined in \cite[p.39]{ds1}. A (harmless) difference is that we wrote
	$\sigma(C(R))/2$ in the definition above, instead of $\sigma(R)/2$ as in \cite{ds1}. Assuming $\ve>0$ small enough (depending on $\delta$) in the definition of 
	$\cB(\ve,\theta)$,
	in equation (12.2) from \cite{ds1} (proved along the Chapters 12-14) it is shown that 
	there exists some $k>1$ (independent of $\ve$ and $\delta$) such that if $R\in \cF_1$, then 
	$$\iint_X  \beta_{\sigma,1}(x,kt)^2\,\frac{d\sigma(x)\,dt}t \gtrsim_\delta \sigma(R),$$
	where $X= \big\{(x,t)\in \supp\sigma \times (0,+\infty): x\in kR,\,k^{-1}d_R(x)\leq t\leq k\ell(R)\big\}$. 	It is easy to check that, choosing $k_1>k$ large enough,
	$$\iint_X  \beta_{\sigma,1}(x,t)^2\,\frac{d\sigma(x)\,dt}t \lesssim_k\sum_{Q\in \tree(R)}\beta_{\sigma,1}(k_1Q)^2\,\sigma(Q).$$
	Hence, \rf{eqlempack1} holds when $R\in\cF_1$.
	
	In the case $R\not\in\cF_1$, by the definition of $\sss(R)$, taking into account that $C(R)\subset B^+(R)\cap B^-(R)$, we have
	\begin{align*}
		\sigma(R) \approx_\gamma \sigma(C(R)) & \leq \sigma(Z(R)\cap C(R)) + \sum_{Q\in\sss(R)\cap\cB(\ve,,\theta)} \sigma(Q\cap C(R))\\
		\quad &+ \sum_{Q\in\sss(R)\cap \rm{(c)}} \!\sigma(Q\cap C(R)) +  \sum_{Q\in\sss(R)\cap \rm{(d)}} \!\sigma(Q\cap C(R))
		.
	\end{align*}
	Since the third sum on the right hand side does not exceed $\sigma(C(R))/2$, we deduce that
	\begin{equation}\label{eqstop88} 
		\frac12\,\sigma(C(R)) \leq \sigma(Z(R)\cap C(R)) + \sum_{Q\in\sss(R)\cap\cB(\ve,\theta)}\! \sigma(Q\cap C(R))  +  \sum_{Q\in\sss(R)\cap \rm{(d)}} \!\sigma(Q\cap C(R)).
	\end{equation}
	finally with the last sum. By condition (d) we have
	\begin{align*}
		\sum_{Q\in\sss(R)\cap \rm{(d)}} \sigma(Q\cap C(R))& \leq \theta^{-1}\!\sum_{Q\in\sss(R)}\,\sum_{P\in\DD_\sigma:Q\subset P\subset R}\!\lambda_\Omega(P)\,\sigma(Q) \\ &= \theta^{-1}\!\sum_{P\in\tree(R)} \!\lambda_\Omega(P) \!\sum_{Q\in\sss(R):Q\subset P}\sigma(Q) \leq \theta^{-1}\!\sum_{P\in\tree(R)} \!\lambda_\Omega(P) 
		\sigma(P).
	\end{align*}
	Plugging this equation into \rf{eqstop88}, the lemma follows.
\end{proof}

\subsection{The starlike Lipschitz  subdomains $\Omega_R^\pm$}\label{subs:star-Lip}

Abusing notation,   we write  $D_R(x')=D_R(x)$ for $x=(x',x_{n+1})$. 

\begin{lemma}\label{lem333}
	Let $$U_R = \{x\in B^+(R): x_{n+1}> A(x') + C_1C_2\ve D_R(x')\},$$
	$$V_R = \{x\in B^-(R): x_{n+1}< A(x') - C_1C_2\ve D_R(x')\},$$
	and
	$$W_R = \{x\in B(R): A(x') - C_1C_2\ve D_R(x') \leq x_{n+1}\leq A(x') + C_1C_2\ve D_R(x')\}.$$
	Then $\pom\cap B(R)\subset W_R$. Also,
	$U_R$ is either contained in $\Omega$ or in  $\R^{n+1}\setminus
	\overline\Omega$, and the same happens with $V_R$. Further, at least one of the sets $U_R$, $V_R$ is contained in $\Omega$.
\end{lemma}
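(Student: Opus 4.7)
The plan is to establish the four assertions in sequence. For the boundary inclusion $\pom\cap B(R)\subset W_R$: given $x=(x',x_{n+1})\in\pom\cap B(R)$, we have $x\in k_1R$ provided $k_1$ is large enough (which we may assume), so Lemma \ref{lemgraf} yields $\dist(x,(x',A(x')))\leq C_1\ve\,d_R(x)$. Since $(x',A(x'))$ shares the same $L_R$-projection as $x$, this forces $|x_{n+1}-A(x')|\leq C_1\ve\,d_R(x)\leq C_1C_2\ve D_R(x')$ via \eqref{eqDR}, so $x\in W_R$.

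For the one-sided confinement, I would introduce the auxiliary open sets
\begin{equation*}
S_R^+:=\{x\in B(R):x_{n+1}>A(x')+C_1C_2\ve D_R(x')\}, \quad S_R^-:=\{x\in B(R):x_{n+1}<A(x')-C_1C_2\ve D_R(x')\},
\end{equation*}
so that $U_R=S_R^+\cap B^+(R)$, $V_R=S_R^-\cap B^-(R)$, and $B(R)=S_R^+\cup W_R\cup S_R^-$. The function $\phi:=A+C_1C_2\ve D_R$ is Lipschitz on $L_R$ with slope at most $C(\delta+\ve)$, and combining the $b\beta$-smallness of $R$ with Lemma \ref{lemgraf} applied at $x_R$ yields $|\phi((x_R)')|\leq C(\delta+\ve)r(B(R))$, much smaller than $r(B(R))$. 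A direct verification then shows that $S_R^+$ is star-shaped with respect to any point $((x_R)',h)$ satisfying $(\delta+\ve)r(B(R))\ll h\ll r(B(R))$, and is therefore connected (and similarly for $S_R^-$). Since $S_R^{\pm}\cap\pom=\varnothing$ by the previous step, each of $S_R^+$ and $S_R^-$ must lie in a single connected component of $\R^{n+1}\setminus\pom$, hence entirely in $\Omega$ or entirely in $\R^{n+1}\setminus\overline{\Omega}$. The second and third assertions follow from $U_R\subset S_R^+$ and $V_R\subset S_R^-$.

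For the final claim, I would invoke the interior corkscrew condition at $x_R$ with radius $r(B(R))$ to obtain $x^*\in\Omega\cap B(R)$ with $\delta_\Omega(x^*)\geq c\,r(B(R))$, and argue that $x^*\in S_R^+\cup S_R^-$. The key step is to prove $|x^*_{n+1}|\gtrsim r(B(R))$: if instead $|x^*_{n+1}|<(c/2)r(B(R))$, then the projection $p=((x^*)',0)\in L_R\cap k_1B_R$ would, by the second summand in the definition of $b\beta_\sigma(k_1R)\leq\ve$, admit some $\xi\in\pom$ with $|p-\xi|\lesssim k_1\ve\,r(B(R))$, producing $\xi\in\pom$ within distance strictly less than $c\,r(B(R))$ of $x^*$ for $\ve$ sufficiently small, contradicting $\delta_\Omega(x^*)\geq c\,r(B(R))$. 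Assuming WLOG that $x^*_{n+1}\geq (c/2)r(B(R))$, the Lipschitz property of $A$, Lemma \ref{lemgraf} applied at $x_R$, and the trivial bound $d_R(x^*)\lesssim r(B(R))$ together give $x^*_{n+1}-A((x^*)')-C_1C_2\ve D_R((x^*)')\gtrsim r(B(R))$ for $\ve,\delta$ small, so $x^*\in S_R^+$. By the previous paragraph, this forces $S_R^+\subset\Omega$ and hence $U_R\subset\Omega$. The main technical obstacle is precisely this vertical-position argument: the corkscrew gives no a priori control on where $x^*$ sits relative to the graph, and it is only the \emph{two-sided} nature of $b\beta$-smallness---both $\pom$ close to $L_R$ and $L_R$ close to $\pom$ within $k_1B_R$---that forces interior points far from $\pom$ to be vertically far from $L_R$.
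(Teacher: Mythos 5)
Your proof is correct and follows the paper's strategy at every step: boundary inclusion via Lemma~\ref{lemgraf} and \eqref{eqDR}, a connectedness argument for the one-sided confinement, and the corkscrew point for the final claim. Where you diverge is in making precise two things the paper leaves implicit. The paper invokes the connectedness of $U_R$, $V_R$ without justification; your star-shapedness argument for the enlarged sets $S_R^{\pm}$ (with respect to $((x_R)',h)$ for $(\delta+\ve)r(B(R))\ll h\ll r(B(R))$) supplies it, and in fact yields connectedness of $U_R=S_R^+\cap B^+(R)$ too, since $B^+(R)$ is convex and contains that star centre. Working with $S_R^\pm$ rather than $U_R,V_R$ also tidies a small wrinkle in the last step: the corkscrew point $y\in B(R)\cap\Omega$ need not land in $B^+(R)\cup B^-(R)$, so the paper's claim $y\in U_R\cup V_R$ requires either your enlargement to $S_R^\pm$ or invoking the corkscrew at a smaller radius $\sim\gamma\,r(B(R))$ so that $y$ falls into $C(R)\subset B^+(R)\cap B^-(R)$. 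Finally, your use of the second summand of $b\beta_\sigma$ --- the $L_R$-to-$\pom$ direction --- to force $|x^*_{n+1}|\gtrsim r(B(R))$ is exactly the mechanism the paper's terse sentence ``$b\beta_\sigma(k_1R)\leq\ve$ and both $\pom\cap B(R)$ and the graphs are contained in a $C\delta\ell(R)$-neighborhood of $\{x_{n+1}=0\}$'' alludes to but does not spell out. So: same route, but with the connectedness and vertical-position steps properly argued.
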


Remark that it may happen that $U_R$ and $V_R$ are both contained in $\Omega$, or that one set is contained in $\Omega$ and the other in $\R^{n+1}\setminus
\overline\Omega$.

\begin{proof}
	Let us see that $\pom\cap B(R) \subset W(R)$. Indeed, we have
	$ \partial\Omega\cap B(R) \subset R$, by the definition of $B(R)$.
	Then, by Lemma \ref{lemgraf} and \eqref{eqDR}, for all $x\in \partial\Omega\cap B(R)$ we have
	$$|x- (x',A(x'))|\leq C_1\ve\,d_R(x) \leq C_1C_2\ve\,D_R(x),$$
	which is equivalent to saying that $x\in W_R$.

	Next we claim that if $U_R\cap \Omega\neq \varnothing$, then $U_R\subset \Omega$.
	This follows from connectivity, taking into account that if $x\in U_R\cap \Omega$ and $r=\dist(x,\partial U_R)$, then $B(x,r)\subset \Omega$. {Otherwise, there exists some point $y\in B(x,r)\setminus
		\overline \Omega$, and thus there exists some $z\in\pom$ which belongs to the segment $\overline{xy}$.} This would contradict the fact that $\pom\subset W_R$.
	The same argument works replacing $U_R$ and/or $\Omega$ by $V_R$ and/or $\R^{n+1}\setminus\overline{\Omega}$, and thus we deduce that any of the sets $U_R$, $V_R$ is contained either in $\Omega$ or in $\R^{n+1}\setminus\overline{\Omega}$. 
	
	Finally, from the 	corkscrew condition we can find  a point $y\in {B(x_R,r(B(R)))}\cap \Omega$ with $\dist(y,\pom)\gtrsim r(B(R))$. {So if $\ve,\delta$ are small enough we deduce that $y\in (U_R \cup V_R)\cap \Omega$ because $b\beta_\sigma(k_1R)\leq \ve$ and both $\pom\cap B(R)$ and the graphs of $A$ in $B(R)$ are contained in
		a $C\delta\ell(R)$-neighborhood of the hyperplane $\{x_{n+1}=0\}$.}
	Then by the discussion in the previous paragraph, we infer that either 
	$U_R\subset\Omega$ or $V_R\subset\Omega$. 
\end{proof}

Suppose that $U_R\subset \Omega$. We denote $B^0(R) = B(x_R^0,r(B(R)))$ and we let $\Gamma_R^+$ be the Lipschitz graph of the function 
$B^0(R)\cap L_R\ni x'\mapsto A(x') + \delta\,D_R(x')$. Notice that this is a Lipschitz function with slope at most 
$C\delta< 1$ (assuming $\delta$ small enough).  Then we define
$$\Omega_R^+ =\big\{x=(x',x_{n+1}) \in B^+(R): x_{n+1}> A(x') + \delta \,D_R(x')\big\}.$$
Observe that $\Omega_R^+$ is a {starlike Lipschitz domain (with uniform Lipschitz character) }and that $\Omega_R^+\subset U_R$, assuming that $C_1C_2\ve\ll\delta$.

In case that $V_R\subset \Omega$, we define $\Gamma_R^-$ and $\Omega_R^-$ analogously, replacing the above function $A(x') + \delta\,D_R(x')$ by $A(x') - \delta\,D_R(x')$. 
If $V_R\subset \R^{n+1}\setminus \overline\Omega$, then we set $\Omega_R^-=\varnothing$.
In any case, we define
$$\Omega_R=\Omega_R^+ \cup\Omega_R^-.$$
From Lemma \ref{lem333} and the assumption that $C_1C_2\ve\ll\delta$,
it is immediate to check that
\begin{equation}\label{eqsep99}
	\dist(x,\partial\Omega)\geq \frac\delta2\,D_R(x)\quad\mbox{ for all $x\in\Omega_R$.}
\end{equation}

For a given $a>1$, we say that two cubes $Q,Q'$ are $a$-close if 
$$ a^{-1}\ell(Q)\leq\ell(Q')\leq a\ell(Q) \; \text{ and }\;\dist(Q,Q')\leq a(\ell(Q)+\ell(Q')).
$$
We say that $Q\in\DD_\sigma$ is $a$-close to $\tree(R)$ if there exists some $Q'\in\tree(R)$ such that
$Q$ and $Q'$ are $a$-close.
For $1<a^*<a^{**}$ to be fixed below, {we} define the augmented trees
$$\tree^*(R) = \{Q\in\DD_\mu: \text{$Q$ is $a^*$-close to $\tree(R)$}\},$$
$$\tree^{**}(R) = \{Q\in\DD_\mu: \text{$Q$ is $a^{**}$-close to $\tree(R)$}\}.$$
Obviously, $\tree(R)\subset\tree^*(R)\subset\tree^{**}(R)$. Notice also that the families of cubes from $\tree^*(R)$ or $\tree^{**}(R)$ may not be trees.

We now let $\m W(U)$ be a $(1/4)$-Whitney decomposition of $\Omega$ (see Section \ref{sec.whitney}).

\begin{lemma}\label{lemcontingtree}
	Assuming $a^*>1$ to be big enough, we have
	$$\overline\Omega_R\cap\Omega\subset \bigcup_{Q\in\tree^*(R)} w(Q).$$
\end{lemma}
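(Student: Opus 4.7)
The plan is to show that if $x \in \overline{\Omega}_R \cap \Omega$ and $I \in \m W(\Omega)$ is the Whitney cube containing $x$, then every boundary cube $Q \in b(I)$ is automatically $a^*$-close to some cube of $\tree(R)$ (for $a^*$ chosen large enough, depending on $\delta$, $n$, and the $n$-Ahlfors regularity constant). Since $I \subset w(Q)$ whenever $Q \in b(I)$, this will immediately yield the desired inclusion.

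First, observe that since $x \in \overline{\Omega}_R \cap \Omega \subset B(R)$, we have $|x-x_R| \lesssim \ell(R)$ and hence $\delta_\Omega(x) \leq |x-x_R| \lesssim \ell(R)$. The Whitney cube $I$ containing $x$ satisfies $\ell(I) \approx \delta_\Omega(x)$, and any $Q \in b(I)$ satisfies $\ell(Q) = \ell(I)$ together with $\dist(Q,I) \lesssim \diam(I)$; in particular $\ell(Q) \approx \delta_\Omega(x) \lesssim \ell(R)$. On the other hand, the separation estimate \eqref{eqsep99} (which extends by continuity from $\Omega_R$ to $\overline{\Omega}_R \cap \Omega$, since both sides are continuous in $x$) combined with \eqref{eqDR} gives
\[
\delta_\Omega(x) \geq \tfrac{\delta}{2}D_R(x) \geq \tfrac{\delta}{2C_2}\, d_R(x).
\]

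Next, by definition of $d_R$, there exists $P_0 \in \tree(R)$ with $\dist(x,P_0) + \diam(P_0) \leq 2d_R(x) \lesssim \delta^{-1}\ell(Q)$. I then distinguish two cases. If $\diam(P_0) \geq c\ell(Q)$ for a small constant $c$ to be fixed, then $\ell(P_0) \approx_{\delta} \ell(Q)$ and
\[
\dist(Q,P_0) \leq \dist(Q,I) + \diam(I) + \dist(x,P_0) \lesssim \delta^{-1}\bigl(\ell(Q)+\ell(P_0)\bigr),
\]
so $Q$ and $P_0$ are $a^*$-close once $a^*$ is large enough (depending on $\delta$). If instead $\diam(P_0) < c\ell(Q)$, I replace $P_0$ by its unique ancestor $P_1$ in $\DD_\sigma$ satisfying $\ell(P_1) \approx \ell(Q)$, with $\ell(P_1)$ chosen in a fixed range comparable to $\ell(Q)$ but still $\leq \ell(R)$ (possible because $\ell(Q) \lesssim \ell(R)$). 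Since $P_0 \in \tree(R)$ means no ancestor of $P_0$ inside $R$ is strictly contained in a stopping cube, all such ancestors lie in $\tree(R)$, so $P_1 \in \tree(R)$. Moreover $P_1 \supset P_0$ yields $\dist(x,P_1) \leq \dist(x,P_0) \lesssim \delta^{-1}\ell(Q)$, and hence $\dist(Q,P_1) \lesssim \delta^{-1}(\ell(Q)+\ell(P_1))$, so again $Q$ is $a^*$-close to $P_1$.

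In both cases $Q \in \tree^*(R)$, so $x \in I \subset w(Q) \subset \bigcup_{Q'\in\tree^*(R)} w(Q')$, which proves the lemma. The main subtlety in the argument is the second case: one must upgrade a small minimizing cube $P_0$ to a comparable-size ancestor $P_1$ and verify that $P_1$ still lies inside $R$ and therefore in $\tree(R)$. This is where the bound $\delta_\Omega(x) \lesssim \ell(R)$ (inherited from $x\in B(R)$) becomes essential, ensuring the ancestor is not forced to exceed the root $R$.
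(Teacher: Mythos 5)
Your proof is correct and follows essentially the same route as the paper's: use the separation bound \eqref{eqsep99} together with \eqref{eqDR} to get $\delta_\Omega(x)\gtrsim_\delta d_R(x)$, extract a near-minimizing cube of $\tree(R)$ from the infimum defining $d_R$, and when that cube is too small, pass to an ancestor of side length comparable to $\ell(Q)$ (still contained in $R$ because $\ell(Q)\lesssim\ell(R)$, which forces the ancestor to stay within $R$ and hence in $\tree(R)$). The only cosmetic differences are that the paper evaluates $d_R$ at the center $x_Q$ and transports via the $1$-Lipschitzness of $D_R$, whereas you evaluate it at $x\in I$ directly, and the two case distinctions are parameterized slightly differently; neither change affects the substance.
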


{Recall that $w(Q)$ is the Whitney region associated with $Q$ (see Section \ref{sec.whitney}).}
Notice that if $\Omega_R^-\neq\varnothing$, it may happen that $w(Q)$ is the union of some
Whitney cubes contained in $\Omega^+_R$ and others in $\Omega^-_R$, for example.

\begin{proof}
	Let $P\in\WW(\Omega)$ be such that $P\cap \overline\Omega_R\neq \varnothing$ and fix $Q\in b(P)$. It suffices to show that $Q\in\tree^*(R)$ if $a^*$ is taken big enough.
	To this end, we   show that there exists some $Q'\in\tree(R)$ which is $a^*$-close to $Q$.
	
	Notice first that $\ell(P)\leq C_5\,\ell(R)$ for some fixed constant $C_5$, because $P$ intersects $\overline\Omega_R$ and thus $\overline{B(R)}$. Let $x\in P\cap \overline \Omega_R$. Then by \eqref{eqsep99} we have
	$\frac\delta2\,D_R(x)\leq \dist(x,\partial\Omega) \approx \ell(P)=\ell(Q)$, 	for $Q$ as above. Thus, $d_R(x_Q)\approx D_R(x_Q) \leq { D_R(x)} + C\,\ell(Q) \lesssim \delta^{-1}\ell(Q)$. 	From the definition of $d_R$ we infer that there exists some cube $Q'\in\tree(R)$ such that $\ell(Q') + \dist(Q,Q') \leq C\,\delta^{-1}\ell(Q)$.
	
	If $\ell(Q')\geq C_5^{-1}\ell(R)$, we let $Q''=Q'$. Otherwise, we let $Q''$ be an ancestor of $Q'$ belonging to $\tree(R)$ and satisfying $C_5^{-1}\ell(Q)\leq \ell(Q'')< 2C_5^{-1}\ell(Q)$. 	The above condition $\ell(Q)=\ell(P)\leq C_5\,\ell(R)$ ensures the existence of $Q''$. Then, in any case, it  follows  that $Q'$ is $a^*$-close to $Q$, for $a^*$ big enough depending on $\delta$.
\end{proof}

In the rest of the lemmas in this subsection, we assume, without loss of generality, that $\Omega_R^+\subset U_R
	\subset\Omega$.

\begin{lemma}
	If $Q\in\tree(R)$, then $\dist(w(Q),\Omega^+_R)\leq C\,\ell(Q)$. 	Also, if $\Omega_R^-\neq\varnothing$, $\dist(w(Q),\Omega^-_R)\leq C\,\ell(Q)$.
\end{lemma}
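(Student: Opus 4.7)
The plan is to exhibit, for each $Q\in\tree(R)$ with $w(Q)\neq\varnothing$ and each Whitney cube $I\in w(Q)$ with center $x_I$, an explicit point $p\in \Omega_R^+$ satisfying $|p-x_I|\lesssim \ell(Q)$. The key input is that $D_R$ (and $d_R$) is small near $Q$: since $Q\in\tree(R)$, plugging $Q$ itself into the infimum defining $d_R$ yields $d_R(y)\leq \diam(Q)\lesssim \ell(Q)$ for all $y\in Q$, and combined with the $1$-Lipschitz property of $d_R$ and the comparison \eqref{eqDR} one obtains $D_R(z')\lesssim \ell(Q)$ whenever $z$ lies within $C\ell(Q)$ of $Q$.

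First, I would select a boundary reference point $q$ associated to $Q$. If $Q\in\tree(R)\setminus\sss(R)$, or $Q\in\sss(R)$ for one of the reasons (b)--(d) but not (a), then condition (a) fails for $Q$ and so $Q\cap B^+(R)\cap B^-(R)\neq\varnothing$; I set $\wt Q:=Q$. Otherwise $Q$ is a maximal stopping cube for reason (a); then its parent $\hat Q$ cannot lie in $\sss(R)$ (since elements of $\sss(R)$ are pairwise incomparable), so $\hat Q\in\tree(R)\setminus\sss(R)$ and $\hat Q\cap B^+(R)\cap B^-(R)\neq\varnothing$. In this case I set $\wt Q:=\hat Q$. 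In either situation $\ell(\wt Q)\approx \ell(Q)$, $\wt Q\in\tree(R)$, and one may pick $q\in \wt Q\cap B^+(R)\cap B^-(R)\subset \pom\cap B(R)$, using $B^+(R)\cup B^-(R)\subset B(R)$ from \eqref{eqinc77}. Applying Lemma \ref{lem333} to $q$ then yields $|q_{n+1}-A(q')|\leq C_1C_2\,\ve\,D_R(q')\lesssim \ve\,\ell(Q)$.

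Next, I would define the candidate
\[
p:=\bigl(q',\,A(q')+c_0\,\ell(Q)\bigr),
\]
for a constant $c_0\gg \delta C_2$ chosen so that $p_{n+1}>A(q')+\delta D_R(q')$, i.e., so that $p$ lies strictly above the Lipschitz graph $\Gamma_R^+$. To verify $p\in B^+(R)$ (and hence $p\in \Omega_R^+$), I split on the size of $\ell(Q)$. When $\ell(Q)>\eta\,r(B(R))$ for a small parameter $\eta$ to be fixed, $\ell(Q)\approx \ell(R)$ and one may simply take $p=x_R^+\in\Omega_R^+$, whose distance to $w(Q)\subset B(x_R,C\ell(R))$ is $\lesssim \ell(R)\lesssim \eta^{-1}\ell(Q)$. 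When $\ell(Q)\leq \eta\,r(B(R))$, both $|q_{n+1}|\leq \gamma\,r(B(R))$ (since $q\in B^+(R)\cap B^-(R)$) and $p_{n+1}\leq (C\ve+c_0\eta)\,r(B(R))\ll (\tfrac14-\gamma)\,r(B(R))=(x_R^+)_{n+1}$, while $p_{n+1}>q_{n+1}$ because $c_0\ell(Q)\gg \ve\ell(Q)\geq |A(q')-q_{n+1}|$ once $\ve\ll c_0$. Hence $q_{n+1}<p_{n+1}<(x_R^+)_{n+1}$; combined with $p'=q'$, this forces $|p-x_R^+|<|q-x_R^+|\leq r(B(R))/4$, so $p\in B^+(R)$.

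Finally, the required estimate follows: $|p-q|= |A(q')+c_0\ell(Q) - q_{n+1}|\leq |A(q')-q_{n+1}|+c_0\ell(Q)\lesssim \ell(Q)$, while $|q-x_I|\leq \diam(\wt Q)+\dist(Q,x_I)\lesssim \ell(Q)$, because $Q\subset \wt Q$, $\ell(\wt Q)\approx \ell(Q)$, and every $I\in w(Q)$ satisfies $\dist(I,Q)\lesssim \ell(I)=\ell(Q)$. Combining these bounds gives $\dist(w(Q),\Omega_R^+)\leq |p-x_I|\lesssim \ell(Q)$. The statement for $\Omega_R^-$ in the case $V_R\subset\Omega$ is entirely symmetric, replacing $A(q')+c_0\ell(Q)$ by $A(q')-c_0\ell(Q)$ and $x_R^+$ by $x_R^-$. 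The main subtlety lies in the small-scale case $\ell(Q)\leq \eta\,r(B(R))$: one must respect the parameter hierarchy $\ve\ll \delta\ll \gamma$ and pick $\eta$ small enough so that the algebra showing $p\in B^+(R)$ goes through; it is precisely the control of $D_R$ on $Q$ coming from $Q\in\tree(R)$ that makes the small lift $c_0\ell(Q)$ sufficient to clear $\Gamma_R^+$.
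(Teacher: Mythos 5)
Your proof is correct and follows essentially the same strategy as the paper's: handle $\ell(Q)\gtrsim\ell(R)$ trivially, then for small $Q$ take a boundary point of a cube comparable to $Q$ lying in $B^+(R)\cap B^-(R)$ (the paper always uses the parent $\widehat Q$, you use $Q$ or $\widehat Q$ as needed), lift it vertically by a constant multiple of $\ell(Q)$, and check the lifted point lands in $\Omega_R^+$ using the smallness of $D_R$ near $Q\in\tree(R)$ and Lemma~\ref{lem333}. One small slip: in the small-scale case your intermediate bound $p_{n+1}\leq(C\ve+c_0\eta)r(B(R))$ omits the $\gamma\,r(B(R))$ contribution coming from $|q_{n+1}|\leq\gamma\,r(B(R))$; the correct bound is $p_{n+1}\leq(\gamma+(C\ve+c_0)\eta)\,r(B(R))$, and the comparison with $(\tfrac14-\gamma)\,r(B(R))$ then requires $\gamma<\tfrac18$ and $\eta$ small --- the same implicit constraint the paper relies on when taking $c_1$ small, so the argument is unaffected.
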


\begin{proof} We will prove the first statement. The second one follows by the same arguments.
	It is clear that $\dist(w(Q),\Omega_R^+)\leq C\,\ell(R)$, 
	and so the statement above holds if $\ell(Q)\gtrsim\ell(R)$.

	So we may assume that $\ell(Q)\leq c_1\,\ell(R)$ for some small $c_1$ to be fixed below. 
	By construction, the parent $\wh Q$ of $Q$ satisfies $\wh Q\cap B^+(R)\cap B^-(R)\neq\varnothing$.
	Thus there exists   $z\in B^+(R)\cap B^-(R)$ such that $|z-x_Q|\lesssim \ell(Q)$.
	Clearly, it holds $\dist(z,\pom)\lesssim \ell(Q)$. 	On the other hand, we consider the point $x=(z',z_{n+1}+2\ell(Q))$, so that 
	\begin{equation}\label{eqnew77}
		\dist(x,\pom\cup\Gamma_R^+)\geq \ell(Q).
	\end{equation}
	By  definition of $d_R$ and $D_R$, $D_R(x_Q)\leq d_R(x_Q) \leq \ell(Q)$. Hence, $D_R(x) \leq D_R(x_Q) + C\,\ell(Q)\lesssim\ell(Q) \leq\delta_\Omega(x)$. 
	Assuming $\delta$ small enough, we deduce that $10\delta\,D_R(x)\leq \dist(x,\pom) \leq C\ell(Q)\leq c_1 C\ell(R)$. 	By the definition of {$\Omega_R^+$, it is easy to check that this implies that $x\in\Omega_R^+$} if $c_1$ is small enough. Indeed, since $z\in B^+(R)\cap B^-(R)$, {by \rf{eqnew77} and the last estimate, $A(x') + \delta\,D_R(x')< x_{n+1}\ll_{c_1} \ell(R)$. }
\end{proof} 

We denote $\partial\tree^{**}(R):= \{Q\in\tree^{**}(R): w(Q)\not \subset\Omega_R\}$.
 
\begin{lemma}\label{lemdtree**}
	For all $S\in\DD_\sigma$, we have
	$$\sum_{Q\in\partial\tree^{**}(R):Q\subset S} \sigma(Q) \lesssim_{a^{**}}\sigma(S).$$
\end{lemma}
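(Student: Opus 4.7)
The plan is to show that any $Q \in \partial\tree^{**}(R)$ must correspond to a Whitney cube $I$ with $Q \in b(I)$ that lies partly outside $B^+(R) \cup B^-(R)$, thereby reducing the packing problem to counting dyadic cubes whose projections onto $L_R$ lie near a low-dimensional sphere.

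First, I will rule out the ``graph boundary case.'' By definition, there is $I \in \WW(\Omega)$ with $Q \in b(I)$ and $I \not\subset \Omega_R = \Omega_R^+ \cup \Omega_R^-$. Since $\Omega_R^+$ lies above $\pom$ in $B^+(R)$ while $\Omega_R^-$ lies below $\pom$ in $B^-(R)$, and $I \subset \Omega$ is connected, without loss of generality $I$ lies above $\pom$ and fails $I \subset \Omega_R^+$. Suppose for contradiction that $I \subset B^+(R)$; then $I$ has a point $y$ with $y_{n+1} \leq A(y') + \delta D_R(y')$. Combined with $\dist(I,\pom) \approx \ell(I)$ and $\pom \cap B(R) \subset W_R$, a vertical slab computation forces $\ell(I) \lesssim \delta D_R(y')$, hence $D_R((x_Q)') \gtrsim \ell(Q)/\delta$. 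On the other hand, since $Q$ is $a^{**}$-close to some $Q' \in \tree(R)$, we have $D_R((x_Q)') \leq d_R(x_Q) \leq \dist(x_Q,Q') + \diam Q' \lesssim_{a^{**}} \ell(Q)$; this contradicts the previous bound provided $\delta$ is chosen small enough depending on $a^{**}$. An analogous argument precludes $I$ from intersecting $W_R$ near $\pom$. Therefore $I$ must have a point outside $B^+(R) \cup B^-(R)$.

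Next I would split the sum by scale. In the large-scale regime $\ell(Q) \geq c_0 \ell(R)$, there are only boundedly many such $Q \in \tree^{**}(R) \subset C a^{**} R$, contributing $\lesssim \sigma(R)$; since $Q \subset S$ forces $\sigma(S) \gtrsim \sigma(R)$, this is harmless. In the small-scale regime $\ell(Q) \ll \ell(R)$, a direct geometric computation using $B^+(R) = B(x_R^+, r(B(R))/4)$ with $x_R^+ = ((x_R)', (1/4-\gamma)r(B(R)))$ and the fact that $I$'s center sits at height $\approx \ell(I)$ above $\pom$ forces $(x_I)'$---and therefore $(x_Q)'$---to lie within $O(\ell(Q))$ of the $(n-1)$-sphere $\Sigma_R^+ \subset L_R$ of radius $\approx \gamma^{1/2}\ell(R)$ around $(x_R)'$ (with an analogous $\Sigma_R^-$ when $\Omega_R^- \neq \varnothing$). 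By Ahlfors regularity of $\sigma$ and the Lipschitz structure of $\pom$ near the graph of $A$, the number of dyadic cubes $Q \subset S$ with $(x_Q)'$ in the $\ell(Q)$-neighborhood of $\Sigma_R^\pm$ at scale $\ell(Q) = 2^{-k}\ell(R)$ is at most $C\min\{\ell(S)^{n-1},\, (\gamma^{1/2}\ell(R))^{n-1}\}/\ell(Q)^{n-1}$. Multiplying by $\sigma(Q) \approx \ell(Q)^n$ and summing over $k$ with $\ell(Q) \leq \ell(S)$ yields
\[
\sum_{Q \in \partial\tree^{**}(R):\, Q \subset S} \sigma(Q) \;\lesssim\; \min\{\ell(S)^n,\, \ell(S)(\gamma^{1/2}\ell(R))^{n-1}\} \;\leq\; \sigma(S).
\]

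The main obstacle will be executing the vertical slab computation rigorously---balancing contributions of $\ve$ (width of $W_R$) against $\delta$ (graph lift)---and verifying the sphere-projection count in the transition range $\ell(S) \approx \gamma^{1/2}\ell(R)$. Fortunately, $\delta$ is fixed only after $a^{**}$, so the dependence of the final packing constant on $a^{**}$ is consistent with the inductive construction.
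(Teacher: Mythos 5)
Your reduction in the first step is where the argument breaks. You aim to rule out the graph boundary case by deriving $D_R((x_Q)') \gtrsim \ell(Q)/\delta$ and contrasting it with $D_R((x_Q)') \lesssim_{a^{**}} \ell(Q)$, concluding a contradiction ``provided $\delta$ is chosen small enough depending on $a^{**}$.'' But the parameter dependence in the corona construction is the reverse: $\ve\ll\delta\ll\gamma$ are fixed first, and then $a^*$ is chosen large \emph{depending on $\delta$} (the proof of Lemma~\ref{lemcontingtree} forces $a^*\gtrsim\delta^{-1}$), with $a^{**}$ chosen larger still depending on $a^*$. Hence $\delta\,a^{**}\gtrsim 1$ always, the inequality $\ell(Q)/\delta > C a^{**}\ell(Q)$ can never be arranged, and your closing sentence that ``$\delta$ is fixed only after $a^{**}$'' has the chain exactly backwards. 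The graph boundary case is genuinely nonempty: for $Q\in\tree^{**}(R)\setminus\tree(R)$ one can have $d_R(x_Q)\approx a^{**}\ell(Q)$, so the associated Whitney cube (sitting at height $\approx\ell(Q)$ above $\pom$) may straddle $\Gamma_R^+$, which is then at vertical distance $\approx\delta D_R\approx\delta a^{**}\ell(Q)\gtrsim\ell(Q)$ from the graph of $A$. Those $Q$ lie in $\partial\tree^{**}(R)$ yet their Whitney regions never leave $B^+(R)\cup B^-(R)$, so your sphere count in step~2 misses them entirely and the argument undercounts.

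The paper's proof does not attempt this reduction. It establishes a Claim that \emph{every} $Q\in\partial\tree^{**}(R)$ is matched to a Whitney cube $P=P(Q)$ meeting the full $n$-dimensional Lipschitz boundary $\pom_R$ (including the graph portions $\Gamma_R^\pm$), with $\ell(P)\approx_{a^{**},\delta}\ell(Q)$ and $\dist(P,Q)\lesssim_{a^{**},\delta}\ell(Q)$, and then packs via Ahlfors regularity of $\HH^n|_{\pom_R}$. Its Case~2 is precisely the graph boundary scenario you tried to exclude, and your sphere $\Sigma_R^\pm$ captures only its Case~3. To salvage your plan you would have to add a direct count for cubes whose Whitney regions sit between the thin slab $W_R$ and the lifted graph $\Gamma_R^\pm$ inside $B^+(R)\cup B^-(R)$, which is exactly what replacing the sphere by $\pom_R$ achieves.
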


\begin{proof}
	We will prove the following:
	\begin{claim*}
		For each $Q\in\partial\tree^{**}(R)$ there exists some cube $P=P(Q)\in\WW(\Omega)$ such that
		$$P\cap \pom_R\neq \varnothing,\qquad \ell(P)\approx_{a^{**},\delta} \ell(Q), \qquad \dist(P,Q) \lesssim_{a^{**},\delta}\ell(Q).$$
	\end{claim*}
	
	The lemma follows easily from this claim. Indeed, using that $\Omega_R$ is either a Lipschitz domain or a union of two Lipschitz domains, that
	\begin{equation}\label{eqclaim1}
		\HH^n( 2P \cap \pom_R)\approx \ell(P)^n\approx_{a^{**},\delta} \sigma(Q),
	\end{equation}
	the finite superposition of the cubes $2P$, and the fact that  $\#\{P\in\WW(\Omega):P=P(Q)\}\leq C(a^{**},\delta)$ for every 	$Q\in\partial\tree^{**}(R)$, we get
	\begin{multline}\nonumber
		\sum_{Q\in\partial\tree^{**}(R):Q\subset S} \sigma(Q) 
		  \lesssim_{a^{**},\delta} \!\! \sum_{\substack{P\in \WW(\Omega):\\P\subset B(x_S,C(a^{**})\ell(S))}} \HH^n(2P\cap \pom_R) 		   \lesssim_{a^{**}} \HH^n\big(\pom_R\cap B(x_S,C'(a^{**})\ell(S))\big) \lesssim
		\ell(S)^n.
	\end{multline}
	
	To prove the claim we distinguish several cases:
	
	\noindent\emph{Case 1.} If $\ell(Q)\geq c(a^{**},\delta)\ell(R)$ (with
	$c(a^{**},\delta)$ to be chosen), we let $P(Q)$ be any Whitney cube that intersects the upper half of $\partial B^+(R)$. It is immediate to check that this choice satisfies the properties described in \eqref{eqclaim1}.
	
	\noindent\emph{Case 2.} 	Suppose now that $\ell(Q)\leq c(a^{**},\delta)\ell(R)$ and that $\dist(Q,\partial (B^+(R)\cup B^-(R)))\geq C_6\,\ell(Q)$ for some
	big $C_6(a^{**})>1$ to be chosen below. Let us see that this implies that $x_Q\in C(R)$. Indeed, from the definition of $\tree^{**}(R)$ there exists some $S\in\tree(R)$ such that $Q$ and $S$ are $a^{**}$-close. Since $S\cap (B^+(R)\cap B^-(R))\neq\varnothing$,
	there exists some $\wt x_S\in S\cap B^+(R)\cap B^-(R)$. If $x_Q\not\in (B^+(R)\cup B^-(R))$, by continuity
	the segment $\overline{x_Q \,\wt x_S}$ intersects $\partial (B^+(R)\cup B^-(R))$ at some point $z$. So we have
	$$\dist(x_Q,\partial (B^+(R)\cup B^-(R)))\leq |x_Q-z| \leq |x_Q- x_{\wt S}| \leq A (\ell(Q)+\ell(S)) + \diam(Q) \leq C(a^{**})\ell(Q),$$
	which contradicts the assumption above if $C_6(a^{**})$ is big enough.
	In particular,   the conditions that $x_Q\in C(R)$ and  $\dist(Q,\partial (B^+(R)\cup B^-(R)))\geq C_6\,\ell(Q)$ 	imply that $w(Q)\subset (B^+(R)\cup B^-(R))$ if $C_6$ is taken big enough.
	
	If $w(Q)\cap\Omega_R\neq\varnothing$, then we take a Whitney cube $P$ with $\ell(P)=\ell(Q)$ contained in $w(Q)$ that intersects $\Omega_R$. Otherwise, $w(Q)\subset \Omega\setminus \Omega_R$, and from the fact that $\ell(Q)\leq c(a^{**},\delta)\ell(R)$ we infer that $w(Q)$ lies below the Lipschitz graph 
	$\Gamma_R^+$ that defines the bottom of $\pom_R$, and above the graph $\Gamma_R^-$ in case that $\Omega_R^-\neq\varnothing$.
	Then we take $x\in w(Q)$ and $x^+ =\Pi^{-1}_R(x)\cap \Gamma_R^+$, and also $x^- =\Pi_R^{-1}(x)\cap \Gamma_R^-$ 
	in case that $\Omega_R^-\neq\varnothing$.
	
	When  $\Omega_R^-=\varnothing$, we let $P$ be the Whitney that contains $x^+$. Since
	$V_R\subset\R^{n+1}\setminus \overline\Omega$, 
	there exists $y = \Pi_R^{-1}(x) \cap
	\pom \cap B^0(R)$. Then we deduce $\ell(P)\approx\dist(x^+,\pom) \approx |x^+- y|\geq \HH^1(w(Q)\cap \Pi_R^{-1}(x^+)) \geq \ell(Q)$. 	Using again that there exists some $S\in\tree(R)$ such that $Q$ and $S$ are $a^{**}$-close we get
	\begin{multline}\label{eqcase11}
		\ell(P)  \approx\dist(x^+,\pom)\lesssim D_R(x^+) = D_R(x) \leq D_R(x_Q) + C\,\ell(Q)\\
		 \leq D_R(x_S) + |x_Q - x_S| +C\,\ell(Q) \lesssim C(a^{**},\delta)\ell(Q). 
	\end{multline}
	Further,
	\begin{equation}\label{eqcase12}
		\dist(P,Q) \leq |x_Q-x^+| \leq |x_Q-x| +|x- x^+| \lesssim \ell(Q),
	\end{equation}
	and so $P$ satisfies the properties in the claim.
	
	If $\Omega_R^-\neq\varnothing$ (i.e., $V_R\subset\Omega$), we let $P$ be the largest Whitney cube that intersects $\{x^+,x^-\}$.
	From the fact that $b\beta_\sigma(k_1Q)\lesssim\ve$ and the stopping condition (c) we easily infer that there
	exists some point $y\in  \Pi_R^{-1}(x^+) \cap B^0(R)$ such that $\dist(y,\pom)\lesssim\ve\ell(Q)$.
	Then it follows $\ell(P) \gtrsim |x^+ - x^-|\geq \HH^1(w(Q)\cap \Pi_R^{-1}(x^+)) \geq \ell(Q)$. 	Also the estimates \eqref{eqcase11} and \eqref{eqcase12} are still valid, replacing $x^+$ by $x^-$ if $x^-\in P$. So $P$ satisfies the required properties.

\noindent\emph{Case 3.} 	Suppose that $\ell(Q)\leq c(a^{**},\delta)\ell(R)$ and that $\dist(Q,\partial (B^+(R)\cup B^-(R)))< C_6\,\ell(Q)$ for
	$C_6(a^{**})>1$ as above. 	So there exists   $z\in
	\partial (B^+(R)\cup B^-(R))$ such that $|x_Q-z|\lesssim_{a^{**}} \ell(Q)$. We also denote $z^+= \Pi^{-1}(z) \cap\Gamma_R^+$.
	As above, we take $S\in\tree(R)$ such that $Q$ and $S$ are $a^{**}$-close, so that since $D_R$ is $1$-Lipschitz, we have
	$$\dist(z^+, \pom)\approx D_R(z^+)=
	D_R(z) \leq D_R(x_S) + |z-x_Q| + |x_Q - x_S|\lesssim_{a^{**}} \ell(Q).$$
	From this fact we infer that 
	there exists some point $y\in\pom_R$ such that $|z-y|\approx |z_{n+1}-y_{n+1}|
	\lesssim_{a^{**}}\ell(Q)$ and $\dist(y,\pom)\approx_{a^{**}}\ell(Q)$.	This point satisfies $|x_Q -y|\leq |x_Q - z| + |z-y| \lesssim_{a^{**}} \ell(Q)$, 	and so letting $P$ be the Whitney cube that contains $y$ we are done.
\end{proof}

\subsection{The corona decomposition of $\Omega$}\label{subs:corona}

We will now perform a corona decomposition of $\Omega$ using the Lipschitz subdomains $\Omega_R$ constructed above.
We define inductively a family $\ttt\subset \DD_\sigma$ as follows. First we let $R_0\in\DD_\sigma$ be a cube {such that 
	$b\beta_\sigma(k_1R_0)\leq\ve$ having maximal side length.} Assuming $R_0,R_1,\ldots, R_i$ to be defined, we let $R_{i+1}\in\DD_\sigma$ be a cube from $\DD_\sigma \setminus \bigcup_{0\leq k\le i} \tree^{**}(R_k)$ such that $b\beta_\sigma(k_1R_{i+1})\leq \ve$ with maximal side length.
We set
$$\ttt=\{R_i\}_{i\geq0}.$$

For each $R\in\ttt$ we consider the subdomain $\Omega_R$ constructed in the previous subsection. We
split
\begin{equation}\label{eqpart93}
	\Omega = \bigcup_{R\in\ttt} \Omega_R \cup \n H, \quad\text{ where }\quad \n H=\Omega \setminus\bigcup_{R\in\ttt} \Omega_R.
\end{equation}

\begin{lemma}
	The sets $\overline\Omega_R\cap\Omega$, with $R\in\ttt$, are pairwise disjoint, assuming that the {constant $a^{**}$ is big enough (possibly depending on $a^*$). }
\end{lemma}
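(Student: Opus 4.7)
The plan is to argue by contradiction and show that if two distinct $R, R' \in \ttt$ had Whitney regions $\overline{\Omega}_R$ and $\overline{\Omega}_{R'}$ meeting inside $\Omega$, then the later-chosen of the two would have to lie in the $\tree^{**}$ of the earlier one, violating the inductive construction of $\ttt$.

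The starting point is to pick $x\in \overline\Omega_R\cap\overline\Omega_{R'}\cap\Omega$ and invoke Lemma \ref{lemcontingtree} to produce cubes $Q\in\tree^*(R)$ and $Q'\in\tree^*(R')$ with $x\in w(Q)\cap w(Q')$. The crucial observation is that any cube $S$ with $x\in w(S)$ must be a boundary cube of some Whitney cube $I\ni x$, so $\ell(S)=\ell(I)\approx \delta_\Omega(x)$ and $\dist(S,x)\lesssim \ell(I)$. Applied to both $Q$ and $Q'$, this forces $\ell(Q)\approx \ell(Q')\approx \delta_\Omega(x)$ and $\dist(Q,Q')\lesssim \ell(Q)$. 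Now by the definition of $\tree^*$, there exist $\tilde Q_R\in\tree(R)$ and $\tilde Q_{R'}\in\tree(R')$ that are $a^*$-close to $Q$ and $Q'$ respectively; hence their sizes are comparable with ratio at most $(a^*)^2$, and $\dist(\tilde Q_R,\tilde Q_{R'})\lesssim (a^*)^2\ell(Q)$.

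Without loss of generality assume $R=R_i$, $R'=R_j$ with $i<j$, which forces $\ell(R)\geq \ell(R')$, and recall $\tilde Q_{R'}\subset R'$. I then want to exhibit a cube $P\in\tree(R)$ that is $a^{**}$-close to $R'$. If $\ell(R')\leq \ell(\tilde Q_R)$, take $P=\tilde Q_R$; the size ratio is bounded by $(a^*)^2$, and the distance estimate follows from $\dist(R',P)\leq \dist(\tilde Q_{R'},\tilde Q_R)\lesssim (a^*)^2\ell(Q)\leq (a^*)^3\ell(R')$. If instead $\ell(R')>\ell(\tilde Q_R)$, take $P$ to be the unique ancestor of $\tilde Q_R$ in $\DD_\sigma(R)$ with $\ell(R')\leq \ell(P)\leq 2\ell(R')$; such $P$ exists because $\ell(R')\leq \ell(R)$, and it lies in $\tree(R)$ since $\tree(R)$ is closed under passage to ancestors within $R$ (an ancestor cannot be strictly contained in any stopping cube below it). The distance estimate is immediate since $\tilde Q_R\subset P$.

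In either case, by choosing $a^{**}\geq C(a^*)^3$ for a suitable absolute constant $C$, we conclude that $R'$ is $a^{**}$-close to $P\in\tree(R)$, so $R'\in\tree^{**}(R)$. This contradicts the construction of $\ttt$, which selects $R'=R_j$ precisely from $\DD_\sigma\setminus \bigcup_{k\leq i}\tree^{**}(R_k)$. The main technical delicacy is the scale-matching in the two cases above and the verification that the ancestor $P$ genuinely belongs to $\tree(R)$, i.e., that the stopping rules defining $\sss(R)$ are monotone with respect to ancestry; this is where the inclusive definition of $\tree(R)$ (cubes \emph{not strictly contained} in any $\sss(R)$ cube) is essential.
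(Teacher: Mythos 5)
Your proof is correct and follows essentially the same route as the paper's: invoke Lemma \ref{lemcontingtree} to obtain overlapping Whitney regions $w(Q)\cap w(Q')\neq\varnothing$, pass from $Q,Q'$ to comparable cubes in $\tree(R)$ and $\tree(R')$, and then pass to an ancestor in $\tree(R)$ at the scale of $R'$ to conclude $R'\in\tree^{**}(R)$, contradicting the greedy construction of $\ttt$. The paper compresses your two cases into a single line (``take the ancestor $\wt S$ of $S$ with $\ell(\wt S)=\ell(R')$, or $\wt S=S$ if $\ell(S)>\ell(R')$''), and it tacitly uses the same ancestor-closedness of $\tree(R)$ that you spell out; your version just makes the scale bookkeeping and the verification that the ancestor stays in $\tree(R)$ explicit.
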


In particular, from this lemma it follows that the union in \eqref{eqpart93} is a partition into 
disjoint sets. The constants {$a^*$ and $a^{**}$} depend on $\delta$; however, this dependence
is harmless for our purposes.

\begin{proof}
Suppose that $R,R'\in\ttt$ satisfy {$\overline\Omega_R\cap\overline\Omega_{R'}\cap\Omega\neq\varnothing$.}
Suppose also that $R=R_i$, $R'=R_j$, with $j>i$, so that in particular $\ell(R')\leq \ell(R)$.
From Lemma \ref{lemcontingtree} we infer that there exist cubes $Q\in\tree^*(R)$ and $Q'\in\tree^*(R')$ such that $w(Q)\cap w(Q')\neq\varnothing$. Clearly, this implies that $\ell(Q)\approx \ell(Q')$, and from the definition of
$\tree^*(R)$ and $\tree^*(R')$ we deduce that there are two cubes $S\in\tree(R)$, $S'\in\tree(R')$
such that $\dist(S,S')\lesssim_a\ell(S)\approx_a\ell(S')$. Let $\wt S$ be the ancestor of $S$ with 
$\ell(\wt S)=\ell(R')$ {(or take $\wt S=S$ if $\ell(S)>\ell(R')$). Clearly, $\wt S\in\tree(R)$} and $\dist(\wt S,R')\lesssim_{a^*}\ell(R')$.
So $R'\in\tree^{**}(R)$ {if $a^{**}=a^{**}(a^*)$} is chosen big enough, which contradicts the construction of $\ttt$.
\end{proof}

\begin{lemma}\label{lempack2}
	The family $\ttt$ satisfies the packing condition
	$$\sum_{R\in\ttt:R\subset S} \sigma(R)\lesssim_{\ve,\delta} \sigma(S)\quad \mbox{ for all $S\in\DD_\sigma$}.$$
\end{lemma}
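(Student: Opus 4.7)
The strategy is to apply Lemma \ref{lempack1} to each $R\in\ttt$ (which is legitimate because $b\beta_\sigma(k_1R)\leq\ve$ holds by construction of $\ttt$), and then to sum over $R\in\ttt$ with $R\subset S$. This reduces the desired packing to the control of four quantities:
\begin{align*}
A_1 &= \sum_{R\in\ttt:R\subset S}\sigma(Z(R)),
& A_2 &= \sum_{R\in\ttt:R\subset S}\sum_{Q\in\sss(R)\cap\cB(\ve,\theta)}\sigma(Q),\\
A_3 &= \sum_{R\in\ttt:R\subset S}\sum_{Q\in\tree(R)}\beta_{\sigma,1}(k_1Q)^2\sigma(Q),
& A_4 &= \theta^{-1}\sum_{R\in\ttt:R\subset S}\sum_{Q\in\tree(R)}\lambda_\Omega(Q)\sigma(Q).
\end{align*}

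\textbf{Key combinatorial observation.} I would first prove the following: if $R_1,R_2\in\ttt$ with $R_1\subsetneq R_2$, then $R_1$ is strictly contained in some $Q'\in\sss(R_2)$. Indeed, since $\ell(R_1)\leq\ell(R_2)$ and both belong to $\ttt$, the cube $R_1$ is chosen strictly after $R_2$ in the inductive construction, so $R_1\notin\tree^{**}(R_2)\supset\tree(R_2)$; combined with $R_1\subsetneq R_2$, the definition of $\tree(R_2)$ forces $R_1\subsetneq Q'$ for some $Q'\in\sss(R_2)$. From this I would draw two consequences: (i) $Z(R_1)\cap Z(R_2)=\varnothing$, because $R_1\subset Q'$ and stopping cubes are disjoint from $Z(R_2)$; hence the family $\{Z(R)\}_{R\in\ttt}$ is pairwise disjoint. (ii) No dyadic cube $Q$ lies in $\tree(R)$ for more than one $R\in\ttt$: if $Q\in\tree(R_1)\cap\tree(R_2)$ with $R_1\subsetneq R_2$, then $R_1$ is a dyadic ancestor of $Q$ contained in $R_2$ and hence lies in $\tree(R_2)$, contradicting $R_1\notin\tree(R_2)$.

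\textbf{Estimation of the four terms.} Using (i), $A_1\leq\sigma(S)$ since the $Z(R)$'s are disjoint subsets of $S$. Using (ii) (and the fact that $\sss(R)\subset\tree(R)$), each cube $Q\subset S$ contributes to $A_2$, $A_3$, $A_4$ at most once, and I can exchange the order of summation to write
\[
A_3\leq\sum_{Q\subset S}\beta_{\sigma,1}(k_1Q)^2\sigma(Q),\quad A_4\leq\theta^{-1}\sum_{Q\subset S}\lambda_\Omega(Q)\sigma(Q),\quad A_2\leq\sum_{\substack{Q\subset S\\ Q\in\cB(\ve,\theta)}}\sigma(Q).
\]
The bound $A_3\lesssim\sigma(S)$ is the standard Carleson packing of $\beta$-numbers for uniformly $n$-rectifiable sets, while $A_4\lesssim_\theta\sigma(S)$ follows from \eqref{eqDKP*}. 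For $A_2$, the packing of $\cB(\ve,\theta)$ follows by applying Chebyshev's inequality separately to its two defining conditions: $\sum_{Q\subset S, b\beta_\sigma(k_1Q)>\ve}\sigma(Q)\leq\ve^{-2}\sum_{Q\subset S}b\beta_\sigma(k_1Q)^2\sigma(Q)\lesssim\ve^{-2}\sigma(S)$ by uniform $n$-rectifiability, and $\sum_{Q\subset S,\lambda_\Omega(Q)>\theta}\sigma(Q)\leq\theta^{-1}\sum_{Q\subset S}\lambda_\Omega(Q)\sigma(Q)\lesssim\theta^{-1}\sigma(S)$ by \eqref{eqDKP*}. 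Combining all bounds yields $\sum_{R\in\ttt:R\subset S}\sigma(R)\lesssim_{\ve,\delta,\theta}\sigma(S)$.

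\textbf{Main obstacle.} The step I expect to demand the most care is the combinatorial claim in the second paragraph: verifying that distinct $R_1,R_2\in\ttt$ with $R_1\subsetneq R_2$ satisfy $R_1\subsetneq Q'\in\sss(R_2)$. This hinges on the ordering of $\ttt$ and on the precise definition of $\tree^{**}$, and it is the bridge that converts the iterative construction of $\ttt$ into the bounded-multiplicity statements needed to reduce $A_2, A_3, A_4$ to single dyadic sums controlled by known Carleson packings.
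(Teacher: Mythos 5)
Your proof follows the same route as the paper's: apply Lemma \ref{lempack1} to each $R\in\ttt$ with $R\subset S$, sum, and then control the four resulting terms using (a) disjointness of the sets $Z(R)$, (b) disjointness of the families $\tree(R)$, (c) the Carleson packing of $\cB(\ve,\theta)$, and (d) the strong geometric lemma for $\beta_{\sigma,1}$ together with \eqref{eqDKP*}. The one place where you add genuine content is the combinatorial observation in your second paragraph: the paper simply asserts that the $Z(R)$'s and the $\tree(R)$'s are ``disjoint by construction,'' whereas you actually derive this from the ordering of $\ttt$ (non-increasing side lengths, $R_j\notin\tree^{**}(R_i)$ for $j>i$) and from the nesting property of $\DD_\sigma$. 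That derivation is correct and is worth spelling out; in particular, your argument that $R_1\subsetneq R_2$, $R_1,R_2\in\ttt$ forces $R_1\subsetneq Q'$ for some $Q'\in\sss(R_2)$, and the resulting $\sigma$-a.e.\ disjointness, is exactly what the paper leaves implicit.

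There is, however, a genuine error in your treatment of the $b\beta$-part of $A_2$. You write
\[
\sum_{Q\subset S,\, b\beta_\sigma(k_1Q)>\ve}\sigma(Q)
\;\leq\;\ve^{-2}\sum_{Q\subset S} b\beta_\sigma(k_1Q)^2\,\sigma(Q)
\;\lesssim\;\ve^{-2}\sigma(S),
\]
attributing the last inequality to uniform rectifiability. But uniform rectifiability does \emph{not} imply a Carleson square-sum estimate for $b\beta_\sigma$ (nor for $\beta_{\sigma,\infty}$). The strong geometric lemma, $\sum_{Q\subset S}\beta_{\sigma,p}(k_1Q)^2\,\sigma(Q)\lesssim\sigma(S)$, is equivalent to UR only for the integral $\beta_p$-numbers with finite $p$ (as you correctly use for $A_3$ with $p=1$); for the $L^\infty$-type bilateral coefficients, even a $\delta$-Lipschitz graph has $b\beta_\sigma(k_1Q)\approx\delta$ on every scale, so $\sum_{Q\subset S}b\beta_\sigma(k_1Q)^2\sigma(Q)$ diverges while the set is trivially UR. The correct tool is the \emph{bilateral weak geometric lemma}, one of the David--Semmes characterizations of UR: for each fixed $\ve>0$, the family $\{Q\in\DD_\sigma : b\beta_\sigma(k_1Q)>\ve\}$ already satisfies a Carleson packing condition, with constant depending on $\ve$ and the UR constants. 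This directly yields $\sum_{Q\subset S,\,b\beta_\sigma(k_1Q)>\ve}\sigma(Q)\lesssim_\ve\sigma(S)$ without passing through a (false) square-function bound. Your Chebyshev step for the $\lambda_\Omega$-part of $\cB(\ve,\theta)$, by contrast, is exactly right, since \eqref{eqDKP*} \emph{is} a square-sum-type packing. So the conclusion of your $A_2$ bound is correct, but the justification for its $b\beta$-half must be replaced by the bilateral weak geometric lemma.
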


\begin{proof}
	By Lemma \ref{lempack1} we have
	\begin{multline}\label{eqlala77}
		\sum_{R\in\ttt:R\subset S} \sigma(R)  \lesssim
		\sum_{R\in\ttt:R\subset S} \sigma(Z(R)) 
		+  \sum_{R\in\ttt}\,\sum_{Q\in\sss(R)\cap\cB(\ve,\theta)} \sigma(Q)\\
		  \quad
		+ \sum_{R\in\ttt:R\subset S}\sum_{Q\in\tree(R)} \big(\beta_{\sigma,1}(k_1Q)^2+\lambda_\Omega(Q)\big)\,\sigma(Q).
	\end{multline}
	By construction, the sets $Z(R)$ are disjoint, and thus the first sum does not exceed $\sigma(S)$.
	The second term does not exceed $\sum_{Q\in\DD_\sigma(S)\cap\cB(\ve,\theta)}\sigma(Q)\lesssim_{\ve,\theta}\sigma(S)$, 	by the uniform rectifiability of $\pom$ and the   DKP condition \rf{eqDKP*}.
	Concerning the last term in \eqref{eqlala77}, the families $\tree(R)$, with $R\in\ttt$, are also disjoint by construction. Therefore, {again by the} uniform rectifiability of $\pom$,
	$$\sum_{R\in\ttt:R\subset S}\sum_{Q\in\tree(R)} \beta_{\sigma,1}(k_1Q)^2\,\sigma(Q) \leq
	\sum_{Q\subset S} \beta_{\sigma,1}(k_1Q)^2\,\sigma(Q)\lesssim_{\ve,\delta} \sigma(S).$$
	Analogously, by \rf{eqDKP*},
	$$\sum_{R\in\ttt:\subset S}\sum_{Q\in\tree(R)} \lambda_\Omega(Q)\,\sigma(Q)\leq
	\sum_{Q\subset S} \lambda_\Omega(Q)\,\sigma(Q)\lesssim \sigma(S).$$
\end{proof}

\begin{lemma}\label{lm.h}
	There is a subfamily $\HH\subset \DD_\sigma$ such that
	\begin{equation}\label{eqHH*}
		\n H\subset \bigcup_{Q\in\HH} w(Q)
	\end{equation}
	which satisfies the packing condition
	\begin{equation}\label{eqHH*2}
		\sum_{Q\in \HH:Q\subset S} \sigma(Q)\lesssim \sigma(S)\quad \mbox{ for all $S\in\DD_\sigma$},
	\end{equation}
	with the implicit constant depending on $\ve,\delta,a^{**}$.
\end{lemma}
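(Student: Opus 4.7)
The plan is to take
$$\HH := \{Q\in\DD_\sigma : w(Q)\cap\n H \neq\varnothing\}.$$
Inclusion \rf{eqHH*} is immediate: if $x\in\n H\subset\Omega$, choose the Whitney cube $P\in\WW(\Omega)$ with $x\in P$ and any $Q\in b(P)$. Then $x\in P\subset w(Q)$ and $P\cap\n H\ni x$, so $Q\in\HH$ and $x$ is covered.

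For the Carleson packing \rf{eqHH*2}, the key step would be to establish the inclusion
\begin{equation*}
\HH\subset\cB(\ve,\theta)\cup\bigcup_{R\in\ttt}\partial\tree^{**}(R).
\end{equation*}
Indeed, suppose $Q\in\HH\setminus\cB(\ve,\theta)$, so $b\beta_\sigma(k_1Q)\leq\ve$ and $\lambda_\Omega(Q)\leq\theta$. By the maximal-length selection rule defining $\ttt$, any such $Q$ must lie in $\tree^{**}(R)$ for some $R\in\ttt$; otherwise $Q$ would itself be an eligible candidate at some stage of the construction, contradicting the priority given to cubes of largest side length. Since $w(Q)\cap\n H\neq\varnothing$ and $\Omega_R\cap\n H=\varnothing$, we have $w(Q)\not\subset\Omega_R$, placing $Q$ in $\partial\tree^{**}(R)$.

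The packing of $Q\in\HH\cap\cB(\ve,\theta)$ splits into two subfamilies: those with $b\beta_\sigma(k_1Q)>\ve$ pack by the uniform $n$-rectifiability of $\pom$ via the David--Semmes geometric lemma, while those with $\lambda_\Omega(Q)>\theta$ pack by \rf{eqDKP*}. Both contributions are $\lesssim_{\ve,\theta}\sigma(S)$. For the remaining family $\bigcup_{R\in\ttt}\partial\tree^{**}(R)$, I would combine Lemma \ref{lemdtree**} (which, applied to an ancestor $T$ of $R$ of size comparable to $\ell(R)$ and containing the $Ca^{**}$-neighborhood of $R$ in $\pom$, yields $\sum_{Q\in\partial\tree^{**}(R)}\sigma(Q)\lesssim\sigma(R)$) with the packing $\sum_{R\in\ttt:R\subset S}\sigma(R)\lesssim\sigma(S)$ from Lemma \ref{lempack2}.

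The main obstacle, requiring care, is that a single $Q$ can lie in $\partial\tree^{**}(R)$ for several $R\in\ttt$, and moreover $R$-cubes strictly containing $S$ or disjoint from but near $S$ contribute small cubes $Q\subset S$ whose naive summation over $R$ could blow up. The right resolution leverages the geometric construction hidden in the proof of Lemma \ref{lemdtree**}: each $Q\in\partial\tree^{**}(R)$ is associated with a Whitney cube $P(Q)\in\WW(\Omega)$ meeting $\partial\Omega_R\cap\Omega$ with $\ell(P(Q))\approx\ell(Q)\approx_{a^{**}}\dist(P(Q),Q)$, and $\HH^n(2P(Q)\cap\partial\Omega_R)\approx\sigma(Q)$. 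Since $\{2P(Q)\}_{R,Q}$ has finite superposition (a Whitney cube is near a bounded number of Lipschitz graphs $\partial\Omega_R$ of comparable scale), the double sum telescopes to $\sum_{R\in\ttt}\HH^n(\partial\Omega_R\cap C\text{-nhd}(S))$, which is $\lesssim\sum_{R\in\ttt:R\subset CS}\sigma(R)\lesssim\sigma(S)$ by Lemma \ref{lempack2}, completing the bound.
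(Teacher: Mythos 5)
Your approach is essentially the one taken in the paper: you reduce the packing for your (a priori smaller) family $\HH$ to the family $\cB\cup\bigcup_{R\in\ttt}\partial\tree^{**}(R)$, using exactly the paper's argument that the inductive maximal-side-length construction of $\ttt$ forces $\DD_\sigma\subset\cB\cup\bigcup_R\tree^{**}(R)$, and then packing via the uniform rectifiability of $\pom$, the DKP packing \rf{eqDKP*}, Lemma \ref{lemdtree**}, and Lemma \ref{lempack2}. The inclusion step and the reduction $\HH\subset\cB\cup\bigcup_R\partial\tree^{**}(R)$ are correct.

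The issue is in the final paragraph. First, the ``multiplicity'' worry is not a real obstacle: one simply bounds
\[
\sum_{Q\in\bigcup_R\partial\tree^{**}(R),\,Q\subset S}\sigma(Q)\ \le\ \sum_{R\in\ttt}\ \sum_{Q\in\partial\tree^{**}(R):\,Q\subset S}\sigma(Q),
\]
so repeated $Q$'s only make the right-hand side larger, which is harmless. Second, your proposed Whitney-superposition telescoping does not actually close the bound as stated: passing from $\sum_{R\in\ttt}\HH^n(\partial\Omega_R\cap C\mbox{-nhd}(S))$ to $\sum_{R\in\ttt:\,R\subset CS}\sigma(R)$ silently discards the $R$'s with $\ell(R)\gg\ell(S)$ (or $R\not\subset CS$) whose boundaries $\partial\Omega_R$ still sweep through the $C$-neighborhood of $S$; for such $R$ one has $\HH^n(\partial\Omega_R\cap C\mbox{-nhd}(S))\approx\ell(S)^n$, which is \emph{not} $\approx\sigma(R)$ and is not captured by Lemma \ref{lempack2}. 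The clean resolution is the paper's split of $T_0=\{R\in\ttt:\tree^{**}(R)\cap\DD_\sigma(S)\neq\varnothing\}$ into $T_1=\{R:\ell(R)\le\ell(S)\}$ and $T_2=\{R:\ell(R)>\ell(S)\}$. For $T_1$, all of $\partial\tree^{**}(R)$ sits in $C(a^{**})R\subset C'(a^{**})S$, so Lemma \ref{lemdtree**} (applied to $R$ itself) combined with Lemma \ref{lempack2} gives $\lesssim\sigma(S)$. For $T_2$, the number of such $R$ is bounded by $C(a^{**})$, and for each one you apply Lemma \ref{lemdtree**} directly with $S$ as the test cube to get $\sum_{Q\in\partial\tree^{**}(R):Q\subset S}\sigma(Q)\lesssim\sigma(S)$. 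This finishes the bound without any superposition argument.
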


\begin{proof}
	By construction, $\DD_\sigma \subset \cB\cup \bigcup_{R\in\ttt} \tree^{**}(R)$, 	and thus 	$$\Omega \subset \bigcup_{Q\in\cB} w(Q) \cup \bigcup_{R\in\ttt}\,\bigcup_{Q\in\partial\tree^{**}(R)}w(Q) \cup \bigcup_{R\in\ttt} \Omega_R.$$
	So \eqref{eqHH*} holds if we define
	$$\HH := \cB \cup \bigcup_{R\in\ttt} \partial\tree^{**}(R).$$
	
	It remains to prove the packing condition \eqref{eqHH*2}. From the uniform rectifiability of $\pom$,  the family $\cB$ satisfies a Carleson packing condition, and so it suffices to show that
	the same holds for
	$\bigcup_{R\in\ttt} \partial\tree^{**}(R)$. This is an immediate consequence of Lemmas \ref{lemdtree**} 
	and \ref{lempack2}. Indeed, for any $S\in\DD_\sigma$, let $T_0=\{R\in\ttt:\tree^{**}(R)\cap \DD_\sigma(S)\neq \varnothing\}$ and $T_1=\{R\in T_0:\ell(R)\leq \ell(S)\}$, $T_2=\{R\in T_0:\ell(R)> \ell(S)\}$,	so that
	\begin{equation}\nonumber
		\sum_{R\in\ttt}\,\sum_{Q\in\partial\tree^{**}(R)\cap\DD_\sigma(S)} \sigma(Q)  \leq 
		\sum_{R\in T_1}\, \sum_{Q\in\partial\tree^{**}(R)} \sigma(Q)  
		  +\sum_{R\in T_2}\,\sum_{Q\in\partial\tree^{**}(R)\cap\DD_\sigma(S)} \sigma(Q). 
	\end{equation}
	Since all the cubes from $\partial\tree^{**}(R)$ are contained in $C(a^{**}) R$, it follows that the cubes from
	$T_1$ are contained in $C'(a^{**})S$, and thus 
	$$\sum_{R\in T_1}\, \sum_{Q\in\partial\tree^{**}(R)} \sigma(Q) \lesssim_{a^{**}}\sum_{R\in T_1}\sigma(R)
	\lesssim_{\ve,\delta}\sigma(S).$$
	Also, it is immediate to check that the number of cubes from $T_2$ is uniformly bounded by some constant
	depending on $a^{**}$. Therefore, 
	$$\sum_{R\in T_2}\,\sum_{Q\in\partial\tree^{**}(R)\cap\DD_\sigma(S)} \sigma(Q)\lesssim_{a^{**}}
	\sum_{R\in T_2}\sigma(S)\lesssim_{a^{**}}\sigma(S).$$
\end{proof}

\subsection{The properties of $\Omega_R$}\label{subs:cordecOmega}

\begin{lemma}
	Assume that $\ve$, $\delta$, and $\gamma$ are small enough. Then, for each $R\in\ttt$, $\Omega_R$ is a $\tau$-Lipschitz domain or a disjoint union of two $\tau$-Lipschitz domains, with uniform Lipschitz character.
\end{lemma}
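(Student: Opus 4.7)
My plan is to verify directly that each $\Omega_R^\pm$ (as defined in Subsection \ref{subs:star-Lip}) fits the definition of a $\tau$-Lipschitz domain from Section \ref{sec.lip}, choosing the parameters $\ve, \delta, \gamma$ small enough that the Lipschitz constant is at most $\tau$. When both $\Omega_R^+$ and $\Omega_R^-$ are nonempty, they are automatically disjoint because Lemma \ref{lem333} places the slab $W_R$ between them, so $\Omega_R$ is then a disjoint union of two such Lipschitz domains.

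The first step is the slope bound for the graph defining the ``bottom'' of $\Omega_R^+$. By Lemma \ref{lemgraf} the function $A$ has slope $\leq C\delta$, and $D_R$ is $1$-Lipschitz by construction, so the graph function $\psi^+(x') := A(x') + \delta D_R(x')$ has Lipschitz constant at most $(C+1)\delta$; taking $\delta \leq \tau/(C+1)$ forces the slope to be $\leq \tau$. The same bound holds for the function defining $\Gamma_R^-$.

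Next I would verify that $\Omega_R^+$ is starlike with respect to $x_R^+$. Using Lemma \ref{lemgraf} together with the bound $b\beta_\sigma(k_1R) \leq \ve$ one estimates $|A((x_R)')| \lesssim \ve\, r(B(R))$, while $D_R((x_R)') \leq d_R(x_R) \lesssim \ell(R) \approx r(B(R))$, so $\psi^+((x_R)') \lesssim (\ve + \delta)\, r(B(R))$; since $x_R^+$ has height $(1/4-\gamma)r(B(R))$, choosing $\ve, \delta$ small relative to $1/4 - \gamma$ places $x_R^+$ at vertical distance $\gtrsim r(B(R))$ above $\Gamma_R^+$. The small slope of $\psi^+$ then implies by a direct segment calculation that every straight line from $x_R^+$ to a point of $\Omega_R^+$ stays strictly above $\Gamma_R^+$, and stays inside $B^+(R)$ by convexity of the ball; this is the desired starlikeness.

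Finally, using starlikeness and the slope bound, I would produce the covering by $\tau$-cylinders required by Section \ref{sec.lip}. Over the graph piece $\Gamma_R^+ \cap \overline{B^+(R)}$ a single family of vertical cylinders of diameter $\approx r(B(R))$ with slope $\leq \tau$ suffices. Over the spherical cap $\partial B^+(R) \cap \overline{\Omega_R^+}$ I would cover by a bounded number $N = N(\tau, \gamma, n)$ of smaller cylinders of diameter $\sim \tau\, r(B(R))$ whose axes point along the outward normals to $\partial B^+(R)$ at well-chosen base points; inside each such cylinder the sphere, of radius $\approx r(B(R)) \gg \tau\, r(B(R))$, is a graph of slope $\leq \tau$. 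The symmetric argument handles $\Omega_R^-$. The main obstacle is the stitching near the corner curve $\Gamma_R^+ \cap \partial B^+(R)$: one must arrange that the vertical cylinders over the graph and the tilted cylinders over the sphere together cover a neighborhood of the corner, each with slope $\leq \tau$. This is achieved by introducing finitely many transitional cylinders of intermediate tilt, exploiting that for $\delta \ll \gamma$ the graph is nearly horizontal and the corner sits well below $x_R^+$, keeping the slopes controlled throughout the covering.
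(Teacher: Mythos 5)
Your proposal is correct and takes essentially the same route as the paper's own (very terse) proof, which simply records that, in a neighborhood of $C'(R)$, the bottom of $\partial\Omega_R^+$ is the graph of the pointwise maximum of $x'\mapsto A(x')+\delta\,D_R(x')$ (slope $\lesssim\delta$) and the function parametrizing the lower cap of $\partial B^+(R)$ (slope $\lesssim\sqrt\gamma$ on the range where the two meet), hence a single Lipschitz graph of slope $\lesssim\max(\delta,\sqrt\gamma)\leq\tau$, and then leaves the remaining cylinder covering to the reader. That max-of-two-graphs observation disposes of the ``transitional cylinders'' you worry about at the corner $\Gamma_R^+\cap\partial B^+(R)$: there the boundary is already one small-slope graph over the horizontal, so ordinary vertical cylinders cover it, and tilted cylinders are needed only higher up on $\partial B^+(R)$ where there is no corner to stitch; your Step 2 (starlikeness) is also correct but not needed for the $\tau$-Lipschitz conclusion, and the disjointness of $\Omega_R^\pm$ follows directly from the defining inequalities $x_{n+1}>A(x')+\delta D_R(x')$ versus $x_{n+1}<A(x')-\delta D_R(x')$ without invoking $W_R$.
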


\begin{proof}
	This is an easy consequence of the definition of $\Omega_R^\pm$. Indeed, notice that in the bottom part of $B^+_R$ (in a neighborhood of the cylinder $C'(R)$, say), $\partial \Omega_R^+$
	can be written as a Lipschitz graph with slope bounded by $C\max(\delta,\gamma)$ (namely the maximum of the graph defining the bottom part of $\partial B^+(R)$ and the graph defined by $ x_{n+1}= A(x') + \delta \,D_R(x')$).
	The same happens with $\partial \Omega_R^-$.
	We leave the details for the reader.
\end{proof}

\begin{lemma}
	For each $R\in\ttt$, $\Omega_R$ is a Lipschitz domain satisfying the   $\tau$-DKP condition, assuming $\theta,\ve,\delta$ small enough
	and $M$ big enough.
\end{lemma}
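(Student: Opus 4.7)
The heart of the proof is that the stopping conditions (b) and (d) defining $\tree(R)$ were \emph{designed} to force a pointwise bound on $|\nabla A|$ that is Carleson-summable on $\Omega_R$. My plan is to push this observation through the corona geometry set up in Sections \ref{subs:approxLipgraph}--\ref{subs:cordecOmega}.

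First, I will establish the pointwise bound. For every $Q\in\tree(R)$, either $Q=R$ or $Q$ has an ancestor $\widehat Q\subsetneq R$ that is not a stopping cube; hence by condition (d), $\lambda_\Omega(Q)\leq \theta$. By \eqref{eqaaa1}, this gives $|\nabla A(x)|^2\leq \theta/\ell(Q)^2$ for a.e.\ $x\in B(x_Q,M\ell(Q))\cap\Omega$ with $\delta_\Omega(x)\geq M^{-1}\ell(Q)$. Taking $M=M(a^*,a^{**})$ large enough and invoking the defining property of $\tree^*(R)$, the same inequality (with a multiplicative constant) holds on the full Whitney region $w^*(Q)$ for every $Q\in\tree^*(R)$, because any such $Q$ is $a^*$-close to some $Q'\in\tree(R)$ with $\ell(Q')\approx\ell(Q)$ and $w^*(Q)\subset B(x_{Q'},M\ell(Q'))\cap\{\delta_\Omega\geq M^{-1}\ell(Q')\}$.

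Next, fix a ball $B_0=B(\xi_0,r_0)$ with $\xi_0\in\partial\Omega_R$. Lemma \ref{lemcontingtree} lets me decompose
\[
B_0\cap\overline\Omega_R\cap\Omega \subset \bigcup_{Q\in\tree^*(R):\,w(Q)\cap B_0\neq\varnothing} w(Q).
\]
On each $w(Q)$ I have $\delta_{\Omega_R}(z)\leq\delta_\Omega(z)\approx\ell(Q)$, and the ball $B(z,\delta_{\Omega_R}(z)/2)$ is contained in $w^*(Q)$, so the previous step yields $\esssup_{B(z,\delta_{\Omega_R}(z)/2)}\delta_{\Omega_R}|\nabla A|^2\lesssim \theta/\ell(Q)$. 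Integrating and summing,
\[
\int_{B_0\cap\Omega_R}\!\esssup_{B(z,\delta_{\Omega_R}(z)/2)}\delta_{\Omega_R}(x)|\nabla A(x)|^2\,dm(z)\;\lesssim\;\theta\!\!\sum_{\substack{Q\in\tree^*(R)\\ w(Q)\cap B_0\neq\varnothing}}\!\!\ell(Q)^n\;\approx\;\theta\!\!\sum_{Q}\sigma(Q).
\]

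The remaining task is to bound this sum by $\sigma_R(B_0\cap\partial\Omega_R)$ up to an absolute constant. If $\xi_0\in\partial\Omega\cap\partial\Omega_R$, this is immediate from the $n$-Ahlfors regularity of $\sigma$ and standard Whitney bookkeeping, since each such $Q$ lies in a ball $CB_0\cap\partial\Omega$ and the relation between $\sigma$ and $\sigma_R$ on this part of the common boundary is bilipschitz. If $\xi_0$ lies on the artificial Lipschitz graph $\Gamma_R^\pm$, one uses \eqref{eqsep99} together with $\ell(Q)\approx\delta_\Omega(z)\gtrsim_\delta D_R(\Pi_R z)$ for $z\in w(Q)\cap B_0$: each such Whitney region sits at height comparable to $\ell(Q)$ above $\Gamma_R^\pm$, so the projection onto $\Gamma_R^\pm$ gives a piece of $\partial\Omega_R$ of $\sigma_R$-measure $\approx\ell(Q)^n\approx\sigma(Q)$, with uniformly bounded overlap by the Whitney disjointness. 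This produces $\sum_Q\sigma(Q)\lesssim \sigma_R(B_0\cap\partial\Omega_R)$. Choosing $\theta$ small enough (depending on $\tau$ and the constants coming from $\ell$, $a^*$, $a^{**}$, $\gamma$, $\delta$, $M$), the $\tau$-DKP condition follows.

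The main obstacle is the second case in the last paragraph: converting Whitney cubes $Q\in\tree^*(R)$ of $\Omega$ into surface pieces of the artificial boundary $\Gamma_R^\pm$ with comparable measures. The key tool is the quantitative separation \eqref{eqsep99} between $\Omega_R$ and $\partial\Omega$, which guarantees that the relevant Whitney cubes live at a scale $\approx$ their distance to $\Gamma_R^\pm$, making the projection onto $\Gamma_R^\pm$ a bounded-multiplicity, bilipschitz-image packing compatible with $\sigma_R$.
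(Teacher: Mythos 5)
There is a genuine and fatal gap in your argument, and it explains precisely why condition~(d) appears in the stopping-time construction.

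Your plan is to establish the pointwise bound $|\nabla A(x)|^2\lesssim \theta/\ell(Q)^2$ on $w^*(Q)$ for $Q\in\tree^*(R)$, to deduce $\delta_{\Omega_R}|\nabla A|^2\lesssim\theta/\ell(Q)$ there, and then to sum $\theta\sum_{Q}\ell(Q)^n$ over the $Q$ whose Whitney region meets $B_0$ and compare this to $\sigma_R(B_0\cap\partial\Omega_R)\approx r_0^n$. But the pointwise bound is sharp on Whitney regions that sit deep in $\Omega_R$ (where $\delta_{\Omega_R}\approx\ell(Q)$), so the estimate $\int_{w(Q)}\delta_{\Omega_R}|\nabla A|^2\,dm\approx\theta\ell(Q)^n$ cannot be improved there, and if $\xi_0\in\partial\Omega\cap\partial\Omega_R$ the tree $\tree^*(R)$ contains cubes of every scale $\leq r_0$ stacked over $\xi_0$. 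Summing $\ell(Q)^n$ over all those scales gives roughly $r_0^n$ \emph{per scale}, so $\sum_Q\sigma(Q)$ is not $\lesssim r_0^n$ — the "standard Whitney bookkeeping" you invoke controls $\sum\ell(Q)^{n+1}$ by volume, not $\sum\ell(Q)^n$ by surface measure. In short: $\lambda_\Omega(Q)\leq\theta$ for every $Q\in\tree(R)$ is only a pointwise (Whitney-scale) bound on $\delta|\nabla A|^2$, and a uniform Whitney-scale bound on a measure never implies a Carleson condition.

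What actually rescues the argument is condition~(d), which you do not use. Since every non-stopping cube $S'$ fails (d), one gets
\begin{equation*}
\sum_{\substack{S'\in\tree(R)\setminus\sss(R)\\ P\subset S'\subset R}}\lambda_\Omega(S')\leq\theta
\qquad\text{for every }P\in\sss(R),
\end{equation*}
and unrolling this chain-by-chain yields the Carleson packing $\sum_{S'\in\tree(R):\,S'\subset B'}\lambda_\Omega(S')\sigma(S')\lesssim\theta\,r^n$ over any surface ball $B'$ of radius $r$. It is this \emph{summed} bound, not the individual bound $\lambda_\Omega(Q)\leq\theta$, that turns $\int_{w(Q)}\delta_{\Omega_R}|\nabla A|^2\,dm\lesssim\lambda_\Omega(Q')\sigma(Q)$ into a convergent Carleson estimate. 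Your proof, as written, has no way to produce this summability, so the main step is missing. As a smaller point, the individual bound $\lambda_\Omega(Q)\leq\theta$ for non-stopping $Q$ comes from condition~(b) (the alternative $\lambda_\Omega(Q)>\theta$ inside the definition of $\cB$), not from condition~(d) as you state; and for stopping cubes $Q\in\sss(R)\cap\tree(R)$ one must pass to the parent and enlarge $M$, exactly as the paper does. Finally, your treatment of balls $B_0$ centered on the artificial graph $\Gamma_R^\pm$ with $r_0\ll\delta_\Omega(\xi_0)$ still needs the correct bookkeeping: the paper handles this regime separately (the $r\leq C_7^{-1}\delta_\Omega(\xi)$ case), using Whitney cubes of $\Omega_R$ itself and the factor $\ell(Q)/r\leq1$ to gain; your projection argument claims $\sum_Q\sigma(Q)\lesssim\sigma_R(B_0\cap\partial\Omega_R)$, which fails once $\ell(Q)\gg r_0$.
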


Remark that, under the assumptions of the previous lemma,  $\Omega_R$ also satisfies the $C\tau$-DPR condition.

\begin{proof}
	Let $B=B(\xi,r)$ be a ball centered in $\pom_R$ with radius $r\leq\diam(\Omega_R)$.
	Denote by $\WW(\Omega_R)$ the family of Whitney cubes of $\Omega_R$. By standard arguments, it is enough to show that
	\begin{equation}\label{eqguai1}
		\sum_{Q\in\WW(\Omega_R):Q\cap B\neq\varnothing} {\rm s}_{2Q}(A)^2\,\ell(Q)^n \leq \tau\,r^n,
	\end{equation}
	where we denoted ${\rm s}_{2Q}(A) = \sup_{x\in 2Q}|\nabla A(x)|\,\ell(Q)$. 	Suppose first that $r\leq C_7^{-1}\delta_\Omega(\xi)$, for some big constant $C_7>1$ to be chosen momentarily. In this case, for any cube $Q\in\WW(\Omega_R)$ that intersects $B(\xi,r)$ (which implies that $\ell(Q)\lesssim r$), we have $C_7r \leq \dist(\xi,\pom) \leq \dist(2Q,\pom)- \dist(\xi,2Q) \leq \dist(2Q,\pom)- Cr$. 	So $\dist(2Q,\pom)\ge r\gtrsim \ell(Q),$ choosing $C_7$ big enough.
	This implies that $2Q$ is contained in a finite union of Whitney cubes $P\in\WW(\Omega)$ with $\ell(P)\approx\dist(2Q,\pom)$.
	We denote by $I_Q$ the family of such cubes $P$.
	
	Recall now that, by Lemma \ref{lemcontingtree}, $\overline\Omega_R\cap\Omega\subset \bigcup_{Q\in\tree^*(R)} w(Q)$. 	Hence for any $P\in I_Q$ there exists $S\in\tree^*(R)$, with $\ell(S)=\ell(P)$  such that $P\subset w(S)$.
	By the definition of $\tree^*(R)$ and the properties of Whitney cubes, this implies that there exists some cube $S'$ which is $a^*$-close to $S$ such that, for $M$ big enough, depending on $a^*$ and the properties of Whitney cubes,
	\begin{equation}\label{eqinclup*}
		P\subset w(S) \subset B(x_{S'},M\ell(S'))\cap\Omega_{M^{-1}\ell(S')}.
	\end{equation}
	Remark that here we have used the notation in \rf{eqomegaa} and that $M$ is the constant appearing in the definition of the coefficients $\lambda_\Omega(\cdot)$. Observe also that the parameter $a^*$ is
	independent of $M$.
	
	By construction, for any cube $S'\in\tree(R)$ we have that either $\lambda_\Omega(S') \leq \theta$, or $\lambda_\Omega(S'')\leq\theta$, being $S''$ the parent of $S'$. By choosing a larger $M$ if necessary, we can assume that the inclusion in \rf{eqinclup*} also holds replacing $S'$
	by $S''$. Then, from both inclusions and by the definition of $\lambda_\Omega(S')$ and $\lambda_\Omega(S'')$ we infer that $|\nabla A(x)|\leq \ell(S')^{-1}\theta \approx_{a^*} \ell(P)^{-1}\theta\lesssim r^{-1}\theta$, for any $x\in P$. 	Since this holds for all the cubes $P\in I_Q$, the estimate above holds for all $x\in 2Q$. Hence, ${\rm s}_{2Q}(A) \lesssim_{a^*} \theta\,\frac{\ell(Q)}r$, for any   $Q\in\WW(\Omega_R)$ such that $Q\cap B\neq\varnothing$. 	Therefore, denoting by $I_B$ the family of cubes $Q\in\WW(\Omega_R)$ such that $Q\cap B\neq\varnothing$, we have
	\begin{equation}\label{eqosc1}
		\sum_{Q\in I_B} {\rm s}_{2Q}(A)^2\,\ell(Q)^n \lesssim_{a^*} \frac{\theta}r\sum_{Q\in I_B} \ell(Q)^{n+1} \lesssim \frac{\theta}r\,m(B(\xi,Cr)) \lesssim \theta\,r^n,
	\end{equation}
	so that \rf{eqguai1} holds for $\theta$ small enough.

	We consider now the case $r> C_7^{-1}\delta_\Omega(\xi)$. For any cube $Q\in I_B$, let $\xi_Q\in\pom_R$ be such that $\dist(\xi_Q,Q)=\dist(Q,\pom_R)$. Notice that $\xi_Q\in C_8B\cap\pom_R$, for some fixed $C_8>1$.
	Let $I_B^1$ be the family of cubes $Q\in I_B$ such that $\dist(Q,\pom_R)< \frac{C_7^{-1}}5\,\dist(\xi_Q,\pom)$, 	and set $I_B^2 = I_B\setminus I_B^1.$ Denote $U= \bigcup_{y\in C_8B\cap \pom_R} B(y,C_7^{-1}\delta_\Omega(y)/5)$. 	By applying Vitali's covering theorem, there is a family of balls $B_j$, $j\in J$, centered in $y_j\in \pom_R$, with radius $r_j= C_7^{-1}\delta_\Omega(y_j)$, so that the balls $\frac15B_j$ are
	pairwise disjoint and 
	$U\subset \bigcup_{j\in J}B_j$.
	Observe that if $Q\in I_B^1$, then $Q$ intersects the ball $B(\xi_Q, C_7^{-1}\delta_\Omega(\xi_Q)/5)$, and so it also intersects at least one of the balls $B_j$, $j\in J$. In this case, we write $Q\in I_{B_j}$. 	We split
	$$\sum_{Q\in I_B} {\rm s}_{2Q}(A)^2\,\ell(Q)^n \leq \sum_{j\in J}  \sum_{Q\in I_{B_j}} {\rm s}_{2Q}(A)^2\,\ell(Q)^n + 
	\sum_{Q\in  I_B^2} {\rm s}_{2Q}(A)^2\,\ell(Q)^n =: S_1+ S_2.$$
	To deal with $S_1$ we apply the estimate \rf{eqosc1} to each ball $B_j$ and we take into account that all the balls $\frac15B_j$ are contained in $C_9B$, for some $C_9>1$, because $r_j= C_7^{-1}\delta_\Omega(y_j) \lesssim \delta_\Omega(\xi) + |\xi - y_j|\lesssim r$. Then we get
	$$S_1 \lesssim_{a^*} \theta\sum_{j\in J_B} r_j^n \approx \theta\sum_{j\in J_B}\HH^n(\tfrac15B_j\cap\pom_R)\lesssim \theta\,\HH^n(C_9B\cap \pom_R) \lesssim \theta\,r^n.$$
	
	Let us turn our attention to the sum $S_2$. We claim that  $Q\in I_B^2   \Rightarrow \dist(2Q,\pom)\approx_{a^*}\ell(Q)$. 	One inequality follows from the fact that $Q$ is a Whitney cube for $\Omega_R$: $\dist(2Q,\pom) \geq \dist(2Q,\pom_R) \gtrsim\ell(Q)$. 	To see the converse inequality, notice that by the definition of $I_B^2$, $\dist(2Q,\pom)\leq \dist(Q,\pom)\leq \dist(\xi_Q,Q) + \delta_\Omega(\xi_Q) \leq (1+5C_7)\,\dist(Q,\pom_R)\approx\ell(Q)$, 	which proves the claim.
	
	From the claim and the properties of Whitney cubes for $\Omega$, it follows that for each $Q\in I_B^2$ there is subfamily $I_Q\subset 
	\WW(\Omega)$ such that $2Q\subset \bigcup_{P\in I_Q} P$, with $\# I_Q\lesssim 1$, and $\ell(P)\approx \ell(Q)$ for each $P\in I_Q$.
	By the same arguments as above, we know that for every $P\in I_Q$
	there exists $S\in\tree^*(R)$, with $\ell(S)=\ell(P)$  such that $P\subset w(S)$.
	As in \rf{eqinclup*}, by the definition of $\tree^*(R)$, this implies that there exists some cube $S'$ which is $a^*$-close to $S$ such that, for $M$ big enough, depending on $a^*$ and the properties of Whitney cubes, $P\subset w(S) \subset B(x_{S'},M\ell(S'))\cap\Omega_{M^{-1}\ell(S')}$. Further, by choosing a larger $M$ if necessary, we can assume that $S'\in\tree(R)\setminus\sss(R)$.  
	
	Let $I_B'$ be the family of   $S'\in\tree(R)\setminus\sss(R)$ associated in this way to some $Q\in I_B^2$. Then we have
	\begin{equation}\label{eqddd2}
		\sum_{Q\in  I_B^2} {\rm s}_{2Q}(A)^2\,\ell(Q)^n \lesssim \sum_{Q\in  I_B^2} \sum_{P\in I_Q} {\rm s}_{2P}(A)^2\,\ell(P)^n 
		\leq \sum_{S'\in I_B'} \sum_{Q\in  I_B^2}   \sum_{P\in I_Q:P\subset S}{\rm s}_{2P}(A)^2\,\ell(P)^n.
	\end{equation}
	By pigeonholing, it is easy to check that each $S'\in I_B'$ is associated, at most, to a bounded number of cubes from 
	$I_B^2$ (with a bound depending on $a^*$). Therefore, by the definition of $\lambda_\Omega(S')$,
	\begin{multline}\nonumber
		\sum_{Q\in  I_B^2} {\rm s}_{2Q}(A)^2\,\ell(Q)^n  = \sum_{Q\in  I_B^2} \|\nabla A\|_{L^\infty(2Q)}^2\,\ell(Q)^{n+2} 
		\lesssim \sum_{Q\in  I_B^2} \sum_{P\in I_Q} \|\nabla A\|_{L^\infty(P)}^2\,\ell(P)^{n+2} \\
		\leq \sum_{S'\in I_B'} \sum_{Q\in  I_B^2}   \sum_{\substack{P\in I_Q:\\P\subset B(x_{S'},M\ell(S'))\cap\Omega_{M^{-1}\ell(S')}}}\|\nabla A\|_{L^\infty(P)}^2\,\ell(P)^{n+2} 
		  \lesssim_{a^*}
		\sum_{S'\in I_B'} \lambda_\Omega(S) \,\sigma(S).
	\end{multline}
	We will estimate the last sum above using the stopping condition (d) for the corona decomposition.
	
	First notice that there are constants $C,C',C'',C'''>1$, possibly depending on $a^*$, such that if $Q\in I_B^2$, $P\in I_Q$, and $S\in\tree^*(R)$ and $S'\in I_B'$ are associated with $Q$ and $P$ as above, then $S'\subset CS\subset C'P \subset C''Q\subset C'''B$. 	Since $\delta_\Omega(\xi)\leq C_7r$, we deduce that there exists some ball $B'$ centered in $\pom$, with radius $r'\approx_{a^*}r$, such
	that $C'''B\subset B'$. Thus, $\sum_{Q\in  I_B^2} {\rm s}_{2Q}(A)^2\,\ell(Q)^n \lesssim_{a^*}\sum_{S'\in \tree(R)\setminus\sss(R):S'\subset B'} \lambda_\Omega(S) \,\sigma(S)$. 	To deal with the last sum, suppose first that the cubes from $\sss(R)$ cover  $R$  up to a set of zero $\sigma$-measure.
	Then we have
	\begin{multline}\nonumber
		\sum_{\substack{S'\in \tree(R)\setminus\sss(R):\\S'\subset B'}} \lambda_\Omega(S) \,\sigma(S)  =
		\sum_{\substack{S'\in \tree(R)\setminus\sss(R):\\S'\subset B'}} \,\sum_{P\in\sss(R)}\lambda_\Omega(S) \,\sigma(P)\\
		  \leq
		\sum_{P\in\sss(R):P\subset B'} \sigma(P) \sum_{\substack{S'\in \tree(R)\setminus\sss(R):\\P\subset S'\subset R}}\lambda_\Omega(S).
	\end{multline}
	By the condition (d), taking into account that $S'\not\in\sss(R)$, we have  $\sum_{\substack{S'\in \tree(R)\setminus\sss(R):\\P\subset S'\subset R}}\lambda_\Omega(S)\leq \theta$. 	Hence
	\begin{equation}\label{eqquals}
		\sum_{\substack{S'\in \tree(R)\setminus\sss(R):\\S'\subset B'}} \lambda_\Omega(S) \,\sigma(S)\leq \theta\,\sigma(B') \approx \theta\,(r')^n
		\approx_{a^*} \theta\,r^n.
	\end{equation}
	In the case when the cubes from $\sss(R)$ do not cover  $R$, for each $k>1$ we let $\sss_k(R)$ be the subfamily of maximal cubes
	from $\sss(R)\cup\DD_k(R)$. By the same arguments as above, replacing the sum $\sum_{P\in\sss(R)}$ by $\sum_{P\in\sss_k(R)}$ we deduce that 	$$\sum_{\substack{S'\in \tree(R)\setminus\sss(R):\\S'\subset B'\!,\; \ell(S')\geq 2^{-k}\ell(R)}} \lambda_\Omega(S) \,\sigma(S)
	\leq \theta\,r^n,$$
	and letting $k\to\infty$ we infer that \rf{eqquals} holds in any case.
	Together with \rf{eqddd2} and the previous estimate for $S_1$, this shows that \rf{eqguai1} is also satisfied in the case $r> C_7^{-1}\delta_\Omega(\xi)$,  for $\theta$ small enough.
\end{proof}

\section{The almost $L$-elliptic extension}\label{sec.extension}
 
Throughout this section we assume that $\Omega\subset\R^{n+1}$, $n\geq1$, is a bounded open set with uniformly $n$-rectifiable boundary satisfying the interior corkscrew condition, and that $A$ is a real, not necessarily symmetric $(n+1)\times (n+1)$ matrix function in $\Omega$ with measurable coefficients satisfying (\ref{eq.elliptic}) and the   DKP condition in $\Omega$. Recall that we write $L=-\dv A\nabla$.

Let $f:\partial\Omega\to \R$ be a Lipschitz function, so that in particular $f\in W^{1,p}(\pom)$ for any $p\in[1,\infty)$. In this brief section we define the ``almost $L$-elliptic extension'' of $f$ to $\Omega$, following closely \cite[Section 4]{mt22} where the case $L=-\Delta$ is considered. We omit proofs of the properties of the almost $L$-elliptic extension, since these are  essentially the same as in \cite[Section 4]{mt22}.

First, we define the auxiliary extension $\wt f$ of $f$; this is done exactly the same way as in \cite[Section 4]{mt22}. Given a ball $B\subset\R^{n+1}$ centered in $\pom$ and an affine map $\m A:\R^{n+1}\to\R$, we consider the coefficient $$\gamma_{f}(B) :=\inf_{\m A}\left( |\nabla\m A| + \dashint_B \frac{|f-\m A|}{r(B)}\,d\sigma\right),$$
where the infimums are taken over all affine maps $\m A:\R^{n+1}\to\R$. We denote by $\m A_B$ an affine map that minimizes $\gamma_{f}(B)$. Next, given a $\hat c$-Whitney decomposition $\m W(\Omega)$, for each Whitney cube $I\in \WW(\Omega)$ we consider 
a $C^\infty$ bump function $\vphi_I$ supported on $1.1I$ such that the functions $\vphi_I$, $I\in\WW(\Omega)$,
form a partition of unity of $\chi_\Omega$. That is, $\sum_{I\in\WW(\Omega)}\vphi_I = \chi_\Omega$. We define the extension $\wt f:\overline \Omega\to\R$ of $f$ as follows:  
\begin{equation}\label{eq.aux}\nonumber
\wt f|_\pom =f,\qquad\quad\wt f|_\Omega = \sum_{I\in\WW(\Omega)} \vphi_I\,\m A_{2B_{\hat b(I)}}.
\end{equation}
It is clear that $\wt f$ is smooth in $\Omega$.  Here, $\hat b(I)$ is any fixed boundary cube of $I$ satisfying (\ref{eq.bi}), and	$B_{\hat b(I)}$ is the ball concentric with $\hat b(I)$ that contains $\hat b(I)$; see Sections \ref{sec.lattice} and \ref{sec.whitney}. If $f\in\Lip(\partial\Omega)$, then $\wt f\in\Lip(\overline{\Omega})$ with $\Lip(\wt f)\lesssim\Lip(f)$ \cite[Lemma 4.2]{mt22}. Let us also record the following useful fact.
\begin{lemma}[\hspace{-0.1mm}{\cite[Lemma 4.6]{mt22}}]\label{lm.gradf} For each $p\in(1,\infty)$, there exists $C>0$ so that 
\begin{equation}\label{eq.gradf}
|\nabla\wt f(x)|\lesssim m_{CB_{\hat b(I)},\sigma}(\nabla_{H,p}f),\qquad\text{for each }x\in I,~ I\in\m W(\Omega).
\end{equation}
\end{lemma}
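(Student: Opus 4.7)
The plan is to expand $\nabla \wt f$ via the partition of unity identity, reduce to a Poincaré-type control of the approximation coefficient $\gamma_f$ by Haj\l asz upper gradients, and compare neighboring affine minimizers anchored at comparable scales by an intermediate boundary average.

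First I would fix $x \in I$ and write, using the definition of $\wt f|_\Omega$ and the product rule,
$$\nabla \wt f(x) = \sum_{J:\,x\in 1.1J} \m A_{2B_{\hat b(J)}}(x) \,\nabla \vphi_J(x) + \sum_{J:\,x\in 1.1J} \vphi_J(x)\,\nabla \m A_{2B_{\hat b(J)}}.$$
Since $\sum_J \vphi_J \equiv 1$ on $\Omega$, differentiating gives $\sum_J \nabla \vphi_J \equiv 0$, which lets me replace $\m A_{2B_{\hat b(J)}}(x)$ by $\m A_{2B_{\hat b(J)}}(x) - \m A_{2B_{\hat b(I)}}(x)$ in the first sum. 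Whitney geometry ensures that only finitely many $J$ contribute, each satisfying $\ell(J)\approx \ell(I)$ and $\dist(I,J)\lesssim \ell(I)$, so that $2B_{\hat b(J)}\subset CB_{\hat b(I)}$ for some dimensional constant $C$.

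Second, I would establish the Poincaré-type bound $\gamma_f(B) \lesssim m_{B,\sigma}(\nabla_{H,p} f)$ for every ball $B$ centered in $\pom$. Testing the infimum defining $\gamma_f(B)$ against the constant affine map $f_B := \dashint_B f\,d\sigma$ and applying the defining inequality of Haj\l asz upper gradients,
$$\dashint_B |f-f_B|\,d\sigma \le \dashint_B \dashint_B |\xi-\eta|\,\bigl(\nabla_{H,p}f(\xi)+\nabla_{H,p}f(\eta)\bigr)\,d\sigma(\eta)\,d\sigma(\xi) \lesssim r(B)\,m_{B,\sigma}(\nabla_{H,p}f),$$
and dividing by $r(B)$ yields the bound. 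Note this is an $L^1$ surface average, matching the right-hand side in the statement.

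Third, I would bound each of the two sums. The second is immediate, since $|\nabla \m A_{2B_{\hat b(J)}}|\le \gamma_f(2B_{\hat b(J)})\lesssim m_{CB_{\hat b(I)},\sigma}(\nabla_{H,p}f)$. For the first sum, set $\m A:=\m A_{2B_{\hat b(J)}}-\m A_{2B_{\hat b(I)}}$; this is affine with $|\nabla \m A|\lesssim m_{CB_{\hat b(I)},\sigma}(\nabla_{H,p}f)$. By the triangle inequality and $n$-Ahlfors regularity to pass between surface averages over $2B_{\hat b(I)}$, $2B_{\hat b(J)}$, and the enclosing $CB_{\hat b(I)}$,
$$\dashint_{CB_{\hat b(I)}\cap \pom} |\m A|\,d\sigma \;\lesssim\; r(B_{\hat b(I)})\,\bigl(\gamma_f(2B_{\hat b(I)})+\gamma_f(2B_{\hat b(J)})\bigr) \;\lesssim\; \ell(I)\,m_{CB_{\hat b(I)},\sigma}(\nabla_{H,p}f).$$
A Chebyshev-type argument produces $x_0 \in CB_{\hat b(I)}\cap\pom$ realizing (up to a constant) this average, and then for $x\in I$ with $|x-x_0|\lesssim \ell(I)$,
$$|\m A(x)| \le |\m A(x_0)| + |x-x_0|\,|\nabla \m A| \;\lesssim\; \ell(I)\,m_{CB_{\hat b(I)},\sigma}(\nabla_{H,p}f).$$
Together with $|\nabla \vphi_J(x)|\lesssim 1/\ell(J)\approx 1/\ell(I)$, this bounds the first sum by $m_{CB_{\hat b(I)},\sigma}(\nabla_{H,p}f)$, finishing the proof.

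The main technical point is the comparison of affine minimizers at different but comparable scales: transferring an $L^1$ surface bound on the difference to a pointwise bound at an interior point $x \in I$. This transfer relies crucially on the affineness of $\m A$ (so that derivative bounds bridge $\pom$ to the interior across a distance $\lesssim \ell(I)$) and on the $n$-Ahlfors regularity of $\sigma$ (so that averages over $2B_{\hat b(I)}$, $2B_{\hat b(J)}$, and $CB_{\hat b(I)}$ are interchangeable up to constants); everything else is routine.
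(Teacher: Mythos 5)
The paper does not actually prove this lemma; it cites \cite[Lemma~4.6]{mt22}. Your overall strategy --- using the partition-of-unity cancellation $\sum_J\nabla\varphi_J\equiv 0$ to replace $\m A_{2B_{\hat b(J)}}$ by the difference $\m A_{2B_{\hat b(J)}}-\m A_{2B_{\hat b(I)}}$, the Poincaré-type bound $\gamma_f(B)\lesssim m_{B,\sigma}(\nabla_{H,p}f)$ obtained by testing the infimum against the constant $f_B$ together with the Hajłasz inequality, and the final Chebyshev upgrade from an $L^1(\sigma)$ bound on $\pom$ to a pointwise bound at $x\in I$ --- is the natural route and is surely the one in \cite{mt22}.

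There is, however, a gap in the comparison step. The claimed inequality
$$\dashint_{CB_{\hat b(I)}\cap\pom}\bigl|\m A_{2B_{\hat b(J)}}-\m A_{2B_{\hat b(I)}}\bigr|\,d\sigma\;\lesssim\; r(B_{\hat b(I)})\,\bigl(\gamma_f(2B_{\hat b(I)})+\gamma_f(2B_{\hat b(J)})\bigr)$$
does not follow from the triangle inequality and Ahlfors regularity alone. The obstruction is geometric: $2B_{\hat b(I)}$ and $2B_{\hat b(J)}$ both have radius $\approx 2\ell(I)$, but by the definition of boundary cubes in \eqref{eq.bi} their centers can lie as far as $C(\hat c)\,\ell(I)$ apart with $C(\hat c)$ large, so the two balls can be disjoint. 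Since $\gamma_f(2B_{\hat b(I)})$ only controls $\dashint_{2B_{\hat b(I)}}|f-\m A_{2B_{\hat b(I)}}|\,d\sigma$, the naive splitting $|\m A_{2B_{\hat b(J)}}-\m A_{2B_{\hat b(I)}}|\le|f-\m A_{2B_{\hat b(I)}}|+|f-\m A_{2B_{\hat b(J)}}|$, averaged over any one of the three balls, leaves at least one term averaged over a region where the corresponding $\gamma_f$ gives no control. The standard fix uses only tools you already invoke: either (i) compare both $\m A_{2B_{\hat b(I)}}$ and $\m A_{2B_{\hat b(J)}}$ to the intermediate minimizer $\m A_{CB_{\hat b(I)}}$ on the enclosing ball, using the slope bound on the affine difference plus a Chebyshev point to propagate the $L^1$ estimate $\dashint_{2B_{\hat b(I)}}|\m A_{CB_{\hat b(I)}}-\m A_{2B_{\hat b(I)}}|\,d\sigma\lesssim\ell(I)\bigl(\gamma_f(2B_{\hat b(I)})+\gamma_f(CB_{\hat b(I)})\bigr)$ to a pointwise bound on all of $CB_{\hat b(I)}$; or (ii) pick by Chebyshev good points $\xi_0\in 2B_{\hat b(I)}\cap\pom$ and $\xi_1\in 2B_{\hat b(J)}\cap\pom$ where simultaneously $|f-\m A_{2B_{\hat b(\cdot)}}|$ and $\nabla_{H,p}f$ are controlled by their respective averages, and bridge $f(\xi_0)$ to $f(\xi_1)$ directly via $|f(\xi_0)-f(\xi_1)|\le|\xi_0-\xi_1|\bigl(\nabla_{H,p}f(\xi_0)+\nabla_{H,p}f(\xi_1)\bigr)$. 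Either route possibly adds a harmless $\gamma_f(CB_{\hat b(I)})$ term and yields $\sup_{CB_{\hat b(I)}}|\m A_{2B_{\hat b(J)}}-\m A_{2B_{\hat b(I)}}|\lesssim\ell(I)\,m_{CB_{\hat b(I)},\sigma}(\nabla_{H,p}f)$, after which your concluding step is as written. With that one step spelled out, the proof is correct.
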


Given the corona construction in terms of the family $\ttt$ from Section \ref{sec.corona},
for each $R\in\ttt$ we denote by $v_R$ the solution of the continuous Dirichlet problem for the operator $L$ in $\Omega_R$ with boundary data $\wt f|_{\pom_R}$. We define the function $v=v_f:\overline \Omega\to \R$ by
$$v = \left\{\begin{array}{ll} \wt f & \text{in $\overline\Omega \setminus \bigcup_{R\in\ttt}\Omega_R$,}\\& \\ v_R & \text{in each $\Omega_R$, with $R\in\ttt$,}
\end{array}
\right.
$$
and we call it the {\it almost $L$-elliptic extension of $f$}.

If $f$ is Lipschitz on $\partial\Omega$, then $v$ is continuous on $\overline{\Omega}$, and moreover, $v\in\dt{W}^{1,2}(\Omega)$ with $\Vert\nabla v\Vert_{L^2(\Omega)}\lesssim\Lip(f)m(\Omega)^{1/2}$ (these facts follow by   essentially the same argument that yields \cite[Lemma 4.3]{mt22}). Furthermore, for each $R\in\ttt$, by virtue of the properties of $\Omega_R$ in Section \ref{subs:cordecOmega} and the main result  of Dindo\v{s}, Pipher, and Rule in \cite{dpr17}, we have that $v_R$ is the solution of the Regularity problem $(R)_p^L$ with boundary data $\wt f|_{\partial\Omega}$. Let us be more precise; define
\begin{equation}\label{eq.ntmaxtr1}\nonumber
	\widetilde{\m N}_2^{\Omega_R}(u)(\zeta):=\sup_{x\in\gamma_1^{\Omega_R}(\zeta)}\Big(\dashint_{B(x,\delta_{\Omega_R}(x)/2)}|u|^2\,dm\Big)^{1/2},\qquad\zeta\in\partial\Omega_R,
\end{equation}
and let $\nabla_{t_R}$ be the tangential derivative in $\Omega_R$.

\begin{theorem}[{\cite[Theorem 2.10]{dpr17}}]\label{thm.regr} Fix $p\in(1,\infty)$. For each $R\in\ttt$ and $q\in(1,\max\{2,p\}]$, the problem $(\Reg_q^L)$ is solvable in $\Omega_R$. In particular, if $f\in\Lip(\partial\Omega)$ and $\wt f$, $v_R$ are as above, then
\begin{equation}\label{eq.regr}\nonumber
\Vert\widetilde{\m N}^{\Omega_R}_2(\nabla v_R)\Vert_{L^q(\partial\Omega_R,\m H^n|_{\partial\Omega_R})}\lesssim\Vert\nabla_{t_R}\wt f\Vert_{L^q(\partial\Omega_R,\m H^n|_{\partial\Omega_R})}.
\end{equation}
\end{theorem}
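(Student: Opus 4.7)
The plan is to verify, with the parameters chosen in Section \ref{sec.corona} sufficiently small, that on each corona subdomain $\Omega_R$ the hypotheses of \cite[Theorem 2.10]{dpr17} are satisfied for the given $q$, and then to invoke that theorem as a black box. Fix $p\in(1,\infty)$ and $q\in(1,\max\{2,p\}]$. The main result of Dindo\v{s}--Pipher--Rule \cite{dpr17} asserts that there exist $\ell_0=\ell_0(q,n,\lambda)>0$ and $\tau_0=\tau_0(q,n,\lambda)>0$ such that whenever $U\subset\bb R^{n+1}$ is a bounded starlike Lipschitz domain with Lipschitz constant at most $\ell_0$ and $L=-\dv A\nabla$ is an elliptic operator whose matrix $A$ satisfies the $\tau_0$-DPR condition on $U$, then $(\Reg_q^L)$ is solvable in $U$ with constant depending only on $q,n,\lambda,\ell_0,\tau_0$ and the Lipschitz character of $U$.

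First I would recall from Section \ref{subs:cordecOmega} that each $\Omega_R$, $R\in\ttt$, is either a starlike Lipschitz domain or a disjoint union $\Omega_R=\Omega_R^+\cup\Omega_R^-$ of two starlike Lipschitz domains with uniform Lipschitz character and with Lipschitz constant bounded by $C\max(\delta,\gamma)$. Moreover, $A$ was shown to satisfy the $\tau$-DKP condition on $\Omega_R$ (hence the $C\tau$-DPR condition) with $\tau$ controlled by a power of $\max(\theta,\ve,\delta)$ and by the inverse of the parameter $M$ appearing in \eqref{eqaaa1}. Thus, by choosing the corona parameters $\ve,\delta,\gamma,\theta$ small enough (and $k_1,a^*,a^{**},M$ correspondingly large) in terms of $q,n,\lambda$, we can arrange that the Lipschitz constants of the $\Omega_R$ and the DPR constant of $A$ on each $\Omega_R$ lie below $\ell_0$ and $\tau_0$ respectively. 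This is legitimate because the packing constants of $\ttt$ and of $\HH$ in Lemmas \ref{lempack2} and \ref{lm.h} are allowed to depend on all these parameters, and no later result in the construction requires these parameters to be bounded away from $0$.

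Once the smallness has been secured, on each starlike Lipschitz piece of $\Omega_R$ the operator $L$ meets the hypotheses of \cite[Theorem 2.10]{dpr17}, so $(\Reg_q^L)$ is solvable there with a constant uniform in $R\in\ttt$. Given $f\in\Lip(\partial\Omega)$, the auxiliary extension $\wt f$ is Lipschitz on $\overline\Omega$ with $\Lip(\wt f)\lesssim\Lip(f)$, hence its restriction to $\partial\Omega_R$ is Lipschitz and in particular lies in $\dt W^{1,q}(\partial\Omega_R)$. Therefore $v_R$, defined as the $L$-harmonic extension of $\wt f|_{\partial\Omega_R}$ inside $\Omega_R$, satisfies
\[
\Vert\widetilde{\m N}_2^{\Omega_R}(\nabla v_R)\Vert_{L^q(\partial\Omega_R,\m H^n|_{\partial\Omega_R})}\lesssim\Vert\nabla_{t_R}\wt f\Vert_{L^q(\partial\Omega_R,\m H^n|_{\partial\Omega_R})},
\]
with implicit constant independent of $R$. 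When $\Omega_R=\Omega_R^+\cup\Omega_R^-$ has two components, one simply applies the estimate to each component and sums.

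The one step that requires care, and which I would expect to be the main subtle point, is the quantitative calibration of the corona parameters: one must verify that the constants $\ell_0$ and $\tau_0$ from \cite[Theorem 2.10]{dpr17} depend only on $q,n,\lambda$ and not on anything one cannot control a priori, and then choose $\ve,\delta,\gamma,\theta,M$ \emph{after} fixing $q$ so that the resulting Lipschitz and DPR constants on every $\Omega_R$ fall within the admissible range. Apart from this calibration, the statement is a direct application of \cite[Theorem 2.10]{dpr17} to each piece of $\Omega_R$, together with the properties of $\wt f$ recalled above and in \cite[Lemma 4.2]{mt22}.
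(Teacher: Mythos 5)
Your proposal is correct and takes essentially the same approach as the paper: the paper states Theorem \ref{thm.regr} as an imported result with no proof of its own, and the surrounding text in Section \ref{sec.extension} makes precisely the argument you reconstruct, namely that the corona decomposition of Section \ref{sec.corona} was built specifically so that each $\Omega_R$ is a (union of at most two) starlike Lipschitz domain(s) with Lipschitz constant $\leq\ell$ on which $A$ satisfies the $\tau$-DPR condition, with $\ell,\tau$ free to be chosen as small as desired, so that \cite[Theorem 2.10]{dpr17} applies directly. One small point worth tightening: since the corona decomposition is fixed once (depending on $p$) but the conclusion must hold for every $q\in(1,\max\{2,p\}]$ simultaneously, you should calibrate $\ve,\delta,\gamma,\theta,M$ against the \emph{largest} exponent $\max\{2,p\}$ rather than against a varying $q$, and then rely on the standard fact that for such operators on Lipschitz domains solvability of $(\Reg^L_{q_0})$ implies solvability of $(\Reg^L_q)$ for $1<q\leq q_0$ (Kenig--Pipher \cite{kp}, or Shen \cite{shen07}); as written, your phrasing ``choose $\ve,\delta,\ldots$ after fixing $q$'' could be read as re-running the corona construction for each $q$, which is not what is wanted.
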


\section{Proof of Proposition \ref{prop.conormal}}\label{sec.reg}
  
In Section \ref{sec.strat}, we saw how to reduce the proof of Theorem \ref{thm.regularity} to proving Proposition \ref{prop.conormal}. With the almost $L$-elliptic extension at hand from Section \ref{sec.extension} and the corona decomposition into Lipschitz subdomains from Section \ref{sec.corona}, we are ready to plunge into the details of the proof of Proposition \ref{prop.conormal}.  Throughout this section we assume that $\Omega\subset\R^{n+1}$, $n\geq2$, is a bounded open set with uniformly $n$-rectifiable
boundary satisfying the corkscrew condition.

\subsection{A n.t.\ maximal function estimate for the almost $L$-elliptic extension}\label{sec.proofext} We will need the following fact regarding the almost $L$-elliptic extension. It tells us that the almost $L$-elliptic extension satisfies a similar estimate to (\ref{eq.regest}).
	
\begin{proposition}\label{prop.ext}  Let $p>1$  and $L=-\dv A\nabla$, where $A$ is a   DKP matrix in $\Omega$. Let $\varphi\in\Lip(\partial\Omega)$ and $v_\varphi$ the almost $L$-elliptic extension of $\varphi$ (see Section \ref{sec.extension}).  Then  we have that
\begin{equation}\label{eq.ext}
\Vert\wt{\m N}_2(\nabla v_\varphi)\Vert_{L^p(\partial\Omega)}\lesssim\Vert\nabla_{H,p}\varphi\Vert_{L^p(\partial\Omega)}.
\end{equation}
\end{proposition}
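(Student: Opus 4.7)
Plan: The natural approach is to split the estimate using the corona decomposition $\Omega=\bigcup_{R\in\ttt}\Omega_R\cup\n H$ of Section \ref{sec.corona}. For each $\xi\in\pom$, write
\[
\wt{\m N}_2(\nabla v_\varphi)(\xi)\leq\wt{\m N}_2^{\n H}(\nabla v_\varphi)(\xi)+\sup_{R\in\ttt}\wt{\m N}_2^{R}(\nabla v_\varphi)(\xi),
\]
where the first term is the supremum over $x\in\gamma(\xi)\cap\n H$ and the second is over $x\in\gamma(\xi)\cap\Omega_R$ (with the usual $L^2$ Whitney average). Then estimate the two contributions separately.

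For the $\n H$ contribution, on $\n H$ we have $v_\varphi=\wt f$, so Lemma \ref{lm.gradf} yields the pointwise bound $|\nabla\wt\varphi(y)|\lesssim m_{CB_{\hat b(I)},\sigma}(\nabla_{H,p}\varphi)\lesssim\m M_\sigma(\nabla_{H,p}\varphi)(\zeta)$ for any $\zeta$ in the boundary cube $\hat b(I)$ of the Whitney cube $I\ni y$. Since $\n H\subset\bigcup_{Q\in\HH}w(Q)$ with $\HH$ Carleson (Lemma \ref{lm.h}), and since any $x\in\gamma(\xi)\cap w(Q)$ forces $\xi\in C\cdot Q$, a dyadic Carleson embedding argument analogous to the one used for $T_2$ in the proof of Theorem \ref{thm.poisson} (Step 3 of Section \ref{sec.poisson1}), combined with the $L^p(\sigma)$-boundedness of $\m M_\sigma$, produces the bound $\|\wt{\m N}_2^{\n H}(\nabla v_\varphi)\|_{L^p(\pom)}\lesssim\|\nabla_{H,p}\varphi\|_{L^p(\pom)}$.

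For the contribution of each $\Omega_R$, I would invoke Theorem \ref{thm.regr}: by the construction of Section \ref{subs:cordecOmega}, each $\Omega_R$ is Lipschitz with uniform character and $A$ satisfies a $\tau$-DPR condition in $\Omega_R$ (with $\tau$ as small as we wish), so $(\Reg_q^L)$ holds in $\Omega_R$ for any $q$ in a range containing $p$. This gives $\|\wt{\m N}^{\Omega_R}_2(\nabla v_R)\|_{L^p(\pom_R,\HH^n|_{\pom_R})}\lesssim\|\nabla_{t_R}\wt f\|_{L^p(\pom_R,\HH^n|_{\pom_R})}$, and its local form Theorem \ref{thm.loc} (applied inside $\Omega_R$) then converts averages of $\wt{\m N}_2^R(\nabla v_R)$ on $\pom$-balls centered near $\pom_R$ into averages of $|\nabla_{t_R}\wt f|$ on $\pom_R$ plus an interior error. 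I split $\pom_R$ into $\pom_R\cap\pom$, where $\wt f=\varphi$ and $|\nabla_{t_R}\wt f|\lesssim|\nabla_{H,p}\varphi|$ (in the Haj\l{}asz sense), and the ``interior Lipschitz graphs'' $\Gamma_R^\pm\subset\Omega$, where Lemma \ref{lm.gradf} bounds $|\nabla_{t_R}\wt f|$ by $\m M_\sigma(\nabla_{H,p}\varphi)$ evaluated at projected base points in $\pom$.

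To finish, given $\xi\in\pom$ with $\gamma(\xi)\cap\Omega_R\neq\varnothing$, any such cone point satisfies $|x-\xi|\lesssim\ell(R)$, so $\xi\in C\cdot R$; one then passes the bound from $\pom_R$ back to $\pom$ using boundary H\"older continuity for solutions together with Ahlfors regularity, and raises everything to the $p$-th power and integrates. The resulting sum over $R\in\ttt$ has the form
\[
\sum_{R\in\ttt}\int_{CR}|\nabla_{H,p}\varphi|^p\,d\sigma+\sum_{R\in\ttt}\int_{\Gamma_R^\pm}|\m M_\sigma(\nabla_{H,p}\varphi)|^p\,d\HH^n,
\]
which, by the Carleson packing of $\ttt$ (Lemma \ref{lempack2}) combined once more with the dyadic Carleson embedding theorem and the $L^p(\sigma)$-boundedness of $\m M_\sigma$, is controlled by $\|\nabla_{H,p}\varphi\|^p_{L^p(\pom)}$. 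The main technical obstacle will be the interplay between cones in $\Omega$ and cones in the $\Omega_R$, the correct matching between points $\xi\in\pom$ and subdomains $\Omega_R$ with $\gamma(\xi)\cap\Omega_R\neq\varnothing$ (several $R$ may be relevant, but only a controlled, dyadic-in-scale family), and the control of the ``artificial'' boundary contributions from $\Gamma_R^\pm$ via the Carleson packing --- precisely where the sharpness of the dyadic embedding machinery is needed.
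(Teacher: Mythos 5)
The paper does not attack \eqref{eq.ext} directly. Its opening move is a duality reduction via Proposition \ref{prop.duality}: it suffices to prove $\bigl|\int_\Omega F\nabla v_\varphi\,dm\bigr|\lesssim\Vert\n C_2(F)\Vert_{L^{p'}(\partial\Omega)}\Vert\nabla_{H,p}\varphi\Vert_{L^p(\partial\Omega)}$ for every $F\in{\bf C}_{2,p'}$. This bilinear form decomposes \emph{spatially} as $\int_{\n H}F\nabla\wt\varphi\,dm+\sum_{R\in\ttt}\int_{\Omega_R}F\nabla v_R\,dm$, and each piece is an integral confined to $\n H$ or to a single $\Omega_R$, so Theorem \ref{thm.regr} and Theorem \ref{thm.loc} are invoked per-$R$ entirely inside $\Omega_R$ and the sum closes by Carleson embeddings against the packings of $\m H$ and $\ttt$. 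Your plan to decompose the supremum defining $\wt{\m N}_2(\nabla v_\varphi)$ directly is a genuinely different route, and that is precisely where gaps appear.

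Two concrete issues. First, cone matching: for $x\in\gamma(\xi)\cap\Omega_R$ near the artificial boundary $\Gamma_R^\pm$ one has $\delta_{\Omega_R}(x)\ll\delta_\Omega(x)$, so the ball $B(x,\hat c\delta_\Omega(x))$ in your $\wt{\m N}_2$ escapes $\Omega_R$, and $x$ is tangential --- not nontangential --- from every boundary point of $\Omega_R$. This blocks a direct application of Theorems \ref{thm.regr} and \ref{thm.loc} at those $x$, and your ``pass from $\pom_R$ back to $\pom$ via boundary H\"older continuity'' step cannot fill the hole: Lemma \ref{lm.boundaryholder} applies to nonnegative $L$-solutions vanishing continuously on the boundary, not to $\wt{\m N}_2^{\Omega_R}(\nabla v_R)$, which is a function on $\pom_R$ and is neither. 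Second, your terminal bound $\sum_{R\in\ttt}\int_{CR}|\nabla_{H,p}\varphi|^p\,d\sigma$ is \emph{not} controlled by $\Vert\nabla_{H,p}\varphi\Vert_{L^p(\pom)}^p$: the dilates $CR$, $R\in\ttt$, can overlap at a fixed $\xi\in\pom$ at every dyadic scale, and the Carleson packing of $\ttt$ controls sums of $\sigma(R)\bigl(\text{mean over }CR\bigr)^p$, not sums of $L^p$ norms over $CR$. What actually closes in the paper is $\sum_{R\in\ttt}\Vert\nabla_{t_R}\wt\varphi\Vert_{L^p(\pom_R)}^p$ as in \eqref{eq.int6}, where the contributions on $\pom_R\cap\pom$ sum by disjointness and those on $\Gamma_R^\pm$ sum by Carleson embedding via Lemma \ref{lm.gradf}. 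The duality reduction is the step that lands you on that expression; once you make it, your remaining ingredients are the paper's own.
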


\noindent\emph{Proof.} Let $p'$ be the H\"older conjugate of $p$. We show that
\begin{equation}\label{eq.nta1}
\Big|\int_\Omega F\nabla v_\varphi\,dm\Big|\lesssim\Vert\n C_2(F)\Vert_{L^{p'}(\partial\Omega)}\Vert\nabla_{H,p}\varphi\Vert_{L^p(\partial\Omega)},\qquad\text{for any }F\in{\bf C}_{2,p'}.
\end{equation}
Once we show that the above estimate holds, then (\ref{eq.ext}) follows immediately by Proposition \ref{prop.duality}. Fix $F\in{\bf C}_{2,p'}$, and denote\footnote{The reason for the choice of the subscript $2$ in $T_2$ will be apparent in Section \ref{sec.proofconormal}.} $T_2:=\big|\int_\Omega F\nabla v_\varphi\,dm\big|$. We split $T_2$ up   as follows:
\begin{equation}\label{eq.break2}\nonumber
	T_2\leq\Big|\int_{\n H}F\nabla\wt\varphi\,dm\Big|+\sum_{R\in\ttt}\Big|\int_{\Omega_R}F\nabla v_R\,dm\Big|=T_{21}+T_{22},
\end{equation}
where $\n H:=\Omega\backslash\cup_{R\in\ttt}\Omega_R$ and $\ttt$, $\Omega_R$ are defined in the corona construction in Section \ref{sec.corona}, and $\wt\varphi$, $v_R$   are defined in Section \ref{sec.extension}.

\subsubsection{Estimate for $T_{21}$}\label{sec.t21} We estimate $T_{21}$ first:
\begin{multline}\label{eq.break3}
	T_{21}\leq\sum_{Q\in\m H}\sum_{I\cap w(Q)\neq\varnothing}\Big(\int_I|F|^2\,dm\Big)^{1/2}\Big(\int_I|\nabla\wt\varphi|^2\,dm\Big)^{1/2}\\  \lesssim \sum_{Q\in\m H}\sum_{I\cap w(Q)\neq\varnothing}\Big(\dashint_{CB_Q\cap\partial\Omega}|\nabla_{H,p}\varphi|\,d\sigma\Big)\int_Im_{2,I}(F)\,dm\\ \lesssim \sum_{Q\in\m H}\Big[\Big(\dashint_{CB_Q\cap\partial\Omega}|\nabla_{H,p}\varphi|\,d\sigma\Big)\frac1{\ell(Q)^n}\int_{w(Q)}m_{2,4B_y}(F)\,dm(y)\Big]\ell(Q)^n\\ \lesssim\int_{\partial\Omega}\sup_{Q\in\m H:\zeta\in Q}\Big\{\Big(\dashint_{CB_Q\cap\partial\Omega}|\nabla_{H,p}\varphi|\,d\sigma\Big)\frac1{\ell(Q)^n}\int_{w(Q)}m_{2,4B_y}(F)\,dm(y)\Big\}\,d\sigma(\zeta)\\ \lesssim\int_{\partial\Omega}\m M_{\sigma}(\nabla_{H,p}\varphi)(\zeta)\n C_2(F)(\zeta)\,d\sigma(\zeta)\lesssim\Vert\n C_2(F)\Vert_{L^{p'}(\partial\Omega)}\Vert\nabla_{H,p}\varphi\Vert_{L^p(\partial\Omega)},
\end{multline}
where in the second line we used Lemma \ref{lm.gradf}, and in the fourth line we used Carleson's Theorem and   that the family $\m H$ satisfies a Carleson packing condition (Lemma \ref{lm.h}).

\subsubsection{Estimate for $T_{22}$}\label{sec.t22}

We now consider $T_{22}$. For each $R\in\ttt$, let $\Ins_R$ be the family of $I\in\m W(\Omega)$ such that $I^*\subset\Omega_R$ and let $\Bdry_R$ consist of all $I\in\m W(\Omega)$ which intersect $\Omega_R$ and do not belong to $\Ins_R$. With this notation, we split $T_{22}$ further:
\begin{multline}\label{eq.break4}
	T_{22}\leq\sum_{R\in\ttt}\Big(\sum_{I\in\Ins_R}\int_{I}|F||\nabla v_R|\,dm +\sum_{I\in\Bdry_R}\int_{I\cap\Omega_R}|F||\nabla v_R|\,dm\Big)\\=\sum_{R\in\ttt}(T_{221,R}+T_{222,R}) = T_{221}+T_{222}.
\end{multline}

\subsubsection{Estimate for $T_{221}$}\label{sec.t221}  By H\"older's inequality and arguing as in (\ref{eq.break3}), it is not hard to see that
\begin{equation}\label{eq.int1}
	T_{221,R}\leq\sum_{I\in\Ins_R}\Big(\dashint_I|\nabla v_R|^2\,dm\Big)^{1/2}\int_Im_{2,4B_x}(F)\,dm.
\end{equation}
Let $\m F_R$ be the family of cubes $Q\in\m D_\sigma$ such that $Q\in b_\Omega(I)$ for some $I\in\Ins_R$. Let
$\sigma_R$ be the $n$-dimensional Hausdorff measure restricted to $\partial\Omega_R$, and define $\m D_{\sigma_R}$ as in Section \ref{sec.lattice}, with the oldest generation $j_0\in\bb Z$ verifying $2^{-j_0}\geq\frac{\diam(\partial\Omega_R)}2$. Let us show that for any $Q\in\m F_R$, there exists $P\in\m D_{\sigma_R}$ such that $\ell(P)=\ell(Q)$ and $\dist(P,Q)\leq K\ell(Q)$. Let $\zeta\in\partial\Omega_R$ verify that $\dist(\zeta,Q)=\dist(Q,\partial\Omega_R)$, and take $P\in\m D_{\sigma_R}$ to be the largest dyadic cube such that $\zeta\in P$ and $\ell(P)\leq\ell(Q)$. On the other hand, since $Q\in b_\Omega(I)$, we have that
\[
\ell(Q)=\ell(I)=\frac{\diam(I)}{\sqrt{n+1}}\leq\frac1{\sqrt{n+1}}\diam(\Omega_R)\leq\frac1{\sqrt{n+1}}\diam(\partial\Omega_R)\leq 2^{-j_0},
\]
where we have used that $\Omega_R$ is a bounded domain. Hence there are dyadic cubes of length $\ell(Q)$ in $\m D_{\sigma_R}$, and therefore we have $\ell(P)=\ell(Q)$. Now, using (\ref{eq.bi}), it is easy to see that $\dist(P,Q)\leq K\ell(Q)$ for $K$ depending only on $n$ and $\hat c$.

Given $Q\in\m F_R$, if $P\in\m D_{\sigma_R}$ satisfies that $\ell(P)=\ell(Q)$ and $\dist(P,Q)\leq K\ell(Q)$, we denote it by $P=L(Q)$ ($P$ is a \emph{lift} of $Q$). It is not hard to see that
\begin{equation}\label{eq.cardp}
	\card\{P:P=L(Q)\}\leq N_0,
\end{equation}
where $N_0$ depends only on $n$, $K$, and the $n$-Ahlfors regularity constant of $\partial\Omega_R$.

We let $\m G_R$ be the family of all $P\in\m D_{\sigma_R}$ such that $P=L(Q)$ for some $Q\in\m F_R$. Furthermore, let
\begin{equation}\label{eq.ntmaxtr}
	\widetilde{\m N}_{\beta,2}^{s,\Omega_R}(h)(\zeta):=\sup_{x\in\gamma_\beta^{\Omega_R}(\zeta)\cap B(\zeta,s)}\Big(\dashint_{B(x,\delta_{\Omega_R}(x)/2)}|h|^2\,dm\Big)^{1/2},\qquad\zeta\in\partial\Omega_R,
\end{equation}
and given $I\in\Ins_R$, let us show that
\begin{equation}\label{eq.int2}
	\Big(\dashint_I|\nabla v_R|^2\,dm\Big)^{1/2}\leq C_1\inf_{\zeta\in P}\widetilde{\m N}_{\beta_0,2}^{C\ell(P),\Omega_R}(\nabla v_R)(\zeta),\qquad\text{for each }P=L(Q), Q\in b_\Omega(I),
\end{equation} 
with $C_1$ and $\beta_0$ depending only on $n$, $\theta$, and $\hat c$,    and $C$ depending only on $\hat c$ and $n$. Since $I\in\Ins_R$, then $I^*=(1+\theta)I\subseteq\Omega_R$, whence $\theta\ell(I)\leq\dist(P,I)\lesssim\ell(I)$. Let $c=2^{-k_0}$, where $k_0\in\bb N$ is the smallest integer with $\frac{c\sqrt{n+1}}{\theta}\leq\frac12$.  Partition $I$ into subcubes $J$ of equal length $\ell(J)=c\ell(I)$. Then  
\begin{equation}\label{eq.detail1}
	\dashint_I|\nabla v_R|^2\,dm=c^{n+1}\sum_J\dashint_J|\nabla v_R|^2\,dm.
\end{equation}
Note that $\card\{J\}=2^{k_0(n+1)}$. It is easy to see that  for each $x\in J$, $J\subseteq B(x,\delta_{\Omega_R}(x)/2)$, and therefore
\begin{equation}\label{eq.detail2}
	\dashint_J|\nabla v_R|^2\,dm\lesssim_{n,\theta,\hat c}\dashint_{B(x,\delta_{\Omega_R}(x)/2)}|\nabla v_R|^2\,dm,\qquad\text{for each }x\in J.
\end{equation}
However, for each $x\in J$ and each $\zeta\in P$, we have that $x\in\gamma_{\beta_0}(\zeta)$, for $\beta_0$ large enough depending only on $n$, $\theta$ and $\hat c$. It follows that 
\begin{equation}\label{eq.detail3}
	\Big(\dashint_{B(x,\delta_{\Omega_R}(x)/2)}|\nabla v_R|^2\,dm\Big)^{1/2}\leq\inf_{\zeta\in P}\widetilde{\m N}_{\beta_0,2}^{C\ell(P),\Omega_R}(\nabla v_R)(\zeta),\qquad\text{for each }x\in J.
\end{equation}
Using (\ref{eq.detail1}), (\ref{eq.detail2}), and (\ref{eq.detail3}), the estimate (\ref{eq.int2}) follows.

Using (\ref{eq.int2}) and the properties of the family $\m G_R$, from (\ref{eq.int1}) we deduce that
\begin{multline}\label{eq.int3}
	T_{221,R}\lesssim\sum_{Q\in\m F_R}\sum_{I\in\Ins_R: Q\in b_\Omega(I)}a_Q\Big(\dashint_{P}\widetilde{\m N}_{\beta,2}^{C\ell(P),\Omega_R}(\nabla v_R)\,d\sigma_R\Big)\big(\inf_{\xi\in Q}\n C_2(F)(\xi)\big)\\ \lesssim \sum_{Q\in\m F_R}a_Q\Big(\dashint_{P}\widetilde{\m N}_{\beta,2}^{C\ell(P),\Omega_R}(\nabla v_R)\,d\sigma_R\Big)\big(\inf_{\xi\in Q}\n C_2(F)(\xi)\big)\\ \leq\Big\{\sum_{Q\in\m F_R}a_Q\Big(\dashint_{P}\widetilde{\m N}_{\beta,2}^{C\ell(P),\Omega_R}(\nabla v_R)\,d\sigma_R\Big)^p\Big\}^{\frac1p}\Big\{\sum_{Q\in\m F_R}a_Q\Big(\dashint_Q\n C_2(F)\,d\sigma\Big)^{p'}\Big\}^{\frac1{p'}}
\end{multline}
where
\begin{equation}\label{eq.aq2}\nonumber
	a_Q:=\frac1{\inf_{\zeta\in Q}\n C_2(F)(\zeta)}\int_{w(Q)}m_{2,B(z,\delta(z)/2)}(F)\,dm(z),
\end{equation}
$P$ is any cube such that $P=L(Q)$, and in the second line we used that there are at most a uniform number of Whitney cubes $I$ with $Q\in b_\Omega(I)$. It was shown in (\ref{eq.packing}) that the family $\{a_Q\}_{Q\in\m D_\sigma}$ satisfies the following Carleson packing condition:
\[
\sum_{Q\in\m D_{\sigma}:Q\subseteq S}a_Q\lesssim\sigma(S),\qquad\text{for any }S\in\m D_\sigma.
\]
Given $P\in\m G_R$,   one can easily prove that $\card\{Q\in\m F_R: P=L(Q)\}\leq C$, where $C$ depends only on $K$, $n$, and the $n$-Ahlfors regularity constant of $\partial\Omega$. For any $P\in\m G_R$, define
\[
\wt a_P:=\sum_{Q\in\m F_R: P=L(Q)}a_Q.
\]
We now claim that $\{\wt a_P\}_{P\in\m G_R}$ satisfies a Carleson packing condition. Given $S\in\m G_R$, fix a cube $Q(S)\in\m F_R$ which satisfies that $S=L(Q(S))$. Fix $P\in\m G_R$ with $P\subseteq S$, and let $Q\in\m F_R$ with $P=L(Q)$. Let us show that there exists $\wt Q\in\m D_\sigma$ with $Q\subseteq\wt Q$, $\ell(\wt Q)=\ell(Q(S))$ and $\dist(\wt Q,Q(S))\lesssim\ell(Q(S))$. Indeed, let $\wt Q\in\m D_\sigma$ be the unique ancestor of $Q$ verifying $\ell(\wt Q)=\ell(Q(S))$. By construction, we have that $Q\subseteq\wt Q$, and finally, note that
\begin{multline}\nonumber
	\dist(\wt Q,Q(S))\leq\dist(Q,Q(S))\leq\dist(Q,S)+\diam S+\dist(S,Q(S))\\ \leq\dist(Q,P)+\ell(S)+K\ell(Q(S))\leq K\ell(P)+\ell(Q(S))+K\ell(Q(S))\\ \leq(2K+1)\ell(Q(S)),
\end{multline}
as desired. Now let $\m F^S$ consist of all cubes $\wt Q\in\m D_\sigma$ such that $\ell(\wt Q)=\ell(Q(S))$ and $\dist(\wt Q,Q(S))\leq(2K+1)\ell(Q(S))$. It is easy to show that
\begin{equation}\label{eq.cards}
	\card\{\wt Q\in\m F^S\}\leq C.
\end{equation}
With these observations, we deduce that
\begin{equation}\label{eq.packing2}
	\sum_{P\in\m G_R:P\subseteq S}\wt a_P=\sum_{P\in\m G_R:P\subseteq S}\sum_{Q\in\m F_R:P=L(Q)}a_Q\lesssim\sum_{\wt Q\in\m F^S}\sum_{Q\subseteq\wt Q}a_Q\lesssim\sum_{\wt Q\in\m F^S}\sigma(\wt Q)  \lesssim\sigma(Q(S))\lesssim\sigma(S),
\end{equation}
where we used (\ref{eq.cardp}), the properties of the family $\m F^S$, the Carleson packing condition of $\{a_Q\}$, (\ref{eq.cards}), and the fact that $\partial\Omega$ is $n$-Ahlfors regular.  This completes the proof of the claim that $\{\wt a_P\}_{P\in\m G_R}$ satisfies a Carleson packing condition.

With this fact at hand, we see that
\begin{multline}\label{eq.int4}
	\sum_{Q\in\m F_R}a_Q\Big(\dashint_{P:P=L(Q)}\widetilde{\m N}_{\beta,2}^{C\ell(P),\Omega_R}(\nabla v_R)\,d\sigma_R\Big)^p \leq\sum_{P\in\m G_R}\Big(\dashint_{P}\widetilde{\m N}_{\beta,2}^{C\ell(P),\Omega_R}(\nabla v_R)\,d\sigma_R\Big)^p\sum_{Q\in\m F_R: P=L(Q)}a_Q\\ =\sum_{P\in\m G_R}\wt a_P\Big(\dashint_{P}\widetilde{\m N}_{\beta,2}^{C\ell(P),\Omega_R}(\nabla v_R)\,d\sigma_R\Big)^p \lesssim\int_{\partial\Omega_R}\Big(\sup_{P\in\m G_R:\zeta\in P}\dashint_{P}\widetilde{\m N}_{\beta,2}^{C\ell(P),\Omega_R}(\nabla v_R)\,d\sigma_R\Big)^p\,d\sigma_R(\zeta)\\ \leq\int_{\partial\Omega_R}\m M_{\sigma_R}(\widetilde{\m N}_{\beta,2}^{\Omega_R}(\nabla v_R))^p \,d\sigma_R \lesssim \Vert\widetilde{\m N}_{\beta,2}^{\Omega_R}(\nabla v_R)\Vert_{L^p(\partial\Omega_R)}^p\lesssim\Vert\nabla_{t_R}\wt\varphi\Vert_{L^p(\partial\Omega_R)}^p,
\end{multline}
where  in the third line we used (\ref{eq.packing2}) and Carleson's theorem, and in the last line we used Theorem \ref{thm.regr} for $L^*$. With (\ref{eq.int3}) and (\ref{eq.int4}), another application of H\"older's inequality for sums yields
\begin{multline}\label{eq.int5}
	\sum_{R\in\ttt}T_{221,R}\lesssim\Big\{\sum_{R\in\ttt}\Vert\nabla_{t_R}\wt\varphi\Vert_{L^p(\partial\Omega_R)}^p\Big\}^{\frac1p}\Big\{\sum_{R\in\ttt}\sum_{Q\in\m F_R}a_Q\Big(\dashint_Q\n C_2(F)\,d\sigma\Big)^{p'}\Big\}^{\frac1{p'}} \\ \lesssim\Big\{\sum_{R\in\ttt}\Vert\nabla_{t_R}\wt\varphi\Vert_{L^p(\partial\Omega_R)}^p\Big\}^{\frac1p}\Big\{\sum_{Q\in\m D_\sigma}a_Q\Big(\dashint_Q\n C_2(F)\,d\sigma\Big)^{p'}\Big\}^{\frac1{p'}}\\ \lesssim \Big\{\sum_{R\in\ttt}\Vert\nabla_{t_R}\wt\varphi\Vert_{L^p(\partial\Omega_R)}^p\Big\}^{\frac1p}\Big\{\int_{\partial\Omega}\Big(\sup_{Q:\zeta\in Q}\dashint_Q\n C_2(F)\,d\sigma\Big)^{p'}\,d\sigma(\zeta)\Big\}^{\frac1{p'}} \\ \lesssim \Big\{\sum_{R\in\ttt}\Vert\nabla_{t_R}\wt\varphi\Vert_{L^p(\partial\Omega_R)}^p\Big\}^{\frac1p}\Vert\m M_\sigma(\n C_2(F))\Vert_{L^{p'}(\partial\Omega)}  \lesssim\Big\{\sum_{R\in\ttt}\Vert\nabla_{t_R}\wt\varphi\Vert_{L^p(\partial\Omega_R)}^p\Big\}^{\frac1p}\Vert\n C_2(F)\Vert_{L^{p'}(\partial\Omega)},
\end{multline}
where in the second line we used that a given cube $Q\in\m D_\sigma$ belongs to at most a uniformly bounded number of the families $\m F_R$, and in the third line we used Carleson's theorem and the Carleson packing condition of $\{a_Q\}$. Finally, since the sets $\partial\Omega_R\cap\partial\Omega$  are pairwise disjoint, and following the argument of \cite[Lemma 4.7]{mt22}, we note that
\begin{multline}\label{eq.int6}
	\sum_{R\in\ttt}\Vert\nabla_{t_R}\wt\varphi\Vert_{L^p(\partial\Omega_R)}^p\lesssim\Vert\nabla_{H,p}\varphi\Vert_{L^p(\partial\Omega)}^p+\sum_{R\in\ttt}\Vert\nabla_{t_R}\wt\varphi\Vert_{L^p(\partial\Omega_R\backslash\partial\Omega)}^p\\ \lesssim\Vert\nabla_{H,p}\varphi\Vert_{L^p(\partial\Omega)}^p+\sum_{Q\in\m H}\ell(Q)^n\Big(\dashint_{CB_Q\cap\partial\Omega}|\nabla_{H,p}\varphi|\,d\sigma\Big)^p\lesssim\Vert\nabla_{H,p}\varphi\Vert_{L^p(\partial\Omega)}^p,
\end{multline}
where once again we used Carleson's theorem. From (\ref{eq.int5}) and (\ref{eq.int6}), we conclude that
\begin{equation}\label{eq.t321}
	T_{221}\lesssim \Vert\n C_2(F)\Vert_{L^{p'}(\partial\Omega)}\Vert\nabla_{H,p}\varphi\Vert_{L^p(\partial\Omega)}.
\end{equation} 

\subsubsection{Estimate for $T_{222}$}\label{sec.t222}   Fix $I\in\Bdry_R$, and by H\"older's inequality, note that
\begin{equation}\label{eq.holder1}
	\int_{I\cap\Omega_R}|F||\nabla v_R|\,dm\leq\Big(\int_I|F|^2\,dm\Big)^{1/2}\Big(\int_{I\cap\Omega_R}|\nabla v_R|^2\,dm\Big)^{1/2}.
\end{equation}
Let $\m W(\Omega_R)$ be a $4$-Whitney decomposition of $\Omega_R$ (see Section \ref{sec.whitney}), and observe that
\begin{multline}\label{eq.break5}
	\int_{I\cap\Omega_R}|\nabla v_R|^2\,dm  \leq 
	\sum_{J\in\m W(\Omega_R): J\cap I\neq\varnothing}\int_J|\nabla v_R|^2\,dm    \lesssim 
	\sum_{J\in\m W(\Omega_R): J\cap I\neq\varnothing}\ell(J)\int_{CJ\cap\partial\Omega_R}\widetilde{\m N}_{\beta,2}^{C\ell(J),\Omega_R}(\nabla v_R)^2\,d\sigma_R\\ \lesssim\sum_{k\in\bb N}\sum_{J\in\m W(\Omega_R): J\cap I\neq\varnothing, \ell(J)=2^{-k+k_0}\ell(I)}\frac{\ell(I)}{2^k}\int_{CJ\cap\partial\Omega_R}\widetilde{\m N}_{\beta,2}^{C\ell(J),\Omega_R}(\nabla v_R)^2\,d\sigma_R \lesssim\ell(I)\int_{\wt CI\cap\partial\Omega_R}\widetilde{\m N}_{\beta,2}^{\wt C\ell(I),\Omega_R}(\nabla v_R)^2\,d\sigma_R 
\end{multline}
where $C$, $\wt C$, $k_0$, and $\beta$ are  uniform constants depending only on the parameters of $\m D_\sigma$, and $\widetilde{\m N}^{\Omega_R}$ was defined in (\ref{eq.ntmaxtr}). In the last estimate of (\ref{eq.break5}), we used that the uniform dilations of Whitney cubes $J\in\m W(\Omega_R)$ of a given generation which intersect $I$ have uniformly bounded overlap. Now, we take $\hat c$ to be small enough that $\wt C\hat c\leq1/2$, and this guarantees that
\begin{equation}\label{eq.far}
	\frac12\dist(I,\partial\Omega)\leq\dist(16\wt CI,\partial\Omega)\leq\dist(I,\partial\Omega).
\end{equation}
Using the localization result for the regularity problem, Theorem \ref{thm.loc}, we obtain that
\begin{equation}\label{eq.locuse}
	\int_{\wt CI\cap\partial\Omega_R}\widetilde{\m N}_{\beta,2}^{\wt C\ell(I),\Omega_R}(\nabla v_R)^2\,d\sigma_R\lesssim\int_{16\wt CI\cap\partial\Omega_R}|\nabla_{t_R}\wt\varphi|^2\,d\sigma_R+\ell(I)^n\Big(\dashint_{4\wt CI\cap\Omega_R}|\nabla v_R|\,dm\Big)^2.
\end{equation}
Next, let $Q\in b_\Omega(I)$, and by (\ref{eq.far}), Lemma \ref{lm.gradf}, and standard  computations, it is easy to show that
\[
|\nabla\wt\varphi(x)|\leq C_1 m_{C_2B_Q,\sigma}(\nabla_{H,p}\varphi),\qquad\text{for each }x\in16\wt CI,
\]
with the constants $C_1,C_2$ depending on $\hat c$. It follows that
\begin{equation}\label{eq.loc3}
	\int_{16\wt CI\cap\partial\Omega_R}|\nabla_{t_R}\wt\varphi|^2\,d\sigma_R\lesssim\ell(I)^n\big(m_{C_2B_Q,\sigma}(\nabla_{H,p}\varphi)\big)^2.
\end{equation}
On the other hand, by essentially the same argument as in (\ref{eq.break5}), we have that
\begin{equation}\label{eq.nt2}
	\dashint_{4\wt CI\cap\Omega_R}|\nabla v_R|\,dm\lesssim\dashint_{\wt C_2I\cap\partial\Omega_R}\widetilde{\m N}_{\beta,1}^{\wt C_2\ell(I),\Omega_R}(\nabla v_R)\,d\sigma_R.
\end{equation}
Putting together (\ref{eq.holder1}), (\ref{eq.break5}), (\ref{eq.locuse}), (\ref{eq.loc3}), and (\ref{eq.nt2}), we see that
\begin{equation}\label{eq.break5.5}
	\int_{I\cap\Omega_R}|F||\nabla v_R|\,dm\lesssim m_{2,I}(F)\ell(I)^{n+1} \Big[m_{C_2B_Q,\sigma}(\nabla_{H,p}\varphi)~+~\dashint_{\wt C_2I\cap\partial\Omega_R}\widetilde{\m N}_{\beta,1}^{\wt C_2\ell(I),\Omega_R}(\nabla v_R)\,d\sigma_R\Big].
\end{equation}
Therefore,
\begin{multline}\label{eq.break6}
	\sum_{I\in\Bdry_R}\int_{I\cap\Omega_R}|F||\nabla v_R|\,dm \lesssim\sum_{I\in\Bdry_R}m_{C_2B_{\hat b_\Omega(I)},\sigma}(\nabla_{H,p}\varphi)\int_Im_{2,4B_x}(F)\,dm(x) \\ + \sum_{I\in\Bdry_R}\Big\{\ell(I)^{\frac n{p'}}\frac1{\ell(I)^n}\int_Im_{2,4B_x}(F)\,dm(x)\Big\}\Big\{\ell(I)^{\frac np}\Big(\dashint_{\wt C_2I\cap\partial\Omega_R}\widetilde{\m N}_{\beta,2}^{\Omega_R}(\nabla v_R)^p\,d\sigma_R\Big)^{\frac1p}\Big\} \\ =: T_{2221,R}+T_{2222,R},
\end{multline}
where $\hat b_\Omega(I)$ is any boundary cube of $I$ (see Section \ref{sec.whitney}). 

Note that the term $\sum_{R\in\ttt}T_{2221,R}$ may be handled exactly the same way as how we handled $T_{21}$ before; as such, we have that
\begin{equation}\label{eq.breaking1}
	\sum_{R\in\ttt}T_{2221,R}\lesssim\Vert\n C_2(F)\Vert_{L^{p'}(\partial\Omega)}\Vert\nabla_{H,p}\varphi\Vert_{L^p(\partial\Omega)}.
\end{equation}
We study now the term $T_{2222,R}$. Note that if $I\in\Bdry_R$ and $Q\in b_{\Omega}(I)$, then $Q\in\m H$. Let $\m H_R$ be the family of $Q\in\m H$ such that $Q\in b_\Omega(I)$ for some $I\in\Bdry_R$.  By H\"older's inequality, we obtain
\begin{multline}\label{eq.breaking2}
	T_{2222,R}\lesssim\Big(\sum_{I\in\Bdry_R}\ell(I)^n\Big[\frac1{\ell(I)^n}\int_Im_{2,4B_x}(F)\,dm(x)\Big]^{p'}\Big)^{1/{p'}}\Big(\int_{\partial\Omega_R}\widetilde{\m N}_{\beta,2}^{\Omega_R}(\nabla v_R)^p\,d\sigma_R\Big)^{1/p}\\ \lesssim \Big(\sum_{Q\in\m H_R}\ell(Q)^n\Big[\frac1{\ell(Q)^n}\int_{w(Q)}m_{2,4B_x}(F)\,dm(x)\Big]^{p'}\Big)^{1/{p'}}\Vert\nabla_{t_R}\wt\varphi\Vert_{L^p(\partial\Omega_R)}
\end{multline}
where in the first line we used that the sets $\wt C_2I$ have uniformly bounded overlap, and in the last line we used the solvability of $\operatorname{(R)}_p$ in the Lipschitz domain $\Omega_R$. Using (\ref{eq.breaking2}) and H\"older's inequality  once again, we see that
\begin{multline}\label{eq.break7}
	\sum_{R\in\ttt}T_{2222,R} \lesssim\Big(\sum_{Q\in\m H}\ell(Q)^n\Big[\frac1{\ell(Q)^n}\int_{w(Q)}m_{2,4B_x}(F)\,dm(x)\Big]^{p'}\Big)^{\frac1{p'}}\Big(\sum_{R\in\ttt}\Vert\nabla_{t_R}\wt\varphi\Vert_{L^p(\partial\Omega_R)}^p\Big)^{\frac1p}\\ \lesssim\Big(\int_{\partial\Omega}\sup_{Q\in\m H:\xi\in Q}\Big[\frac1{\ell(Q)^n}\int_{B(\xi,C\ell(Q))\cap\Omega}m_{2,4B_x}(F)\,dm\Big]^{p'}\,d\sigma(\xi)\Big)^{\frac1{p'}}\Big(\sum_{R\in\ttt}\Vert\nabla_{t_R}\wt\varphi\Vert_{L^p(\partial\Omega_R)}^p\Big)^{\frac1p}\\  \lesssim\Big(\int_{\partial\Omega}\n C_2(F)^{p'}\,d\sigma\Big)^{1/{p'}}\Vert\nabla_{H,p}\varphi\Vert_{L^p(\partial\Omega)},
\end{multline}
where in the second line we used Carleson's theorem and the fact that $\m H$ satisfies a Carleson packing condition, and in the last line we used the definition of $\n C_2$, and (\ref{eq.int6}).  From (\ref{eq.break6}), (\ref{eq.breaking1}), and (\ref{eq.break7}), we conclude that
\begin{equation}\label{eq.ivcalc}
	T_{222}\lesssim\Vert\n C_2(F)\Vert_{L^{p'}(\partial\Omega)}\Vert\nabla_{H,p}\varphi\Vert_{L^p(\partial\Omega)}.
\end{equation}
Putting (\ref{eq.break3}), (\ref{eq.t321}), and (\ref{eq.ivcalc}) together, we have shown the desired estimate (\ref{eq.nta1}).\hfill{$\square$}

\subsection{Proof of Proposition \ref{prop.conormal}}\label{sec.proofconormal}   Let $v_{\varphi}$ be the almost $L^*$-elliptic extension of $\varphi$ defined in Section \ref{sec.extension}. Since $w$ solves $Lw=-\dv F$, it follows that $B[w,\Phi]=B[w,v_\varphi]$.  Hence
\begin{equation}\label{eq.breaki}
|\ell_w(\varphi)|\leq\Big|\int_\Omega A\nabla w\nabla v_{\varphi}\,dm\Big|+\Big|\int_\Omega F\nabla v_\varphi\,dm\Big|=T_1+T_2. 
\end{equation}
From Proposition \ref{prop.duality} and Proposition \ref{prop.ext}, we see that
\begin{equation}\label{eq.t3}
T_2\lesssim\Vert\n C_2(F)\Vert_{L^{p'}(\partial\Omega)}\Vert\nabla_{H,p}\varphi\Vert_{L^p(\partial\Omega)}.
\end{equation}
Hence we need only control $T_1$. Fortunately, several parts of the argument  to control $T_1$ may be reduced to terms considered  in the proof of Proposition \ref{prop.ext}. We split up $T_1$ as follows:
\begin{equation}\label{eq.t1}
	T_1\leq\Big|\int_{\n H}A\nabla w\nabla\wt\varphi\,dm\Big|+\sum_{R\in\ttt}\Big|\int_{\Omega_R}A\nabla w\nabla v_R\,dm\Big|=T_{11}+\sum_{R\in\ttt}T_{12,R}=T_{11}+T_{12},
\end{equation}

\subsubsection{Estimate for $T_{11}$}\label{sec.t11} By the Caccioppoli inequality (Lemma \ref{lm.cacc}), we have that
\begin{multline}\label{eq.t11}
	T_{11}\lesssim\sum_{Q\in\m H}\sum_{I\cap w(Q)\neq\varnothing}\Big(\int_I|\nabla w|^2\,dm\Big)^{\frac12}\Big(\int_I|\nabla\wt\varphi|^2\,dm\Big)^{\frac12}\\ \lesssim\sum_{Q\in\m H}\sum_{I\cap w(Q)\neq\varnothing}\ell(I)^{n}\Big(\dashint_{I^*}|w|^2\,dm\Big)^{\frac12}\Big(\dashint_I|\nabla\wt\varphi|^2\,dm\Big)^{\frac12}+\sum_{Q\in\m H}\sum_{I\cap w(Q)\neq\varnothing}\Big(\int_{I^*}|F|^2\,dm\Big)^{\frac12}\Big(\int_I|\nabla\wt\varphi|^2\,dm\Big)^{\frac12} \\ =T_{111}+T_{112}.
\end{multline}
The term $T_{112}$ is estimated in exactly the same way as the term $T_{21}$, which was controlled in Section \ref{sec.t21}. Thus we obtain that
\begin{equation}\label{eq.t112}
T_{112}\lesssim\Vert\n C_2(F)\Vert_{L^{p'}(\partial\Omega)}\Vert\nabla_{H,p}\varphi\Vert_{L^p(\partial\Omega)}.
\end{equation}
We consider the term $T_{111}$ now. Recall that the truncated non-tangential maximal function was defined in (\ref{eq.truncated}). It is not hard to show that  for any $Q\in\m D_{\sigma}$,
\begin{equation}\label{eq.t111.1}
\Big(\dashint_{I^*}|w|^2\,dm\Big)^{\frac12}\leq C_1\inf_{\xi\in Q}\widetilde{\m N}_{\beta_1,2}^{C_2\ell(Q)}(w)(\xi),\qquad\text{for each }I\in\m W(\Omega) \text{ s.t. }I\cap w(Q)\neq\varnothing,
\end{equation}
where $C_1$ and $C_2$ depend only on $n$, and $\hat c$, and we may take $\beta_1=17$.  Now, using (\ref{eq.t111.1}), Lemma \ref{lm.gradf}, and the fact that $\card\{I:I\cap w(Q)\neq\varnothing\}\leq C$, we obtain that
\begin{multline}\label{eq.t111.2}
T_{111}\lesssim\sum_{Q\in\m H}\Big\{\ell(Q)^{\frac{n}{p'}}\inf_{\xi\in Q}\widetilde{\m N}_{2}^{C_2\ell(Q)}(w)(\xi)\Big\}\Big\{\ell(Q)^{\frac np}\Big(\dashint_{CB_Q\cap\partial\Omega}|\nabla_{H,p}\varphi|\,d\sigma\Big)\Big\}\\ \lesssim\Big(\sum_{Q\in\m H}\ell(Q)^n\Big(\dashint_Q\widetilde{\m N}_2^{C_2\ell(Q)}(w)\,d\sigma\Big)^{p'}\Big)^{\frac1{p'}}\Big(\sum_{Q\in\m H}\ell(Q)^n\Big(\dashint_{CB_Q\cap\partial\Omega}|\nabla_{H,p}\varphi|\,d\sigma\Big)^p\Big)^{\frac1p}\\ \lesssim\Big(\int_{\partial\Omega}\m M_\sigma(\widetilde{\m N}_2(w))^{p'}\,d\sigma\Big)^{\frac1{p'}}\Big(\int_{\partial\Omega}\m M_\sigma(\nabla_{H,p}\varphi)^{p}\,d\sigma\Big)^{\frac1{p}} \lesssim\Vert\widetilde{\m N}_2(w)\Vert_{L^{p'}(\partial\Omega)}\Vert\nabla_{H,p}\varphi\Vert_{L^p(\partial\Omega)}\\ \lesssim\big[\Vert g\Vert_{L^{p'}(\partial\Omega)}+\Vert\n C_2(F)\Vert_{L^{p'}(\partial\Omega)}\big]\Vert\nabla_{H,p}\varphi\Vert_{L^p(\partial\Omega)},
\end{multline}
where in the second line we used H\"older's inequality, in the third line we used Carleson's theorem and the fact that the family $\m H$ satisfies a Carleson packing condition, and in the last line we used (\ref{eq.ntmaxn}). From (\ref{eq.t112}) and (\ref{eq.t111.2}), we conclude that
\begin{equation}\label{eq.t11done}
T_{11}\lesssim\big[\Vert g\Vert_{L^{p'}(\partial\Omega)}+\Vert\n C_2(F)\Vert_{L^{p'}(\partial\Omega)}\big]\Vert\nabla_{H,p}\varphi\Vert_{L^p(\partial\Omega)}.
\end{equation}

\subsubsection{Estimate for $T_{12}$}\label{sec.t12} Write $w=w_g+w_F$, where $w_g$ solves the homogeneous problem with Dirichlet data $g$ on $\partial\Omega$, and $w_F$ solves the Poisson problem with source data $-\dv F$ and $0$ Dirichlet data on the boundary.  With these definitions, note that
\begin{equation}\label{eq.trick}
T_{12,R}\leq\Big|\int_{\Omega_R}A\nabla w_g\nabla v_R\,dm\Big|+\Big|\int_{\Omega_R}A\nabla w_F\nabla v_R\,dm\Big|=T_{121,R}+T_{122,R}.
\end{equation}

\subsubsection{Estimate for $T_{121}:=\sum_{R\in\ttt}T_{121,R}$}\label{sec.t121} Let us see how to control $T_{121}$. Let $\vec{N}$ be the unit outer normal on $\partial\Omega_R$, which is well defined $\sigma_R$-almost everywhere. We will argue similarly as in the proof of \cite[Lemma 5.1]{mt22}, with some small technical differences. For each $k\in\bb N$, let $G_k=G_{R,k}$ be the family of dyadic cubes $I\subset\bb R^{n+1}$ with side length $2^{-k}$  such that $3I\cap\partial\Omega_R\neq\varnothing$, and let
\[
\Omega_k=\Omega_{R,k}:=\Omega_R\backslash\bigcup_{I\in G_k}\overline{I}.
\]
It is easy to see that $\partial\Omega_k\subset\Omega_R$, that $\partial\Omega_k$ is $n$-Ahlfors regular uniformly on $k$ and $R$, and that $\dist(x,\partial\Omega_R)\approx 2^{-k}$ for each $x\in\partial\Omega_k$. Since $\int_{\Omega_R}|A^T\nabla v_R\nabla w_g\,dm|<+\infty$, then by the Lebesgue Dominated Convergence Theorem and the facts that $A^T\in\Lip_{\loc}(\Omega)$ and $L^*v_R=0$ pointwise a.e.\ in $\Omega_k$, we have that 
\begin{equation}\label{eq.limit4}
\int_{\Omega_R}A^T\nabla v_R\nabla w_g\,dm=\lim_{k\ra\infty}\int_{\Omega_k}A^T\nabla v_R\nabla w_g\,dm=\lim_{k\ra\infty}\int_{\Omega_k}\dv((A^T\nabla v_R)w_g)\,dm.
\end{equation}
Seeing as the vector field $(A^T\nabla v_R)w_g$ is continuous in $\overline{\Omega}_k$, we use the divergence theorem  to obtain  
\begin{equation}\label{eq.div}
\int_{\Omega_k}\dv((A^T\nabla v_R)w_g)dm=\int_{\partial\Omega_k}(A^T\nabla v_R)\cdot\vec{N}w_g\,d\m H^n,\qquad\text{for each }k.
\end{equation}

From   (\ref{eq.limit4}), (\ref{eq.div}), and H\"older's inequality, we obtain that
\begin{equation}\nonumber
T_{121,R}\lesssim\Big\{\limsup_{k\ra\infty}\Big(\int_{\partial\Omega_k}|\nabla v_R|^p\,d\m H^n\Big)^{\frac1p}\Big\}\Big\{\limsup_{k\ra\infty}\Big(\int_{\partial\Omega_k}|w_g|^{p'}\,d\m H^n\Big)^{\frac1{p'}}\Big\}=T_{1211,R}\times T_{1212,R},
\end{equation}
and so
\begin{equation}\label{eq.limit6}
\sum_{R\in\ttt}T_{121,R}\lesssim\Big(\sum_{R\in\ttt}T_{1211,R}^p\Big)^{\frac1p}\Big(\sum_{R\in\ttt}T_{1212,R}^{p'}\Big)^{\frac1{p'}}=T_{1211}\times T_{1212}.
\end{equation}
Let us control the factor $T_{1212}$. Since $w_g\in C(\overline{\Omega_R})$ and $\Omega_R$ is bounded, we have that
\begin{equation}\label{eq.tricky4}
T_{1212,R}^{p'}=\int_{\partial\Omega_R}|w_g|^{p'}\,d\m H^n =\Vert g\Vert_{L^{p'}(\partial\Omega_R\cap\partial\Omega)}^{p'}+ \int_{\partial\Omega_R\backslash\partial\Omega}|w_g|^{p'}\,d\m H^n.
\end{equation}
Note that
\begin{multline}\label{eq.tricky5}
\int_{\partial\Omega_R\backslash\partial\Omega}|w_g|^{p'}\,d\m H^n\leq\sum_{I\in\Bdry_R}\int_{I\cap\partial\Omega_R}|w_g|^{p'}\,d\m H^n\\ \lesssim\sum_{I\in\Bdry_R}\ell(I)^n\Vert w_g\Vert_{L^{\infty}(I)}^{p'} \lesssim\sum_{I\in\Bdry_R}\ell(I)^{n}\Big(\dashint_{I^*}|w_g|^2\,dm\Big)^{\frac{p'}2} \lesssim \sum_{Q\in\m H_R}\ell(Q)^n\Big(\dashint_{Q}\widetilde{\m N}_2(w_g)\,d\sigma\Big)^{p'},
\end{multline}
where we have used the interior Moser estimate for $w_g$ and (\ref{eq.t111.1}). From (\ref{eq.tricky4}) and (\ref{eq.tricky5}) and the fact that the sets $\partial\Omega_R\cap\partial\Omega$ are disjoint, we see that
\begin{multline}\label{eq.tricky6}
T_{1212}\lesssim\Vert g\Vert_{L^{p'}(\partial\Omega)}+\Big(\sum_{Q\in\m H}\ell(Q)^n\Big(\dashint_{Q}\widetilde{\m N}_2(w_g)\,d\sigma\Big)^{p'}\Big)^{\frac1{p'}}\\ \lesssim\Vert g\Vert_{L^{p'}(\partial\Omega)}+\Big(\int_{\partial\Omega}\m M_\sigma(\widetilde{\m N}_2(w_g))^{p'}\,d\sigma\Big)^{\frac1{p'}} \lesssim\Vert g\Vert_{L^{p'}(\partial\Omega)}+\Vert\widetilde{\m N}_2(w_g)\Vert_{L^{p'}(\partial\Omega)} \lesssim\Vert g\Vert_{L^{p'}(\partial\Omega)},
\end{multline}
where we  used that  $\m H$ satisfies a Carleson packing condition, Carleson's theorem, and (\ref{eq.ntmaxn}).

We turn to the factor $T_{1211}$ in (\ref{eq.limit6}). Note that $|\nabla A^T|\leq C/\delta_\Omega$ in $\Omega$, and since for each $x\in\Omega_R$ we have that $\delta_{\Omega_R}(x)\leq\delta_\Omega(x)$, then it follows that $|\nabla A^T|\leq C/\delta_{\Omega_R}$ in $\Omega_R$.  For each $J\in\m W(\Omega_R)$, there exists a boundary cube $\hat b_{\Omega_R}(J)\in\m D_{\sigma_R}$ such that $\ell(J)=\ell(\hat b_{\Omega_R}(J))$ and $\dist(J,\hat b_{\Omega_R}(J))\approx\ell(J)$. Let $\m G_1=\m G_{1,R}$ be the family of cubes $P\in\m D_{\sigma_R}$ such that there exists some $J\in\m W(\Omega_R)$ with $P=\hat b_{\Omega_R}(J)$ and $J\cap\partial\Omega_k\neq\varnothing$. Consider the estimate
\begin{multline}\label{eq.limit7}
T_{1211,R}^p\leq\sum_{J\in\m W(\Omega_R):J\cap\partial\Omega_k\neq\varnothing}\int_{J\cap\partial\Omega_k}|\nabla v_R|^p\,d\m H^n\leq\sum_{J\in\m W(\Omega_R):J\cap\partial\Omega_k\neq\varnothing}\Vert\nabla v_R\Vert_{L^{\infty}(J)}^p\ell(J)^n\\ \lesssim\sum_{J\in\m W(\Omega_R):J\cap\partial\Omega_k\neq\varnothing}\Big(\dashint_{J^*}|\nabla v_R|^2\,dm\Big)^{\frac p2}\ell(J)^n \lesssim\sum_{J\in\m W(\Omega_R):J\cap\partial\Omega_k\neq\varnothing}\big(\inf_{\zeta\in\hat b_{\Omega_R}(J)}(\widetilde{\m N}_2^{\Omega_R}(\nabla v_R)(\zeta)\big)^p\ell(J)^n\\ \lesssim\sum_{P\in\m G_1}\big(\inf_{\zeta\in P}\widetilde{\m N}_2^{\Omega_R}(\nabla v_R)(\zeta)\big)^p\ell(P)^n,
\end{multline}
where we used Lemma \ref{lm.gradient}  and a similar estimate to (\ref{eq.int2}). In the proof of \cite[Lemma 5.1]{mt22}, it is shown that the family $\m G_1$ satisfies the Carleson packing condition
\begin{equation}\label{eq.carlesonpacking}
\sum_{P\in\m G_1: P\subset S}\ell(P)^n\lesssim\ell(S)^n.
\end{equation}
Hence, from (\ref{eq.limit7}), (\ref{eq.carlesonpacking}), and Carleson's theorem, we see that
\begin{equation}\label{eq.limit8}
T_{1211,R}^p\lesssim\int_{\partial\Omega_R}\m M_\sigma(\widetilde{\m N}_2^{\Omega_R}(\nabla v_R))^p\,d\m H^n\lesssim\Vert\widetilde{\m N}_2^{\Omega_R}(\nabla v_R)\Vert_{L^p(\partial\Omega_R)}^p\lesssim\Vert\nabla_{t_R}\wt{\varphi}\Vert_{L^p(\partial\Omega_R)},
\end{equation}
where in the last estimate we used Theorem \ref{thm.regr} for $L^*$. Therefore, from (\ref{eq.limit8}) and (\ref{eq.int6}), 
\begin{equation}\label{eq.t1211}
	T_{1211}\lesssim\Big(\sum_{R\in\ttt}\Vert\nabla_{t_R}\wt\varphi\Vert_{L^p(\partial\Omega_R)}^p\Big)^{\frac1p}\lesssim\Vert\nabla_{H,p}\varphi\Vert_{L^p(\partial\Omega)}.
\end{equation}
Combining (\ref{eq.limit6}), (\ref{eq.tricky6}), and (\ref{eq.t1211}), we deduce that 
\begin{equation}\label{eq.t121}
	\sum_{R\in\ttt}T_{121,R}\lesssim\Vert g\Vert_{L^{p'}(\partial\Omega)}\Vert\nabla_{H,p}\varphi\Vert_{L^p(\partial\Omega)}.
\end{equation}

\subsubsection{Estimate for $T_{122}:=\sum_{R\in\ttt}T_{122,R}$}\label{sec.t122} Recall that $T_{122,R}$ has been defined in (\ref{eq.trick}); it remains to bound $T_{122}$. Recall that for each $R\in\ttt$,  we denote by $\Ins_R$ the family of Whitney cubes $I\in\m W(\Omega)$ such that $I^*=(1+\theta)I\subset\Omega_R$, and we denote by $\Bdry_R$ all the cubes $J\in\m W(\Omega)\backslash\Ins_R$ which intersect $\Omega_R$. For each $I\in\Ins_R$, let $\eta_{0,I}\in C_c^{\infty}(I^*)$ be such that $\eta_{0,I}\equiv1$ on $I$ and $|\nabla\eta_{0,I}|\lesssim\frac1{\theta\ell(I)}$. Let $\eta_0:=\sum_{I\in\Ins_R}\eta_{0,I}$, and note that $\eta_0>0$ on $\cup_{I\in\Ins_R}I^*$. For each $I\in\Ins_R$, let $\eta_I:=\frac{\eta_{0,I}}{\eta_0}$, and define $\eta_R:=\sum_{I\in\Ins_R}\eta_I$. Then $\eta_R\in C^\infty(\Omega_R)$, $0\leq\eta_R\leq1$,  $\eta_R\equiv1$ on $\bigcup_{I\in\Ins_R}I$, and  
\begin{equation}\label{eq.etar}
\supp((1-\eta_R){\1}_{\Omega_R})\subset\bigcup_{J\in\Bdry_R}J,\qquad\qquad|\nabla\eta_R|\lesssim\frac1{\theta\ell(J)},~\text{on }J, \text{ for each }J\in\Bdry_R.
\end{equation}
Moreover, $\eta_R\equiv0$ on $\partial\Omega_R\backslash\partial\Omega$. Now note that
\begin{multline}\label{eq.tricky}
\int_{\Omega_R}A\nabla w_F\nabla v_R\,dm=\int_{\Omega_R}A\nabla(w_F\eta_{R})\nabla v_R\,dm+\int_{\Omega_R}A\nabla(w_F(1-\eta_{R}))\nabla v_R\,dm\\ =\int_{\Omega_R}A^T\nabla v_R\nabla(w_F\eta_R)\,dm+\int_{\Omega_R}A\nabla(w_F(1-\eta_R))\nabla v_R\,dm=\int_{\Omega_R}A\nabla(w_F(1-\eta_R))\nabla v_R\,dm,
\end{multline}
where we used that $L^*v_R=0$ in $\Omega_R$ and that $w_F\eta_R\in W_0^{1,2}(\Omega_R)$. Next, we see that
\begin{multline}\label{eq.morebreak}
T_{122,R}\lesssim\sum_{I\in\Bdry_R}\int_{I\cap\Omega_R}|\nabla(w_F(1-\eta_R))||\nabla v_R|\,dm\\ \lesssim\sum_{I\in\Bdry_R}\Big(\int_{I\cap\Omega_R}\big(w_F^2|\nabla\eta_R|^2+|\nabla w_F|^2\big)\,dm\Big)^{\frac12}\Big(\int_{I\cap\Omega_R}|\nabla v_R|^2\,dm\Big)^{\frac12}\\ \lesssim\sum_{I\in\Bdry_R}\frac1{\ell(I)}\Big(\int_{I^*}w_F^2\,dm\Big)^{\frac12}\Big(\int_{I\cap\Omega_R}|\nabla v_R|^2\,dm\Big)^{\frac12}\\+\sum_{I\in\Bdry_R} \Big(\int_{I^*}|F|^2\,dm\Big)^{\frac12}\Big(\int_{I\cap\Omega_R}|\nabla v_R|^2\,dm\Big)^{\frac12}= T_{1221,R}+T_{1222,R},
\end{multline}
where we have used (\ref{eq.etar}), Lemma \ref{lm.cacc}, Lemma \ref{lm.cacc}. Note that the term $\sum_{R\in\ttt}T_{1222,R}$ may be controlled in exactly the same way which we controlled the term $T_{222}$ in Section \ref{sec.t222}. Hence, we have that
\begin{equation}\label{eq.t1222}
\sum_{R\in\ttt}T_{1222,R}\lesssim\Vert\n C_2(F)\Vert_{L^{p'}(\partial\Omega)}\Vert\nabla_{H,p}\varphi\Vert_{L^p(\partial\Omega)}.
\end{equation}
We turn our attention to $T_{1221}:=\sum_{R\in\ttt}T_{1221,R}$. For fixed $R\in\ttt$, we see that
\begin{multline}\label{eq.t1221}
T_{1221,R}\lesssim\sum_{I\in\Bdry_R}\ell(I)^n\big(\inf_{\xi\in \hat b_\Omega(I)}\widetilde{\m N}_2(w_F)(\xi)\big)\Big(\dashint_{\wt CI\cap\partial\Omega_R}\widetilde{\m N}_{\beta,2}^{\wt C\ell(I),\Omega_R}(\nabla v_R)^2\,d\sigma_R\Big)^{\frac12}\\ \lesssim\sum_{I\in\Bdry_R}\ell(I)^n\big(\inf_{\xi\in \hat b_\Omega(I)}\widetilde{\m N}_2(w_F)(\xi)\big)m_{C_2B_{\hat b_\Omega(I)},\sigma}(\nabla_{H,p}\varphi)\\ + \sum_{I\in\Bdry_R}\ell(I)^n\big(\inf_{\xi\in \hat b_\Omega(I)}\widetilde{\m N}_2(w_F)(\xi)\big) \dashint_{\wt C_2I\cap\partial\Omega_R}\widetilde{\m N}_{\beta,1}^{\wt C_2\ell(I),\Omega_R}(\nabla v_R)\,d\sigma_R=T_{12211,R}+T_{12212,R},
\end{multline}
where in the first estimate we used (\ref{eq.t111.1}) and (\ref{eq.break5}), and in the second we used (\ref{eq.locuse}), (\ref{eq.loc3}), and (\ref{eq.nt2}).

We control $T_{12211}:=\sum_{R\in\ttt}T_{12211,R}$ now.  Let $\m H_R$ be the family of $Q\in\m H$ such that $Q\in b_\Omega(I)$ for some $I\in\Bdry_R$. Note that 
\begin{multline}\label{eq.t12211}
\sum_{R\in\ttt}T_{12211,R}\lesssim\sum_{R\in\ttt}\sum_{Q\in\m H_R}\Big\{\ell(Q)^{\frac n{p'}}\inf_{\xi\in Q}\widetilde{\m N}_2(w_F)(\xi)\Big\}\Big\{\ell(Q)^{\frac np}m_{C_2B_Q,\sigma}(\nabla_{H,p}\varphi)\Big\}\\ \lesssim\sum_{Q\in\m H}\Big\{\ell(Q)^{\frac n{p'}}\inf_{\xi\in Q}\widetilde{\m N}_2(w_F)(\xi)\Big\}\Big\{\ell(Q)^{\frac np}m_{C_2B_Q,\sigma}(\nabla_{H,p}\varphi)\Big\}\\ \lesssim\Big(\sum_{Q\in\m H}\ell(Q)^n\Big(\dashint_Q\widetilde{\m N}_2(w_F)\,d\sigma\Big)^{p'}\Big)^{\frac1{p'}}\Big(\sum_{Q\in\m H}\ell(Q)^n\big(m_{C_2B_Q,\sigma}(\nabla_{H,p}\varphi)\big)^{p}\Big)^{\frac1{p}}\\ \lesssim\Big(\int_{\partial\Omega}\m M_\sigma(\widetilde{\m N}_2(w_F))^{p'}\,d\sigma\Big)^{\frac1{p'}}\Big(\int_{\partial\Omega}\m M_\sigma(\nabla_{H,p}\varphi)^{p}\,d\sigma\Big)^{\frac1{p}}\\ \lesssim\Vert\widetilde{\m N}_2(w_F)\Vert_{L^{p'}(\partial\Omega)}\Vert\nabla_{H,p}\varphi\Vert_{L^p(\partial\Omega)}\lesssim\Vert\n C_2(F)\Vert_{L^{p'}(\partial\Omega)}\Vert\nabla_{H,p}\varphi\Vert_{L^p(\partial\Omega)},
\end{multline}
where in the second line we used that any given $Q$ may only belong to a uniformly bounded number of the families $\m H_R$, in the third line we used H\"older's inequality, in the fourth line we used Carleson's theorem and the fact that the family $\m H$ satisfies a Carleson packing condition, and in the last line we used   (\ref{eq.ntmaxn}).

One last term remains to bound, $T_{12212}:=\sum_{R\in\ttt}T_{12212,R}$. Actually, this term is similar to term $T_{2222}$ which was defined in (\ref{eq.break6}). Indeed, note that
\begin{equation}\label{eq.t12212}
T_{12212,R}\leq\sum_{I\in\Bdry_R}\Big\{\ell(I)^{\frac n{p'}}\inf_{\xi\in\hat b_\Omega(I)}\widetilde{\m N}_2(w_F)(\xi)\Big\}\times\Big\{\ell(I)^{\frac np}\dashint_{\wt C_2I\cap\partial\Omega_R}\widetilde{\m N}_{\beta,1}^{\wt C_2\ell(I),\Omega_R}(\nabla v_R)\,d\sigma_R\Big\}.
\end{equation}
We control the first factor of (\ref{eq.t12212}) similarly as (\ref{eq.t12211}), while we control the second factor of (\ref{eq.t12212}) similarly as in  the proof of (\ref{eq.ivcalc}). Putting the arguments together, one arrives at the estimate
\begin{equation}\label{eq.t12212done}
\sum_{R\in\ttt}T_{12212,R}\lesssim\Vert\n C_2(F)\Vert_{L^{p'}(\partial\Omega)}\Vert\nabla_{H,p}\varphi\Vert_{L^p(\partial\Omega)}.
\end{equation}
From the splits (\ref{eq.trick}), (\ref{eq.morebreak}), (\ref{eq.t1221}), and the estimates (\ref{eq.t121}),   (\ref{eq.t1222}),   (\ref{eq.t12211}), and (\ref{eq.t12212done}), it follows 
\begin{equation}\label{eq.t12}
T_{12}\lesssim\big[\Vert g\Vert_{L^{p'}(\partial\Omega)}+\Vert\n C_2(F)\Vert_{L^{p'}(\partial\Omega)}\big]\Vert\nabla_{H,p}\varphi\Vert_{L^p(\partial\Omega)}.
\end{equation}
From (\ref{eq.t1}), (\ref{eq.t11done}), and (\ref{eq.t12}), we conclude that
\begin{equation}\label{eq.t1done}
T_1\lesssim\big[\Vert g\Vert_{L^{p'}(\partial\Omega)}+\Vert\n C_2(F)\Vert_{L^{p'}(\partial\Omega)}\big]\Vert\nabla_{H,p}\varphi\Vert_{L^p(\partial\Omega)}.
\end{equation}

Finally, from (\ref{eq.breaki}), (\ref{eq.t3}) and (\ref{eq.t1done}), the desired estimate (\ref{eq.boundconormal}) follows.\hfill{$\square$}

\appendix

\section{Proofs of auxiliary results}\label{sec.app}

\subsection{Proof of Lemma \ref{lm.changeavg}}\label{sec.proofchangeavg} Assume without loss of generality that $\hat c_1<\hat c_2$, and then in this case, the direction $\lesssim$ in (\ref{eq.changeavg2}) is straightforward. Thus it suffices to prove the direction $\gtrsim$ in (\ref{eq.changeavg2}). Let $B$ be the unit ball centered at the origin in $\bb R^{n+1}$, and let $\{B_j\}_{j=1}^{N}$ be a family of balls of radius $\frac{\hat c_1}{2\hat c_2}<1$ centered at points $e_j\in B$, such that $B\subset\cup_jB_j$ and the $B_j$'s have uniformly bounded overlap (depending only on $n$); we have  that $N$ depends only on $n$ and $\hat c_1/(2\hat c_2)$. For each $x\in\Omega$ and $j=1,\ldots, N$, let $B_j^x$ be the ball of radius $\hat c_1\delta(x)/2$ centered at $x_j:=x+\hat c_2\delta(x)e_j$; clearly we have that $\{B_j^x\}_{j=1}^N$ is an open cover of $B(x,\hat c_2\delta(x))$. Then we see that
\begin{multline}\label{eq.cq}
	\n C_{\hat c_2,q}(H)(\xi)\leq\sup_{r>0}\frac1{r^n}\int_{\Omega\cap B(\xi,r)}\frac1{m(B(x,\hat c_2\delta(x)))^\frac1q}\sum_{j=1}^N\Big(\int_{B_j^x}|H|^q\,dm\Big)^{\frac1q}\,dm(x)\\ \lesssim\sum_{j=1}^N\sup_{r>0}\frac1{r^n}\int_{\Omega\cap B(\xi,r)}\Big(\dashint_{B(x_j,\hat c_1\delta(x_j))}|H|^q\,dm\Big)^{\frac1q}\,dm(x)
\end{multline}
where in the second estimate we used the Minkowski inequality. To continue, we claim that for each $j=1,\ldots,n$ and each $r>0$, we have that 
\begin{equation}\label{eq.claim2}
	\int_{\Omega\cap B(\xi,r)}\Big(\dashint_{B(x_j,\hat c_1\delta(x_j))}|H|^q\,dm\Big)^{\frac1q}\,dm(x)\leq2\int_{\Omega\cap B(\xi,2r)}\Big(\dashint_{B(y,\hat c_1\delta(y))}|H|^q\,dm\Big)^{\frac1q}\,dm(y).
\end{equation}
Using (\ref{eq.claim2}) in (\ref{eq.cq}) immediately yields the estimate $\n C_{\hat c_2,q}(H)(\xi)\lesssim\n C_{\hat c_1,q}(H)(\xi)$, from which the desired bound follows.  Thus it remains only to prove (\ref{eq.claim2}), which we do via a change of variables. First,   define the map $F_j:\bb R^{n+1}\ra\bb R^{n+1}$ as
\[
F_j(x):=\left\{\begin{array}{ll}x+\hat c_2\delta_\Omega(x)e_j,&\text{if }x\in\Omega,\\x,&\text{if }x\in\bb R^{n+1}\backslash\Omega,\end{array}\right.
\]
and the non-negative function $Q:\bb R^{n+1}\ra\bb R$ as
\[
Q(x):=\left\{\begin{array}{ll}m_{q,B(F_j(x),\hat c_1\delta(F_j(x)))}(H),&\text{if }x\in\Omega\cap B(\xi,r),\\0,&\text{if }x\in\bb R^{n+1}\backslash(\Omega\cap B(\xi,r)).\end{array}\right.
\]
Note that $x_j=F_j(x)$ whenever $x\in\Omega$. It is not hard to see that $F_j$ is Lipschitz continuous in $\bb R^{n+1}$, with Lipschitz constant $1+\hat c_2\leq3/2$. Denote by $JF_j$ the Jacobian of the map $F_j$ (see \cite[Definition 3.6]{eg1}); then by the matrix determinant lemma, we have that 
\[
JF_j(x)=1+\hat c_2\nabla\delta_\Omega(x)e_j>\frac12,\qquad\text{for }~m_n\text{-a.e.\ }x\in\Omega,
\]
since $|\nabla\delta_\Omega|\leq1$, $|e_j|<1$, and $\hat c_2\leq1/2$. Next we claim that $F_j$ is injective: Indeed, say that $x_1,x_2\in\bb R^{n+1}$ satisfy that $F_j(x_1)=F_j(x_2)$. If both $x_1,x_2\in\bb R^{n+1}\backslash\Omega$, then it trivially follows that $x_1=x_2$. Suppose now that both $x_1,x_2\in\Omega$. Then $F_j(x_1)=F_j(x_2)$ implies that $|x_1-x_2|=\hat c_2|e_j||\delta_\Omega(x_1)-\delta_\Omega(x_2)|\leq|x_1-x_2|/2$, which implies that $x_1=x_2$. Finally, if $x_1\in\bb R^{n+1}\backslash\Omega$ and $x_2\in\Omega$, then $|x_1-x_2|=\hat c_2|e_j|\delta_\Omega(x_2)\leq|x_1-x_2|/2$, which is a contradiction. Thus $F_j$ is injective.

Let us define $F_j^{-1}:F_j(\Omega\cap B(\xi,r))\ra\Omega\cap B(\xi,r)$ the inverse map in $\Omega\cap B(\xi,r)$, so that $F_j(F_j^{-1}(y))=y$ for each $y\in F_j(\Omega\cap B(\xi,r))$.  We observe that
\begin{multline}\nonumber
	\int_{\Omega\cap B(\xi,r)}\Big(\dashint_{B(x_j,\hat c_1\delta(x_j))}|H|^q\,dm\Big)^{\frac1q}\,dm(x)=\int_{\Omega}Q(x)\,dm(x)<2\int_\Omega Q(x)JF_j(x)\,dm(x)\\=2\int_{\bb R^{n+1}} Q(x)JF_j(x)\,dm(x)  =2\int_{\bb R^{n+1}}\Big[\sum_{x\in F_j^{-1}\{y\}}Q(x)\Big]\,dm(y)=2\int_{F_j(\Omega\cap B(\xi,r))}Q(F_j^{-1}(y))\,dm(y)\\ \leq2\int_{\Omega\cap B(\xi,2r)}\Big(\dashint_{B(y,\hat c_1\delta(y))}|H|^q\,dm\Big)^{\frac1q}\,dm(y),
\end{multline}
where in the third line we used the change of variables for Lipschitz maps \cite[Theorem 3.9]{eg1}, and in the last line we used the definition of $Q$ and the fact that $F_j(\Omega\cap B(\xi,r))\subseteq\Omega\cap B(\xi,2r)$. From these estimates, the claim (\ref{eq.claim2}) follows.\hfill{$\square$}

\subsection{Proof of Proposition \ref{prop.duality}}\label{sec.proofduality} 
To prove the proposition, we will show that the operators $\wt{\m N}$ and $\n C$, as defined in Section \ref{sec.main}, can be   discretized. After this, we will   appeal to the discrete duality result of Hyt\"onen and Ros\'en \cite[Theorem 2.2]{hr13} to finish the proof. 

Before we get started though, let us record a technical lemma, which is Lemma 3.5 in \cite{hr13} when $\Omega=\bb R^{n+1}_+$. This lemma is what essentially allows us to obtain ``change of aperture'' results, and it is needed in the proofs of Lemmas \ref{lm.ns} and \ref{lm.nd}.  Its proof in our case is very similar to that of \cite[Lemma 3.5]{hr13}, and thus we omit it.
\begin{lemma}\label{lm.lc} Let $f,g$ be two non-negative functions on $\partial\Omega$. Assume that there exist constants $c_1,C_2\in(0,\infty)$ such that for every $\xi\in\partial\Omega$, the bound $f(\xi)>\tau$ implies that there exists a Borel set $E=E(\xi)\subset\partial\Omega$ such that $g>c_1\tau$ on $E$, and
	\begin{equation}\label{eq.lc1}
		0<\big(\sup_{\zeta\in E}|\zeta-\xi|\big)^n\leq C_2\sigma(E).
	\end{equation}
	Then there exists $C_3\in(0,\infty)$ such that for any $p\in[1,\infty]$, $\Vert f\Vert_{L^p(\partial\Omega)}\leq C_3\Vert g\Vert_{L^p(\partial\Omega)}$. 
\end{lemma}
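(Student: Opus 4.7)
The plan is to reduce the $L^p$ inequality to the distributional estimate
\[
\sigma\{\xi\in\partial\Omega:f(\xi)>\tau\}\leq C\,\sigma\{\xi\in\partial\Omega:g(\xi)>c_1\tau\}
\qquad\text{for every }\tau>0,
\]
since once this is in hand the case $p\in[1,\infty)$ follows from the layer-cake formula,
\[
\|f\|_{L^p}^p=p\int_0^\infty\tau^{p-1}\sigma\{f>\tau\}\,d\tau\leq C\,p\int_0^\infty\tau^{p-1}\sigma\{g>c_1\tau\}\,d\tau=C\,c_1^{-p}\|g\|_{L^p}^p,
\]
while the case $p=\infty$ is immediate from the same distributional bound.

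To prove the distributional inequality I would fix $\tau>0$ and assume without loss of generality that $\sigma\{g>c_1\tau\}<\infty$, for otherwise the estimate is trivial. Under this reduction, for every $\xi\in\{f>\tau\}$ the hypothesis furnishes a Borel set $E(\xi)\subset\partial\Omega$ on which $g>c_1\tau$, and the number $r(\xi):=\sup_{\zeta\in E(\xi)}|\zeta-\xi|$ satisfies
\[
0<r(\xi)^n\leq C_2\,\sigma(E(\xi))\leq C_2\,\sigma\{g>c_1\tau\}.
\]
In particular $r(\xi)$ is uniformly bounded in $\xi$, which is exactly the hypothesis required to apply the $5r$-covering lemma to the family $\{B(\xi,r(\xi))\}_{\xi\in\{f>\tau\}}$. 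That yields a disjoint subfamily $\{B(\xi_j,r(\xi_j))\}_j$ with
\[
\{f>\tau\}\subset\bigcup_{j}B(\xi_j,5r(\xi_j))\cap\partial\Omega.
\]

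Now the $n$-Ahlfors regularity of $\sigma$ gives $\sigma(B(\xi_j,5r(\xi_j))\cap\partial\Omega)\lesssim r(\xi_j)^n$, and the hypothesis gives $r(\xi_j)^n\leq C_2\,\sigma(E(\xi_j))$. Since $E(\xi_j)\subset B(\xi_j,r(\xi_j))$ and the balls $B(\xi_j,r(\xi_j))$ are disjoint, so are the sets $E(\xi_j)$, hence
\[
\sigma\{f>\tau\}\leq\sum_j\sigma(B(\xi_j,5r(\xi_j))\cap\partial\Omega)\lesssim\sum_j r(\xi_j)^n\lesssim\sum_j\sigma(E(\xi_j))=\sigma\Big(\bigcup_j E(\xi_j)\Big)\leq\sigma\{g>c_1\tau\},
\]
which is the desired distributional bound with an absolute constant depending only on $c_1$, $C_2$, and the $n$-Ahlfors regularity constant of $\sigma$.

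The only mild subtlety is securing a uniform bound on the radii $r(\xi)$ so that the $5r$-covering lemma applies, which is why the reduction to $\sigma\{g>c_1\tau\}<\infty$ is made at the start; after that the argument is a clean Vitali-type covering, and I do not expect any further technical obstacle, since $\Omega$ being bounded or having unbounded boundary plays no role beyond ensuring Ahlfors regularity is globally available.
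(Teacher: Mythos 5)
Your argument is correct and matches the intended approach: the paper omits the proof, deferring to Hytönen--Rosén, Lemma 3.5, whose proof is the same distributional-estimate-plus-Vitali-covering argument you give; the layer-cake reduction and the $p=\infty$ endpoint are handled correctly, and a single $C_3$ works uniformly in $p$ since $C^{1/p}\leq\max(C,1)$ for $p\geq1$.

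One small technicality to patch: since $r(\xi)=\sup_{\zeta\in E(\xi)}|\zeta-\xi|$, you only have $E(\xi)\subset\overline{B(\xi,r(\xi))}$, the \emph{closed} ball, so pairwise disjointness of the open Vitali balls $B(\xi_j,r(\xi_j))$ does not immediately give pairwise disjointness of the sets $E(\xi_j)$, which your display uses when it replaces $\sum_j\sigma(E(\xi_j))$ by $\sigma\bigl(\bigcup_j E(\xi_j)\bigr)$. The cleanest fix is to apply the $5r$-covering lemma to the inflated family $\{B(\xi,2r(\xi))\}$ instead; then $E(\xi)\subset\overline{B(\xi,r(\xi))}\subset B(\xi,2r(\xi))$, the chosen open balls are disjoint and each contains its $E(\xi_j)$, and the rest of the computation runs unchanged with $5r(\xi_j)$ replaced by $10r(\xi_j)$.
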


\subsubsection{The discrete model of Hyt\"onen and Ros\'en} Let us quickly describe the particular version of Theorem 2.2 of \cite{hr13} which we shall use. Let $\m W$ be a $1/2$-Whitney decomposition of $\Omega$  (see Section \ref{sec.whitney}), and define $w^*(Q)$ as in (\ref{eq.wq2}) for $\theta$ small enough. For fixed $q\in[1,\infty]$, let $(u)=(u_Q)_{Q\in\m D_\sigma}$ be a sequence of functions $u_Q\in L^q(w^*(Q))$, and define the discrete non-tangential maximal function (acting on sequences $(u)$)  by
\[
\m N^s_q(u)(\xi):=\sup_{Q\in\m D_\sigma:\xi\in Q}m(w^*(Q))^{-1/q}\Vert u_Q\Vert_{L^q(w^*(Q))},\qquad\xi\in\partial\Omega.
\]
For a sequence $(H)=(H_Q)_{Q\in\m D_\sigma}$ where $H_Q\in L^{q'}(w^*(Q))$, we define the discrete Carleson functional (acting on sequences $(H)$) by
\[
\n C_{q'}^s(H)(\xi):=\sup_{Q\in\m D_\sigma:\xi\in Q}\frac1{\sigma(Q)}\sum_{R\in\m D_\sigma: R\subseteq Q}m(w^*(R))^{1-\frac1{q'}}\Vert H_R\Vert_{L^{q'}(w^*(R))},\qquad\xi\in\partial\Omega.
\]
Let $p\in[1,\infty)$ and $p'$ its H\"older conjugate. Then (\cite[Theorem 2.2]{hr13})
\begin{multline}\label{eq.hr1}
\sum_{Q\in\m D_\sigma}\Big|\int_{w^*(Q)}u_QH_Q\,dm\Big|\lesssim\Vert\m N_q^s(u)\Vert_{L^p(\partial\Omega)}\Vert\n C_{q'}^s(H)\Vert_{L^{p'}(\partial\Omega)},\quad \text{ for any } u_Q\in L^q(w^*(Q)), H_Q\in L^{q'}(w^*(Q)).
\end{multline}
\begin{equation}\label{eq.hr2}
\Vert\m N_q^s(u)\Vert_{L^p(\partial\Omega)}\lesssim\sup_{\Vert\n C_{q'}^s(H)\Vert_{L^{p'}(\partial\Omega)}=1}\sum_{Q\in\m D_\sigma}\Big|\int_{w^*(Q)}u_QH_Q\,dm\Big|,\qquad u_Q\in L^q(w^*(Q)),
\end{equation}
\begin{equation}\label{eq.hr3}
\Vert\n C_{q'}^s(H)\Vert_{L^{p'}(\partial\Omega)}\lesssim\sup_{\Vert\m N_q^s(u)\Vert_{L^{p}(\partial\Omega)}=1}\sum_{Q\in\m D_\sigma}\Big|\int_{w^*(Q)}u_QH_Q\,dm\Big|,\qquad H_Q\in L^{q'}(w^*(Q)).
\end{equation}

\subsubsection{Passing from functions to sequences and viceversa}\label{sec.passing} In order to use the estimates (\ref{eq.hr1})-(\ref{eq.hr3}) to prove Proposition \ref{prop.duality}, we need to pass from the discrete model to our setting. As a first step, we need a natural way to identify sequences $(u_Q)_{Q\in\m D_\sigma}$ with functions $u\in L^q_{\loc}(\Omega)$.

Given $u\in L^q_{\loc}(\Omega)$, we define a sequence $S(u)=(u_Q)_{Q\in\m D_\sigma}$ as follows: for each $Q\in\m D_\sigma$, let $u_Q:=u|_{w^*(Q)}\in L^q(w^*(Q))$. Given a sequence $(u_Q)_{Q\in\m D_\sigma}$ with $u_Q\in L^q(w^*(Q))$, we construct a function $F(u)\in L^q_{\loc}(\Omega)$ as follows: 
\[
F(u)(x):=\frac1{\sum_{Q\in\m D_\sigma}\1_{w^*(Q)}(x)}\sum_{Q\in\m D_\sigma}u_Q(x)\1_{w^*(Q)}(x),\qquad x\in\Omega.
\]
Note that $1\leq\sum_{Q\in\m D_\sigma}\1_{w^*(Q)}(x)\leq C$ for every $x\in\Omega$. Moreover, for any function $u\in L^q_{\loc}(\Omega)$, we have that $F(S(u))=u$ pointwise in $\Omega$, but   $S\circ F$ is not the identity on sequences.

Let us now see that with these maps, we are able to extend the estimates (\ref{eq.hr1})-(\ref{eq.hr3}) to dyadic versions of $\wt{\m N}$ and $\n C$ acting on functions. Let
\[
\m N_q^du(\xi):=\sup_{Q\in\m D_\sigma:\xi\in Q}m(w^*(Q))^{-1/q}\Vert u\Vert_{L^q(w^*(Q))},\qquad\xi\in\partial\Omega, u\in L^q_{\loc}(\Omega),
\]
\[
\n C_{q'}^dH(\xi):=\sup_{Q\in\m D_\sigma:\xi\in Q}\frac1{\sigma(Q)}\sum_{R\in\m D_\sigma: R\subseteq Q}m(w^*(R))^{1-\frac1{q'}}\Vert H\Vert_{L^{q'}(w^*(R))},\quad~\xi\in\partial\Omega, H\in L^{q'}_{\loc}(\Omega).
\]
It is easy to see then that
\begin{equation}\label{eq.nsg}
\m N_q^sS(u)=\m N_q^du,\qquad\n C_{q'}^sS(H)=\n C_{q'}^dH,
\end{equation}
pointwise on $\partial\Omega$, for any $u\in L^q_{\loc}(\Omega), H\in L^{q'}_{\loc}(\Omega)$. The behavior of $F(u)$ is a bit more subtle in our setting; we record it in the following lemma.

\begin{lemma}\label{lm.ns} Let $p\in[1,\infty)$, $q\in[1,\infty]$, and $p'$,$q'$ the H\"older conjugates. Suppose that either $\Omega$ is bounded, or $\partial\Omega$ is unbounded. Then
\begin{equation}\label{eq.ns}
\Vert\m N_q^dF(u)\Vert_{L^p(\partial\Omega)}\approx\Vert\m N^s_{q}u\Vert_{L^p(\partial\Omega)},\qquad\text{for each }(u)=(u_Q)_{Q\in\m D_\sigma},
\end{equation}
and
\begin{equation}\label{eq.cs}
\Vert\n C_{q'}^dF(H)\Vert_{L^{p'}(\partial\Omega)}\approx\Vert\n C_{q'}^sH\Vert_{L^{p'}(\partial\Omega)},\qquad\text{for each }(H_Q)_{Q\in\m D_\sigma},
\end{equation}
\end{lemma}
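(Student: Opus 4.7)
The plan rests on exploiting a single geometric observation about the averaging map $F$: on each Whitney region $w^*(Q)$, only finitely many ``neighbors'' $Q'$ of $Q$ contribute to the sum defining $F(u)|_{w^*(Q)}$. I would declare $Q\sim Q'$ iff $w^*(Q)\cap w^*(Q')\neq\varnothing$ and first verify the standard combinatorial properties, all following from the definitions in Section \ref{sec.whitney} and the $n$-Ahlfors regularity of $\sigma$: $Q\sim Q'$ forces $\ell(Q)=\ell(Q')$ and $\dist(Q,Q')\leq C\ell(Q)$, so $m(w^*(Q))\approx m(w^*(Q'))$ and $\sigma(Q)\approx \sigma(Q')$; the number of neighbors of each $Q$ is uniformly bounded; and hence $1\leq N(x)\leq C$ on $\Omega$.

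For the $\lesssim$ direction in (\ref{eq.ns}), I would use the pointwise bound $|F(u)(x)|\leq \sum_{Q'\sim Q}|u_{Q'}(x)|\1_{w^*(Q')}(x)$ on each $w^*(Q)$, which after taking $L^q$ norms and normalizing by $m(w^*(Q))^{1/q}$ gives
\[
m(w^*(Q))^{-1/q}\|F(u)\|_{L^q(w^*(Q))}\leq C\max_{Q'\sim Q} m(w^*(Q'))^{-1/q}\|u_{Q'}\|_{L^q(w^*(Q'))}.
\]
Taking the supremum over $Q\ni \xi$: whenever $\m N_q^d F(u)(\xi)>\tau$, there is a pair $(Q,Q')$ with $Q\ni \xi$, $Q'\sim Q$, and the right-hand quantity exceeds $\tau/C$. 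Setting $E=Q'$, we have $\m N_q^s u>\tau/C$ on $E$, $\sigma(E)\approx \ell(Q)^n$, and $\sup_{\zeta\in E}|\zeta-\xi|\leq C\ell(Q)$, so the hypothesis of Lemma \ref{lm.lc} is met and yields $\|\m N_q^d F(u)\|_{L^p}\lesssim \|\m N_q^s u\|_{L^p}$. The $\lesssim$ direction of (\ref{eq.cs}) is analogous: apply the pointwise bound to $F(H)$, sum the weighted $L^{q'}$ norms over $R\subseteq Q$, swap the resulting double sum $\sum_{R\subseteq Q}\sum_{R'\sim R}$, and note that the inner $R'$ then ranges over a finite union of dyadic cubes $Q^{(j)}$ with $\ell(Q^{(j)})\approx \ell(Q)$ and $\dist(Q^{(j)},Q)\leq C\ell(Q)$; dividing by $\sigma(Q)\approx \sigma(Q^{(j)})$ and invoking Lemma \ref{lm.lc} closes this half.

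The reverse $\gtrsim$ bounds constitute the main obstacle, because signed cancellations among neighboring entries of a sequence could in principle wipe out information when passing from $u$ to $F(u)$. My preferred route is to argue directly via a ``core'' subset $E_Q\subset w^*(Q)\setminus\bigcup_{Q'\neq Q}w^*(Q')$, where by construction $F(u)\equiv u_Q/N$; a geometric inspection of the Whitney--to--boundary--cube assignment $b(\cdot)$, using that distinct $Q,Q'$ with $\ell(Q)=\ell(Q')$ are separated and at least one Whitney cube of side $\ell(Q)$ lies near $Q$ but not $Q'$, shows $m(E_Q)\geq c\,m(w^*(Q))$, yielding $\|F(u)\|_{L^q(w^*(Q))}\gtrsim \|u_Q\|_{L^q(E_Q)}$ and, after combining with the discrete change-of-aperture argument from the $\lesssim$ direction, the pointwise inequality $\m N_q^s u\lesssim \m N_q^d F(u)$ modulo a mild shift in $\xi$ (handled by Lemma \ref{lm.lc}). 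The same core argument handles (\ref{eq.cs}); alternatively, one may establish only the $\lesssim$ halves and then deduce the $\gtrsim$ halves a posteriori from Proposition \ref{prop.duality} applied to the functions $F(u)$ and $F(H)$, noting that for the proof of that proposition only the $\lesssim$ halves are in fact used (the discrete-to-continuous transfer of (\ref{eq.hr1})-(\ref{eq.hr3}) proceeds via the identity $\sum_Q\int_{w^*(Q)} u\,H_Q=\int_\Omega u\cdot NF(H)$ and only needs $\|\n C_{q'}F(H)\|_{L^{p'}}\lesssim \|\n C_{q'}^s H\|_{L^{p'}}$, and symmetrically). I expect the geometric verification that $m(E_Q)\gtrsim m(w^*(Q))$ to require the most care, since it must rule out degenerate configurations in which $w(Q)=w(Q')$ for distinct $Q,Q'$.
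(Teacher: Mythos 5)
Your $\lesssim$ halves follow essentially the paper's route: pointwise domination by a bounded number of neighbors, then Lemma \ref{lm.lc} to absorb the lateral and scale shift, exactly as the paper does. Your observation that only the $\lesssim$ halves are needed downstream (for Lemma \ref{lm.dn} and hence Proposition \ref{prop.duality}) is also correct and worth noting.

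For the $\gtrsim$ halves you have overcomplicated the matter, and your preferred argument has a genuine gap. The set $E_Q=w^*(Q)\setminus\bigcup_{Q'\neq Q}w^*(Q')$ is generically empty, not of comparable measure. Indeed, if $x$ lies in the bulk of a Whitney cube $I$ with $Q\in b(I)$, then $x\in I^*\subset w^*(Q')$ for \emph{every} $Q'\in b(I)$, and the definition of $b(I)$ --- all boundary cubes of side $\ell(I)$ within distance $\frac{256}{\hat c}\diam I$ of $I$ --- typically produces many such $Q'$. Already in the half-space no Whitney cube has a singleton $b(I)$, so $m(E_Q)=0$ and the claimed estimate $m(E_Q)\gtrsim m(w^*(Q))$ fails outright. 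What the paper does instead is simply reduce to $u_Q\geq 0$ and then observe that $F(u)\geq u_Q/N\gtrsim u_Q$ pointwise on $w^*(Q)$ (since $1\leq N\leq C$), which gives $\|u_Q\|_{L^q(w^*(Q))}\lesssim\|F(u)\|_{L^q(w^*(Q))}$ directly and hence $\m N_q^su\lesssim\m N_q^dF(u)$ pointwise in one line; (\ref{eq.cs})'s $\gtrsim$ half is the same observation. Your fallback via Proposition \ref{prop.duality} is logically sound (the proof of that proposition only invokes the $\lesssim$ halves, so there is no circularity, and the chain $\sum_Q\int u_QH_Q\lesssim\int F(u)F(H)\lesssim\|\m N_q^dF(u)\|_{L^p}\|\n C_{q'}^sH\|_{L^{p'}}$ closes for non-negative sequences), but it is a large detour for a bound available immediately from $u_Q\lesssim F(u)$. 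Your instinct that the sign reduction is not automatic for the $\gtrsim$ direction (since $|F(u)|\leq F(|u|)$ goes the wrong way) is a fair observation about the statement as written, but both the paper's proof and all later uses operate under the non-negative reduction, so this does not affect soundness; stating it explicitly would simply make the WLOG step cleaner.
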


\noindent\emph{Proof.} {\bf Proof of (\ref{eq.ns}).} Let us prove the direction $\gtrsim$ first; in fact, we show that
\begin{equation}\label{eq.nsp}
\m N_q^su\lesssim\m N_q^dF(u)\quad\text{pointwise on }\partial\Omega.
\end{equation}
Without loss of generality, we may assume that $u_Q\geq0$ for each $Q\in\m D_\sigma$, and then, by the definition of $F(u)$,
\begin{equation}\label{eq.fu1}
	u_Q(x)\lesssim F(u)(x),\qquad\text{for all }x\in\Omega.
\end{equation} 
Fix $\xi\in\partial\Omega$ and $Q\in\m D_\sigma$ such that $\xi\in Q$. Then
\begin{equation}\label{eq.ns1}
\Big(\int_{w^*(Q)}|u_Q|^q\,dm\Big)^{\frac1q}\lesssim\Big(\int_{w^*(Q)}|F(u)|^q\,dm\Big)^{\frac1q},
\end{equation}
so that (\ref{eq.nsp}) follows. 

We now consider the direction $\lesssim$ in (\ref{eq.ns}). We will show that the hypotheses of Lemma \ref{lm.lc} are verified. Without loss of generality, we may again assume that $u_Q\geq0$ for each $Q\in\m D_\sigma$. Fix $\xi\in\partial\Omega$ and $\tau>0$, and assume that $\m N_q^dF(u)(\xi)>\tau$. Then there exists $Q\in\m D_\sigma$ with $\xi\in Q$ so that
\begin{equation}\label{eq.ns2}
\tau<\Big(\dashint_{w^*(Q)}|F(u)|^q\,dm\Big)^{\frac1q}\lesssim m(w^*(Q))^{-\frac1q}\sum_{\substack{S\in\m D_\sigma\\w^*(S)\cap w^*(Q)\neq\varnothing}}\Big(\int_{w^*(S)}|u_S|^q\,dm\Big)^{\frac1q},
\end{equation}
where we used the Minkowski inequality and the definition of $F$. Let $\m F_Q$ be the family of all $S\in\m D_\sigma$ such that $w^*(S)\cap w^*(Q)\neq\varnothing$. If $S\in\m F_Q$, then by standard geometric arguments, we may see that $\ell(S)\approx\ell(Q)$ and $\dist(S,Q)\lesssim\ell(Q)$; these facts imply that $\card\m F_Q\approx1$ and that $m(w^*(S))\approx m(w^*(Q))$ for all $S\in\m F_Q$. With these observations in mind, from (\ref{eq.ns2}) we deduce that there exists $\wt S\in\m F_Q$ so that
\begin{equation}\label{eq.ns3}\nonumber
\tau\lesssim\Big(\dashint_{w^*(\wt S)}|u_{\wt S}|^q\,dm\Big)^{\frac1q}\leq\inf_{\zeta\in\wt S}\m N_q^su(\zeta).
\end{equation}
In other words, there exists $c_1>0$ so that $\m N_q^su\geq c_1\tau$ on $\wt S$. On the other hand, since
\[
\sup_{\zeta\in\wt S}|\zeta-\xi|\leq\diam S+\dist(S,Q)+\diam Q\lesssim\ell(S)+\ell(Q)\approx\ell(S),
\]
it follows that the hypotheses of Lemma \ref{lm.lc} are satisfied, and we immediately conclude the desired estimate.

{\bf Proof of (\ref{eq.cs}).} The direction $\gtrsim$ is straightforward by applying the estimate (\ref{eq.ns1}). We show the direction $\lesssim$; once again, our strategy is to show that the hypotheses of Lemma \ref{lm.lc} are verified. We assume without loss of generality that $H_Q\geq0$ for each $Q\in\m D_\sigma$. Fix $\xi\in\partial\Omega$ and $\tau>0$, and assume that $\n C_{q'}^dF(H)(\xi)>\tau$. Then there exists $Q\in\m D_\sigma$ with $\xi\in Q$ so that
\begin{multline}\label{eq.cs1}
\tau<\frac1{\sigma(Q)}\sum_{\substack{R\in\m D_\sigma\\R\subseteq Q}}m(w^*(R))^{1-\frac1{q'}}\Big(\int_{w^*(R)}|F(H)|^{q'}\,dm\Big)^{\frac1{q'}}\\ \lesssim\frac1{\sigma(Q)}\sum_{\substack{R\in\m D_\sigma\\R\subseteq Q}}\sum_{S\in\m F_R}m(w^*(S))^{1-\frac1{q'}}\Big(\int_{w^*(S)}|H_S|^{q'}\,dm\Big)^{\frac1{q'}},
\end{multline}
where we used a similar estimate as in (\ref{eq.ns2}), and $\m F_R$ is the family of all $S\in\m D_\sigma$ such that $w^*(S)\cap w^*(R)\neq\varnothing$. If $S\in\m F_R$, then $\ell(S)\leq2\ell(R)\leq2\ell(Q)$, and
\[
\dist(S,Q)\leq\dist(S,R)\lesssim\ell(R)\leq\ell(Q).
\]
For any $S\in\m F_R$ and $R\in\m D_\sigma$ with $R\subseteq Q$, let $Q_S\in\m D_\sigma$ be the unique cube satisfying that $S\subseteq Q_S$ and $\ell(Q_S)=2\ell(Q)$. By the previous remarks,  there is a uniformly bounded number of distinct $Q_S$; more precisely $\cup_R\{Q_S\}_{S\in\m F_R}=\{Q_j\}_{j=1}^N$. Moreover, for any $j=1,\ldots,N$ and $S\in\m D_\sigma$ with $S\subset Q_j$, we have that $\card\{R\in\m D_\sigma:R\subseteq Q, S\in\m F_R\}\lesssim1$. With these facts in mind, from (\ref{eq.cs1}) we obtain that
\begin{multline}
\tau\lesssim\frac1{\sigma(Q)}\sum_{j=1}^N\sum_{\substack{S\in\m D_\sigma\\S\subseteq Q_j}}m(w^*(S))^{1-\frac1{q'}}\Big(\int_{w^*(S)}|H_S|^{q'}\,dm\Big)^{\frac1{q'}}\\ \lesssim\frac1{\sigma(Q_k)}\sum_{\substack{R\in\m D_\sigma\\R\subseteq Q_k}}m(w^*(R))^{1-\frac1{q'}}\Big(\int_{w^*(R)}|H_R|^{q'}\,dm\Big)^{\frac1{q'}}\leq\inf_{\zeta\in Q_k}\n C_{q'}^sH(\zeta),
\end{multline}
for some $k\in\{1,\ldots,N\}$. It is also not hard to see that $\sup_{\zeta\in Q_k}|\zeta-\xi|\lesssim\ell(Q_k)$, so that the hypotheses of Lemma \ref{lm.lc} are satisfied. The desired estimate follows.\hfill{$\square$}

With Lemma \ref{lm.ns} at hand, we are ready to interpret the estimates (\ref{eq.hr1})-(\ref{eq.hr3}) for the dyadic operators acting on functions. The following lemma is an analogue of \cite[Corollary 2.3]{hr13}.

\begin{lemma}\label{lm.dn} Let $p\in[1,\infty)$, $q\in[1,\infty]$ and  $\frac1p+\frac1{p'}=\frac1q+\frac1{q'}=1$. Assume that either $\Omega$ is bounded, or $\partial\Omega$ is unbounded. Then 
\begin{equation}\label{eq.hr4}
\Vert uH\Vert_{L^1(\Omega)}\lesssim\Vert\m N_q^du\Vert_{L^p(\partial\Omega)}\Vert\n C_{q'}^dH\Vert_{L^{p'}(\partial\Omega)},\qquad u\in L^q_{\loc}(\Omega), H\in L^{q'}_{\loc}(\Omega),
\end{equation}
\begin{equation}\label{eq.hr5}
\Vert\m N_q^du\Vert_{L^p(\partial\Omega)}\lesssim\sup_{\Vert\n C_{q'}^dH\Vert_{L^{p'}(\partial\Omega)}=1}\Vert uH\Vert_{L^1(\Omega)},\qquad u\in L^q_{\loc}(\Omega),
\end{equation}
\begin{equation}\label{eq.hr6}
\Vert\n C_{q'}^dH\Vert_{L^{p'}(\partial\Omega)}\lesssim\sup_{\Vert\m N_q^du\Vert_{L^p(\partial\Omega)}=1}\Vert uH\Vert_{L^1(\Omega)},\qquad H\in L^{q'}_{\loc}(\Omega).
\end{equation}
\end{lemma}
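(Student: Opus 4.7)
The plan is to view Lemma \ref{lm.dn} as a transcription of the discrete duality results \eqref{eq.hr1}--\eqref{eq.hr3} of Hyt\"onen--Ros\'en, using the sequence/function correspondence $S, F$ introduced in Section \ref{sec.passing}. The two crucial ingredients are the exact identities \eqref{eq.nsg}, namely $\m N_q^s S(u) = \m N_q^d u$ and $\n C_{q'}^s S(H) = \n C_{q'}^d H$, together with the comparability stated in Lemma \ref{lm.ns}. The only bookkeeping issue is to pass from the continuous $L^1$-pairing $\int_\Omega uH\,dm$ to the discrete pairing $\sum_Q \int_{w^*(Q)} u_Q H_Q \,dm$ and back; this is handled by the finite overlap $N(x) := \sum_{Q\in\m D_\sigma}\1_{w^*(Q)}(x)\in[1,C]$, which gives the pointwise comparison
\[
|u(x)H(x)| \,\leq\, \sum_{Q\in\m D_\sigma} |u(x)H(x)|\1_{w^*(Q)}(x) \,\leq\, C\,|u(x)H(x)|,
\]
and, for any sequence $(H_Q)$ with $H_Q$ supported in $w^*(Q)$, the identity $\sum_Q H_Q = N\cdot F((H_Q))$, which yields $\big|\sum_Q H_Q(x)\big|\leq C\,|F((H_Q))(x)|$.

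To prove \eqref{eq.hr4}, I would apply the finite overlap property to bound $\|uH\|_{L^1(\Omega)}\leq \sum_Q \int_{w^*(Q)} |u_Q||H_Q|\,dm$ where $(u_Q)=S(u)$ and $(H_Q)=S(H)$. Then I invoke \eqref{eq.hr1} and use the identities \eqref{eq.nsg} to replace $\m N_q^s S(u)$ by $\m N_q^d u$ and $\n C_{q'}^s S(H)$ by $\n C_{q'}^d H$, obtaining the desired estimate.

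For \eqref{eq.hr5}, given $u\in L^q_{\loc}(\Omega)$, I first write $\|\m N_q^d u\|_{L^p}=\|\m N_q^s S(u)\|_{L^p}$ by \eqref{eq.nsg} and then apply \eqref{eq.hr2} to the sequence $S(u)$. For any $(H_Q)$ with $\|\n C_{q'}^s(H_Q)\|_{L^{p'}}=1$, I construct the function $H:=F((H_Q))\in L^{q'}_{\loc}(\Omega)$. The bound $\big|\sum_Q H_Q\big|\leq C\,|H|$ gives
\[
\sum_{Q\in\m D_\sigma}\Big|\int_{w^*(Q)} u_Q H_Q\,dm\Big| \,\leq\, \int_\Omega |u|\Big|\sum_{Q} H_Q\1_{w^*(Q)}\Big|\,dm \,\lesssim\, \|uH\|_{L^1(\Omega)},
\]
while Lemma \ref{lm.ns} yields $\|\n C_{q'}^d H\|_{L^{p'}}\lesssim \|\n C_{q'}^s(H_Q)\|_{L^{p'}}=1$. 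Taking the supremum over such $(H_Q)$ then reproduces \eqref{eq.hr5} up to constants. The proof of \eqref{eq.hr6} proceeds in a completely symmetric manner, starting from \eqref{eq.hr3}, using $S(H)$ and then transferring the test sequence to a test function via $F$ and Lemma \ref{lm.ns}.

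There is no genuine obstacle in this argument; the work is entirely mechanical once Lemma \ref{lm.ns} is in hand. The only points requiring care are (i) ensuring that the finite overlap constant $N(x)\leq C$ controls the passage between $\sum_Q H_Q$ and $F((H_Q))$, and (ii) using Lemma \ref{lm.ns} in the correct direction in each of \eqref{eq.hr5} and \eqref{eq.hr6} (namely, on the discrete side when converting a sequence produced by \eqref{eq.hr2} or \eqref{eq.hr3} into a test function for the continuous pairing).
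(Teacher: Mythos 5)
Your approach is essentially the paper's: pass to the discrete Hyt\"onen--Ros\'en model via the maps $S$ and $F$, invoke \eqref{eq.hr1}--\eqref{eq.hr3}, and transfer back using \eqref{eq.nsg} and Lemma~\ref{lm.ns}. The argument for \eqref{eq.hr4} is correct (after the trivial reduction to nonnegative $u,H$).

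For \eqref{eq.hr5}, however, the displayed inequality
\[
\sum_{Q\in\m D_\sigma}\Big|\int_{w^*(Q)} u_Q H_Q\,dm\Big| \leq \int_\Omega |u|\,\Big|\sum_{Q} H_Q\1_{w^*(Q)}\Big|\,dm
\]
is false in general: the left side registers no cancellation among the $Q$'s, whereas the right side does. The Whitney regions $w^*(Q)$ overlap, and on an overlap $w^*(Q_1)\cap w^*(Q_2)$ one may take $H_{Q_1}=-H_{Q_2}\neq0$ to make the right side vanish while the left stays positive. The triangle inequality only gives $\sum_Q\big|\int u_Q H_Q\big|\leq\int|u|\sum_Q|H_Q|\1_{w^*(Q)}$, and $\sum_Q|H_Q|\1_{w^*(Q)}$ dominates $\big|\sum_Q H_Q\1_{w^*(Q)}\big|$, not the reverse. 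The fix is exactly the paper's: assume WLOG $u\geq0$, so $u_Q\geq0$; then the supremum in \eqref{eq.hr2} may be restricted to sequences with $H_Q\geq0$, since replacing $H_Q$ by $|H_Q|$ leaves $\n C_{q'}^s$ unchanged and does not decrease $\sum_Q\big|\int u_QH_Q\big|$. Once $H_Q\geq0$, one has $\sum_Q|H_Q|\1_{w^*(Q)}=\sum_Q H_Q\1_{w^*(Q)}=N\cdot F((H_Q))$ with $1\leq N\leq C$ (equivalently, the pointwise bound \eqref{eq.fu1}), and your chain of estimates goes through. The same reduction is needed for \eqref{eq.hr6}. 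You correctly flag the finite-overlap bookkeeping and the directionality of Lemma~\ref{lm.ns}, but the reduction to a nonnegative test sequence is the one step that genuinely requires care and that your write-up omits.
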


\noindent\emph{Proof.} {\bf Proof of (\ref{eq.hr4}).} We may assume that $u\geq0$ and $H\geq0$. Then we see that
\begin{equation}\nonumber
\int_\Omega uH\,dm\approx\sum_{Q\in\m D_\sigma}\int_{w^*(Q)}uH\,dm=\sum_{Q\in\m D_\sigma}\int_{w^*(Q)}u_QH_Q\,dm \lesssim\Vert\m N_q^sS(u)\Vert_{L^p(\partial\Omega)}\Vert\n C_{q'}^sS(H)\Vert_{L^{p'}(\partial\Omega)},
\end{equation}
where  we used (\ref{eq.hr1}). Then (\ref{eq.hr4}) follows from the above bound and (\ref{eq.nsg}).

{\bf Proof of (\ref{eq.hr5}).} Assume without loss of generality that $u\geq0$. We have that
\begin{multline}\nonumber
\Vert\m N_q^du\Vert_{L^p(\partial\Omega)}=\Vert\m N_q^sS(u)\Vert_{L^p(\partial\Omega)} \lesssim\sup_{(H):\Vert\n C_{q'}^s(H)\Vert_{L^{p'}(\partial\Omega)}<\infty}\frac1{\Vert\n C_{q'}^s(H)\Vert_{L^{p'}(\partial\Omega)}}\sum_{Q\in\m D_\sigma}\int_{w^*(Q)}u_QH_Q\,dm\\ \lesssim \sup_{(H):\Vert\n C_{q'}^s(H)\Vert_{L^{p'}(\partial\Omega)}<\infty}\frac1{\Vert\n C_{q'}^dF(H)\Vert_{L^{p'}(\partial\Omega)}}\sum_{Q\in\m D_\sigma}\int_{w^*(Q)}uF(H)\,dm\\ \lesssim\sup_{H:\Vert\n C_{q'}^dH\Vert_{L^{p'}(\partial\Omega)}<\infty}\frac1{\Vert\n C_{q'}^dH\Vert_{L^{p'}(\partial\Omega)}}\sum_{Q\in\m D_\sigma}\int_{w^*(Q)}uH\,dm \lesssim\sup_{\Vert\n C_{q'}^dH\Vert_{L^{p'}(\partial\Omega)}=1}\int_\Omega uH\,dm,
\end{multline}
where first we used (\ref{eq.nsg}), in the second line we used (\ref{eq.hr2}) and the definition of $S(u)$, in the third line we used (\ref{eq.cs}) and (\ref{eq.fu1}) (we have that $H_Q\geq0$ since $u_Q\geq0$), and in the last estimate we used the uniformly bounded overlap of the Whitney regions $w^*(Q)$.

{\bf Proof of (\ref{eq.hr6}).} This estimate is obtained similarly as (\ref{eq.hr5}), but using (\ref{eq.hr3}) and (\ref{eq.ns}) instead of (\ref{eq.hr2}) and (\ref{eq.cs}), respectively. We omit further details.\hfill{$\square$}

\subsubsection{Discretization of $\wt{\m N}$ and $\n C$}  
The following lemma shows that the  dyadic versions of $\wt{\m N}$ and $\n C$ are compatible with the non-dyadic versions that we use throughout this manuscript. In the case that $\Omega$ is the half-space, the lemma is shown in Propositions 3.6 and 3.7 in \cite{hr13}. Our proof has some similarities to that of \cite{hr13}; let us give the details for the benefit of the reader.
\begin{lemma}\label{lm.nd} Let $p\in[1,\infty)$, $q\in[1,\infty]$, and $p'$,$q'$ the H\"older conjugates. Suppose that either $\Omega$ is bounded, or $\partial\Omega$ is unbounded. Then
\begin{equation}\label{eq.nd}
\Vert\wt{\m N}_qu\Vert_{L^p(\partial\Omega)}\approx\Vert\m N^d_{q}u\Vert_{L^p(\partial\Omega)},\qquad\text{for each }u\in L^q_{\loc}(\Omega),
\end{equation}
and
\begin{equation}\label{eq.cd}
\Vert\n C_{q'}H\Vert_{L^{p'}(\partial\Omega)}\approx\Vert\n C_{q'}^dH\Vert_{L^{p'}(\partial\Omega)},\qquad\text{for each }H\in L^{q'}_{\loc}(\Omega).
\end{equation}
\end{lemma}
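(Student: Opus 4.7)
The proof proceeds by invoking Lemma \ref{lm.lc} in each direction of each equivalence, using Lemmas \ref{lm.ntchange} and \ref{lm.changeavg} freely to adjust the aperture $\alpha$ and averaging parameter $\hat c$ without affecting $L^p$ norms. The uniform template is: assuming the first quantity at $\xi$ exceeds $\tau$, produce a set $E = E(\xi) \subset \partial\Omega$ (typically a single dyadic cube $Q$) on which the second quantity is $\gtrsim \tau$, and then verify $(\sup_{\zeta\in E}|\zeta-\xi|)^n \lesssim \sigma(E)$.

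For (\ref{eq.nd}), the direction $\Vert\wt{\m N}_q u\Vert_{L^p} \lesssim \Vert\m N_q^d u\Vert_{L^p}$ runs as follows. If $\wt{\m N}_q u(\xi) > \tau$ via $x \in \gamma_\alpha(\xi)$, then taking $\hat c$ small (permissible by Lemma \ref{lm.changeavg}) ensures $B(x, \hat c\delta(x)) \subset I^* \subset w^*(Q)$ for the Whitney cube $I \ni x$ and any $Q \in b(I)$; a volume-normalization yields $m(w^*(Q))^{-1/q}\Vert u\Vert_{L^q(w^*(Q))} \gtrsim \tau$, so $\m N_q^d u \gtrsim \tau$ on $E := Q$, and $\dist(Q,\xi) \lesssim \ell(Q) \approx \sigma(Q)^{1/n}$ because $x \in \gamma_\alpha(\xi)$ and $Q \in b(I)$. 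The reverse direction: from $\dashint_{w^*(Q)}|u|^q > \tau^q$, pigeonhole over the boundedly many $I^*$'s in $w^*(Q)$ to pick $I_0^*$ with $m_{q,I_0^*}(u) \gtrsim \tau$, cover $I_0^*$ by balls $B(x_j, \hat c\delta(x_j))$, and pigeonhole again to select $x_{j^*}$ with $m_{q, B(x_{j^*},\hat c\delta(x_{j^*}))}(u) \gtrsim \tau$; since $x_{j^*} \in \gamma_{\alpha'}(\zeta)$ for every $\zeta \in Q$ with $\alpha'$ sufficiently large, $\wt{\m N}_{\alpha',\hat c,q}u \gtrsim \tau$ on $E := Q$.

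For (\ref{eq.cd}), the direction $\Vert\n C_{q'}H\Vert_{L^{p'}} \lesssim \Vert\n C_{q'}^dH\Vert_{L^{p'}}$ proceeds by decomposing the ball $B(\xi,r)\cap\Omega$ realizing $\n C_{q'}H(\xi) > \tau$ into Whitney cubes $I$ (with $\ell(I) \lesssim r$), bounding $m_{q',B(x,\hat c\delta(x))}(H) \lesssim m_{q',I^*}(H)$ via $B(x,\hat c\delta(x)) \subset I^*$, and converting $\ell(I)^{n+1}m_{q',I^*}(H) \approx m(w^*(Q_I))^{1-1/q'}\Vert H\Vert_{L^{q'}(w^*(Q_I))}$ for $Q_I \in b(I)$. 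Since every such $Q_I$ has $\ell(Q_I), \dist(Q_I,\xi) \lesssim r$, all $Q_I$'s lie in the union of boundedly many dyadic cubes $\{Q_j^*\}$ at scale $\approx r$; pigeonhole selects some $Q_{j_0}^*$ on which the corresponding dyadic subsum is $\gtrsim \tau \sigma(Q_{j_0}^*)$, giving $\n C_{q'}^d H \gtrsim \tau$ on $E := Q_{j_0}^*$, which satisfies $\sup_{\zeta \in E}|\zeta - \xi| \lesssim r \approx \sigma(E)^{1/n}$.

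The remaining direction $\Vert\n C_{q'}^d H\Vert_{L^{p'}} \lesssim \Vert\n C_{q'}H\Vert_{L^{p'}}$ is the main obstacle. Suppose $\sum_{R\subseteq Q} m(w^*(R))^{1-1/q'}\Vert H\Vert_{L^{q'}(w^*(R))} > \tau\sigma(Q)$ for some $Q\ni\xi$. For each $R$, pigeonhole over the boundedly many Whitney cubes in $w^*(R)$, followed by a finite-ball cover argument inside the dominant $I_0^*$, produces a point $x_R^* \in w^*(R)$ with
\[
m_{q',B(x_R^*,\hat c\delta(x_R^*))}(H) \gtrsim m_{q',w^*(R)}(H).
\]
Choosing $\hat c$ small enough ensures that for every $x$ in the sub-ball $B_R := B(x_R^*,\hat c\delta(x_R^*)/4) \subset w^*(R)$, the inclusion $B(x_R^*,\hat c\delta(x_R^*)) \subset B(x,\delta(x)/2)$ holds, giving the pointwise transfer $m_{q',B(x_R^*,\hat c\delta(x_R^*))}(H) \lesssim m_{q',B(x,\delta(x)/2)}(H)$ uniformly for $x \in B_R$. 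Since $m(B_R) \approx \hat c^{n+1} m(w^*(R))$, integration yields $m(w^*(R))\cdot m_{q',w^*(R)}(H) \lesssim \int_{w^*(R)} m_{q',B(x,\delta(x)/2)}(H)\,dm(x)$; summing over $R \subseteq Q$ (using bounded overlap of the $w^*(R)$'s) produces $\int_{T_Q} m_{q',B(x,\delta(x)/2)}(H)\,dm(x) \gtrsim \tau\sigma(Q)$, where $T_Q := \bigcup_{R\subseteq Q} w^*(R) \subset B(\xi,C\ell(Q))\cap\Omega$. For any $\zeta \in Q$, $B(\zeta,2C\ell(Q)) \supset T_Q$ forces the continuous Carleson functional with auxiliary aperture $\hat c'' = 1/2$ to exceed $c\tau$ at $\zeta$. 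Taking $E := Q$ in Lemma \ref{lm.lc} and applying Lemma \ref{lm.changeavg} to return to the original $\hat c$ closes the proof. The main subtlety is the pointwise transfer from the single ball around $x_R^*$ to balls centered at every point of $B_R$, which dictates the small choice of $\hat c$ and produces the auxiliary aperture $\hat c'' = 1/2$.
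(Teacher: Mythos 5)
Your proposal is correct, but it takes a more uniform route than the paper, applying Lemma \ref{lm.lc} in all four directions, whereas the paper establishes three of them by \emph{pointwise} inequalities and reserves Lemma \ref{lm.lc} for the single direction where a pointwise bound genuinely fails. Concretely: for (\ref{eq.nd}), the paper shows $\m N_q^d u \lesssim \wt{\m N}_{\alpha,1/2,q}u$ pointwise (using $I^*\subset B(x,\delta(x)/2)$), and $\wt{\m N}_{\alpha,c_\theta,q}u\lesssim\m N_q^d u$ pointwise (by choosing $Q$ at the scale of $I$ \emph{containing} $\xi$, then verifying $Q\in b(I)$ using the constants from the definition of $b(I)$ and a small-aperture restriction $\alpha<255/257$); your Lemma \ref{lm.lc} arguments avoid having to engineer $Q\ni\xi$ at the cost of the double pigeonhole. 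For (\ref{eq.cd}), the paper proves $\n C_{q'}^d H\lesssim\n C_{1/2,q'}H$ pointwise, which is slicker than your pigeonhole-plus-transfer argument but hides a small constant issue (one must either refine the Whitney decomposition or temporarily allow $\hat c>1/2$ so that $w^*(R)\subset B(x,\hat c\delta(x))$ for $x\in I_R$); your approach of localizing to a ``heavy'' sub-ball $B_R$ around $x_R^*$ sidesteps that. One conceptual slip: you label $\Vert\n C_{q'}^d H\Vert_{L^{p'}}\lesssim\Vert\n C_{q'}H\Vert_{L^{p'}}$ as ``the main obstacle,'' but this is precisely the direction admitting a pointwise estimate — and indeed your own argument yields $\n C_{1/2,q'}H(\xi)\gtrsim\tau$ directly since $\xi\in Q$, so the final invocation of Lemma \ref{lm.lc} there is superfluous. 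The genuinely non-pointwise direction is $\Vert\n C_{q'}H\Vert_{L^{p'}}\lesssim\Vert\n C_{q'}^d H\Vert_{L^{p'}}$, because a ball $B(\xi,r)$ need not align with any single dyadic cube; there the paper agrees with you and uses Lemma \ref{lm.lc}. Finally, a cosmetic point: when shrinking the averaging parameter $\hat c$ in the non-tangential maximal function you should cite Lemma \ref{lm.ntchange}, not Lemma \ref{lm.changeavg} (the latter is the Carleson-side analogue).
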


\noindent\emph{Proof.} {\bf Proof of (\ref{eq.nd}).} Fix $u\in L^q_{\loc}(\Omega)$, and let us show the direction $\gtrsim$ first. In fact, we show  that
\begin{equation}\label{eq.ndp}
\m N_q^du\lesssim\wt{\m N}_{\alpha,\frac12,q}u\qquad\text{pointwise on }\partial\Omega \text{ for large enough }\alpha.
\end{equation}
Fix $\xi\in\partial\Omega$, and let $Q\in\m D_\sigma$ be any cube with $\xi\in Q$. Note that
\begin{equation}\label{eq.nd1}
\Big(\dashint_{w^*(Q)}|u|^q\,dm\Big)^{\frac1q}\approx\Big(\sum_{I\in\m W:Q\in b(I)}\dashint_{I^*}|u|^q\,dm\Big)^{\frac1q}\approx\max_{I\in\m W:Q\in b(I)}~\Big(\dashint_{I^*}|u|^q\,dm\Big)^{\frac1q},
\end{equation} 
where we used the definition of $w^*(Q)$, the fact  that $m(w^*(Q))\approx m(I^*)$ for any $I\in\m W$ with $I\cap w(Q)\neq\varnothing$, that the $I^*$ have uniformly bounded overlap depending only on $n$, and that there exists $N_0\in\bb N$ so that for any $Q\in\m D_\sigma$, $\card\{I\in\m W:Q\in b(I)\}\leq N_0$. Now fix $I\in\m W$ such that $Q\in b(I)$, and let $x\in I$. Then it is not hard to see that for some $\alpha>0$ large enough depending only on $n$, we have that $x\in\gamma_\alpha(\xi)$. Since $I^*\subset B(x,\delta(x)/2)$ and $\ell(I^*)\approx\delta(x)$, it follows that
\begin{equation}\label{eq.nde}
\Big(\dashint_{I^*}|u|^q\,dm\Big)^{\frac1q}\lesssim\Big(\dashint_{B(x,\delta(x)/2)}|u|^q\,dm\Big)^{\frac1q}\leq\sup_{y\in\gamma_\alpha(\xi)}\Big(\dashint_{B(y,\delta(y)/2)}|u|^q\,dm\Big)^{\frac1q}
\end{equation}
and so from this estimate and (\ref{eq.nd1}), the estimate (\ref{eq.ndp}) follows. 

We turn to the direction $\lesssim$ in (\ref{eq.nd}); actually we show that
\begin{equation}\label{eq.ndp2}
\wt{\m N}_{\alpha,c_\theta,q}u\lesssim\m N_q^du\qquad\text{pointwise on }\partial\Omega \text{ for small enough }\alpha,c_\theta.
\end{equation}
Fix $\xi\in\partial\Omega$ and $x\in\gamma_\alpha(\xi)$ for $\alpha\in(0,\frac{255}{257})$. Let $I\in\m W$ be the unique Whitney cube such that $x\in I$, and $Q\in\m D_\sigma$ the unique boundary cube such that $\ell(Q)=\ell(I)$ and $\xi\in Q$. We claim that   $Q\in b(I)$. To see this, from (\ref{eq.bi}) we only need to show that $\dist(I,Q)\leq2^9\diam I$, and indeed,
\[
\dist(I,Q)\leq|x-\xi|\leq(1+\alpha)\delta(x)\leq(1+\alpha)257\diam I<2^9\diam I.
\]
Thus $Q\in b(I)$, and so $B(x,c_\theta\delta(x))\subset I^*\subset w^*(Q)$ for some $c_\theta$ small depending only on $n$ and $\theta$. Since we also have that $m(B(x,c_\theta\delta(x)))\approx_\theta m(I^*)\approx m(w^*(Q))$, we deduce that
\begin{equation}\label{eq.ndp3}
\Big(\dashint_{B(x,c_\theta\delta(x))}|u|^q\,dm\Big)^{\frac1q}\lesssim\Big(\dashint_{w^*(Q)}|u|^q\,dm\Big)^{\frac1q},
\end{equation}
from which (\ref{eq.ndp2}) follows.

Finally, (\ref{eq.nd}) follows from (\ref{eq.ndp}),  (\ref{eq.ndp2}), and Lemma \ref{lm.ntchange}.

{\bf Proof of (\ref{eq.cd}).} Fix $H\in L^{q'}_{\loc}(\Omega)$, and we show the direction $\gtrsim$. We prove that
\begin{equation}\label{eq.cdp}
\n C_{q'}^dH\lesssim\n C_{\frac12,q'}H,\qquad\text{pointwise on }\partial\Omega.
\end{equation}
Fix $\xi\in\partial\Omega$ and $Q\in\m D_\sigma$ with $\xi\in Q$. Given any $R\in\m D_\sigma$ with $R\subseteq Q$, let $I_R$ be any one Whitney cube such that $I_R\cap w(R)\neq\varnothing$. Consider that
\begin{multline}\nonumber
\sum_{R\in\m D_\sigma: R\subseteq Q}m(w^*(R))\Big(\dashint_{w^*(R)}|H|^{q'}\,dm\Big)^{\frac1{q'}}\lesssim\sum_{R\in\m D_\sigma: R\subseteq Q}m(I_R)\Big(\dashint_{w^*(R)}|H|^{q'}\,dm\Big)^{\frac1{q'}}\\ \lesssim\sum_{R\in\m D_\sigma: R\subseteq Q}\int_{I_R}\Big(\dashint_{B(x,\delta(x)/2)}|H|^{q'}\,dm\Big)^{\frac1{q'}}\,dm(x) \lesssim\int_{B(\xi,C\ell(Q))\cap\Omega}\Big(\dashint_{B(x,\delta(x)/2)}|H|^{q'}\,dm\Big)^{\frac1{q'}}\,dm(x),
\end{multline}
where we used that $m(w^*(R))\approx m(I_R)$, and in the second line we used (\ref{eq.nd1}) and (\ref{eq.nde}). From the above bound and the fact that $\sigma(Q)\approx\ell(Q)^n$, the estimate (\ref{eq.cdp}) follows.

We now consider the direction $\lesssim$ in (\ref{eq.cd}). Here, it is not possible to prove a pointwise estimate; instead, we will verify that the hypotheses of Lemma \ref{lm.lc}  are satisfied. Fix $\xi\in\partial\Omega$ and assume that $\n C_{c_\theta,q'}H(\xi)>\tau$, where $c_\theta\in(0,1/2]$ is small.  By definition of $\n C_{c_\theta,q'}H$, there exists $r\in(0,\diam\Omega)$ so that
\begin{multline}\label{eq.cd1}
\tau r^n<\int_{B(\xi,r)\cap\Omega}\Big(\dashint_{B(x,c_\theta\delta(x))}|H|^{q'}\,dm\Big)^{\frac1{q'}}\,dm(x) \leq\sum_{I\in\m W:I\cap B(\xi,r)\neq\varnothing}\int_{I}\Big(\dashint_{B(x,c_\theta\delta(x))}|H|^{q'}\,dm\Big)^{\frac1{q'}}\,dm(x)\\ \lesssim \sum_{R\in\m D_\sigma: w(R)\cap B(\xi,r)\neq\varnothing}\int_{w(R)}\Big(\dashint_{w^*(R)}|H|^{q'}\,dm\Big)^{\frac1{q'}}\,dm(x) \lesssim \sum_{R\in\m D_\sigma: w(R)\cap B(\xi,r)\neq\varnothing}m(w^*(R))^{1-\frac1{q'}}\Vert H\Vert_{L^{q'}(w^*(R))},
\end{multline}
where in the third estimate we used (\ref{eq.ndp3}), the uniformly bounded overlap of the ``Whitney regions'' $w(R)$, and the assumption that either $\Omega$ is bounded or $\partial\Omega$ is unbounded. Let $\m F$ be the family of $R\in\m D_\sigma$ such that $w(R)\cap B(\xi,r)\neq\varnothing$. Then it is easy to see that $\ell(R)\leq r/(32\sqrt{n+1})$ and that $R\subseteq B(\xi,Cr)\cap\partial\Omega$ for any $R\in\m F$. For any $R\in\m F$, let $Q_R\in\m D_\sigma$ be the unique cube such that $R\subseteq Q_R$ and $\frac1{32\sqrt{n+1}}r\leq\ell(Q_R)<\frac1{16\sqrt{n+1}}r$. By the previous observations, we have that $\{Q_R\}_{R\in\m F}=\{Q_j\}_{j=1}^N$ for a universal constant $N\in\bb N$, and $Q_j\subset B(\xi,\tilde Cr)$.  Hence, from (\ref{eq.cd1}) we see that
\begin{multline}\nonumber
\tau\lesssim\frac1{r^n}\sum_{j=1}^N\sum_{R\in\m D_\sigma:R\subseteq Q_j}m(w^*(R))^{1-\frac1{q'}}\Vert H\Vert_{L^{q'}(w^*(R))}\\ \lesssim\frac1{\sigma(Q_k)}\sum_{R\in\m D_\sigma:R\subseteq Q_k}m(w^*(R))^{1-\frac1{q'}}\Vert H\Vert_{L^{q'}(w^*(R))} \leq\inf_{\zeta\in Q_k}(\n C_{q'}^dH)(\zeta),
\end{multline}
for some $k\in\{1,\ldots,N\}$. In other words, we have that $\n C_{q'}^dH\geq c_1\tau$ on $Q_k$. By the definition of $Q_k$, it is straightforward to see that $(\sup_{\zeta\in Q_k}|\zeta-\xi|)^n\leq C_2\sigma(Q_k)$. Consequently, the hypotheses of Lemma \ref{lm.lc} are verified, and therefore it follows that $\Vert\n C_{c_\theta,q'}H\Vert_{L^p(\partial\Omega)}\lesssim\Vert\n C_{q'}^dH\Vert_{L^p(\partial\Omega)}$, as desired.\hfill{$\square$}

\subsubsection{Conclusion} The estimates in Proposition \ref{prop.duality}  follow from the estimates in Lemmas \ref{lm.dn} and \ref{lm.nd}. Finally, the fact that ${\bf N}_{q,p}=({\bf C}_{q',p'})^*$ is proven similarly as in \cite[Theorem 2.4(iii)]{hr13}, using the density of compactly supported functions in ${\bf C}_{q',p'}$ (Lemma \ref{lm.lipschitz} (i)). We omit further details. This finishes the proof of Proposition \ref{prop.duality}.\hfill{$\square$}

\subsection{Proof of Lemma \ref{lm.lipschitz}}\label{sec.prooflipschitz}  

We show (i) first. Fix $H\in{\bf C}_{q,p}$. Let $S$ and $F$ be the maps defined in Section \ref{sec.passing} which pass between sequences and functions. By \cite[Lemma 2.5]{hr13}, we may find a sequence $\{h^k\}_{k\in\bb N}$ with $h^k=(h_Q^k)_{Q\in\m D_\sigma}$, $\Vert\n C_q^sh^k\Vert_{L^p(\partial\Omega)}<\infty$, so that $\Vert\n C_q^s(S(H)-h^k)\Vert_{L^p(\partial\Omega)}\ra0$ as $k\ra\infty$. For each $k\in\bb N$, it is straightforward that $F(h^k)$ is compactly supported in $\Omega$ and that $\{F(h^k)\}_k\subset{\bf C}_{q,p}$.  Since $H=F(S(H))$, by Lemmas \ref{lm.ns} and \ref{lm.nd} we have that
\begin{equation}\nonumber
\Vert\n C_q(H-F(h^k))\Vert_{L^p(\partial\Omega)}\approx\Vert\n C_q^d(F\big(S(H)-h^k\big))\Vert_{L^p(\partial\Omega)} \approx\Vert\n C_q^s(S(H)-h^k)\Vert_{L^p(\partial\Omega)}\longrightarrow0,
\end{equation}
as $k\ra\infty$. Hence $F(h^k)\ra H$ strongly in ${\bf C}_{q,p}$, as desired.

We turn to (ii).  Fix $H\in{\bf C}_{q,p}$ such that $H$ is compactly supported in $\Omega$, and fix a bump function $\phi\in C_c^{\infty}(B(0,1))$ satisfying that $0\leq\phi\leq1$. For each $\tau>0$, let $\phi_\tau(x):=\frac1{\tau^{n+1}}\phi\big(\frac{x}{\tau}\big)$ and then given $\hat c\in(0,1/6]$, for each $\ep\in(0,1)$  define
\[
H_\ep(x):=(H*\phi_{\hat c\delta(x)\ep})(x)=\frac1{(\hat c\delta(x)\ep)^{n+1}}\int_{\bb R^{n+1}}H(x-y)\phi\Big(\frac y{\hat c\delta(x)\ep}\Big)\,dm(y),\qquad x\in\Omega.
\]
For fixed $\ep\in(0,1)$, since the function $\phi_{\hat c\delta(\cdot)\ep}$ is locally Lipschitz in $\Omega$ and $H$ is compactly supported in $\Omega$,   it follows that $H_\ep$ is Lipschitz continuous and compactly supported in $\Omega$. We will show that $H_\ep\ra H$ in ${\bf C}_{q,p}$ as $\ep\ra0$. First, let us prove that 
\begin{equation}\label{eq.lip1}
	\sup_{\ep\in(0,1)}\Big(\dashint_{B(x,\hat c\delta(x))}|H_\ep|^q\,dm\Big)^{\frac1q}\lesssim\Big(\dashint_{B(x,3\hat c\delta(x))}|H|^q\,dm\Big)^{\frac1q},\qquad\text{for each }x\in\Omega.
\end{equation}
Fix $x\in\Omega$, and note that
\begin{multline}
	\dashint_{B(x,\hat c\delta(x))}|H_\ep(y)|^q\,dm(y)=|B(0,1)|^q\dashint_{B(x,\hat c\delta(x))}\Big|\dashint_{B(y,\hat c\delta(y)\ep)}H(z)\phi\Big(\frac{y-z}{\hat c\delta(y)\ep}\Big)\,dm(z)\Big|^q\,dm(y) \\ \leq|B(0,1)|^q\dashint_{B(x,\hat c\delta(x))} \dashint_{B(y,\hat c\delta(y)\ep)}|H(z)|^q\,dm(z)\,dm(y) \leq2^{n+1}3^{(n+1)q}|B(0,1)|^q\dashint_{B(x,3\hat c\delta(x))}|H(z)|^q\,dm(z),
\end{multline}
where we used Fubini's theorem and the fact that $\delta(x)\leq 2\delta(y)$ whenever $y\in B(x,\hat c\delta(x))$.  

We now show that $\n C_{\hat c,q}(H_\ep-H)\ra0$ pointwise $\sigma$-almost everywhere in $\partial\Omega$.    Since $H\in{\bf C}_{q,p}$, we have that $\n C_q(H)\in L^p(\partial\Omega,\sigma)$, whence Lemma \ref{lm.changeavg} implies that $\n C_{3\hat c,q}(H)$ is finite on a set $S\subseteq\partial\Omega$ with $\sigma(\partial\Omega\backslash S)=0$. Now fix $\xi\in S$, and let us prove that $\n C_{\hat c,q}(H_\ep-H)(\xi)\ra0$ as $\ep\ra0$.

Since $H\in L^q(\Omega)$, then by the properties of convolution, we have that for any bounded open set $K$ compactly contained in $\Omega$, $\Vert H_\ep-H\Vert_{L^q(K)}\ra0$ as $\ep\ra0$. In particular, for any $x\in\Omega$, we have that
\[
\Big(\dashint_{B(x,\hat c\delta(x))}|H_\ep-H|^q\,dm\Big)^{\frac1q}\ra0,\qquad\text{as }\ep\ra0.
\]
In other words, the sequence $\{m_{q,B(\cdot,\hat c\delta(\cdot))}(H_\ep-H)\}_{\ep>0}$ converges pointwise to $0$ in $\Omega$. On the other hand, by (\ref{eq.lip1}) we have that 
\[
m_{q,B(\cdot,\hat c\delta(\cdot))}(H_\ep-H)\lesssim m_{q,B(\cdot,3\hat c\delta(\cdot))}(H)\in L^1(\Omega\cap B(\xi,r)),\qquad\text{for all }r>0,
\]
since $\xi\in S$. By the Lebesgue Dominated Convergence Theorem, it follows that 
\[
Q_r(H_\ep-H):=\int_{\Omega\cap B(\xi,r)}\Big(\dashint_{B(x,\hat c\delta(x))}|H_\ep-H|^q\,dm\Big)^{\frac1q}\,dm(x)\longrightarrow0,\qquad \text{ for any }r>0.
\]
Next, since $H$ is compactly supported in $\Omega$, there exists $s>0$ so that   $\n C_{\hat c,q}(H_\ep-H)(\xi)=\sup_{r\in[s,1/s]}\frac1{r^n}Q_r(H_\ep-H)$, for any $\ep>0$. We thus see that  $\n C_{\hat c,q}(H_\ep-H)(\xi)\leq\frac1{s^n}Q_{\frac1s}(H_\ep-H)\longrightarrow0$, as  $\ep\ra0$.  Since $\xi\in S$ was arbitrary, it follows that $\n C_{\hat c,q}(H_\ep-H)\ra0$ pointwise $\sigma$-almost everywhere in $\partial\Omega$. Furthermore, by (\ref{eq.lip1}) we have that $\n C_{\hat c,q}(H_\ep-H)\lesssim\n C_{\hat c,q}(H)\in L^p(\partial\Omega)$, so that by the Lebesgue Dominated Convergence Theorem we conclude that $\n C_{\hat c,q}(H_\ep-H)\ra0$ in $L^p(\partial\Omega)$. This finishes the proof of (ii).\hfill{$\square$}

\subsection{Proof of Proposition \ref{prop.wrhp}}\label{sec.proofwrhp}

The direction (b)$\implies$(a) is shown in \cite{hl2018}, while the argument for  (b)$\iff$(c) is the same as that for \cite[Theorem 9.2 (b)$\iff$(c)]{mt22}. Thus we only need to prove   (a)$\implies$(b). Fix $\xi\in\partial\Omega$, $r\in(0,\diam\partial\Omega)$, and $x\in\Omega\backslash2B$. Write $B=B(\xi,r)$. First, let us show that for any $f\in C_c(B\cap\partial\Omega)$ with $f\geq0$, we have that
\begin{equation}\label{eq.wrhp1}
	\Big|\int_Bf\,d\omega^x\Big|\lesssim\frac1{\sigma(B)^{1/p'}}\Vert f\Vert_{L^{p'}(B)}.
\end{equation}
Let $u(y):=\int_Bf\,d\omega^y$ for each $y\in\Omega$, so that $u$ solves the continuous Dirichlet problem for $L$ in $\Omega$ with data $f$ on $\partial\Omega$. Since $f\geq0$, then $u\geq0$ in $\Omega$ by the maximum principle. Let $\m W$ be a $1/2$-Whitney decomposition of $\Omega$ (see Section \ref{sec.whitney}), and let $I\in\m W$ be the unique Whitney cube such that $x\in I$. Let $Q\in b_\Omega(I)$, and note that $\ell(Q)=\ell(I)\approx\delta(x)$.

Fix a large constant $M\geq1$ to be specified later, and we distinguish two cases: either $\delta(x)\geq r/M$ or $\delta(x)<r/M$. If $\delta(x)\geq r/M$, then note that
\begin{equation}\label{eq.wrhp2}
	u(x)\leq\inf_{\zeta\in Q}\m N_\alpha(u)(\zeta)\leq\Big(\dashint_Q|\m N_\alpha(u)|^{p'}\,d\sigma\Big)^{\frac1{p'}}\lesssim\frac1{\sigma(Q)^{1/p'}}\Vert f\Vert_{L^{p'}(B)}\lesssim_M\frac1{\sigma(B)^{1/p'}}\Vert f\Vert_{L^{p'}(B)},
\end{equation}
where we have used an aperture $\alpha$ large enough, H\"older's inequality, the fact that $(\Di_{p'}^L)$ is solvable in $\Omega$, the $n$-Ahlfors regularity of $\partial\Omega$, and the fact that $\ell(Q)\gtrsim\delta(x)\geq r/M$.

If instead we have that $\delta(x)<r/M$, let $\hat Q$ be the unique ancestor of $Q$ such that $r/M\leq\ell(\hat Q)<2r/M$, and $K=2^{10}\sqrt{n+1}$. Then $x\in KB_{\hat Q}$, and if $M\geq8K$, then we have that $KB_{\hat Q}\cap B=\varnothing$, since $x\in\Omega\backslash2B$. Using the boundary H\"older continuity, Lemma \ref{lm.boundaryholder}, on the ball $KB_{\hat Q}$ for the solution $u$, we see that
\begin{multline}\label{eq.wrhp3}
	u(x)\lesssim\Big(\frac{\delta(x)}{K\ell(\hat Q)}\Big)^{\eta}\frac1{|2KB_{\hat Q}|}\int_{2KB_{\hat Q}\cap\Omega}u\,dm\lesssim\dashint_{\wt KB_{\hat Q}}\m N(u)\,d\sigma\\ \lesssim\Big(\dashint_{\wt KB_{\hat Q}}|\m N(u)|^{p'}\,d\sigma\Big)^{\frac1{p'}}   \lesssim\frac1{\sigma(\wt KB_{\hat Q})^{1/p'}}\Vert f\Vert_{L^{p'}(B)}\approx\frac1{\sigma(B)^{1/p'}}\Vert f\Vert_{L^{p'}(B)},
\end{multline}
where $\wt K\approx K$. From (\ref{eq.wrhp2}) and (\ref{eq.wrhp3}), the estimate (\ref{eq.wrhp1}) follows for any $x\in\Omega\backslash2B$. 

Although we obtained (\ref{eq.wrhp1}) for $f\geq0$, we can obtain the same estimate for general $f\in C_c(B)$ by simply splitting $f$ into the non-negative and negative parts. Let us see how to obtain (\ref{eq.wrhp}) from (\ref{eq.wrhp1}). First, for any $x\in\Omega\cap\partial(2B)$, by Lemma \ref{lm.bourgain} we have that $\omega^x(8B)\approx1$, and hence (\ref{eq.wrhp1}) implies that
\begin{equation}\label{eq.wrhp5}
	|u(x)|\leq C\frac{1}{\sigma(B)^{1/p'}}\Vert f\Vert_{L^{p'}(B)}\omega^x(8B),\qquad\text{for all }x\in\Omega\cap\partial(2B).
\end{equation}
Let
\[
v(y):=C\frac{1}{\sigma(B)^{1/p'}}\Vert f\Vert_{L^{p'}(B)}\omega^y(8B)-u(y),\qquad\text{for any }y\in\Omega\backslash2B,
\]
and note that $v\geq0$ on $\partial(\Omega\backslash2B)$ due to (\ref{eq.wrhp5}), the fact that $u\equiv0$ on $\partial\Omega\backslash B$, and the fact that $\omega^y(8B)\geq0$ on $\overline\Omega$. Moreover, by linearity we have that $Lv=0$ in $\Omega\backslash2B$. Then, by the maximum principle (see, for example, \cite[Chapter 6]{hkm93}), we have that $v\geq0$ on $\Omega\backslash2B$. In other words, we have the bound
\begin{equation}\label{eq.wrhp6}
	\Big|\int_Bf\,d\omega^x\Big|\lesssim\frac{\omega^x(8B)}{\sigma(B)^{1/p'}}\Vert f\Vert_{L^{p'}(B)},\qquad\text{for each }x\in\Omega\backslash2B~\text{ and }f\in C_c(B).
\end{equation}

From (\ref{eq.wrhp6}), since $C_c(B)$ is dense in $L^{p'}(B)$, it follows that the   functional $\ell_{\omega^x}(f):=\int_Bf\,d\omega^x$ is bounded on $L^{p'}(B)$; hence there exists $g\in L^p(B)$ so that $\ell_{\omega^x}(f)=\int_Bfg\,d\sigma$ for every $f\in L^{p'}(B)$. It follows immediately that $\omega^x\ll\sigma$ for any $x\in\Omega$, that $d\omega^x/d\sigma=g$ $\sigma$-a.e.\ in $B$, and the estimate (\ref{eq.wrhp}) is obtained directly from (\ref{eq.wrhp6}).\hfill{$\square$}

\bibliographystyle{alpha-sort-max} 
\bibliography{refs} 

\end{document}